\newcommand{\T}{\mathbb{T}}
\newcommand{\ii }{{\rm i} }
\newcolumntype{C}[1]{>{\centering\arraybackslash}b{#1}}
\newcolumntype{R}[1]{>{\raggedleft\arraybackslash}b{#1}}
\newcolumntype{L}[1]{>{\raggedright\arraybackslash}b{#1}}
\newcolumntype{M}[1]{>{\centering}m{#1}}
\newtheorem{theo}{Theorem}[section]
\newtheorem{defin}{Definition}[section]
\newtheorem{lem}{Lemma}[section]
\newtheorem{prop}{Proposition}[section]
\newtheorem{remark}{Remark}[section]
\numberwithin{equation}{section}
\date{}
\title{Periodic and quasi-periodic Euler-$\alpha$ flows close to Rankine vortices}
\author{Emeric Roulley\thanks{Univ Rennes, CNRS, IRMAR – UMR 6625, F-35000 Rennes, France\\
E-mail address : emeric.roulley@univ-rennes1.fr}}
\begin{document}
	\maketitle
	\begin{abstract}
		In the present contribution, we first prove the existence of $\mathbf{m}$-fold simply-connected V-states close to the unit disc for Euler-$\alpha$ equations. These solutions are implicitly obtained as bifurcation curves from the circular patches. We also prove the existence of quasi-periodic in time vortex patches close to the Rankine vortices provided that the scale parameter $\alpha$ belongs to a suitable Cantor-like set of almost full Lebesgue measure. The techniques used to prove this result are borrowed from the Berti-Bolle theory in the context of KAM for PDEs. 
	\end{abstract}
\tableofcontents
\section{Euler-$\alpha$ equations and main results}
	We consider the Euler-$\alpha$ planar model with parameter $\alpha\geqslant 0$ given by
	\begin{equation}\label{Euler alpha equations}
		\left\lbrace\begin{array}{ll}
		\partial_{t}\mathbf{u}+(\mathbf{v}\cdot\nabla) \mathbf{u}+(\nabla \mathbf{v})^{\top}\mathbf{u}+\nabla \pi=0 & \textnormal{in }\mathbb{R}_{+}\times\mathbb{R}^{2}\\
		\nabla\cdot \mathbf{v}=\nabla\cdot \mathbf{u}=0 & \\
		\mathbf{u}=\mathbf{v}-\alpha^{2}\Delta \mathbf{v} & \\
		\mathbf{v}(0,\cdot)=\mathbf{v}_0. & \\
	\end{array}\right.
\end{equation}
This model is a regularization of Euler equations describing the flow of an incompressible fluid on spatial scales larger than the length scale parameter $\alpha.$ It has been introduced in the context of averaged fluid models, see \cite{HMR98,HMR98-1}. In the literature, $\mathbf{v}$ and $\mathbf{u}$ are called the \textit{filtered} and \textit{unfiltered velocities}, respectively. Actually, $\mathbf{u}$ corresponds to the velocity field of the fluid. Notice that $(\nabla \mathbf{v})^{\top}$ denotes the transpose of the Jacobi matrix for $\mathbf{v}$, namely
$$(\nabla \mathbf{v})^{\top}=\left(\frac{\partial \mathbf{v}_j}{\partial x_i}\right)_{1\leqslant i,j\leqslant 2}$$
The pressure term $\pi$ is linked to the pressure field $p$ of the fluid through the relation
$$\pi\triangleq p-\tfrac{1}{2}|\mathbf{v}|^2-\tfrac{\alpha^2}{2}|\nabla \mathbf{v}|^2.$$
Remark that we formally recover the classical Euler model, by taking $\alpha=0$ in the above set of equations. The rigorous justification of this convergence can be found in \cite{LT10}. In the sequel, we assume that $\alpha>0.$ Let us consider the \textit{unfiltered vorticity} $\boldsymbol{\omega}$ defined by
\begin{equation}\label{def vort}
	\boldsymbol{\omega}\triangleq \nabla^{\perp}\cdot \mathbf{u}=\nabla^{\perp}\cdot(\textnormal{Id}-\alpha^2\Delta)\mathbf{v},\qquad\nabla^{\perp}\triangleq \begin{pmatrix}
		-\partial_2\\
		\partial_1
	\end{pmatrix}.
\end{equation}
Applying the operator $\nabla^{\perp}\cdot$ to the first equation in \eqref{Euler alpha equations}, we find, after straightforward computations using the divergence-free conditions, that $\boldsymbol{\omega}$ is a solution to the following active scalar equation
\begin{equation}\label{active scalar eq}
	\partial_{t}\boldsymbol{\omega}+\mathbf{v}\cdot\nabla\boldsymbol{\omega}=0.
\end{equation}
Such kind of nonlinear and nonlocal transport PDE has been widely studied during the past decade in fluid dynamics, especially regarding the periodic motions and more recently the quasi-periodic ones. We shall discuss in this introduction two types of active scalar models in the form \eqref{active scalar eq} which will be of interest in the sequel. The first example is given by the classical velocity-vorticity formulation of 2D Euler equations, namely
$$\partial_{t}\boldsymbol{\omega}^{\textnormal{\tiny{E}}}+\mathbf{v}^{\textnormal{\tiny{E}}}\cdot\nabla\boldsymbol{\omega}^{\textnormal{\tiny{E}}}=0,\qquad\mathbf{v}^{\textnormal{\tiny{E}}}=\nabla^{\perp}\boldsymbol{\Psi}^{\textnormal{\tiny{E}}},\qquad\Delta\boldsymbol{\Psi}^{\textnormal{\tiny{E}}}=\boldsymbol{\omega}^{\textnormal{\tiny{E}}}.$$
In this case, the potential velocity $\boldsymbol{\Psi}^{\textnormal{\tiny{E}}}$ is obtained as the convolution with the Green function associated to the Laplace problem set in the whole plane, namely
\begin{equation}\label{Psi Euler intro}
	\boldsymbol{\Psi}^{\textnormal{\tiny{E}}}=\mathbf{G}^{\textnormal{\tiny{E}}}\ast\boldsymbol{\omega}^{\textnormal{\tiny{E}}},\qquad\mathbf{G}^{\textnormal{\tiny{E}}}\triangleq\tfrac{1}{2\pi}\log(|\cdot|).
\end{equation}
The second example is given by quasi-geostrophic shallow-water equations with parameter $\lambda>0$, shorten in what follows into $(QGSW)_{\lambda}$, which is a geophysical asymptotic model describing the circulation of the atmosphere at large time and space scales \cite[p.220]{V17}. This model is given by
$$\partial_t\boldsymbol{q}^{\textnormal{\tiny{SW}}}+\mathbf{v}_{\lambda}^{\textnormal{\tiny{SW}}}\cdot\nabla\boldsymbol{q}^{\textnormal{\tiny{SW}}}=0,\qquad\mathbf{v}_{\lambda}^{\textnormal{\tiny{SW}}}=\nabla^{\perp}\boldsymbol{\Psi}_{\lambda}^{\textnormal{\tiny{SW}}},\qquad(\Delta-\lambda^2)\boldsymbol{\Psi}_{\lambda}^{\textnormal{\tiny{SW}}}=\boldsymbol{q}^{\textnormal{\tiny{SW}}}.$$
In this case, the potential velocity $\boldsymbol{\Psi}_{\lambda}^{\textnormal{\tiny{SW}}}$ is obtained as the convolution with the Green function associated to the Helmoltz problem set in the whole plane, namely
\begin{equation}\label{Psi QGSW intro}
	\boldsymbol{\Psi}_{\lambda}^{\textnormal{\tiny{SW}}}=\mathbf{G}_{\lambda}^{\textnormal{\tiny{SW}}}\ast\boldsymbol{q}^{\textnormal{\tiny{SW}}},\qquad\mathbf{G}_{\lambda}^{\textnormal{\tiny{SW}}}\triangleq-\tfrac{1}{2\pi}K_0(\lambda|\cdot|),
\end{equation}
where $K_0$ is the modified Bessel function of second kind. We refer to the Appendix \ref{appendix Bessel} for a presentation of modified Bessel functions and some of their useful properties. The parameter $\lambda$ is called \textit{Rossby deformation length} or \textit{Rossby radius} and quantifies the rotation/stratification balance for the fluid. Few results are known regarding this model and we may refer to \cite{DHR19,R21} for the mathematical context of interest in the sequel and to \cite{DJ20,DP11} for interesting numerical simulations in the physical literature.\\

This paper is based on the following fundamental relation, which can be found for instance in \cite{BLT08,OS99}, giving a nice expression of the Euler-$\alpha$ velocity field $\mathbf{v}$. 
\begin{equation}\label{Psi Ea intro}
	\mathbf{v}=\nabla^{\perp}\boldsymbol{\Psi},\qquad\boldsymbol{\Psi}\triangleq\mathbf{G}\ast\boldsymbol{\omega},\qquad\mathbf{G}\triangleq\mathbf{G}^{\textnormal{\tiny{E}}}-\mathbf{G}_{\frac{1}{\alpha}}^{\textnormal{\tiny{SW}}}.
\end{equation}
Due to the importance of this formula for our work, we provide in Section \ref{sec Ea dyn} its detailed justification. In our approach, we take advantage of this explicit link with Euler and QGSW equations to prove the existence of periodic and quasi-periodic vortex patch motions for the Euler-$\alpha$ equations. In particular, we shall make appeal to the computations carried out in \cite{BHM21,B82,DHR19,HR21-1,HMV13,HR21} relatively to these contexts. For the Euler-$\alpha$ model, the global existence and uniqueness of classical solutions has been obtained in \cite{B99}. The well-posedness of weak solutions in the space of bounded Radon measures for \eqref{Euler alpha equations} has been proved in \cite{OS99}. Indeed, the formula \eqref{Psi Ea intro} and \eqref{exp K0} imply that the vector field $\mathbf{v}$ is less singular than the Euler one, which allows to reach the class of measures for the well-posedness. An other consequence is that the Yudovitch theory also applies in this context and the weak solutions are Lagrangian. This fact is the starting point to be able to look for vortex patches. Before presenting the notion of vortex patch solutions together with the main results of this study, we may end this presentation by briefly mentioning that the 2D Euler-$\alpha$ model has also been intensively studied in the case of subset of $\mathbb{R}^{2}$ domains with suitable boundary conditions. We may for instance refer the reader to \cite{BI17,BIFL20,BIFL22,FLTZ15,XZ21}.\\

Now, we shall present the notion of vortex patches and discuss some historical background about the periodic and quasi-periodic settings. The techniques used to find these two kind of solutions are completely different. The periodic solutions are obtained by using bifurcation theory from the stationary solutions and the quasi-periodic ones appear in the context of KAM/Nash-Moser theories. Considering an open bounded simply-connected domain $D_0$ in $\mathbb{R}^2$ then the function $\mathbf{1}_{D_0}$ provides an initial datum in the Yudovitch class. Therefore,
we have existence and uniqueness of a global weak solution which, according to the transport structure of \eqref{active scalar eq}, takes the following form
$$\boldsymbol{\omega}(t,\cdot)=\mathbf{1}_{D_t},$$
where $D_t$ is the image of $D_0$ by the flow map associated to the velocity field $\mathbf{v}$, namely
$$D_t=\boldsymbol{\Phi}_t(D_0),\qquad\boldsymbol{\Phi}_t(x)=x+\int_{0}^{t}\mathbf{v}\big(s,\boldsymbol{\Phi}_s(x)\big)ds.$$
The resulting solution $t\mapsto\mathbf{1}_{D_t}$ is called a \textit{vortex patch} with initial patch $\mathbf{1}_{D_0}.$ Given any parametrization $z(t,\cdot):\mathbb{T}\rightarrow\partial D_t$ of the boundary of the patch at time $t,$ it is well-known since the works \cite{HMV13,HMV15} that for an active scalar equation, as in our case, the particles on the boundary move with the flow and remain at the boundary. Hence, at least in the smooth case, one can write the following equation called \textit{vortex patch equation}.
\begin{equation}\label{Lag-For}
	\Big(\partial_tz(t,\theta)-\mathbf{v}\big(t,z(t,\theta)\big)\Big)\cdot\mathbf{n}\big(t,z(t,\theta)\big)=0,
\end{equation}
where $\mathbf{n}(t,z(t,\theta))$ is the outward normal vector to the boundary $\partial D_{t}$ of $D_{t}$ at the point $z(t,\theta)$. Since the dynamics is planar, it is more convenient to use the complex notation. In particular, the Euclidean structure of $\mathbb{R}^{2}$ is transposed into the complex world in the following way :
$$\forall u=a+\ii b\in\mathbb{C},\quad \forall v=a'+\ii b'\in\mathbb{C},\qquad u\cdot v\triangleq \langle u,v\rangle_{\mathbb{R}^{2}}=\mbox{Re}\left(u\overline{v}\right)=aa'+bb'.$$
Remarking that $\mathbf{n}(t,z(t,\theta))$ is real-proportional to $\ii\partial_{\theta}z(t,\theta)$, then we get the complex form of the contour dynamics motion \eqref{Lag-For}, 
\begin{equation}\label{complex vp eq}
	\mbox{Im}\Big(\partial_{t}z(t,\theta)\overline{\partial_{\theta}z(t,\theta)}\Big)=\mbox{Im}\Big(\mathbf{v}(t,z(t,\theta))\overline{\partial_{\theta}z(t,\theta)}\Big).
\end{equation}

\noindent $\blacktriangleright$ \textbf{Uniformly rotating solutions.}\\
Uniformly rotating vortex patches, also called \textit{V-states}, form a particular subclass of vortex patch solutions taking the form
\begin{equation}\label{unif rot dom}
	D_t=e^{\ii\Omega t}D_0.
\end{equation}
Any solution of this form is rotating with a time independent angular velocity $\Omega\in\mathbb{R}$ around its center of mass fixed at the origine. If $\Omega=0,$ then it is stationary, otherwise it is time periodic with period $\tfrac{2\pi}{\Omega}.$ Historically, the first example of V-states was provided by Kirchhoff \cite{K74} who showed that for Euler equations any ellipse with semi-axis $a$ and $b$ uniformly rotates for an angular velocity $\Omega=\tfrac{ab}{(a+b)^2}.$ Observe from \eqref{Psi Ea intro}, \eqref{Psi Euler intro} and \eqref{Psi QGSW intro} that the stream function $\boldsymbol{\Psi}$ admits a radial Green kernel. Therefore, any radial initial profile would generate a stationary solution. In particular, the Rankine vortices given by the discs provide such examples. The purpose of this study is to prove the existence of periodic (and quasi-periodic) vortex patch structures close to these equilibrium states. Due to the invariance by dilation of the model, it is sufficient to search for these type of solutions close to the unit disc. The analytical study of existence of V-states close to the unit disc for Euler equations goes back to the work of Burbea \cite{B82}. We also refer to \cite{HMV13} for a more recent and rigorous point of view. Indeed, combining Crandall-Rabinowitz's Theorem \ref{Crandall-Rabinowitz theorem} and complex analysis, one can find the existence of branches of $\mathbf{m}$-fold V-states bifurcating from the unit disc at the angular velocities
\begin{equation}\label{Burbea frequencies}
	\boldsymbol{\Omega}_{\mathbf{m}}^{\textnormal{\tiny{E}}}\triangleq\frac{\mathbf{m}-1}{2\mathbf{m}}.
\end{equation}
Later on, a lot of attention has been paid to such type of solutions for different nonlinear transport fluid models like Euler equations in the plane or in the unit disc, generalised surface quasi-geostrophic equations $(SQG)_{\alpha}$ and $(QGSW)_{\lambda}$ equations. Also, several topological and regularity settings were explored and we may refer to
\cite{CCG16,DHR19,G20,G21,G19, GPSY20,HH15,HH21,HHH18,HHHM15,HMW20,HW21,HHMV16,HM16, HM16-2,HM17,HMV13,HMV15,R17,R21}. Nevertheless, it seems that the Euler-$\alpha$ model has been set aside from these studies. Hence, we propose here to study the emergence of simply-connected V-states for this model. For this task, due to the decomposition \eqref{Psi Ea intro}, we may emphasize the result in \cite{DHR19}, where they found the following angular velocities related to the modified Bessel functions for which the bifurcation of simply-connected V-states from the unit disc occurs
\begin{equation}\label{DHR frequencies}
	\boldsymbol{\Omega}_{\mathbf{m}}^{\textnormal{\tiny{SW}}}(\lambda)\triangleq I_1(\lambda)K_1(\lambda)-I_{\mathbf{m}}(\lambda)K_{\mathbf{m}}(\lambda).
\end{equation}
Now, we shall present the first theorem proved in this study and dealing with periodic simply-connected patches bifurcating from the Rankine vortices.
\begin{theo}\label{thm Ea sc}
	Let $\alpha>0$ and $\beta\in(0,1).$ For any $\mathbf{m}\in\mathbb{N}^*,$ there exists a branch of $\mathbf{m}$-fold V-states of regularity $C^{1+\beta}$ bifurcating from the unit disc at the angular velocity
		\begin{equation}\label{def eigenvalues Easc}
			\boldsymbol{\Omega}_{\mathbf{m}}^{\textnormal{\tiny{E}}}(\alpha)\triangleq\frac{\mathbf{m}-1}{2\mathbf{m}}-\Big(I_1\left(\tfrac{1}{\alpha}\right)K_1\left(\tfrac{1}{\alpha}\right)-I_{\mathbf{m}}\left(\tfrac{1}{\alpha}\right)K_{\mathbf{m}}\left(\tfrac{1}{\alpha}\right)\Big)=\boldsymbol{\Omega}_{\mathbf{m}}^{\textnormal{\tiny{E}}}-\boldsymbol{\Omega}_{\mathbf{m}}^{\textnormal{\tiny{SW}}}\left(\tfrac{1}{\alpha}\right)
		\end{equation}
for Euler-$\alpha$ equations \eqref{Euler alpha equations}.
\end{theo}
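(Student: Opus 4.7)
The plan is to apply the Crandall--Rabinowitz bifurcation theorem (Theorem~\ref{Crandall-Rabinowitz theorem}), exploiting the decomposition \eqref{Psi Ea intro} to reduce the spectral analysis to the Euler \cite{B82,HMV13} and QGSW \cite{DHR19} cases already treated in the literature. Starting from the complex contour equation \eqref{complex vp eq} with the uniform rotation ansatz \eqref{unif rot dom}, I would parametrize the $\mathbf{m}$-fold boundary by $\theta\mapsto e^{\ii\theta}\bigl(1+h(\theta)\bigr)$ with $h$ of class $C^{1+\beta}$ carrying only $n\mathbf{m}$-Fourier modes ($n\geqslant 1$). This recasts the problem as a nonlinear equation $F(\Omega,h)=0$ between appropriate Hölder spaces $X_{\mathbf{m}}^{1+\beta}\to Y_{\mathbf{m}}^{\beta}$, admitting $h\equiv 0$ as a trivial branch for every $\Omega$. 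Using \eqref{Psi Ea intro} to split the velocity, the functional writes
\[
F(\Omega,h)=F^{\textnormal{\tiny{E}}}(\Omega,h)-F^{\textnormal{\tiny{SW}}}_{\frac{1}{\alpha}}(\Omega,h)-\Omega\,\mathcal{R}(h),
\]
where $F^{\textnormal{\tiny{E}}}$ and $F^{\textnormal{\tiny{SW}}}_{\frac{1}{\alpha}}$ are the V-state functionals of Euler and QGSW with $\lambda=\frac{1}{\alpha}$ and $\mathcal{R}$ gathers the rigid-rotation contribution. All three pieces are $C^{1}$, indeed real analytic in a ball around $h=0$, as already established in \cite{HMV13,DHR19}.

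I would then compute the linearized operator $L(\Omega)\triangleq\partial_h F(\Omega,0)$ at the disc. By linearity of the splitting and the diagonalisation of $L^{\textnormal{\tiny{E}}}(\Omega),\,L^{\textnormal{\tiny{SW}}}_{\frac{1}{\alpha}}(\Omega)$ already performed in \cite{HMV13,DHR19} on the real basis $(\cos(n\mathbf{m}\theta),\sin(n\mathbf{m}\theta))_{n\geqslant 1}$, the operator $L(\Omega)$ is a Fourier multiplier whose $n$-th eigenvalue is, up to a fixed nonzero normalisation,
\[
\mu_n(\Omega,\alpha)=\Omega-\boldsymbol{\Omega}_{n\mathbf{m}}^{\textnormal{\tiny{E}}}+\boldsymbol{\Omega}_{n\mathbf{m}}^{\textnormal{\tiny{SW}}}\!\left(\tfrac{1}{\alpha}\right).
\]
Setting $\Omega=\boldsymbol{\Omega}_{\mathbf{m}}^{\textnormal{\tiny{E}}}(\alpha)$ from \eqref{def eigenvalues Easc} annihilates exactly $\mu_1$. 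Fredholmness with closed range of codimension one follows from the fact that the sequence $\mu_n(\boldsymbol{\Omega}_{\mathbf{m}}^{\textnormal{\tiny{E}}}(\alpha),\alpha)$ converges, as $n\to\infty$, to $I_{\mathbf{m}}(\frac{1}{\alpha})K_{\mathbf{m}}(\frac{1}{\alpha})-\tfrac{1}{2\mathbf{m}}\neq 0$ (using $\boldsymbol{\Omega}_{n\mathbf{m}}^{\textnormal{\tiny{E}}}\to\tfrac{1}{2}$ and $I_k K_k\to 0$ as $k\to\infty$, together with the strict inequality $I_k(x)K_k(x)<\tfrac{1}{2k}$ valid for $x>0$). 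The transversality hypothesis is immediate since $\partial_\Omega\mu_1\equiv 1$.

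The \emph{main obstacle} is therefore the simplicity of the kernel, namely the statement $\mu_n(\boldsymbol{\Omega}_{\mathbf{m}}^{\textnormal{\tiny{E}}}(\alpha),\alpha)\neq 0$ for every $n\geqslant 2$. A direct rearrangement recasts this as the strict monotonicity in $k\in\mathbb{N}^{*}$ of
\[
\psi_\alpha(k)\triangleq I_k\!\left(\tfrac{1}{\alpha}\right)K_k\!\left(\tfrac{1}{\alpha}\right)-\tfrac{1}{2k},
\]
since the obstruction equals $\psi_\alpha(\mathbf{m})-\psi_\alpha(n\mathbf{m})$. This is genuinely more delicate than in the pure Euler case (where the analogue reduces to $-\tfrac{1}{2k}$ and is trivially monotone) or in the pure QGSW case (where $k\mapsto I_kK_k$ alone is strictly decreasing and carries the sign): here both contributions share the leading asymptotic $\tfrac{1}{2k}$ as $k\to\infty$, so $\psi_\alpha(k)\to 0$ and the sign of the difference must be squeezed out of sub-leading information on the Bessel products. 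I would establish strict monotonicity using the Nicholson-type integral representation of $I_kK_k$ together with the elementary identities on modified Bessel functions collected in Appendix~\ref{appendix Bessel}, which produce a manifestly signed expression for $\psi_\alpha(k+1)-\psi_\alpha(k)$. Once this monotonicity is in hand, Crandall--Rabinowitz delivers the announced local $C^1$-branch of nontrivial $\mathbf{m}$-fold V-states emanating from $\bigl(\boldsymbol{\Omega}_{\mathbf{m}}^{\textnormal{\tiny{E}}}(\alpha),0\bigr)$, with $C^{1+\beta}$ regularity of the boundary inherited from the chosen functional framework.
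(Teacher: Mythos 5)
Your proposal is correct and follows the same architecture as the paper: recast the rotating contour equation as $F(\Omega,h)=0$ between $\mathbf{m}$-fold Hölder spaces, use \eqref{Psi Ea intro} to superpose the Euler and $(QGSW)_{1/\alpha}$ contributions, identify the linearization at the disc as a Fourier multiplier whose eigenvalues are $\boldsymbol{\Omega}_{n\mathbf{m}}^{\textnormal{\tiny{E}}}(\alpha)-\Omega$ up to nonzero factors, and verify the Crandall--Rabinowitz hypotheses, with the kernel's one-dimensionality reduced to the strict monotonicity of $k\mapsto\boldsymbol{\Omega}_{k}^{\textnormal{\tiny{E}}}(\alpha)$. (The paper works with the conformal parametrization rather than your polar one, but it states explicitly that either choice is viable, and your Fredholm argument via the nonzero limit $I_{\mathbf{m}}(\frac1\alpha)K_{\mathbf{m}}(\frac1\alpha)-\frac{1}{2\mathbf{m}}$ of the multiplier is the Fourier-side version of the paper's ``isomorphism plus compact'' decomposition.) The one genuine divergence is the proof of the monotonicity of $\psi_\alpha(k)=I_k(\frac1\alpha)K_k(\frac1\alpha)-\frac{1}{2k}$, which you correctly isolate as the crux. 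The paper (Lemma \ref{lem mono ev Ea}) shows that $x\mapsto I_k(x)K_k(x)-I_{k+1}(x)K_{k+1}(x)$ is strictly decreasing on $(0,\infty)$ using the Wronskian \eqref{wronskian}, the ratio bounds \eqref{ratio bounds with derivatives} and the bound $I_kK_k<\frac{1}{2k}$, then compares with the limit $\frac{1}{2k}-\frac{1}{2(k+1)}$ as $x\to0$. Your Nicholson-type route does deliver the claim, and arguably more transparently: from $I_\nu(x)K_\nu(x)=\int_0^\infty J_0(2x\sinh t)\,e^{-2\nu t}\,dt$ one gets $\psi_\alpha(k)=\int_0^\infty\bigl(J_0(2\alpha^{-1}\sinh t)-1\bigr)e^{-2kt}\,dt$, so $\psi_\alpha(k+1)-\psi_\alpha(k)$ is the integral of a product of two nonpositive factors, hence strictly positive. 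Since this is the single load-bearing step you leave as an assertion, you should write that computation out explicitly (with the validity range of the integral representation) rather than gesture at it.
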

\begin{remark}
	\begin{enumerate}[label=(\roman*)]
		\item Observe that the bifurcation frequencies \eqref{def eigenvalues Easc} are exactly the superposition of those of Euler \eqref{Burbea frequencies} and $(QGSW)_{\frac{1}{\alpha}}$ \eqref{DHR frequencies} models which is in accordance with the remark in Section \ref{sec Ea dyn} and the structure of the vortex patch equation \eqref{complex vp eq}.
		\item The following convergences hold true.
		$$\boldsymbol{\Omega}_{\mathbf{m}}^{\textnormal{\tiny{E}}}(\alpha)\underset{\mathbf{m}\rightarrow\infty}{\longrightarrow}\tfrac{1}{2}-I_1\left(\tfrac{1}{\alpha}\right)K_1\left(\tfrac{1}{\alpha}\right)\triangleq \boldsymbol{\Omega}_{\infty}^{\textnormal{\tiny{E}}}(\alpha),\qquad\boldsymbol{\Omega}_{\mathbf{m}}^{\textnormal{\tiny{E}}}(\alpha)\underset{\alpha\rightarrow0}{\longrightarrow}\tfrac{\mathbf{m}-1}{2\mathbf{m}}=\boldsymbol{\Omega}_{\mathbf{m}}^{\textnormal{\tiny{E}}}.$$
		\item The case $\mathbf{m}=1$ corresponds to a translation of the trivial solution, see Remark \ref{rem triv sol}.
	\end{enumerate}
\end{remark}
The Theorem \ref{thm Ea sc} is proved by using Crandall-Rabinowitz's Theorem \ref{Crandall-Rabinowitz theorem} in the spirit of the previous works mentioned above. For that purpose, we reformulate the vortex patch equation \eqref{complex vp eq} in the uniformly rotating framework with conformal mappings. We chose to use the conformal functional setting in order to be able to take advantage of the computations already done in \cite{DHR19,HMV13}. Nevertheless, one could also use the polar parametrization similarly to the next result on quasi-periodic solutions. Introducing, for an initial domain $D_0$ close to the unit disc $\mathbb{D}$, the conformal parametrization $\Phi:\mathbb{C}\setminus\overline{\mathbb{D}}\rightarrow\mathbb{C}\setminus\overline{D_0}$ in the form
$$\Phi(z)=z+f(z),\qquad f(z)=\sum_{n=0}^{\infty}\frac{a_n}{z^n},\qquad a_n\in\mathbb{R},$$
we can reformulate the vortex patch equation \eqref{complex vp eq} in the uniformly rotating context as the following equation
$$\forall w\in\mathbb{T},\quad F_{\alpha}(\Omega,f)(w)=0,\qquad F_{\alpha}(\Omega,f)(w)\triangleq \textnormal{Im}\left\lbrace\Big(\Omega\Phi(w)+I^{\textnormal{\tiny{E}}}(f)(w)+I^{\textnormal{\tiny{SW}}}(f)(\alpha,w)\Big)\overline{w}\overline{\Phi'(w)}\right\rbrace,$$
with 
$$I^{\textnormal{\tiny{E}}}(f)(w)\triangleq \fint_{\mathbb{T}}\Phi'(\tau)\log\big(|\Phi(w)-\Phi(\tau)|\big)d\tau,\qquad I^{\textnormal{\tiny{SW}}}(f)(\alpha,w)\triangleq \fint_{\mathbb{T}}\Phi'(\tau)K_{0}\left(\tfrac{1}{\alpha}|\Phi(\tau)-\Phi(w)|\right)d\tau.$$
Remarking that the disc provides a line of solutions $F_{\alpha}(\Omega,0)=0$ for $\Omega\in\mathbb{R}$, we shall apply the local bifurcation theory to find non-trivial solutions. In Proposition \ref{prop lin op Ea}, we prove that the functional $F_{\alpha}$ is of class $C^1$ with respect to some Hölder regularity spaces and its linearized operator at the equilibrium has the Fredholm property and expresses as the following Fourier multiplier
$$\forall w\in\mathbb{T},\quad d_{f}F_{\alpha}(\Omega,0)(w)=\sum_{n=0}^{\infty}a_n(n+1)\Big(\boldsymbol{\Omega}_{n+1}^{\textnormal{\tiny{E}}}(\alpha)-\Omega\Big)\textnormal{Im}(w^{n+1}).$$
The one dimensional kernel condition for applying bifurcation theory is ensured by the strict monotonicity of the sequence $\big(\boldsymbol{\Omega}_{n}^{\textnormal{\tiny{E}}}(\alpha)\big)_{n\in\mathbb{N}^*}$ which is checked using some refined estimates on modified Bessel functions, see Lemma \ref{lem mono ev Ea}. Finally, the transversality condition is a direct consequence of the Fourier decomposition of the linearized operator and is checked in Proposition \ref{prop hyp CR Ea}-(iv).\\

\noindent $\blacktriangleright$ \textbf{Quasi-periodic in time vortex patches.}\\
This topic is rather new in vortex patch dynamics and the tools used are borrowed from KAM/Nash-Moser theories \cite{A63,BB15,K54,M62}. Recall that a function $r:\mathbb{R}\rightarrow\mathbb{R}$ is said to be \textit{quasi-periodic} if there exists $d\in\mathbb{N}^*,$ $\widehat{r}=\widehat{r}(\varphi):\mathbb{T}^{d}\rightarrow\mathbb{R}$ continuous and $\omega\in\mathbb{R}^d$ such that
\begin{equation}\label{def QP}
	r(t)=\widehat{r}(\omega t),\qquad\forall l\in\mathbb{Z}^d\setminus\{0\},\quad\omega\cdot l\neq 0.
\end{equation} 
The existence of quasi-periodic vortex patches close to the unit disc for the QGSW equations have been proved in \cite{HR21}. These solutions are obtained by selecting the Rossby deformation length in a massive Cantor set. Similarly, in \cite{HHM21}, the authors used the parameter inside the equations to generate quasi-periodic vortex patches near the Rankine vortices for $(SQG)_{\alpha}$ equations for suitable selected values of $\alpha.$ Here our work follows the same idea relying on the free parameter $\alpha$ to obtain these solutions. We mention two other works recently obtained for Euler equations. The first one concerns the quasi-periodic patches close to the Rankine vortices in the unit disc presented in \cite{HR21-1}. The second result, which can be found in \cite{BHM21}, is relative to quasi-periodic patches close to the Kirchhoff ellipses. In both cases, this is a geometrical parameter which, when taken among a Cantor-like set of admissible parameters, allows to find quasi-periodic solutions. Our second result reads as follows.

\begin{theo}\label{thm QPS Ea}
		Let $0<\alpha_0<\alpha_1$ and  $\mathbb{S}\subset\mathbb{N}^*.$ There exists  $\varepsilon_{0}\in(0,1)$ small enough such that for every choice of amplitudes ${\mathtt{a}}=(\mathtt{a}_{j})_{j\in\mathbb{S}}\in(\mathbb{R}_{+}^{*})^{|\mathbb{S}|}$ enjoying the smallness condition
		$$|{\mathtt{a}}|\leqslant\varepsilon_{0},$$ 
		there exists a Cantor-like set $\mathscr{C}_{\infty}\subset(\alpha_{0},\alpha_{1})$ with asymptotically full Lebesgue measure as ${\mathtt{a}}\rightarrow 0,$ i.e.
		$$\lim_{{\mathtt{a}}\rightarrow 0}|\mathscr{C}_{\infty}|=\alpha_{1}-\alpha_{0},$$
		such that for any $\alpha\in\mathscr{C}_{\infty}$, the  equation \eqref{active scalar eq} admits a time quasi-periodic vortex patch solution
		with diophantine frequency vector ${\omega}_{\tiny{\textnormal{pe}}}(\alpha,{\mathtt{a}})\triangleq (\omega_{j}(\alpha,{\mathtt{a}}))_{j\in\mathbb{S}}\in\mathbb{R}^{|\mathbb{S}|}$ and taking the form
		\begin{align}
			\boldsymbol{\omega}(t,\cdot)=\mathbf{1}_{D_t},\qquad D_t&=\Big\{\ell e^{\ii\theta},\quad\theta\in[0,2\pi],\quad 0\leqslant \ell\leqslant R(t,\theta)\Big\},\qquad R(t,\theta)=\sqrt{1+2r(t,\theta)},\label{ansatz thm}\\
			r(t,\theta)&=\sum_{j\in\mathbb{S}}{\mathtt{a}_{j}}\cos\big(j\theta+\omega_{j}(\alpha,{\mathtt{a}})t\big)+\mathtt{p}\big({\omega}_{\tiny{\textnormal{pe}}}(\alpha,\mathtt{a})t,\theta\big).\nonumber
		\end{align}
		The diophantine frequency vector satisfies the following asymptotic
		$${\mathtt{\omega}}_{\tiny{\textnormal{pe}}}(\alpha,{\mathtt{a}})\underset{{\mathtt{a}}\rightarrow 0}{\longrightarrow}(-\Omega_{j}^{\textnormal{\tiny{E}}}(\alpha))_{j\in\mathbb{S}},$$
		where $\Omega_{j}^{\textnormal{\tiny{E}}}(\alpha)$ are the equilibrium frequencies defined by
		$$\Omega_{j}^{\textnormal{\tiny{E}}}(\alpha)\triangleq j\Big(\Omega+\boldsymbol{\Omega}_{|j|}^{\textnormal{\tiny{E}}}(\alpha)\Big),$$
		with $\boldsymbol{\Omega}_{|j|}^{\textnormal{\tiny{E}}}(\alpha)$ as in \eqref{def eigenvalues Easc}. In addition, the perturbation $\mathtt{p}:\T^{|\mathbb{S}|+1}\to\mathbb{R}$ is an even function satisfying for some large index of regularity $s,$
		$$\|\mathtt{p}\|_{H^{{s}}(\mathbb{T}^{|\mathbb{S}|+1},\mathbb{R})}\underset{{\mathtt{a}}\rightarrow 0}{=}o(|{\mathtt{a}}|).$$
\end{theo}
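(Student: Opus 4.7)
\textbf{Proof proposal for Theorem \ref{thm QPS Ea}.}

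The plan is to reformulate the contour dynamics \eqref{complex vp eq} in polar coordinates around the unit disc and then apply a Nash--Moser/KAM iteration à la Berti--Bolle, using $\alpha$ as the external parameter. I parametrize $\partial D_t$ in the form $R(t,\theta)=\sqrt{1+2r(t,\theta)}$, as in \eqref{ansatz thm}; the factor $2$ and the square root are chosen so that the symplectic form (coming from conservation of area) takes the standard Darboux form $d r\wedge d\theta$, and $r=0$ corresponds to the Rankine vortex. Plugging this ansatz into \eqref{complex vp eq}, together with the velocity field $\mathbf{v}=\nabla^{\perp}(\mathbf{G}\ast\mathbf{1}_{D_t})$ from \eqref{Psi Ea intro}, produces a nonlinear scalar equation of the form $\partial_t r=\partial_{\theta}\nabla_{r} H_{\alpha}(r)$ with Hamiltonian $H_{\alpha}=H^{\textnormal{\tiny E}}-H^{\textnormal{\tiny SW}}_{1/\alpha}$, by the additive splitting $\mathbf{G}=\mathbf{G}^{\textnormal{\tiny E}}-\mathbf{G}^{\textnormal{\tiny SW}}_{1/\alpha}$. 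This is the crucial simplification: the functional, its symplectic structure and its linearization are obtained by subtracting the corresponding quasi-geostrophic shallow-water objects from the Euler ones, so that the detailed analyses of \cite{HR21-1} (Euler in the disc) and \cite{HR21} (QGSW) can be combined rather than redone from scratch.

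The second step is the spectral study of the linearized operator $\mathcal L_{\alpha}(\Omega)$ at the equilibrium $r=0$, after passing to a frame rotating with some fixed angular velocity $\Omega$. Because of the radial symmetry of the Green kernel $\mathbf G$, this linearization is a Fourier multiplier on $\T$: on the mode $e^{\ii j\theta}$ it equals $\ii\Omega^{\textnormal{\tiny E}}_{j}(\alpha)=\ii j(\Omega+\boldsymbol{\Omega}^{\textnormal{\tiny E}}_{|j|}(\alpha))$, with $\boldsymbol{\Omega}^{\textnormal{\tiny E}}_{|j|}(\alpha)$ exactly the bifurcation frequencies of Theorem~\ref{thm Ea sc}. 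One then picks a finite set $\mathbb{S}\subset\mathbb{N}^*$ of tangential sites, introduces action-angle-normal coordinates $(\vartheta,I,z)\in\mathbb{T}^{|\mathbb{S}|}\times\mathbb{R}^{|\mathbb{S}|}\times H^s_{\bot}$ adapted to $\mathbb S$, and looks for an embedded invariant torus of frequency $\omega_{\textnormal{pe}}$ close to $(-\Omega^{\textnormal{\tiny E}}_{j}(\alpha))_{j\in\mathbb S}$ as in the statement. The evenness hypothesis on $\mathtt p$ is natural and is preserved by the flow thanks to a reversibility symmetry of the equation ($\theta\mapsto-\theta$).

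The heart of the proof is the Nash--Moser iteration: at each step one must approximately invert the linearized operator $d\mathcal F(\alpha,i)$ at an approximate torus $i$, and conclude convergence in a high Sobolev norm. Following Berti--Bolle, I would decouple the linearized operator into a finite-dimensional block on the tangential/action directions and an infinite-dimensional operator on the normal subspace. The normal operator is a quasi-periodic-in-time perturbation of the constant-coefficient Fourier multiplier above; reducibility is obtained by a sequence of symplectic transformations: a transport step straightening the first-order part (similar to the one used in \cite{HR21,HR21-1}), then a KAM reducibility scheme that removes the off-diagonal remainder thanks to first-- and second--order Melnikov non-resonance conditions in $\alpha$. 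Once the reduced operator is diagonal with perturbed eigenvalues $\mu^{\infty}_j(\alpha,\mathtt a)$, the inversion is by division, which defines the Cantor set $\mathscr{C}_\infty$.

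The main obstacle, as usual in one-parameter KAM, is the measure estimate proving $|\mathscr C_\infty|\to\alpha_1-\alpha_0$ as $\mathtt a\to 0$. With only the single parameter $\alpha$ at our disposal, one has to verify a quantitative non-degeneracy (transversality) of the unperturbed frequencies: the map $\alpha\mapsto\big(\Omega^{\textnormal{\tiny E}}_{j}(\alpha),\,\boldsymbol{\Omega}^{\textnormal{\tiny E}}_{n}(\alpha)\big)$ and a finite number of its derivatives in $\alpha$ should not allow too many small divisors $\omega_{\textnormal{pe}}\cdot\ell+\mu^{\infty}_n\pm\mu^{\infty}_{n'}$ to vanish simultaneously. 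Because $\boldsymbol{\Omega}^{\textnormal{\tiny E}}_{n}(\alpha)=\boldsymbol{\Omega}^{\textnormal{\tiny E}}_{n}-\boldsymbol{\Omega}^{\textnormal{\tiny SW}}_{n}(1/\alpha)$ and the Euler part is $\alpha$-independent, the whole $\alpha$-dependence sits in products of modified Bessel functions $I_n(1/\alpha)K_n(1/\alpha)$; the delicate point is therefore to establish sharp lower bounds on a finite Wronskian-type determinant built from derivatives in $\alpha$ of these Bessel combinations, using the asymptotics and monotonicity already exploited in Lemma~\ref{lem mono ev Ea}. Once this transversality is in hand, Rüssmann-type arguments give the claimed asymptotically full measure of $\mathscr C_\infty$, which, combined with the standard Nash--Moser convergence, yields the quasi-periodic patch \eqref{ansatz thm}.
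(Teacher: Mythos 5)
Your proposal follows essentially the same route as the paper: polar parametrization $R=\sqrt{1+2r}$ yielding a reversible Hamiltonian transport equation whose linearization at $r=0$ is the Fourier multiplier with eigenvalues $\Omega_j^{\textnormal{\tiny{E}}}(\alpha)=j(\Omega+\boldsymbol{\Omega}_{|j|}^{\textnormal{\tiny{E}}}(\alpha))$, then action-angle-normal variables, a Berti--Bolle almost approximate right inverse (transport straightening followed by KAM reducibility of the remainder under Melnikov conditions), Nash--Moser, and a Rüssmann measure estimate driven by the $\alpha$-dependence of $I_n(1/\alpha)K_n(1/\alpha)$. The only cosmetic difference is that the paper obtains the quantitative transversality not by direct Wronskian lower bounds but by proving qualitative non-degeneracy (analytic continuation to $\alpha\to 0$ plus a Vandermonde argument borrowed from the QGSW case) and upgrading it via a compactness/contradiction argument, which is the same circle of ideas.
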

Now we shall briefly mention the key steps of the proof of Theorem \ref{thm QPS Ea} which are similar to the ones of \cite{HHM21,HR21-1,HR21}. The core of the proof of Theorem \ref{thm QPS Ea} relies on Berti-Bolle theory \cite{BB15} and \cite[Sec. 6]{HHM21}. We mention here some results which have been obtained using this theory \cite{BBMH18,BBM16,BBM16-1,BFM21,BFM21-1,BKM21,BM18,FG20,FGP18}. Plugging the ansatz \eqref{ansatz thm} into the vortex patch equation \eqref{complex vp eq} provides a nonlinear Hamiltonian transport PDE for the radial deformation $r$ in the form
\begin{equation}\label{eq r intro}
	\partial_{t}r=\partial_{\theta}\nabla\mathscr{H}(r),
\end{equation}
with Hamiltonian $\mathscr{H}$ related to the kinetic energy and the angular momentum. This equation satisfies the reversibility property, namely if $(t,\theta)\mapsto r(t,\theta)$ is a solution, then so is $(t,\theta)\mapsto r(-t,-\theta).$ After a rescalling $r\mapsto\varepsilon r$ the equation \eqref{eq r intro} can be seen as a quasilinear perturbation of its linearization at the equilibrium ($\varepsilon=0$), see \eqref{pertham}. As mentioned in Lemmata \ref{lem lin eq QP Ea} and \ref{lemsoleqEa}, this latter is an integrable Hamiltonian system, namely for any finite set of Fourier modes $\mathbb{S}\subset\mathbb{N}^*$ of cardinal $d\in\mathbb{N}^*$, the linearized equation at the Rankine patch admits, for almost every $\alpha$ in $(\alpha_0,\alpha_1),$ reversible quasi-periodic solutions with $d$-dimensional frequency vector $-\omega_{\textnormal{Eq}}(\alpha)\triangleq\big(-\Omega_{j}^{\textnormal{\tiny{E}}}(\alpha)\big)$ in the form
$$r(t,\theta)=\sum_{j\in\mathbb{S}}r_j\cos\big(j\theta-\Omega_j^{\textnormal{\tiny{E}}}(\alpha)t\big),\qquad r_j\in\mathbb{R}^*.$$
The property \eqref{def QP} for $\omega_{\textnormal{Eq}}(\alpha)$ holds for almost every $\alpha$ by imposing some Diophantine conditions. The measure of the corresponding set is a consequence of the transversality condition in Lemma \ref{lem trsvrslt Ea}-(i) together with the Rüssmann Lemma \ref{lem Russmann measure}. Notice that the transversality condition is itself a consequence of the non-degeneracy of the function $\alpha\mapsto\omega_{\textnormal{Eq}}(\alpha)$ on $[\alpha_0,\alpha_1],$ see Lemma \ref{lem non-deg Ea}. The introduction of the Diophantine conditions imply the invertibility of the linearized operator at the equilibrium state but with a fixed loss of derivatives. Therefore, to find quasi-periodic solutions for the nonlinear model, we may apply a Nash-Moser scheme. For this aim we reformulate the problem in terms of embedded tori. This is done by splitting the phase space $L_0^2(\mathbb{T})$ in \eqref{phase space} associated with \eqref{eq r intro} into tangential $L_{\overline{\mathbb{S}}}$ and normal $L_\perp^2$ subspaces, see \eqref{ph sp splt}. Then we introduce in \eqref{action-angle var} the action-angle variables $(I,\vartheta)$ by using a symplectic polar change of coordinates for the Fourier coefficients on the tangential subspace. Thus, any function $r\in L^2\big(\mathbb{T}_{\varphi}^d,L_0^2(\mathbb{T}_{\theta})\big)$ can be related to an embedded torus, namely
$$r(\varphi,\cdot)=\mathtt{A}\big(i(\varphi)\big),\qquad i:\begin{array}[t]{rcl}
	\mathbb{T}^d & \rightarrow & \mathbb{T}^d\times\mathbb{R}^d\times L_\perp^2\\
	\varphi & \mapsto & \big(\vartheta(\varphi),I(\varphi),z(\varphi)\big)
\end{array},\qquad\mathtt{A}:\mathbb{T}^d\times\mathbb{R}^d\times L_\perp^2\rightarrow L_0^2(\mathbb{T}).$$
Then, the search of reversible quasi-periodic solutions to \eqref{eq r intro} is equivalent to looking for reversible tori solutions to
\begin{equation}\label{scrF intro}
	\mathscr{F}(i,\kappa ,\alpha,\omega,\varepsilon)=0,
\end{equation}
where $\mathscr{F}$ is a nonlinear functional whose complete expression can be found in \eqref{def scrF}. This provides a true solution of the original problem for the particular value
\begin{equation}\label{rigid ttc}
	\kappa =-\omega_{\textnormal{Eq}}(\alpha).
\end{equation}
To find non-trivial solutions of \eqref{scrF intro}, we apply a Nash-Moser scheme. At each step, we shall find an approximate right inverse, with nice tame estimates, of the linearized operator $d_{i,\kappa }\mathscr{F}(i_0)$ at the reversible torus $i_0$ of the current step. Applying the Berti-Bolle theory \cite{BB15} and \cite[Sec. 6]{HHM21}, one can conjugate the linearized operator $d_{i,\kappa }\mathscr{F}(i_0)$ by a suitable diffeomorphism of the toroidal phase space $\mathbb{T}^d\times\mathbb{R}^d\times L_{\perp}^{2}$ in order to obtain a triangular system in the action-angle-normal variables up to nice error terms. Then to solve the triangular system, it is sufficient to find an almost approximate right inverse for the linearized operator in the normal directions denoted $\mathscr{L}_{\perp}$ and related to the linearized operator $\mathcal{L}_{\varepsilon r}$ of \eqref{eq r intro} through the relation \eqref{hLom-2}. To do so, we conjugate $\mathscr{L}_{\perp}$ to a constant coefficients operator up to error terms. This is the content of the Section \ref{sec cons aai}. First, in Proposition \ref{prop reduc trans} we use a KAM reduction process with quasi-periodic symplectic change of variables to reduce the transport part of $\mathcal{L}_{\varepsilon r}.$ Then, in Proposition \ref{prop projnor} we look at the localization effects in the normal directions to recover the reduction of the transport part for $\mathscr{L}_{\perp}.$ This provides a diagonal operator of order 1 plus a remainder of order $(0,-1)$ in the variables $(\varphi,\theta).$ Finally, in Proposition \ref{prop redrem} we use a KAM reduction process in the Toeplitz operators class to reduce the remainder term. Each KAM reduction occurs for suitable restrictions of the parameters $(\alpha,\omega)$ to a Cantor-like set. In addition, the inversion of the final diagonal operator also requires an extraction of parameters. The Nash-Moser process constructs a non-trivial solution 
$(\alpha,\omega)\mapsto(i_\infty(\alpha,\omega),\kappa _{\infty}(\alpha,\omega))$ with reversible torus $i_{\infty}$ modulo restriction of the parameters to a Cantor set in $(\alpha,\omega)$ obtained gathering all the restrictions of all the steps for the contruction of the almost approximate right inverses of the linearized operators. Then, coming back to \eqref{rigid ttc}, we rigidify the frequency $\omega$ into $\omega(\alpha,\varepsilon)$ so that
\begin{equation}\label{rigid ttc-bis}
	\kappa _{\infty}\big(\alpha,\omega(\alpha,\varepsilon)\big)=-\omega_{\textnormal{Eq}}(\alpha).
\end{equation}
We mention that the introduction of the free-parameter $\kappa $ was required to apply the Berti-Bolle theory along the scheme, more precisely to invert the triangular system. The condition \eqref{rigid ttc-bis} gives a final Cantor set in $\alpha$ only that we must check it is not empty. This latter fact is obtained in Proposition \ref{prop meas Cant} by estimating the Lebesgue measure of the set through the Rüssmann Lemma \ref{lem Russmann measure} together with the perturbed transversality conditions.
\section{The stream function associated to the filtered velocity}\label{sec Ea dyn}
This short section is devoted to the justification of the formula \eqref{Psi Ea intro} which is the fundamental relation for this work. Recall that the identity \eqref{Psi Ea intro} was already observable in the literature, for instance in \cite{BLT08,OS99}. The divergence-free property for $\mathbf{v}$ and $\mathbf{u}$  in \eqref{Euler alpha equations} implies the existence of stream functions $\boldsymbol{\Psi}$ and $\psi$ such that
$$\mathbf{v}=\nabla^{\perp}\boldsymbol{\Psi},\qquad \mathbf{u}=\nabla^{\perp}\psi$$
and the goal of this section is to find a nice expression for the velocity potential $\boldsymbol{\Psi}$. Notice that, according to the third equation in \eqref{Euler alpha equations}, the stream functions are linked through the relation
$$\psi=(1-\alpha^2\Delta)\boldsymbol{\Psi}.$$
Then, \eqref{def vort} implies
$$\boldsymbol{\omega}=\Delta\psi=(1-\alpha^2\Delta)\Delta\boldsymbol{\Psi}.$$
We deduce from \eqref{Psi Euler intro} and \eqref{Psi QGSW intro} that 
$$\boldsymbol{\Psi}=\tfrac{-1}{\alpha^2}\mathbf{G}^{\textnormal{\tiny{E}}}\ast \mathbf{G}_{\frac{1}{\alpha}}^{\textnormal{\tiny{SW}}}\ast\boldsymbol{\omega}\triangleq\mathbf{G}\ast\boldsymbol{\omega}.$$
Now our goal is to compute $\mathbf{G}.$ One can write
\begin{align*}
	\mathbf{G}(x)&=\tfrac{1}{4\pi^2\alpha^2}\int_{\mathbb{R}^2}K_{0}\left(\tfrac{|y|}{\alpha}\right)\log(|x-y|)\,dA(y)\\
	&=\tfrac{1}{8\pi^2\alpha^2}\int_{\mathbb{R}^2}K_{0}\left(\tfrac{|y|}{\alpha}\right)\log\big(|x-y|^2\big)dA(y)\\
	&=\tfrac{1}{8\pi^2\alpha^2}\int_{\mathbb{R}^2}K_{0}\left(\tfrac{|y|}{\alpha}\right)\log\left(|x|^2+|y|^2-2x\cdot y\right)dA(y),
\end{align*}
where $dA$ denotes the planar Lebesgue measure. Writing $x=(R\cos(\theta),R\sin(\theta))$, then a polar change of variables yields
\begin{align*}
	\mathbf{G}(x)&=\tfrac{1}{8\pi^2\alpha^2}\int_{0}^{\infty}\int_{0}^{2\pi}rK_{0}\left(\tfrac{r}{\alpha}\right)\log\left(R^2+r^2-2Rr\cos(\theta-\eta)\right) \,drd\eta\\
	&=\tfrac{\log(R)}{2\pi\alpha}\int_{0}^{\infty}\tfrac{r}{\alpha}K_0\left(\tfrac{r}{\alpha}\right)dr+\tfrac{1}{8\pi^2\alpha}\int_{0}^{\infty}\tfrac{r}{\alpha}K_0\left(\tfrac{r}{\alpha}\right)\mathtt{I_{P}}\left(\tfrac{r}{R}\right)dr,
\end{align*}
where $\mathtt{I_{P}}$ denotes the Poisson integral defined by
$$\forall x\in\mathbb{R},\quad \mathtt{I_{P}}(x)=\int_{-\pi}^{\pi}\log\big(1+x^2-2x\cos(\eta)\big)d\eta.$$  
It is well-known, see for instance \cite{C02}, that the Poisson integral admits the following explicit formula
$$\left\lbrace\begin{array}{ll}
	\mathtt{I_{P}}(x)=0 & \textnormal{if }|x|\leqslant1\\
	\mathtt{I_{P}}(x)=4\pi\log(|x|) & \textnormal{if }|x|>1.
\end{array}\right.$$
Therefore,
\begin{align*}
	\mathbf{G}(x)&=\tfrac{\log(R)}{2\pi\alpha}\int_{0}^{\infty}\tfrac{r}{\alpha}K_0\left(\tfrac{r}{\alpha}\right)dr+\tfrac{1}{2\pi\alpha}\int_{R}^{\infty}\tfrac{r}{\alpha}K_0\left(\tfrac{r}{\alpha}\right)\log\left(\tfrac{r}{R}\right)dr\\
	&=\tfrac{\log(R)}{2\pi\alpha}\int_{0}^{R}\tfrac{r}{\alpha}K_0\left(\tfrac{r}{\alpha}\right)dr+\tfrac{1}{2\pi\alpha}\int_{R}^{\infty}\tfrac{r}{\alpha}K_0\left(\tfrac{r}{\alpha}\right)\log\left(r\right)dr.
\end{align*}
A change of variables together with \eqref{Bessel and anti-derivatives}, \eqref{asymp small arg} and \eqref{asymp large z} give
$$\tfrac{\log(R)}{2\pi\alpha}\int_{0}^{R}\tfrac{r}{\alpha}K_0\left(\tfrac{r}{\alpha}\right)dr=\tfrac{\log(R)}{2\pi}\int_{0}^{\frac{R}{\alpha}}uK_0(u)du=\tfrac{\log(R)}{2\pi}\Big(1-\tfrac{R}{\alpha}K_{1}\left(\tfrac{R}{\alpha}\right)\Big)$$
and
\begin{align*}
	\tfrac{1}{2\pi\alpha}\int_{R}^{\infty}\tfrac{r}{\alpha}K_0\left(\tfrac{r}{\alpha}\right)\log\left(r\right)dr&=\tfrac{\log(\alpha)}{2\pi}\int_{\frac{R}{\alpha}}^{\infty}uK_0\left(u\right)du+\tfrac{1}{2\pi}\int_{\frac{R}{\alpha}}^{\infty}uK_0\left(u\right)\log\left(u\right)dr\\
	&=\tfrac{R\log(\alpha)}{2\pi\alpha}K_1\left(\tfrac{R}{\alpha}\right)+\tfrac{1}{2\pi}\int_{\frac{R}{\alpha}}^{\infty}uK_0\left(u\right)\log\left(u\right)du.
\end{align*}
An integration by parts in the last integral together with \eqref{asymp large z} lead to
\begin{align*}
	\tfrac{1}{2\pi}\int_{\frac{R}{\alpha}}^{\infty}uK_0\left(u\right)\log\left(u\right)du&=\tfrac{R}{2\pi\alpha}K_1\left(\tfrac{R}{\alpha}\right)\log\left(\tfrac{R}{\alpha}\right)+\tfrac{1}{2\pi}\int_{\frac{R}{\alpha}}^{\infty}K_1\left(u\right)du\\
	&=\tfrac{R}{2\pi\alpha}K_1\left(\tfrac{R}{\alpha}\right)\log\left(R\right)-\tfrac{R\log\left(\alpha\right)}{2\pi\alpha}K_1\left(\tfrac{R}{\alpha}\right)+\tfrac{1}{2\pi}K_0\left(\tfrac{R}{\alpha}\right).
\end{align*}
Gathering the foregoing computations and reminding that $R=|x|$ we obtain
\begin{equation}\label{dec E+qgsw}
	\mathbf{G}(x)=\tfrac{1}{2\pi}\log(|x|)+\tfrac{1}{2\pi}K_0\left(\tfrac{|x|}{\alpha}\right)=\mathbf{G}^{\textnormal{\tiny{E}}}(x)-\mathbf{G}_{\frac{1}{\alpha}}^{\textnormal{\tiny{SW}}}(x).
\end{equation}
Notice that we recover a radial function as a convolution of two radial functions. In particular, any radial profil gives a stationary solution for the active scalar equation \eqref{active scalar eq} satisfied by $\boldsymbol{\omega}.$
\section{Periodic rigid motion}\label{sec per}
This section is devoted to the proof of the Theorem \ref{thm Ea sc} which is an application of the Crandall-Rabinowitz's Theorem \ref{Crandall-Rabinowitz theorem}. This latter is applied to a reformulation of the vortex patch equation \eqref{complex vp eq} in the uniformly rotating framework.
\subsection{Function spaces and reformulation of the vortex patch equation}\label{sec funct Ea}
The goal of this subsection is to set up the contour dynamics equation for the Euler-$\alpha$ V-states near the Rankine vortices. Consider an ansatz \eqref{unif rot dom} with simply-connected domain $D_0$ and rotating uniformly with some angular velocity $\Omega.$ At time $t\geqslant 0,$ the boundary $\partial D_t$ can be parametrized by
\begin{equation}\label{prmz rotating boundary}
	z(t,\theta)=e^{\ii\Omega t}z(0,\theta),\qquad\theta\in[0,2\pi],\qquad z(0,0)=z(0,2\pi).
\end{equation}
With this parametrization, the left hand side of \eqref{complex vp eq} becomes
\begin{equation}\label{simpl vp eq 1}
	\mbox{Im}\Big(\partial_{t}z(t,\theta)\overline{\partial_{\theta}z(t,\theta)}\Big)=\Omega\,\mbox{Re}\Big( z(0,\theta)\overline{\partial_{\theta}z(0,\theta)}\Big).
\end{equation}
As for the computation of the right hand-side of \eqref{complex vp eq}, it is obtained from the Biot-Savart law. Note that the velocity potential $\boldsymbol{\Psi}$ given by \eqref{Psi Ea intro} writes in this context
$$\boldsymbol{\Psi}(t,z)=\tfrac{1}{2\pi}\int_{D_t}\log(|z-\xi|)dA(\xi)+\tfrac{1}{2\pi}\int_{D_t}K_{0}\left(\tfrac{1}{\alpha}|z-\xi|\right)dA(\xi).$$
Notice that the real notation $x\in\mathbb{R}^2$ has been replaced by the complex notation $z\in\mathbb{C}.$
To get the Biot-Savart law we shall use Stokes' Theorem in complex notation 
$$2\ii\int_{D}\partial_{\overline{\xi}}f(\xi,\overline{\xi})dA(\xi)=\int_{\partial D}f(\xi,\overline{\xi})d\xi.$$
Hence, by making the identification   $2\ii\partial_{\overline{z}}=\nabla^{\perp}$ leading to  $\mathbf{v}(t,z)=2\ii\partial_{\overline{z}}\boldsymbol{\Psi}(t,z)$ one deduces, after a regularization procedure similar to the one explained in the proof of \cite[Lem. 2.1]{HR21-1}, that
\begin{equation}\label{velocity Ealpha-1}
	\mathbf{v}(t,z)=-\frac{1}{2\pi}\int_{\partial D_{t}}\log(|z-\xi|)d\xi-\frac{1}{2\pi}\int_{\partial D_{t}}K_{0}\left(\tfrac{1}{\alpha}|z-\xi|\right)d\xi.
\end{equation}
One easily obtains from \eqref{prmz rotating boundary}, \eqref{velocity Ealpha-1} and change of variables,
\begin{align}\label{sym velocity v Ea}
	\mathbf{v}\big(t,z(t,\theta)\big)=e^{\ii\Omega t}\mathbf{v}\big(0,z(0,\theta)\big).
\end{align}
Thus, combining \eqref{sym velocity v Ea} and \eqref{prmz rotating boundary}, we obtain
\begin{equation}\label{simpl vp eq 2}
	\mbox{Im}\Big( \mathbf{v}\big(t,z(t,\theta)\big)\overline{\partial_{\theta}z(t,\theta)}\Big)=\mbox{Im}\Big( \mathbf{v}\big(0,z(0,\theta)\big)\overline{\partial_{\theta}z(0,\theta)}\Big).
\end{equation}
Gathering \eqref{simpl vp eq 1} and \eqref{simpl vp eq 2}, the equation \eqref{complex vp eq} becomes
\begin{equation}\label{vpe rot}
	\Omega\mbox{Re}\Big( z(0,\theta)\overline{\partial_{\theta}z(0,\theta)}\Big)=\mbox{Im}\Big( \mathbf{v}(0,z(0,\theta))\overline{\partial_{\theta}z(0,\theta)}\Big).
\end{equation}
Therefore, denoting $z'$ a tangent vector to the boundary $\partial D_{0}$ at the point $z,$ one gets from \eqref{velocity Ealpha-1} and \eqref{vpe rot} that
for any $z\in\partial D_{0},$
\begin{equation}\label{general boundary eq Ealpha}
	\Omega\mbox{Re}\left(z\overline{z'}\right) + \mbox{Im}\left(\left[\frac{1}{2\pi}\int_{\partial D_{0}}\log\big(|z-\xi|\big)d\xi+\frac{1}{2\pi}\int_{\partial D_{0}}K_{0}\big(\tfrac{1}{\alpha}|z-\zeta|\big)d\zeta\right]\overline{z'}\right)=0.
\end{equation}
In accordance with the previous works in the field beginning with the one of Burbea \cite{B82}, we should rewrite the equation \eqref{general boundary eq Ealpha} by using conformal mappings. For that purpose we shall now present the function spaces used throughout this first part on periodic solutions. Notice that we shall identify $2\pi$-periodic $g:\mathbb{R}\rightarrow\mathbb{C}$ functions with functions $f:\mathbb{T}\rightarrow\mathbb{C}$ defined on the torus $\mathbb{T}=\mathbb{R}/2\pi\mathbb{Z}\cong\mathbb{U}$ through the relation 
$$f(w)=g(\theta),\quad w=e^{\ii\theta}.$$
In this part, we shall consider the following notation for the mean value line integral of any continuous function $f$ defined on the torus $\mathbb{T}$
$$\fint_{\mathbb{T}}f(\tau)d\tau\triangleq \frac{1}{2\ii\pi}\int_{\mathbb{T}}f(\tau)d\tau\triangleq \frac{1}{2\pi}\int_{0}^{2\pi}f\big(e^{\ii\theta}\big)e^{\ii\theta}d\theta.$$
Now, we introduce the Hölder spaces on the unit circle. Given $\beta\in(0,1),$ we denote by $C^{\beta}(\mathbb{T})$ the space of continuous functions $f$ such that
$$\| f\|_{C^{\beta}(\mathbb{T})}\triangleq \| f\|_{L^{\infty}(\mathbb{T})}+\sup_{\underset{\tau\neq w}{(\tau,w)\in\mathbb{T}^{2}}}\frac{|f(\tau)-f(w)|}{|\tau-w|^{\beta}}<+\infty$$
and we denote by $C^{1+\beta}(\mathbb{T})$ the space of $C^{1}$ functions with $\beta$-Hölder continuous derivative such that
$$\| f\|_{C^{1+\beta}(\mathbb{T})}\triangleq \| f\|_{L^{\infty}(\mathbb{T})}+\Big\| \frac{df}{dw}\Big\|_{C^{\beta}(\mathbb{T})}<+\infty.$$
For $\beta\in(0,1),$ we set
\begin{align*}
	X^{1+\beta}&\triangleq \left\lbrace f\in C^{1+\beta}(\mathbb{T})\quad\textnormal{s.t.}\quad\forall w\in\mathbb{T},\,f(w)=\sum_{n=0}^{+\infty}f_{n}\overline{w}^{n},\,f_{n}\in\mathbb{R}\right\rbrace,\\
	Y^{\beta}&\triangleq\left\lbrace g\in C^{\beta}(\mathbb{T})\quad\textnormal{s.t.}\quad\forall w\in\mathbb{T},\,g(w)=\sum_{n=1}^{+\infty}g_{n}e_{n}(w),\,g_{n}\in\mathbb{R}\right\rbrace,\qquad e_{n}(w)\triangleq \mbox{Im}(w^{n}).
\end{align*}
Remark that the $\mathbf{m}$-fold symmetry property can be translated in the functional spaces as follows
\begin{align*}
	X_{\mathbf{m}}^{1+\beta}&\triangleq\left\lbrace f\in X^{1+\beta}\quad\textnormal{s.t.}\quad\forall w\in\mathbb{T},\,f(w)=\sum_{n=1}^{+\infty}f_{n\mathbf{m}-1}\overline{w}^{n\mathbf{m}-1}\right\rbrace,\\
	Y_{\mathbf{m}}^{\beta}&\triangleq\left\lbrace g\in Y^{\beta}\quad\textnormal{s.t.}\quad\forall w\in\mathbb{T},\,g(w)=\sum_{n=1}^{+\infty}g_{n\mathbf{m}}e_{n\mathbf{m}}(w)\right\rbrace.
\end{align*}
We shall also consider the following balls of radius $r>0$ in $X^{1+\beta}$ and $X_{\mathbf{m}}^{1+\beta}$, respectively
$$B_{r}^{1+\beta}\triangleq \Big\{ f\in X^{1+\beta}\quad\textnormal{s.t.}\quad\| f\|_{C^{1+\beta}(\mathbb{T})}<r\Big\},\qquad B_{r,\mathbf{m}}^{1+\beta}\triangleq B_{r}^{1+\beta}\cap X_{\mathbf{m}}^{1+\beta}.$$
\noindent Based on the Riemann mapping Theorem, we may parametrize the boundary of $D_0$ by considering the conformal mapping $\Phi:\mathbb{C}\setminus\overline{\mathbb{D}}\rightarrow\mathbb{C}\setminus\overline{D_0}$ given by
$$\Phi(z)\triangleq z+f(z),\qquad f(z)=\sum_{n=0}^{\infty}\frac{a_n}{z^n},\qquad a_n\in\mathbb{R}.$$
Here, $f$ is in $B_r^{1+\beta}$ for small $r$. We have $\Phi(\mathbb{T})=\partial D_0$ and we mention that the link between the regularity of the mapping and of the boundary is given by Kellogg-Warschawski's Theorem \cite{W35,P92}. For $w\in\mathbb{T},$ a tangent vector to the boundary $\partial D_0$ at the point $z=\Phi(w)$ is given by
$$\overline{z'}=-\ii\overline{w}\overline{\Phi'(w)}.$$
Plugging this into \eqref{general boundary eq Ealpha} and using the change of variables $\xi=\Phi(\tau)$ give
\begin{equation}\label{def F Ea}
	\forall w\in\mathbb{T},\quad F_{\alpha}(\Omega,f)(w)=0,\qquad F_{\alpha}(\Omega,f)(w)\triangleq \textnormal{Im}\left\lbrace\Big(\Omega\Phi(w)+I^{\textnormal{\tiny{E}}}(f)(w)+I^{\textnormal{\tiny{SW}}}(f)(\alpha,w)\Big)\overline{w}\overline{\Phi'(w)}\right\rbrace,
\end{equation}
with 
$$I^{\textnormal{\tiny{E}}}(f)(w)\triangleq \fint_{\mathbb{T}}\Phi'(\tau)\log\big(|\Phi(w)-\Phi(\tau)|\big)d\tau,\qquad I^{\textnormal{\tiny{SW}}}(f)(\alpha,w)\triangleq \fint_{\mathbb{T}}\Phi'(\tau)K_{0}\left(\tfrac{1}{\alpha}|\Phi(w)-\Phi(\tau)|\right)d\tau.$$
Observe that the functional $F_{\alpha}$ makes appear a term $I^{\textnormal{\tiny{E}}}$ associated with the Euler dynamics and a term $I^{\textnormal{\tiny{SW}}}$ corresponding to the QGSW equations.
\subsection{Regularity aspects and structure of the linearized operator}
Our next task is to study some regularity properties for the functional $F_{\alpha}$ defined in \eqref{def F Ea}, look for the structure of its linearized operator at the Rankine vortex and check some monotonicity property for its spectrum. We first remark that the linearized operator at the equilibrium state acts as a Fourier multiplier according to the functions spaces introduced in Section \ref{sec funct Ea}. More precisely, we have the following result.
\begin{prop}\label{prop lin op Ea}
	Let $\alpha>0.$ Then the following properties hold true.
	\begin{enumerate}
	\item There exists $r>0$ such that for any $\beta\in(0,1)$, the following hold true.
	\begin{enumerate}[label=(\roman*)]
		\item $F_{\alpha}:\mathbb{R}\times B_r^{1+\beta}\rightarrow Y^{\beta}$ is well-defined and of class $C^1.$
		\item For any $\mathbf{m}\in\mathbb{N}^*,$ the restriction $F_{\alpha}:\mathbb{R}\times B_{r,\mathbf{m}}^{1+\beta}\rightarrow Y_{\mathbf{m}}^{\beta}$ is well-defined.
		\item The partial derivative $\partial_{\Omega}d_fF_{\alpha}:\mathbb{R}\times B_{r}^{1+\beta}\rightarrow\mathcal{L}\left(X^{1+\beta},Y^\beta\right)$ exists and is continuous.
		\item For any $\Omega\in\mathbb{R},$ one has $F_{\alpha}(\Omega,0)=0.$
	\end{enumerate}
	\item Let $\Omega\in\mathbb{R}\setminus\big\{\tfrac{1}{2}-I_{1}\left(\tfrac{1}{\alpha}\right)K_{1}\left(\tfrac{1}{\alpha}\right)\big\}.$ Then the operator $d_{f}F_{\alpha}(\Omega,0):X^{1+\beta}\rightarrow Y^{\beta}$ is Fredholm with index $0$.\vspace{0.05cm}\\
	In addition, for $h\in X^{1+\beta}$ writing
	$$\forall w\in\mathbb{T},\quad h(w)=\sum_{n=0}^{\infty}a_n\overline{w}^{n},\quad a_n\in\mathbb{R},$$
we have
\begin{equation}\label{lin op Ea}
	\forall w\in\mathbb{T},\quad d_{f}F_{\alpha}(\Omega,0)(w)=\sum_{n=0}^{\infty}a_n(n+1)\Big(\tfrac{n}{2(n+1)}-\boldsymbol{\Omega}_{n+1}^{\textnormal{\tiny{SW}}}\left(\tfrac{1}{\alpha}\right)-\Omega\Big)e_{n+1}(w),
\end{equation}
with $\boldsymbol{\Omega}_{n}^{\textnormal{\tiny{SW}}}$ as in \eqref{DHR frequencies}.
\end{enumerate}
\end{prop}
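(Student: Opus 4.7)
The functional $F_\alpha$ splits naturally as
$$F_\alpha(\Omega,f) = F^{\textnormal{\tiny{E}}}(\Omega,f) + F^{\textnormal{\tiny{SW}}}(\alpha,f),$$
where $F^{\textnormal{\tiny{E}}}$ collects the $\Omega\Phi$ and $I^{\textnormal{\tiny{E}}}$ contributions (the Euler V-state functional studied in \cite{B82,HMV13}) and $F^{\textnormal{\tiny{SW}}}$ is the $I^{\textnormal{\tiny{SW}}}$ contribution (the $(QGSW)_{\frac{1}{\alpha}}$ V-state functional studied in \cite{DHR19}). Every assertion of the proposition will be obtained by treating each summand separately and adding.

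\textbf{Part 1.} Statement (iv) reduces to a direct computation: when $f=0$, $\Phi(w)=w$, and a polar change of variables shows that $I^{\textnormal{\tiny{E}}}(0)(w)$ and $I^{\textnormal{\tiny{SW}}}(0)(\alpha,w)$ are real scalar multiples of $w$, so $(\Omega+C_{\textnormal{\tiny{E}}}+C_{\textnormal{\tiny{SW}}}(\alpha))|w|^2$ is real and its imaginary part vanishes. For (i)--(ii), the Kellogg--Warschawski theorem yields $\Phi'\in C^\beta(\mathbb{T})$ from $f\in C^{1+\beta}(\mathbb{T})$; then the $C^1$ regularity $\mathbb{R}\times B_r^{1+\beta}\to Y^\beta$ of the Euler part follows from \cite{HMV13}, while for the QGSW part we rely on the corresponding argument of \cite{DHR19}. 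Preservation of $\mathbf{m}$-fold symmetry is a direct consequence of the rotational invariance of both kernels $\log|\cdot|$ and $K_0(\tfrac{1}{\alpha}|\cdot|)$. Assertion (iii) is immediate because $\Omega$ enters linearly in the functional.

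\textbf{Linearization formula.} Gateaux-differentiating each summand at $f=0$ and exploiting the Fourier identity $\log|w-\tau| = -\tfrac{1}{2}\sum_{k\geqslant 1}\tfrac{w^k\overline{\tau}^k + \overline{w}^k \tau^k}{k}$ on the unit circle (used in \cite{B82,HMV13}) together with the Graf addition formula for Bessel functions, which yields the analogous expansion of $K_0(\tfrac{1}{\alpha}|w-\tau|)$ (used in \cite{DHR19}), one finds
$$d_f F^{\textnormal{\tiny{E}}}(\Omega,0)h = \sum_{n\geqslant 0} a_n(n+1)\big(\boldsymbol{\Omega}_{n+1}^{\textnormal{\tiny{E}}}-\Omega\big)e_{n+1}(w), \qquad d_f F^{\textnormal{\tiny{SW}}}(\alpha,0)h = -\sum_{n\geqslant 0} a_n(n+1)\boldsymbol{\Omega}_{n+1}^{\textnormal{\tiny{SW}}}(\tfrac{1}{\alpha})e_{n+1}(w).$$
Summation of the two expressions gives \eqref{lin op Ea} after noting that $\tfrac{n}{2(n+1)} = \boldsymbol{\Omega}_{n+1}^{\textnormal{\tiny{E}}}$.

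\textbf{Fredholm property and main obstacle.} The operator $d_f F_\alpha(\Omega,0)$ acts as a diagonal Fourier multiplier with symbol $\mu_n(\Omega) = (n+1)\big(\boldsymbol{\Omega}_{n+1}^{\textnormal{\tiny{E}}}(\alpha) - \Omega\big)$ between the natural bases $(\overline{w}^n)_{n\geqslant 0}$ of $X^{1+\beta}$ and $(e_{n+1})_{n\geqslant 0}$ of $Y^\beta$. Since $\boldsymbol{\Omega}_{n+1}^{\textnormal{\tiny{E}}}(\alpha) \to \tfrac{1}{2}-I_1(\tfrac{1}{\alpha})K_1(\tfrac{1}{\alpha})$ and this limit is excluded from $\Omega$ by hypothesis, one has $|\mu_n(\Omega)|\geqslant c(n+1)$ for all $n$ large, so the kernel and cokernel are finite-dimensional of common cardinality $\#\{n\geqslant 0 : \mu_n(\Omega)=0\}$, whence Fredholm with index zero. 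The main technical difficulty is the $C^1$ regularity of the nonlinear integral operator $I^{\textnormal{\tiny{SW}}}$ on Hölder spaces: one splits $K_0(x) = -\log(x) + H(x)$ with $H$ entire (see Appendix \ref{appendix Bessel}), reducing the analysis to a logarithmic Euler-type kernel plus a smooth remainder that is harmless for Hölder estimates.
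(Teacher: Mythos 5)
Your treatment of Part 1 and of the Fourier representation \eqref{lin op Ea} follows essentially the same route as the paper: split $F_\alpha$ into an Euler and a QGSW contribution, compute $I^{\textnormal{\tiny{E}}}(0)$ and $I^{\textnormal{\tiny{SW}}}(0)$ as real multiples of $w$ for (iv), invoke \cite{HMV13} and \cite{DHR19} for the $C^1$ regularity, use rotational invariance of the kernels for (ii), and evaluate the Gateaux derivative at $f=0$ via the Fourier expansions of $\log|w-\tau|$ and $K_0(\tfrac{1}{\alpha}|w-\tau|)$ (the paper uses the integral identities \eqref{int diego} and \eqref{int Bes SW}, which encode the same information as the expansions you cite). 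Your split formulas for $d_fF^{\textnormal{\tiny{E}}}(\Omega,0)$ and $d_fF^{\textnormal{\tiny{SW}}}(\alpha,0)$ are correct and sum to \eqref{lin op Ea}. A small inaccuracy: $K_0(x)+\log(x)$ is \emph{not} entire (it contains a term $-\log(x)(I_0(x)-1)=O(x^2\log x)$); what is true, and what suffices, is that this remainder and its derivative are bounded near $0$, which is exactly how the paper phrases it.

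The genuine gap is in your Fredholm argument. You infer Fredholmness of index $0$ directly from the symbol estimate $|\mu_n(\Omega)|\geqslant c(n+1)$ for large $n$, concluding that kernel and cokernel are finite-dimensional "of common cardinality". In the Hölder setting $X^{1+\beta}\to Y^{\beta}$ this inference is not valid as stated: a Fourier multiplier whose symbol is bounded below does not automatically have closed range between Hölder spaces, because the candidate inverse multiplier need not be bounded there (boundedness of Fourier multipliers on $C^{\beta}$ is not read off from pointwise bounds on the symbol). The closed-range property is precisely the content that must be proved, and the paper does so by writing $d_fF_{\alpha}(\Omega,0)=\mathcal{I}+\mathcal{K}$, where $\mathcal{I}[h]=\big(\Omega-\tfrac{1}{2}+I_1(\tfrac{1}{\alpha})K_1(\tfrac{1}{\alpha})\big)\,\textnormal{Im}\{\overline{h'}\}$ is an isomorphism of $X^{1+\beta}$ onto $Y^{\beta}$ exactly when $\Omega\neq\tfrac{1}{2}-I_1(\tfrac{1}{\alpha})K_1(\tfrac{1}{\alpha})$ (this is where the excluded value enters, which your argument never uses structurally), and $\mathcal{K}$ is compact, proved by the kernel bounds of Lemma \ref{lem sing ker} showing $\mathcal{K}:X^{1+\beta}\to Y^{\delta}$ is bounded for some $\delta>\beta$ together with the compact embedding $C^{\delta}(\mathbb{T})\hookrightarrow C^{\beta}(\mathbb{T})$. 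To repair your proof you should supply this decomposition (or an equivalent closed-range argument); the rest of your computation then identifies the kernel and cokernel as in the paper.
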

\begin{proof}
	\textbf{1. (i)} The proof of the regularity is now classical. We refer the reader to \cite{HMV13} for the computations associated with the Euler part and to \cite{DHR19} for the computations associated with the QGSW part.\\
	\textbf{(ii)} For $f\in B_{r,\mathbf{m}}^{1+\beta},$ the following identities hold
	$$\forall w\in\mathbb{T},\qquad\Phi\left(e^{\frac{2\ii\pi}{\mathbf{m}}}w\right)=e^{\frac{2\ii\pi}{\mathbf{m}}}\Phi\left(w\right),\qquad\Phi'\left(e^{\frac{2\ii\pi}{\mathbf{m}}}w\right)=\Phi'\left(w\right).$$
	Therefore, the change of variables $\tau\mapsto e^{\frac{2\ii\pi}{\mathbf{m}}}\tau$ implies
	$$I^{\textnormal{\tiny{E}}}(f)\left(e^{\frac{2\ii\pi}{\mathbf{m}}}w\right)=e^{\frac{2\ii\pi}{\mathbf{m}}}I^{\textnormal{\tiny{E}}}(f)(w),\qquad I^{\textnormal{\tiny{SW}}}(f)\left(\alpha,e^{\frac{2\ii\pi}{\mathbf{m}}}w\right)=e^{\frac{2\ii\pi}{\mathbf{m}}}I^{\textnormal{\tiny{SW}}}(f)(\alpha,w).$$
	Consequently,
	$$\forall w\in\mathbb{T},\quad F_{\alpha}(\Omega,f)\big(e^{\frac{2\ii\pi}{\mathbf{m}}}w\big)=F_{\alpha}(\Omega,f)(w).$$
	This symmetry property proves the desired result.\\
	\textbf{(iii)} One readily has
	$$\partial_{\Omega}d_fF_{\alpha}(\Omega,f)[h](w)=\textnormal{Im}\left\lbrace\overline{h'(w)}\overline{w}\Phi(w)+h(w)\overline{w}\overline{\Phi'(w)}\right\rbrace.$$
	Hence, for $(f_{j},g_{j})\in(B_{r}^{1+\alpha})^{2}$ and $h\in C^{1+\alpha}(\mathbb{T})$, we get 
	$$\Big\|\partial_{\Omega}d_{f}F_{\alpha}(\Omega,f_{j})[h]-\partial_{\Omega}d_{f}F_{\alpha}(\Omega,g_{j})[h]\Big\|_{C^{\alpha}(\mathbb{T})}\lesssim\| f_{j}-g_{j}\|_{C^{1+\alpha}(\mathbb{T})}\| h\|_{C^{1+\alpha}(\mathbb{T})}.$$
	This proves the continuity of $\partial_{\Omega}d_{f}F_{\alpha}:\mathbb{R}\times B_{r}^{1+\alpha}\rightarrow\mathcal{L}(X^{1+\alpha},Y^{\alpha}).$\\
	\textbf{(iv)} For $w\in\mathbb{T},$ using the change of variable $\tau\mapsto w\tau$, the fact that $|w|=1$ and the definition of the line integral, we get \begin{align*}
		I^{\textnormal{\tiny{E}}}(0)(w)&=\fint_{\mathbb{T}}\log\big(|w-\tau|\big)d\tau\\
		&=w\fint_{\mathbb{T}}\log\big(|1-\tau|\big)d\tau\\
		&=\frac{w}{2\pi}\int_{0}^{2\pi}\log\big(|1-e^{\ii\theta}|\big)e^{\ii\theta}d\theta.
	\end{align*}
	Now remark that
	$$|1-e^{\ii\theta}|^{2}=2\big(1-\cos(\theta)\big)=4\sin^{2}\big(\tfrac{\theta}{2}\big).$$
	In particular, this latter quantity is even in $\theta.$ Hence, we infer
	\begin{equation}\label{IE0}
		I^{\textnormal{\tiny{E}}}(0)(w)=\frac{w}{4\pi}\int_{0}^{2\pi}\log\big(\sin^2\big(\tfrac{\theta}{2}\big)\big)\cos(\theta)d\theta=-\frac{w}{2}.
	\end{equation}
	The last identity is a consequence of the following formula which can be found for instance in \cite[Lem. A.3]{CCG16}. 
	\begin{equation}\label{int diego}
		\tfrac{1}{2\pi}\int_{0}^{2\pi}\log\big(\sin^2\left(\tfrac{\theta}{2}\right)\big)\cos(n\theta)d\theta=-\frac{1}{n}.
	\end{equation}
Similarly,
	\begin{equation}\label{ISW0}
		I^{\textnormal{\tiny{SW}}}(0)(w)=\frac{w}{2\pi}\int_{0}^{2\pi}K_0\big(\tfrac{2}{\alpha}\big|\sin\big(\tfrac{\theta}{2}\big)\big|\big)\cos(\theta)d\theta=wI_1\big(\tfrac{1}{\alpha}\big)K_1\big(\tfrac{1}{\alpha}\big).
	\end{equation}
	The last identity follows from the following identity which can be found in the proof of \cite[Lem. 3.2]{HR21}. 
	\begin{equation}\label{int Bes SW}
		\frac{1}{2\pi}\int_{0}^{2\pi}K_0\big(\tfrac{2}{\alpha}\sin\big(\tfrac{\theta}{2}\big)\big)\cos(n\theta)d\theta=I_{n}\left(\tfrac{1}{\alpha}\right)K_{n}\left(\tfrac{1}{\alpha}\right).
	\end{equation}
Inserting \eqref{IE0} and \eqref{ISW0} into \eqref{def F Ea} and using $w\overline{w}=|w|^2=1$ gives the desired result.\\
	\textbf{2.} We can write
	\begin{align*}
		d_fF_{\alpha}(\Omega,0)[h](w)&=\mathcal{I}[h](w)+\mathcal{K}[h](w),
	\end{align*}
with
\begin{align*}
	\mathcal{I}[h](w)&\triangleq\big(\Omega-\tfrac{1}{2}+I_{1}\left(\tfrac{1}{\alpha}\right)K_1\left(\tfrac{1}{\alpha}\right)\big)\textnormal{Im}\left\lbrace\overline{h'(w)}\right\rbrace,\\
	\mathcal{K}[h](w)&\triangleq\textnormal{Im}\left\lbrace\big(\Omega h(w)+d_fI^{\textnormal{\tiny{E}}}(0)[h](w)+d_fI^{\textnormal{\tiny{SW}}}(0)[h](w)\big)\overline{w}\right\rbrace.
\end{align*}
One obviously has that $\mathcal{I}:X^{1+\beta}\rightarrow Y^{\beta}$ is an isomorphism provided that $\Omega\neq\tfrac{1}{2}-I_{1}\left(\tfrac{1}{\alpha}\right)K_{1}\left(\tfrac{1}{\alpha}\right).$ One readily has
\begin{align*}
	d_{f}I^{\textnormal{\tiny{E}}}(0)[h](w)&=\fint_{\mathbb{T}}h'(\tau)\log\big(|w-\tau|\big)d\tau+\fint_{\mathbb{T}}\frac{h(w)-h(\tau)}{2\big(w-\tau\big)}d\tau+\fint_{\mathbb{T}}\frac{\overline{h(w)}-\overline{h(\tau)}}{2\big(\overline{w}-\overline{\tau}\big)}d\tau
\end{align*}
and 
\begin{align*}
	d_{f}I^{\textnormal{\tiny{SW}}}(0)[h](w)&=\fint_{\mathbb{T}}h'(\tau)K_0\big(\tfrac{1}{\alpha}|w-\tau|\big)d\tau\\
	&\quad+\fint_{\mathbb{T}}K_0'\big(\tfrac{1}{\alpha}|w-\tau|\big)\frac{\big(h(w)-h(\tau)\big)\big(\overline{w}-\overline{\tau}\big)}{2\alpha|w-\tau|}d\tau\\
	&\quad+\fint_{\mathbb{T}}K_0'\big(\tfrac{1}{\alpha}|w-\tau|\big)\frac{\big(\overline{h(w)}-\overline{h(\tau)}\big)\big(w-\tau\big)}{2\alpha|w-\tau|}d\tau.
\end{align*}
Now, combining \eqref{Bessel derivatives} and \eqref{exp K0}, we can write
$$K_{0}(z)=-\log(z)+\mathtt{F}_1(z), \qquad K_0'(z)=-K_1(z)=-\tfrac{1}{z}+\mathtt{F}_2(z),\qquad \mathtt{F}_1,\mathtt{F}_1',\mathtt{F}_2, \mathtt{F}_2'\textnormal{ bounded at }0.$$
Gathering the foregoing computations leads to
\begin{align*}
	d_{f}I^{\textnormal{\tiny{E}}}(0)[h](w)+d_{f}I^{\textnormal{\tiny{SW}}}(0)[h](w)&=\fint_{\mathbb{T}}h'(\tau)\mathtt{F}_{1}\big(\tfrac{1}{\alpha}|w-\tau|\big)d\tau+\fint_{\mathbb{T}}\mathtt{F}_2\big(\tfrac{1}{\alpha}|w-\tau|\big)\frac{\big(h(w)-h(\tau)\big)\big(\overline{w}-\overline{\tau}\big)}{2\alpha|w-\tau|}d\tau\\
	&\quad+\fint_{\mathbb{T}}\mathtt{F}_2\big(\tfrac{1}{\alpha}|w-\tau|\big)\frac{\big(\overline{h(w)}-\overline{h(\tau)}\big)\big(w-\tau\big)}{2\alpha|w-\tau|}d\tau.
\end{align*}
This can be written in the form
$$d_{f}I^{\textnormal{\tiny{E}}}(0)[h](w)+d_{f}I^{\textnormal{\tiny{SW}}}(0)[h](w)=\mathcal{T}_{\mathtt{K}_{1}}(h')(w)+\mathcal{T}_{\mathtt{K_2}}(1)(w)+\mathcal{T}_{\overline{\mathtt{K}_2}}(1)(w),$$
with
\begin{align*}
	\mathcal{T}_{\mathtt{K}}(u)(w)&\triangleq\fint_{\mathbb{T}}u(\tau)\mathtt{K}(w,\tau)d\tau,\\
	\mathtt{K}_{1}(w,\tau)&\triangleq\mathtt{F}_1\big(\tfrac{1}{\alpha}|w-\tau|\big),\\
	\mathtt{K}_{2}(w,\tau)&\triangleq\mathtt{F}_2\big(\tfrac{1}{\alpha}|w-\tau|\big)\frac{\big(h(w)-h(\tau)\big)\big(\overline{w}-\overline{\tau}\big)}{2\alpha|w-\tau|}d\tau.
\end{align*}
Using the boundedness of $\mathtt{F}_1$, $\mathtt{F}_1'$, $\mathtt{F}_2$ and $\mathtt{F}_2'$ close to zero, we get 
$$\begin{array}{ll}
	|\mathtt{K}_1(w,\tau)|\leqslant C,\qquad&\displaystyle|\partial_w\mathtt{K}_1(w,\tau)|\leqslant\frac{C}{|w-\tau|},\vspace{0.1cm}\\
	|\mathtt{K}_{2}(w,\tau)|\leqslant C\|h\|_{C^{1+\beta}(\mathbb{T})},\qquad&\displaystyle|\partial_w\mathtt{K}_{2}(w,\tau)|\leqslant \frac{C\|h\|_{C^{1+\beta}(\mathbb{T})}}{|w-\tau|}\cdot
\end{array}$$
Therefore, applying Lemma \ref{lem sing ker}, one obtains the continuity of $d_{f}I^{\textnormal{\tiny{E}}}(0)+d_{f}I^{\textnormal{\tiny{SW}}}(0):C^{1+\beta}(\mathbb{T})\rightarrow C^{\delta}(\mathbb{T})$ for any $\beta\leqslant\delta<1.$ Coming back to the definition of $\mathcal{K}$, we deduce that for any $\beta\leqslant\delta<1,$ the operator $\mathcal{K}:X^{1+\beta}\rightarrow Y^{\delta}$ is continuous (the symmetry property being easily obtained by straightforward calculations and changes of variables in the integrals). Hence, using the compact embedding of $C^{\delta}(\mathbb{T})$ into $C^{\beta}(\mathbb{T})$ for $\beta<\delta<1,$ one deduces that the operator $\mathcal{K}:X^{1+\beta}\rightarrow Y^{\beta}$ is compact. Consequently, $d_{f}F_{\alpha}(\Omega,0):X^{1+\beta}\rightarrow Y^{\beta}$ is a Fredholm operator with index $0$. Now, we shall compute the Fourier representation of this operator. Fix 
$$h(w)=\sum_{n=0}^{\infty}a_n\overline{w}^{n}\in X^{1+\beta}.$$
First observe that
\begin{equation}\label{DfcalI}
	\mathcal{I}[h](w)=-\big(\Omega-\tfrac{1}{2}+I_{1}\left(\tfrac{1}{\alpha}\right)K_1\left(\tfrac{1}{\alpha}\right)\big)\sum_{n=0}^{\infty}na_ne_{n+1}(w)
\end{equation}
and
\begin{equation}\label{DfOmh}
	\textnormal{Im}\left\lbrace\Omega h(w)\overline{w}\right\rbrace=-\Omega\sum_{n=0}^{\infty}a_{n}e_{n+1}(w).
\end{equation}
The next task is to compute $d_{f}I^{\textnormal{\tiny{E}}}(0).$ One has
\begin{align*}
	\fint_{\mathbb{T}}h'(\tau)\log\big(|w-\tau|\big)d\tau&=w\fint_{\mathbb{T}}h'(\omega\tau)\log\big(|1-\tau|\big)d\tau\\
	&=-\sum_{n=1}^{\infty}na_n\overline{\omega}^{n}\fint_{\mathbb{T}}\overline{\tau}^{n+1}\log\big(|1-\tau|\big)d\tau.
\end{align*}
The last integral can be computed as follows by using \eqref{int diego}
\begin{align*}
	\fint_{\mathbb{T}}\overline{\tau}^{n+1}\log\big(|1-\tau|\big)d\tau&=\tfrac{1}{2\pi}\int_{0}^{2\pi}e^{-\ii n\theta}\log\big(|1-e^{\ii\theta}|\big)d\theta\\
	&=\tfrac{1}{4\pi}\int_{0}^{2\pi}\log\big(\sin^2\left(\tfrac{\theta}{2}\right)\big)\cos(n\theta)d\theta\\
	&=-\tfrac{1}{2n}.
\end{align*}
Hence,
$$\fint_{\mathbb{T}}h'(\tau)\log\big(|w-\tau|\big)d\tau=\sum_{n=1}^{\infty}\tfrac{a_n}{2}\overline{w}^{n}.$$
Similarly, we obtain
\begin{align*}
	\fint_{\mathbb{T}}\frac{h(w)-h(\tau)}{2\big(w-\tau\big)}d\tau&=\sum_{n=0}^{\infty}\tfrac{a_n}{2}\fint_{\mathbb{T}}\frac{\overline{w}^n-\overline{\tau}^n}{w-\tau}=-\sum_{n=1}^{\infty}\sum_{k=0}^{n-1}\tfrac{a_n\overline{w}^{k+1}}{2}\fint_{\mathbb{T}}\overline{\tau}^{n-k}d\tau=-\sum_{n=1}^{\infty}\tfrac{a_n}{2}\overline{w}^n\\
	\fint_{\mathbb{T}}\frac{\overline{h(w)}-\overline{h(\tau)}}{2\big(\overline{w}-\overline{\tau}\big)}d\tau&=\sum_{n=0}^{\infty}\tfrac{a_n}{2}\fint_{\mathbb{T}}\frac{w^n-\tau^n}{\overline{w}-\overline{\tau}}=-\sum_{n=1}^{\infty}\sum_{k=0}^{n-1}\tfrac{a_nw^{k+1}}{2}\fint_{\mathbb{T}}\tau^{n-k}d\tau=0.
\end{align*}
Consequently,
\begin{equation}\label{DfIE=0}
	d_fI^{\textnormal{\tiny{E}}}(0)=0.
\end{equation}
Now, let us turn to the calculation of $\textnormal{Im}\left\lbrace d_{f}I^{\textnormal{\tiny{SW}}}(0)[h](w)\overline{w}\right\rbrace.$ The formula \eqref{int Bes SW} gives
\begin{align*}
	\fint_{\mathbb{T}}h'(\tau)K_0\big(\tfrac{1}{\alpha}|w-\tau|\big)d\tau&=-\sum_{n=1}^{\infty}na_n\overline{w}^n\fint_{\mathbb{T}}\overline{\tau}^{n+1}K_0\big(\tfrac{1}{\alpha}|1-\tau|\big)d\tau\\
	&=-\sum_{n=1}^{\infty}\tfrac{na_n\overline{w}^n}{2\pi}\int_{0}^{2\pi}K_0\big(\tfrac{2}{\alpha}\sin\big(\tfrac{\theta}{2}\big)\big)\cos(n\theta)d\theta\\
	&=-\sum_{n=1}^{\infty}nI_{n}\big(\tfrac{1}{\alpha}\big)K_{n}\big(\tfrac{1}{\alpha}\big)a_n\overline{w}^n.
\end{align*}
Thus
$$\textnormal{Im}\left\lbrace\overline{w}\fint_{\mathbb{T}}h'(\tau)K_0\big(\tfrac{1}{\alpha}|w-\tau|\big)d\tau\right\rbrace=\sum_{n=1}^{\infty}nI_{n}\big(\tfrac{1}{\alpha}\big)K_{n}\big(\tfrac{1}{\alpha}\big)a_ne_{n+1}(w).$$
On the other hand
\begin{align*}
	&\fint_{\mathbb{T}}K_0'\big(\tfrac{1}{\alpha}|w-\tau|\big)\frac{\big(h(w)-h(\tau)\big)\big(\overline{w}-\overline{\tau}\big)+\big(\overline{h(w)}-\overline{h(\tau)}\big)\big(w-\tau\big)}{2\alpha|w-\tau|}d\tau\\
	&=\sum_{n=0}^{\infty}a_n\left(\overline{w}^{n}\fint_{\mathbb{T}}K_0'\big(\tfrac{1}{\alpha}|1-\tau|\big)\frac{\big(\overline{\tau}^n-1\big)\big(\overline{\tau}-1\big)}{2\alpha|1-\tau|}d\tau+w^{n}\fint_{\mathbb{T}}K_0'\big(\tfrac{1}{\alpha}|1-\tau|\big)\frac{\big(\tau^n-1\big)\big(\tau-1\big)}{2\alpha|1-\tau|}d\tau\right).
\end{align*}
One can easily check that the above integrals are real and therefore
\begin{align*}
	&\textnormal{Im}\left\lbrace\overline{w}\fint_{\mathbb{T}}K_0'\big(\tfrac{1}{\alpha}|w-\tau|\big)\frac{\big(h(w)-h(\tau)\big)\big(\overline{w}-\overline{\tau}\big)+\big(\overline{h(w)}-\overline{h(\tau)}\big)\big(w-\tau\big)}{2\alpha|w-\tau|}d\tau\right\rbrace\\
	&=\sum_{n=0}^{\infty}a_n\left(\fint_{\mathbb{T}}K_0'\big(\tfrac{1}{\alpha}|1-\tau|\big)\frac{\big(\tau^n-1\big)\big(\tau-1\big)-\big(\overline{\tau}^n-1\big)\big(\overline{\tau}-1\big)}{2\alpha|1-\tau|}d\tau\right)e_{n+1}(w)\\
	&=\sum_{n=0}^{\infty}a_n\left(\fint_{\mathbb{T}}K_0'\big(\tfrac{1}{\alpha}|1-\tau|\big)\frac{\big(\tau^{n+1}-\overline{\tau}^{n+1}\big)-\big(\tau^{n}-\overline{\tau}^{n}\big)-\big(\tau-\overline{\tau}\big)}{2\alpha|1-\tau|}d\tau\right)e_{n+1}(w).
\end{align*}
Now, symmetry arguments together with an integration by parts and \eqref{int Bes SW} imply for any $k\in\mathbb{N}^*$
\begin{align*}
	\fint_{\mathbb{T}}K_0'\big(\tfrac{1}{\alpha}|1-\tau|\big)\frac{\tau^{k}-\overline{\tau}^{k}}{2\alpha|1-\tau|}d\tau&=\frac{-1}{4\pi\alpha}\int_{0}^{2\pi}K_0'\big(\tfrac{2}{\alpha}\big|\sin\left(\tfrac{\theta}{2}\right)\big|\big)\frac{\sin(k\theta)\sin(\theta)}{\big|\sin\left(\tfrac{\theta}{2}\right)\big|}d\theta\\
	&=\frac{-1}{2\pi\alpha}\int_{0}^{2\pi}K_0'\big(\tfrac{2}{\alpha}\sin\left(\tfrac{\theta}{2}\right)\big)\sin(k\theta)\cos\left(\tfrac{\theta}{2}\right)d\theta\\
	&=\frac{k}{2\pi}\int_{0}^{2\pi}K_0\big(\tfrac{2}{\alpha}\sin\left(\tfrac{\theta}{2}\right)\big)\cos(k\theta)d\theta\\
	&=kI_{k}\left(\tfrac{1}{\alpha}\right)K_{k}\left(\tfrac{1}{\alpha}\right).
\end{align*}
Combining the foregoing computations leads to
\begin{equation}\label{DfISW0}
	\textnormal{Im}\left\lbrace d_{f}I^{\textnormal{\tiny{SW}}}(0)[h](w)\overline{w}\right\rbrace=\sum_{n=0}^{\infty}a_n\Big((n+1)I_{n+1}\left(\tfrac{1}{\alpha}\right)K_{n+1}\left(\tfrac{1}{\alpha}\right)-I_{1}\left(\tfrac{1}{\alpha}\right)K_{1}\left(\tfrac{1}{\alpha}\right)\Big)e_{n+1}(w).
\end{equation}
Putting together \eqref{DfcalI}, \eqref{DfOmh}, \eqref{DfIE=0} and \eqref{DfISW0} gives the desired result.
\end{proof}
According to Proposition \ref{prop lin op Ea} the possible values for $\Omega$ from which we can hope to bifurcate are 
\begin{equation}\label{def disp set}
	\boldsymbol{\Omega}_{n}^{\textnormal{\tiny{E}}}(\alpha)\triangleq \tfrac{n-1}{2n}-\big[I_1\left(\tfrac{1}{\alpha}\right)K_1\left(\tfrac{1}{\alpha}\right)-I_n\left(\tfrac{1}{\alpha}\right)K_n\left(\tfrac{1}{\alpha}\right)\big]=\boldsymbol{\Omega}_{n}^{\textnormal{\tiny{E}}}-\boldsymbol{\Omega}_{n}^{\textnormal{\tiny{SW}}}\left(\tfrac{1}{\alpha}\right),\quad n\in\mathbb{N}^*.
\end{equation}
We shall now prove the monotonicity of the sequence $\big(\boldsymbol{\Omega}_{n}^{\textnormal{\tiny{E}}}(\alpha)\big)_{n\in\mathbb{N}^*}.$ This is given by the following result.
\begin{lem}\label{lem mono ev Ea}
	For any $\alpha>0,$ the sequence $\big(\boldsymbol{\Omega}_{n}^{\textnormal{\tiny{E}}}(\alpha)\big)_{n\in\mathbb{N}^*}$ is strictly increasing and tends to $\boldsymbol{\Omega}_{\infty}^{\textnormal{\tiny{E}}}(\alpha),$ with
	$$\boldsymbol{\Omega}_{\infty}^{\textnormal{\tiny{E}}}(\alpha)\triangleq \tfrac{1}{2}-I_1\left(\tfrac{1}{\alpha}\right)K_1\left(\tfrac{1}{\alpha}\right).$$
	Moreover,
$$\forall n\in\mathbb{N}^*,\quad\boldsymbol{\Omega}_{n}^{\textnormal{\tiny{E}}}(\alpha)\underset{\alpha\rightarrow0}{\longrightarrow}\tfrac{n-1}{2n}.$$
\end{lem}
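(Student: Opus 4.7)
The plan is to treat the three claims separately. Set $x \triangleq \tfrac{1}{\alpha}$ throughout.

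For the strict monotonicity, I would first use the elementary arithmetic $\tfrac{n}{2(n+1)} - \tfrac{n-1}{2n} = \tfrac{1}{2n(n+1)}$ together with \eqref{def disp set} to obtain the telescoping identity
$$\boldsymbol{\Omega}_{n+1}^{\textnormal{\tiny{E}}}(\alpha) - \boldsymbol{\Omega}_n^{\textnormal{\tiny{E}}}(\alpha) = u_n(x) - u_{n+1}(x), \qquad u_n(x) \triangleq \tfrac{1}{2n} - I_n(x) K_n(x),$$
reducing the claim to the strict monotonicity $u_n(x) > u_{n+1}(x)$ for every $x>0$ and $n \in \mathbb{N}^*$. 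The cleanest path is via the Nicholson-type integral representation
$$I_\nu(x) K_\nu(x) = \int_0^\infty J_0\bigl(2 x \sinh t\bigr)\, e^{-2\nu t}\, dt,$$
combined with the identity $\tfrac{1}{2\nu} = \int_0^\infty e^{-2\nu t} dt$, which rewrites
$$u_n(x) = \int_0^\infty \bigl(1 - J_0(2 x \sinh t)\bigr)\, e^{-2n t}\, dt.$$
Since $J_0 \leqslant 1$ with strict inequality on $(0,\infty)$, the integrand is nonnegative and strictly positive for $t > 0$ and $x > 0$; replacing $e^{-2nt}$ by the strictly smaller $e^{-2(n+1)t}$ then strictly decreases the integral.

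For the two limits, I rely on classical asymptotics of modified Bessel products from the appendix on Bessel functions. Fixing $\alpha$, the large-order asymptotic $I_n(x) K_n(x) \sim \tfrac{1}{2n}$ forces $I_n(x) K_n(x) \to 0$ as $n \to \infty$, and therefore
$$\boldsymbol{\Omega}_n^{\textnormal{\tiny{E}}}(\alpha) = \tfrac{n-1}{2n} - I_1(x) K_1(x) + I_n(x) K_n(x) \;\underset{n \to \infty}{\longrightarrow}\; \tfrac{1}{2} - I_1(x) K_1(x) = \boldsymbol{\Omega}_\infty^{\textnormal{\tiny{E}}}(\alpha).$$
For the $\alpha \to 0$ limit, $x = 1/\alpha \to \infty$ and the large-argument asymptotic $I_n(x) K_n(x) \sim \tfrac{1}{2x}$ for each fixed $n \geqslant 1$ forces both $I_1 K_1$ and $I_n K_n$ to vanish, leaving $\boldsymbol{\Omega}_n^{\textnormal{\tiny{E}}}(\alpha) \to \tfrac{n-1}{2n}$, the classical Burbea eigenvalue.

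The only delicate point is the strict inequality in the monotonicity step. Indeed, denoting $d_n(x) \triangleq I_n(x) K_n(x) - I_{n+1}(x) K_{n+1}(x)$, one verifies that $d_n(x) \to \tfrac{1}{2n(n+1)}$ as $x \to 0^+$ (i.e.\ $\alpha \to \infty$), so the bound saturates in that limit and any argument relying solely on the qualitative monotonicity of $\nu \mapsto I_\nu(x) K_\nu(x)$ from \cite{DHR19} cannot close the gap. The integral representation above is precisely suited to this, since it encodes the strict bound $J_0(y) < 1$ for $y > 0$ and therefore propagates strict inequality for every finite $x > 0$.
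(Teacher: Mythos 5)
Your proof is correct, and for the key monotonicity step it takes a genuinely different route from the paper. Both arguments start from the same telescoping identity reducing the claim to $I_n K_n - I_{n+1}K_{n+1} < \tfrac{1}{2n(n+1)}$, and you correctly identify the delicate point: this inequality saturates as $x\to 0^+$, so a purely qualitative monotonicity-in-$\nu$ statement cannot give strictness. The paper resolves this by a differential argument entirely within the toolbox of its Appendix: it shows that $\varphi_n(x)=I_n(x)K_n(x)-I_{n+1}(x)K_{n+1}(x)$ is strictly decreasing on $(0,\infty)$, via the Wronskian \eqref{wronskian}, the Tur\'an-type ratio bounds \eqref{ratio bounds with derivatives} and the bound $I_nK_n<\tfrac{1}{2n}$, and then compares with the limit $\varphi_n(x)\to\tfrac{1}{2n(n+1)}$ as $x\to0^+$. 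You instead invoke the Nicholson-type representation $I_\nu(x)K_\nu(x)=\int_0^\infty J_0(2x\sinh t)\,e^{-2\nu t}\,dt$ (valid for $\operatorname{Re}\nu>-\tfrac12$, hence for all $n\in\mathbb{N}^*$), which turns $u_n(x)=\tfrac{1}{2n}-I_nK_n$ into an integral with strictly positive integrand and makes the strict decrease in $n$ immediate from $e^{-2nt}>e^{-2(n+1)t}$. This is a clean and arguably stronger argument: it gives strict monotonicity in the continuous order parameter $\nu$ and positivity of $u_\nu$ at once, with no derivative computation. Its only cost is that the Nicholson formula is not among the Bessel facts collected in Appendix \ref{appendix Bessel}, so you would need to import it (e.g.\ from Watson or the DLMF) with its validity range, whereas the paper's proof is self-contained relative to its stated lemmas. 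The two limit statements are handled exactly as in the paper, via \eqref{asymp high order} and \eqref{asymp large z}.
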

\begin{proof}
	The convergences are immediate consequences of \eqref{asymp high order} and \eqref{asymp large z}. We shall now study the monotonicity. For $n\in\mathbb{N}^*,$ we can write
	$$\boldsymbol{\Omega}_{n+1}^{\textnormal{\tiny{E}}}(\alpha)-\boldsymbol{\Omega}_{n}^{\textnormal{\tiny{E}}}(\alpha)=\big(\tfrac{n}{2(n+1)}-\tfrac{n-1}{2n}\big)-\big(\boldsymbol{\Omega}_{n+1}^{\textnormal{\tiny{SW}}}\left(\tfrac{1}{\alpha}\right)-\boldsymbol{\Omega}_{n}^{\textnormal{\tiny{SW}}}\left(\tfrac{1}{\alpha}\right)\big).$$
	We look for the monotonicity of the function $\varphi_{n}$ defined for $n\in\mathbb{N}^*$ by
	$$\forall x>0,\quad\varphi_{n}(x)\triangleq \boldsymbol{\Omega}_{n+1}^{\textnormal{\tiny{SW}}}\left(x\right)-\boldsymbol{\Omega}_{n}^{\textnormal{\tiny{SW}}}\left(x\right)=I_{n}\left(x\right)K_{n}\left(x\right)-I_{n+1}\left(x\right)K_{n+1}\left(x\right).$$
	From the decay property of $x\mapsto I_{n}(x)K_{n}(x)$ on $(0,\infty)$ for every $n\in\mathbb{N}^{*}$ and the asymptotic expansion \eqref{asymp small arg} we deduce
	\begin{equation}\label{upper bound InKn}
		\forall n\in\mathbb{N}^{*},\quad\forall x>0,\quad I_{n}(x)K_{n}(x)<\frac{1}{2n}.
	\end{equation}
	Using \eqref{wronskian}, \eqref{ratio bounds with derivatives} and \eqref{upper bound InKn}, we find
	\begin{align*}
		\varphi_{n}'(x) & =  I_{n}'(x)K_{n}(x)+I_{n}(x)K_{n}'(x)-I_{n+1}'(x)K_{n+1}(x)-I_{n+1}(x)K_{n+1}'(x)\\
		& = 2\left(\frac{1}{x}+I_{n}(x)K_{n}'(x)-I_{n+1}'(x)K_{n+1}(x)\right)\\
		& = \frac{2}{x}\left(1+\frac{x K_{n}'(x)}{K_{n}(x)}I_{n}(x)K_{n}(x)-\frac{x I_{n+1}'(x)}{I_{n+1}(x)}I_{n+1}(x)K_{n+1}(x)\right)\\
		& <\frac{2}{x}\left(1-\sqrt{x^{2}+n^{2}}\big[I_{n}(x)K_{n}(x)+I_{n+1}(x)K_{n+1}(x)\big]\right)\\
		& < \frac{2}{x}\left(1-\sqrt{x^{2}+n^{2}}\right)\\
		&<  0.
	\end{align*}
We deduce that for all $n\in\mathbb{N}^{*}$, $\varphi_{n}$ is strictly decreasing on $(0,\infty).$ In addition, from \eqref{asymp small arg}, we infer
$$\forall n\in\mathbb{N}^*,\quad \lim_{x\to0}I_n(x)K_n(x)=\tfrac{1}{2n},$$
which implies in turn
$$\forall n\in\mathbb{N}^*,\quad \lim_{x\to0}\boldsymbol{\Omega}_n^{\textnormal{\tiny{SW}}}(x)=\tfrac{n-1}{2n}.$$
Therefore,
$$\forall n\in\mathbb{N}^*,\quad \boldsymbol{\Omega}_{n+1}^{\textnormal{\tiny{E}}}(\alpha)-\boldsymbol{\Omega}_{n}^{\textnormal{\tiny{E}}}(\alpha)>\big(\tfrac{n}{2(n+1)}-\tfrac{n-1}{2n}\big)-\big(\tfrac{n}{2(n+1)}-\tfrac{n-1}{2n}\big)=0.$$
This achieves the proof of Lemma \ref{lem mono ev Ea}. 
\end{proof}
\subsection{Construction of local bifurcation branches}
We check here the hypothesis of Crandall-Rabinowitz's Theorem \ref{Crandall-Rabinowitz theorem} which immediately imply Theorem \ref{thm Ea sc}. Notice that the line of trivial solutions has already been obtained in Lemma \ref{prop lin op Ea}-1-(iv).
\begin{prop}\label{prop hyp CR Ea}
	Let $\alpha>0,$ $\beta\in(0,1)$ and $\mathbf{m}\in\mathbb{N}^*.$ The following assertions hold true.
	\begin{enumerate}[label=(\roman*)]
		\item $F_{\alpha}:\mathbb{R}\times X_{\mathbf{m}}^{1+\beta}\rightarrow Y_{\mathbf{m}}^{\beta}$ is well-defined and of class $C^1.$
		\item The kernel $\ker\Big(d_fF_{\alpha}\big(\boldsymbol{\Omega}_{\mathbf{m}}^{\textnormal{\tiny{E}}}(\alpha),0\big)\Big)$ is one dimensional and generated by
		$$v_{0,\mathbf{m}}:\begin{array}[t]{rcl}
			\mathbb{T} & \rightarrow & \mathbb{C}\\
			w & \mapsto & \overline{w}^{\mathbf{m}-1}.		
		\end{array}$$
	\item The range $\mathcal{R}\Big(d_fF_{\alpha}\big(\boldsymbol{\Omega}_{\mathbf{m}}^{\textnormal{\tiny{E}}}(\alpha),0\big)\Big)$ is closed and of codimension one in $Y_{\mathbf{m}}^{\beta}.$
	\item Transversality condition :
	$$\partial_{\Omega}d_fF_{\alpha}\big(\boldsymbol{\Omega}_{\mathbf{m}}^{\textnormal{\tiny{E}}}(\alpha),0\big)[v_{0,\mathbf{m}}]\not\in\mathcal{R}\Big(d_fF_{\alpha}\big(\boldsymbol{\Omega}_{\mathbf{m}}^{\textnormal{\tiny{E}}}(\alpha),0\big)\Big).$$
\end{enumerate}
\end{prop}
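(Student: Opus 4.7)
The plan is to leverage the explicit Fourier multiplier representation of $d_f F_\alpha(\Omega,0)$ established in Proposition \ref{prop lin op Ea}-2 together with the strict monotonicity of the dispersion sequence from Lemma \ref{lem mono ev Ea}. Restricted to $X_\mathbf{m}^{1+\beta}$, the linearized operator diagonalizes with eigenvalues $(k\mathbf{m})(\boldsymbol{\Omega}_{k\mathbf{m}}^{\textnormal{\tiny{E}}}(\alpha)-\Omega)$ indexed by $k\geq 1$, so each of the four assertions reduces to inspecting the vanishing of these eigenvalues at the critical frequency $\Omega=\boldsymbol{\Omega}_\mathbf{m}^{\textnormal{\tiny{E}}}(\alpha)$. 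Assertion (i) is immediate: the $C^1$ regularity is Proposition \ref{prop lin op Ea}-1-(i) and the stability of the $\mathbf{m}$-fold symmetry subspaces is Proposition \ref{prop lin op Ea}-1-(ii).

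For (ii), I would substitute an arbitrary $h\in X_\mathbf{m}^{1+\beta}$, written as $h(w)=\sum_{k\geq 1} a_{k\mathbf{m}-1}\overline{w}^{k\mathbf{m}-1}$, into \eqref{lin op Ea} at $\Omega=\boldsymbol{\Omega}_\mathbf{m}^{\textnormal{\tiny{E}}}(\alpha)$ and use the identity $\boldsymbol{\Omega}_n^{\textnormal{\tiny{E}}}(\alpha)=\tfrac{n-1}{2n}-\boldsymbol{\Omega}_n^{\textnormal{\tiny{SW}}}(\tfrac{1}{\alpha})$ to obtain
\begin{equation*}
d_f F_\alpha\big(\boldsymbol{\Omega}_\mathbf{m}^{\textnormal{\tiny{E}}}(\alpha),0\big)[h](w) = \sum_{k=1}^\infty a_{k\mathbf{m}-1}(k\mathbf{m})\Big(\boldsymbol{\Omega}_{k\mathbf{m}}^{\textnormal{\tiny{E}}}(\alpha)-\boldsymbol{\Omega}_\mathbf{m}^{\textnormal{\tiny{E}}}(\alpha)\Big) e_{k\mathbf{m}}(w).
\end{equation*}
The $k=1$ coefficient vanishes identically, and by Lemma \ref{lem mono ev Ea} the factor $\boldsymbol{\Omega}_{k\mathbf{m}}^{\textnormal{\tiny{E}}}(\alpha)-\boldsymbol{\Omega}_\mathbf{m}^{\textnormal{\tiny{E}}}(\alpha)$ is strictly positive for every $k\geq 2$. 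Hence $h$ belongs to the kernel if and only if $a_{k\mathbf{m}-1}=0$ for every $k\geq 2$, which characterises precisely the real multiples of $v_{0,\mathbf{m}}$.

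For (iii), Lemma \ref{lem mono ev Ea} also ensures $\boldsymbol{\Omega}_\mathbf{m}^{\textnormal{\tiny{E}}}(\alpha)<\boldsymbol{\Omega}_\infty^{\textnormal{\tiny{E}}}(\alpha)=\tfrac{1}{2}-I_1(\tfrac{1}{\alpha})K_1(\tfrac{1}{\alpha})$, so Proposition \ref{prop lin op Ea}-2 applies and the operator $d_f F_\alpha(\boldsymbol{\Omega}_\mathbf{m}^{\textnormal{\tiny{E}}}(\alpha),0):X^{1+\beta}\to Y^\beta$ is Fredholm of index zero. Since it commutes with the $\mathbf{m}$-fold projection, its restriction $X_\mathbf{m}^{1+\beta}\to Y_\mathbf{m}^\beta$ remains Fredholm of index zero, which combined with the one-dimensional kernel from (ii) forces a closed range of codimension one in $Y_\mathbf{m}^\beta$. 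Direct inspection of the Fourier representation above identifies the range explicitly as the hyperplane $\{g\in Y_\mathbf{m}^\beta: g_\mathbf{m}=0\}$.

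For (iv), differentiating \eqref{lin op Ea} in $\Omega$ yields
\begin{equation*}
\partial_\Omega d_f F_\alpha(\Omega,0)[h](w)=-\sum_{n=0}^\infty a_n(n+1)e_{n+1}(w),
\end{equation*}
which evaluated at $h=v_{0,\mathbf{m}}$ (the only non-vanishing coefficient being $a_{\mathbf{m}-1}=1$) produces $-\mathbf{m}\,e_\mathbf{m}(w)$. Since the missing direction in the range identified in (iii) is precisely $e_\mathbf{m}$, this nonzero multiple of $e_\mathbf{m}$ cannot lie in $\mathcal{R}(d_f F_\alpha(\boldsymbol{\Omega}_\mathbf{m}^{\textnormal{\tiny{E}}}(\alpha),0))$, establishing transversality. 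The whole argument is essentially bookkeeping; the analytic substance is absorbed in Proposition \ref{prop lin op Ea} and the monotonicity in Lemma \ref{lem mono ev Ea}, and no genuine obstacle is anticipated here.
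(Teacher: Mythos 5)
Your proposal is correct and follows essentially the same route as the paper: assertion (i) from Proposition \ref{prop lin op Ea}-1, the kernel and range from the Fourier multiplier representation \eqref{lin op Ea} restricted to $X_{\mathbf{m}}^{1+\beta}$ combined with the strict monotonicity of Lemma \ref{lem mono ev Ea} and the Fredholm index-zero property, and the transversality from the explicit computation $\partial_{\Omega}d_fF_{\alpha}\big(\boldsymbol{\Omega}_{\mathbf{m}}^{\textnormal{\tiny{E}}}(\alpha),0\big)[v_{0,\mathbf{m}}]=-\mathbf{m}\,e_{\mathbf{m}}$. Your identification of the range as $\{g\in Y_{\mathbf{m}}^{\beta}:g_{\mathbf{m}}=0\}$ is exactly the paper's $\langle e_{\mathbf{m}}\rangle^{\perp}$ in \eqref{orth range}.
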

\begin{proof}
	\textbf{(i)} Follows immediately from Proposition \ref{prop lin op Ea}-1.\\
	\textbf{(ii)} It is a direct consequence of Proposition \ref{prop lin op Ea}-2 and Lemma \ref{lem mono ev Ea} since for 
	$$h(w)=\sum_{n=0}^{\infty}a_n\overline{w}^{n\mathbf{m}-1}\in X_{\mathbf{m}}^{1+\beta},$$
	we have by \eqref{lin op Ea}
	\begin{equation}\label{lin op Ea sym}
		d_{f}F_{\alpha}\big(\Omega_{\mathbf{m}}^{\textnormal{\tiny{E}}}(\alpha),0\big)[h]=\sum_{n=1}^{\infty}n\mathbf{m}a_n\Big(\Omega_{n\mathbf{m}}^{\textnormal{\tiny{E}}}(\alpha)-\Omega_{\mathbf{m}}^{\textnormal{\tiny{E}}}(\alpha)\Big)e_{n\mathbf{m}}.
	\end{equation}
	\textbf{(iii)} From Proposition \ref{prop lin op Ea}-2, we know that $d_fF_{\alpha}\big(\Omega_{\mathbf{m}}^{\textnormal{\tiny{E}}}(\alpha),0\big)$ is a Fredholm operator with index zero. Together with the point (ii), we conclude that the range of $d_fF_{\alpha}\big(\Omega_{\mathbf{m}}^{\textnormal{\tiny{E}}}(\alpha),0\big)$ is closed with codimension one. We endow $Y_{\mathbf{m}}^{\beta}$ with the scalar product
	$$\big\langle f,g\big\rangle\triangleq\fint_{\mathbb{T}}f(w)\overline{g(w)}dw=\sum_{n=1}^{\infty}f_{n\mathbf
	m}g_{n\mathbf{m}},\qquad f=\sum_{n=1}^{\infty}f_{n\mathbf{m}}e_{n\mathbf{m}},\qquad g=\sum_{n=1}^{\infty}g_{n\mathbf{m}}e_{n\mathbf{m}}.$$
The continuity of $\langle\cdot,\cdot\rangle$ on $Y_{\mathbf{m}}^{\beta}\times Y_{\mathbf{m}}^{\beta}$ is a direct consequence of Cauchy-Schwarz inequality and the continuous embedding $C^{\beta}(\mathbb{T})\hookrightarrow L^{2}(\mathbb{T}).$ We claim that
\begin{equation}\label{orth range}
	\mathcal{R}\Big(d_f F_{\alpha}\big(\Omega_{\mathbf{m}}^{\textnormal{\tiny{E}}}(\alpha),0\big)\Big)=\langle e_{\mathbf{m}}\rangle^{\perp},
\end{equation}
where the orthogonal is understood in the sense of the scalar product $\langle\cdot,\cdot\rangle.$ Observe that \eqref{lin op Ea sym} and the point (i) immediately imply the first inclusion in \eqref{orth range}. The inverse inclusion is deduced from the fact that the range is of codimension one.\\ 
	\textbf{(iv)} For any $h\in X_{\mathbf{m}}^{1+\beta},$
	$$\forall w\in\mathbb{T},\quad\partial_{\Omega}d_fF_{\alpha}\big(\boldsymbol{\Omega}_{\mathbf{m}}^{\textnormal{\tiny{E}}}(\alpha),0\big)[h](w)=\textnormal{Im}\left\lbrace h(w)\overline{w}+\overline{h'(w)}\right\rbrace.$$
	Consequently, \eqref{orth range} implies
	$$\partial_{\Omega}d_fF_{\alpha}\big(\boldsymbol{\Omega}_{\mathbf{m}}^{\textnormal{\tiny{E}}}(\alpha),0\big)[v_{0,\mathbf{m}}]=-\mathbf{m}e_{\mathbf{m}}\not\in\mathcal{R}\Big(d_fF_{\alpha}\big(\boldsymbol{\Omega}_{\mathbf{m}}^{\textnormal{\tiny{E}}}(\alpha),0\big)\Big).$$
	This ends the proof of Proposition \ref{prop hyp CR Ea} and proves Theorem \ref{thm Ea sc}.
\end{proof}
\begin{remark}\label{rem triv sol}
	Observe that for $\mathbf{m}=1$, a similar calculation to Proposition \ref{prop lin op Ea}-1-(iv), shows that for $\Phi(z)=z+a_0,$ with $a_0\in\mathbb{R}$, one has $F_{\alpha}(0,\Phi)=0.$ By uniqueness of the constructed branch of bifurcation, this latter corresponds to a translation of the Rankine vortex.
\end{remark}
\section{Quasi-periodic Euler-$\alpha$ patches}
This section is devoted to the proof of Theorem \ref{thm QPS Ea}. It is based on KAM and Nash-Moser techniques in a similar way to the recent works \cite{BHM21,HHM21,HR21-1,HR21}. First let us introduce the notations and topologies used along this section.
\subsection{Topologies for functions and operators}\label{sec topo}
This subsection presents the notations and topologies used along this part on the construction of quasi-periodic vortex patches. 
First we fix 
\begin{equation}\label{alf0alf1}
	0<\alpha_0<\alpha_1.
\end{equation}
The parameter $\alpha$ will live in the interval $(\alpha_0,\alpha_1).$ More precisely, at the end it will belongs to a Cantor set contained in this interval. We shall denote
\begin{equation}\label{p-d}
	d\in\mathbb{N}^*
\end{equation}
the number of excited frequencies generating the quasi-periodic solutions. Consequently, the frequency vector $\omega$ belongs to $\mathbb{R}^d.$ More precisely, at the end $\omega$ should be close to the equilibrium frequency vector obtained at the linear level at the Rankine vortex. Our solutions will be searched in the Sobolev class constructed on $L^2$ in the variables $\varphi\in\mathbb{T}^d$ and $\theta\in\mathbb{T}.$ Hence, we decompose any $\rho\in L^{2}(\mathbb{T}^{d+1},\mathbb{C}),$ in Fourier series as follows
$$\rho=\sum_{(l,j)\in\mathbb{Z}^{d+1 }}\rho_{l,j}\,\mathbf{e}_{l,j},\qquad\rho_{l,j}\triangleq\big\langle\rho,\mathbf{e}_{l,j}\big\rangle_{L^{2}(\mathbb{T}^{d+1},\mathbb{C})},$$
where $(\mathbf{e}_{l,j})_{(l,j)\in\mathbb{Z}^{d}\times\mathbb{Z}}$ denotes the classical Hilbert basis of the complex Hilbert space  $L^{2}(\mathbb{T}^{d+1},\mathbb{C}).$ Explicitly, we have
$$\mathbf{e}_{l,j}(\varphi,\theta)\triangleq e^{\ii(l\cdot\varphi+j\theta)},\qquad\mathbf{e}_{j}\triangleq \mathbf{e}_{0,j}.$$
The associated Hermitian inner product is
$$\big\langle\rho_{1},\rho_{2}\big\rangle_{L^{2}(\mathbb{T}^{d+1},\mathbb{C})}\triangleq\bigintssss_{\mathbb{T}^{d+1}}\rho_{1}(\varphi,\theta)\overline{\rho_{2}(\varphi,\theta)}d\varphi d\theta,\qquad\int_{\mathbb{T}^{n}}f(x)dx\triangleq\frac{1}{(2\pi)^{n}}\bigintssss_{[0,2\pi]^{n}}f(x)dx.$$
For $s\in\mathbb{R}$ the complex Sobolev space $H^{s}(\mathbb{T}^{d +1},\mathbb{C})$ is given by 
$$H^{s}(\mathbb{T}^{d +1},\mathbb{C})\triangleq\Big\{\rho\in L^{2}(\mathbb{T}^{d +1},\mathbb{C})\quad\textnormal{s.t.}\quad\| \rho\|_{H^{s}}^{2}\triangleq \sum_{(l,j)\in\mathbb{Z}^{d+1 }}\langle l,j\rangle^{2s}|\rho_{l,j}|^{2}<\infty\Big\},\qquad \langle l,j\rangle\triangleq \max(1,|l|,|j|).$$
The closed sub-vector space of real valued functions is denoted
\begin{align*}
	\nonumber H^{s}\triangleq H^{s}(\mathbb{T}^{d+1},\mathbb{R})&\triangleq\left\lbrace\rho\in H^{s}(\mathbb{T}^{d +1},\mathbb{C})\quad\textnormal{s.t.}\quad\forall\, (\varphi,\theta)\in\mathbb{T}^{d+1 },\,\rho(\varphi,\theta)=\overline{\rho(\varphi,\theta)}\right\rbrace\\
	&=\Big\{\rho\in H^{s}(\mathbb{T}^{d +1},\mathbb{C})\quad\textnormal{s.t.}\quad\forall \,(l,j)\in\mathbb{Z}^{d+1},\,\rho_{-l,-j}=\overline{\rho_{l,j}}\Big\}.
\end{align*}
In order to ensure some suitable embeddings, we shall consider the following restrictions on the Sobolev indices. In particular $S$ is chosen large enough.
\begin{equation}\label{Sob index}
	S\geqslant s\geqslant s_{0}>\tfrac{d+1}{2}+q+2.
\end{equation} 
During the scheme, we shall also keep track of the regularity of our functions and operators with respect to the parameters $\mu\triangleq(\alpha,\omega).$ Thus, we introduce the parameters
\begin{equation}\label{p-cond0}
	\gamma\in(0,1),\qquad q\in\mathbb{N}^{*}
	\end{equation}
and consider the following weighted spaces
\begin{align*}
	W^{q,\infty,\gamma}(\mathcal{O},H^{s})&\triangleq\Big\lbrace \rho:\mathcal{O}\rightarrow H^{s}\quad\textnormal{s.t.}\quad\|\rho\|_{q,s}^{\gamma,\mathcal{O}}<\infty\Big\rbrace,\qquad	\|\rho\|_{q,s}^{\gamma,\mathcal{O}}\triangleq\sum_{\underset{|\alpha|\leqslant q}{\alpha\in\mathbb{N}^{d+1}}}\gamma^{|\alpha|}\sup_{\mu\in{\mathcal{O}}}\|\partial_{\mu}^{\alpha}\rho(\mu,\cdot)\|_{H^{s-|\alpha|}},\\
	W^{q,\infty,\gamma}(\mathcal{O},\mathbb{C})&\triangleq\Big\lbrace\rho:\mathcal{O}\rightarrow\mathbb{C}\quad\textnormal{s.t.}\quad\|\rho\|_{q}^{\gamma,\mathcal{O}}<\infty\Big\rbrace,\qquad\|\rho\|_{q}^{\gamma,\mathcal{O}}\triangleq\sum_{\underset{|\alpha|\leqslant q}{\alpha\in\mathbb{N}^{d+1}}}\gamma^{|\alpha|}\sup_{\mu\in{\mathcal{O}}}|\partial_{\mu}^{\alpha}\rho(\mu)|.
\end{align*}
The set $\mathcal{O}$ is an open bounded subset of $\mathbb{R}^{d+1}$ which will be fixed in \eqref{def calO}. The parameters $\gamma$ and $q$ will be fixed in \eqref{gma N0 NM} and \eqref{def q}, respectively. Observe that any $\rho\in W^{q,\infty,\gamma}(\mathcal{O},H^{s})$ decomposes as follows
$$\rho(\mu,\varphi,\theta)=\sum_{(l,j)\in\mathbb{Z}^{d+1}}\rho_{l,j}(\mu)\mathbf{e}_{l,j}(\varphi,\theta).$$
Some classical properties of the weighted Sobolev norm are gathered in the following lemma. We refer for instance to \cite{BM18,BFM21,BFM21-1} for the ideas of their proofs.
\begin{lem}\label{Lem-lawprod}
	Let $(d,\gamma,q,s_{0},s)$ satisfying \eqref{p-d}, \eqref{Sob index} and \eqref{p-cond0}. Take $\rho_1,\rho_2\in W^{q,\infty,\gamma}(\mathcal{O},H^s).$ Then the following assertions hold true.
	\begin{enumerate}[label=(\roman*)]
		\item Space translation invariance : for any $\eta\in\mathbb{T},$ we have $\tau_{\eta}\rho_1:(\mu,\varphi,\theta)\mapsto\rho_1(\mu,\varphi,\eta+\theta)\in W^{q,\infty,\gamma}(\mathcal{O},H^{s})$, and 
		$$\|\tau_{\eta}\rho_1\|_{q,s}^{\gamma,\mathcal{O}}=\|\rho_1\|_{q,s}^{\gamma,\mathcal{O}}.$$
		\item Projectors properties : For all $N\in\mathbb{N}^{*}$ and for all $t\in\mathbb{R}_{+}^{*},$
		$$\|\Pi_{N}\rho_1\|_{q,s+t}^{\gamma,\mathcal{O}}\leqslant N^{t}\|\rho_1\|_{q,s}^{\gamma,\mathcal{O}}\qquad\|\Pi_{N}^{\perp}\rho_1\|_{q,s}^{\gamma,\mathcal{O}}\leqslant N^{-t}\|\rho_1\|_{q,s+t}^{\gamma,\mathcal{O}},$$
		where the projectors are defined by
		$$\Pi_{N}\left(\sum_{j\in\mathbb{Z}}(\rho_1)_j\mathbf{e}_{j}\right)\triangleq\sum_{j\in\mathbb{Z}\atop|j|\leqslant N}(\rho_1)_j\mathbf{e}_{j},\qquad\Pi_{N}^{\perp}\triangleq\textnormal{Id}-\Pi_{N}.$$
		\item Law products : $\rho_{1}\rho_{2}\in W^{q,\infty,\gamma}(\mathcal{O},H^{s})$ and 
		$$\| \rho_{1}\rho_{2}\|_{q,s}^{\gamma,\mathcal{O}}\lesssim\| \rho_{1}\|_{q,s_{0}}^{\gamma,\mathcal{O}}\| \rho_{2}\|_{q,s}^{\gamma,\mathcal{O}}+\| \rho_{1}\|_{q,s}^{\gamma,\mathcal{O}}\| \rho_{2}\|_{q,s_{0}}^{\gamma,\mathcal{O}}.$$
		\item Composition law : For $f\in C^{\infty}(\mathcal{O}\times\mathbb{R},\mathbb{R})$, if there exists $\mathtt{M}>0$ such that
		$$\| \rho_{1}\|_{q,s}^{\gamma,\mathcal{O}},\|\rho_{2}\|_{q,s}^{\gamma,\mathcal{O}}\leqslant \mathtt{M},$$ 
		then $f(\rho_{1})-f(\rho_{2})\in W^{q,\infty,\gamma}(\mathcal{O},H^{s})$ with 
		$$\| f(\rho_{1})-f(\rho_{2})\|_{q,s}^{\gamma,\mathcal{O}}\leqslant C(s,d,q,f,\mathtt{M})\| \rho_{1}-\rho_{2}\|_{q,s}^{\gamma,\mathcal{O}},$$
		where we used the notation
		$$\forall (\mu,\varphi,\theta)\in \mathcal{O}\times\mathbb{T}^{d+1},\quad f(\rho)(\mu,\varphi,\theta)\triangleq  f(\mu,\rho(\mu,\varphi,\theta)).$$
	\end{enumerate}
\end{lem}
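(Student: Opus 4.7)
The plan is to reduce each of the four assertions to a corresponding classical result for $H^s(\mathbb{T}^{d+1})$, and then handle the $\mu$-dependence by applying the Leibniz rule to the finite sum defining $\|\cdot\|_{q,s}^{\gamma,\mathcal{O}}$. Throughout, I will freely use that $\mu$-derivatives commute with spatial derivatives and with the Fourier expansion in $(\varphi,\theta)$, and that $s_0>\tfrac{d+1}{2}+q+2$ so that $H^{s_0-|\alpha|}\hookrightarrow L^\infty$ for every $|\alpha|\leqslant q$.

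For assertion (i), I would note that the shift $\tau_\eta$ acts on Fourier coefficients by $(\tau_\eta\rho_1)_{l,j}=e^{\ii j\eta}\rho_{1,l,j}$, so $|(\tau_\eta\rho_1)_{l,j}|=|\rho_{1,l,j}|$ and hence $\|\tau_\eta\partial_\mu^\alpha\rho_1(\mu,\cdot)\|_{H^{s-|\alpha|}}=\|\partial_\mu^\alpha\rho_1(\mu,\cdot)\|_{H^{s-|\alpha|}}$ for every $\alpha$ and every $\mu$; summing gives the equality of weighted norms. For assertion (ii), I would read off both estimates directly from the Fourier characterization of $\|\cdot\|_{H^{s}}$: on the support $\{|j|\leqslant N\}$ one has $\langle l,j\rangle^{2t}\leqslant N^{2t}\langle l,j\rangle^{2t}\cdot\mathbf{1}_{|j|\leqslant N}$ so that $\|\Pi_N\rho_1\|_{H^{s+t}}\leqslant N^t\|\rho_1\|_{H^s}$; dually, on $\{|j|>N\}$ one has $\langle l,j\rangle^{-2t}\leqslant N^{-2t}$, giving the second estimate. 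Applying these fiberwise in $\mu$ to each $\partial_\mu^\alpha\rho_1$ and summing yields the weighted statements.

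For (iii), I would first establish the $q=0$ case by the classical high-low paraproduct splitting: decompose $\rho_1\rho_2=\sum_N(\Pi_N\rho_1)(\Pi_{2N}^\perp\rho_2-\Pi_N^\perp\rho_2)+\ldots$ and use $H^{s_0}\hookrightarrow L^\infty$ to bound one factor in $L^\infty$ and the other in $H^s$, giving the tame estimate $\|\rho_1\rho_2\|_{H^s}\lesssim\|\rho_1\|_{H^{s_0}}\|\rho_2\|_{H^s}+\|\rho_1\|_{H^s}\|\rho_2\|_{H^{s_0}}$. Then, for general $q$, apply Leibniz in $\mu$: for $|\alpha|\leqslant q$,
$$
\gamma^{|\alpha|}\partial_\mu^\alpha(\rho_1\rho_2)=\sum_{\beta+\beta'=\alpha}\tbinom{\alpha}{\beta}\bigl(\gamma^{|\beta|}\partial_\mu^\beta\rho_1\bigr)\bigl(\gamma^{|\beta'|}\partial_\mu^{\beta'}\rho_2\bigr),
$$
take the $H^{s-|\alpha|}$-norm of each summand using the $q=0$ tame estimate (valid since $s-|\alpha|\geqslant s_0-q>\tfrac{d+1}{2}$), take the supremum in $\mu\in\mathcal{O}$, and repackage the resulting sums as $\|\rho_1\|_{q,s_0}^{\gamma,\mathcal{O}}\|\rho_2\|_{q,s}^{\gamma,\mathcal{O}}+\|\rho_1\|_{q,s}^{\gamma,\mathcal{O}}\|\rho_2\|_{q,s_0}^{\gamma,\mathcal{O}}$, up to a combinatorial constant absorbed in the $\lesssim$.

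Assertion (iv) is the main technical point and will be the main obstacle, because of the interplay between $\mu$-regularity of $f$ and spatial regularity in $(\varphi,\theta)$. My plan is to write
$$
f(\rho_1)-f(\rho_2)=(\rho_1-\rho_2)\int_0^1 g(\tau,\mu,\varphi,\theta)\,d\tau,\qquad g(\tau,\cdot)\triangleq\partial_u f\bigl(\mu,(1-\tau)\rho_2+\tau\rho_1\bigr),
$$
and then apply the tame product law from (iii). The factor $\int_0^1 g\,d\tau$ must in turn be controlled in $W^{q,\infty,\gamma}(\mathcal{O},H^s)$, which is a Moser-type nonlinear composition estimate: for each $\alpha$ with $|\alpha|\leqslant q$, a Faà di Bruno expansion of $\partial_\mu^\alpha\bigl(\partial_uf(\mu,u(\mu,\varphi,\theta))\bigr)$ produces finitely many products of partial derivatives of $f$ (bounded on the range $[-\mathtt{M} C_{s_0,d},\mathtt{M} C_{s_0,d}]$ by continuity of $f\in C^\infty$ and Sobolev embedding) times products of $\partial_\mu^{\beta_k}u$; each such product is then estimated by the $q=0$ tame product law, with the smoothness assumption $s_0>\tfrac{d+1}{2}+q+2$ ensuring enough derivatives remain. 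This produces a constant $C(s,d,q,f,\mathtt{M})$ depending only on the listed data. Combining with the preceding product step yields the claimed linear-in-$(\rho_1-\rho_2)$ bound. For the full details of each of these four items in closely related contexts I would follow \cite{BM18,BFM21,BFM21-1}.
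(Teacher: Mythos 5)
The paper does not prove this lemma at all --- it simply refers to \cite{BM18,BFM21,BFM21-1} --- and your sketch is precisely the standard argument carried out in those references, so all four items are handled in essentially the intended way: Fourier-side identities for (i)--(ii), Leibniz in $\mu$ combined with the tame paraproduct bound at the correct lower index $s_0-q>\tfrac{d+1}{2}$ for (iii), and the integral-remainder factorization plus a Fa\`a di Bruno/Moser composition estimate for (iv). The only point to adjust is in (ii): the smoothing bound $\|\Pi_N\rho_1\|_{q,s+t}^{\gamma,\mathcal{O}}\leqslant N^t\|\rho_1\|_{q,s}^{\gamma,\mathcal{O}}$ requires the truncation to be on $\langle l,j\rangle\leqslant N$ (as in the cited references), not on $|j|\leqslant N$ alone as the paper's display literally reads, so your phrase ``on the support $\{|j|\leqslant N\}$'' (and the garbled inequality following it, which should read $\langle l,j\rangle^{2(s+t)}\leqslant N^{2t}\langle l,j\rangle^{2s}$ on that support) needs the full constraint $\langle l,j\rangle\leqslant N$ to be valid.
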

We shall now present the operator topology used along this section. In particular, we deal with the Toeplitz in time operator class. These notions are based on the one introduced in \cite{BBMH18,BFM21,BFM21-1,BM18}. We consider parameter dependent operators in the form 
$$T:\mu\in\mathcal{O}\mapsto T(\mu)\in\mathcal{L}(H^s(\T^{d+1},\mathbb{C}))$$ which can be identified with an infinite dimensional matrix $\left(T_{l_{0},j_{0}}^{l,j}(\mu)\right)_{\underset{(j,j_{0})\in\mathbb{Z}^{2}}{(l,l_{0})\in(\mathbb{Z}^{d })^{2}}}$ through their action on the Hilbert basis $\big(\mathbf{e}_{l,j}\big)_{(l,j)\in\mathbb{Z}^d\times\mathbb{Z}}$ as follows 
$$T(\mu)\mathbf{e}_{l_{0},j_{0}}=\sum_{(l,j)\in\mathbb{Z}^{d+1}}T_{l_{0},j_{0}}^{l,j}(\mu)\mathbf{e}_{l,j},\qquad T_{l_{0},j_{0}}^{l,j}(\mu)\triangleq\big\langle T(\mu)\mathbf{e}_{l_{0},j_{0}},\mathbf{e}_{l,j}\big\rangle_{L^{2}(\mathbb{T}^{d+1})}.$$
More precisely, we shall see such operators acting on $W^{q,\infty,\gamma}(\mathcal{O},H^{s}(\mathbb{T}^{d+1},\mathbb{C}))$ in the following sense, 
$$\rho\in W^{q,\infty,\gamma}\big(\mathcal{O},H^{s}(\mathbb{T}^{d+1},\mathbb{C})\big),\quad\,\quad (T\rho)(\mu,\varphi,\theta)\triangleq T(\mu)\rho(\mu,\varphi,\theta).$$
We shall now introduce the Toeplitz operators class. An operator $T(\mu)$ is said to be Toeplitz in time iff
	$$T_{l_{0},j_{0}}^{l,j}(\mu)=T_{j_{0}}^{j}(\mu,l-l_{0}),\qquad T_{j_{0}}^{j}(\mu,l)\triangleq T_{0,j_{0}}^{l,j}(\mu).$$
Its action on a function $\rho=\displaystyle\sum_{(l_{0},j_{0})\in\mathbb{Z}^{d+1}}\rho_{l_{0},j_{0}}\mathbf{e}_{l_{0},j_{0}}$ writes
\begin{equation}\label{action Toeplitz}
	T(\mu)\rho=\sum_{(l,l_{0})\in(\mathbb{Z}^{d})^{2}\\\atop (j,j_{0})\in\mathbb{Z}^{2}}T_{j_{0}}^{j}(\mu,l-l_{0})\rho_{l_{0},j_{0}}\mathbf{e}_{l,j}.
\end{equation}
The Toeplitz topology is given by the following off-diagonal norm,
$$\| T\|_{\textnormal{\tiny{O-d}},q,s}^{\gamma,\mathcal{O}}\triangleq\sum_{\underset{|\alpha|\leqslant q}{\alpha\in\mathbb{N}^{d+1}}}\gamma^{|\alpha|}\sup_{(b,\omega)\in{\mathcal{O}}}\|\partial_{\mu}^{\alpha}(T)(\mu)\|_{\textnormal{\tiny{O-d}},s-|\alpha|},\qquad\| T\|_{\textnormal{\tiny{O-d}},s}^{2}\triangleq\sum_{(l,m)\in\mathbb{Z}^{d+1}}\langle l,m\rangle^{2s}\sup_{j-k=m}|T_{j}^{k}(l)|^{2}.$$
We mention that we will encounter several operators acting only on the variable $\theta$ and that can be considered as  $\varphi$-dependent operators $T(\mu,\varphi)$ taking the form of integral operators
\begin{align*}
	T(\mu,\varphi)\rho(\varphi,\theta)=\int_{\T} K(\mu,\varphi,\theta,\eta)\rho(\varphi,\eta)d\eta.
\end{align*}
One can easily check that those operators are Toeplitz and therefore they  satisfy \eqref{action Toeplitz}. We shall now give some classical results on the Toeplitz norm. The proofs are very similar to those in \cite{BM18} concerning pseudo-differential operators.
\begin{lem}\label{lem prop Toe}
	Let $(d,s_{0},s,\gamma,q)$ satisfying \eqref{p-d}, \eqref{Sob index} and \eqref{p-cond0}.  Let $T,$ $T_{1}$ and $T_{2}$ be Toeplitz in time operators.
	\begin{enumerate}[label=(\roman*)]
		\item Composition law :
		$$\| T_{1}T_{2}\|_{\textnormal{\tiny{O-d}},q,s}^{\gamma,\mathcal{O}}\lesssim\| T_{1}\|_{\textnormal{\tiny{O-d}},q,s}^{\gamma,\mathcal{O}}\| T_{2}\|_{\textnormal{\tiny{O-d}},q,s_{0}}^{\gamma,\mathcal{O}}+\| T_{1}\|_{\textnormal{\tiny{O-d}},q,s_{0}}^{\gamma,\mathcal{O}}\| T_{2}\|_{\textnormal{\tiny{O-d}},q,s}^{\gamma,\mathcal{O}}.$$ 
		\item Link between operators and off-diagonal norms :
		$$\| T\rho\|_{q,s}^{\gamma,\mathcal{O}}\lesssim\| T\|_{\textnormal{\tiny{O-d}},q,s_{0}}^{\gamma,\mathcal{O}}\|\rho\|_{q,s}^{\gamma,\mathcal{O}}+\| T\|_{\textnormal{\tiny{O-d}},q,s}^{\gamma,\mathcal{O}}\|\rho\|_{q,s_{0}}^{\gamma,\mathcal{O}}.$$
		In particular
		$$\| T \rho\|_{q,s}^{\gamma,\mathcal{O}}\lesssim\| T\|_{\textnormal{\tiny{O-d}},q,s}^{\gamma,\mathcal{O}}\|\rho\|_{q,s}^{\gamma,\mathcal{O}}.$$
	\end{enumerate}
\end{lem}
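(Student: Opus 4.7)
The plan is to reduce both statements to classical bilinear estimates on Sobolev-type norms by exploiting the convolution structure built into the Toeplitz class. The starting algebraic observation is that if $T_1$ and $T_2$ are Toeplitz in time, then so is $T_1T_2$, with
$$
(T_1T_2)_j^k(l)=\sum_{j_1\in\mathbb{Z}}\sum_{l_1\in\mathbb{Z}^d}(T_1)_{j_1}^{k}(l_1)(T_2)_j^{j_1}(l-l_1),
$$
which has exactly the shape of a convolution in the "difference variables". The off-diagonal norm $\|\cdot\|_{\textnormal{\tiny{O-d}},s}$ is morally an $H^s$ norm in the variable $(l,m)=(l,j-k)$, the supremum over the fibers $\{j-k=m\}$ being harmless because $(T_i)_j^{j_1}(l-l_1)$ only depends on $j-j_1$.

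For part (i), I would first treat the case $q=0$. Starting from the composition formula above, set $m=j-k$ and $m_1=j_1-k$, so that $m-m_1=j-j_1$. Applying the elementary inequality
$$
\langle l,m\rangle^{s}\lesssim\langle l_1,m_1\rangle^{s}+\langle l-l_1,m-m_1\rangle^{s}
$$
splits $\langle l,m\rangle^{s}|(T_1T_2)_j^k(l)|$ into two pieces. On each piece, a Cauchy-Schwarz in $(l_1,m_1)$ against the weight $\langle l_1,m_1\rangle^{-2s_0}$ (resp. $\langle l-l_1,m-m_1\rangle^{-2s_0}$), together with the assumption $2s_0>d+1$ for summability of such weights, yields the tame bilinear bound
$$
\|T_1T_2\|_{\textnormal{\tiny{O-d}},s}\lesssim\|T_1\|_{\textnormal{\tiny{O-d}},s}\|T_2\|_{\textnormal{\tiny{O-d}},s_0}+\|T_1\|_{\textnormal{\tiny{O-d}},s_0}\|T_2\|_{\textnormal{\tiny{O-d}},s}.
$$
To pass to $q\geqslant 1$, apply Leibniz to $\partial_\mu^\alpha(T_1T_2)=\sum_{\alpha'\leqslant\alpha}\binom{\alpha}{\alpha'}\partial_\mu^{\alpha'}T_1\,\partial_\mu^{\alpha-\alpha'}T_2$, use the $q=0$ estimate on each summand at regularity $s-|\alpha|$, and distribute the weight $\gamma^{|\alpha|}=\gamma^{|\alpha'|}\gamma^{|\alpha-\alpha'|}$ across the two factors.

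For part (ii), write
$$
(T\rho)_{l,j}(\mu)=\sum_{(l_0,j_0)\in\mathbb{Z}^{d+1}}T_j^{j_0}(\mu,l-l_0)\,\rho_{l_0,j_0}(\mu),
$$
and apply exactly the same low–high splitting
$$
\langle l,j\rangle^{s}\lesssim\langle l-l_0,j-j_0\rangle^{s}+\langle l_0,j_0\rangle^{s},
$$
followed by Cauchy-Schwarz in $(l_0,j_0)$ against $\langle l_0,j_0\rangle^{-2s_0}$ (or the symmetric weight), using $2s_0>d+1$ for convergence. This produces the two bilinear terms
$$
\|T\rho\|_{H^s}\lesssim\|T\|_{\textnormal{\tiny{O-d}},s_0}\|\rho\|_{H^{s}}+\|T\|_{\textnormal{\tiny{O-d}},s}\|\rho\|_{H^{s_0}},
$$
and the parameter version follows via the same Leibniz argument as in part (i). The final, simpler estimate is obtained by setting $s_0=s$ in the previous one.

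The only genuinely delicate point is the compatibility of the supremum $\sup_{j-k=m}$ that defines the off-diagonal norm with the convolution structure: one must check that for each fixed $m$ the choice of decomposition $m=m_1+(m-m_1)$ may be made uniformly so that the supremum can be pulled inside the sum over $(l_1,m_1)$. This is automatic because the Toeplitz condition makes $(T_i)_j^{j_1}(l-l_1)$ depend on $j,j_1$ only through $j-j_1$, so the supremum factorizes. Beyond this bookkeeping, no new ingredient is required, and both estimates are direct analogues of the standard Sobolev product law transposed from functions on $\mathbb{T}^{d+1}$ to Toeplitz matrices indexed by the differences $(l,j-k)$.
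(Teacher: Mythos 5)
Your argument is correct and is precisely the standard one: the paper itself gives no proof of this lemma and simply refers to \cite{BM18}, where the estimates are obtained exactly as you describe, by bounding the majorant sequence $c(l,m)=\sup_{j-k=m}|T_j^k(l)|$ of a product (resp.\ of $T\rho$) by a convolution of majorant sequences and invoking the tame product law in weighted $\ell^2(\mathbb{Z}^{d+1})$, with the Leibniz rule handling the $\mu$-derivatives. Two cosmetic points: the ``in particular'' estimate follows from monotonicity $\|\cdot\|_{q,s_0}^{\gamma,\mathcal{O}}\leqslant\|\cdot\|_{q,s}^{\gamma,\mathcal{O}}$ rather than from literally substituting $s_0=s$, and in the Leibniz step the low index for the factor $\partial_\mu^{\beta}T_2$ is $s_0-|\beta|$ rather than $s_0$, which is exactly why \eqref{Sob index} demands $s_0>\tfrac{d+1}{2}+q+2$ and not merely $s_0>\tfrac{d+1}{2}$.
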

Now, wee give the following definition inspired for instance from \cite[Def. 2.2]{BBM14}.
\begin{defin}\label{Def-Rev}
	We defined the following involutions
	$$(\mathscr{S}_{2}\rho)(\varphi,\theta)=\rho(-\varphi,-\theta),\qquad(\mathscr{S}_{c}\rho)(\varphi,\theta)=\overline{\rho(\varphi,\theta)}.$$
	An operator $T=T(\mu)$ is said to be
	\begin{enumerate}[label=\textbullet]
		\item real iff $\forall\rho\in L^{2}(\mathbb{T}^{d+1},\mathbb{C}),\,\mathscr{S}_{c}\rho=\rho\Rightarrow\mathscr{S}_{c}(T\rho)=T\rho.$
		\item reversible iff $T\circ\mathscr{S}_{2}=-\mathscr{S}_{2}\circ T.$
		\item reversibility preserving iff $T\circ\mathscr{S}_{2}=\mathscr{S}_{2}\circ T.$
	\end{enumerate}
\end{defin}
\noindent We end this subsection by recalling  an important lemma whose proof can be found in \cite[Lem. 4.4]{HR21}.
\begin{lem}\label{lem sym-Rev}
	Let $(d,s_{0},s,\gamma,q)$ satisfy \eqref{p-d}, \eqref{Sob index} and \eqref{p-cond0}, then for any integral opearator $T$ with a real-valued kernel $K$, namely
	$$(T\rho)(\mu,\varphi,\theta)=\int_{\mathbb{T}}\rho(\mu,\varphi,\eta)K(\mu,\varphi,\theta,\eta)d\eta,\qquad K:(\mu,\varphi,\theta,\eta)\mapsto K(\mu,\varphi,\theta,\eta),$$
	the following property holds true.
	\begin{enumerate}[label=\textbullet]
		\item If $K$ is even in $(\varphi,\theta,\eta)$, then $T$ is a real and reversibility preserving Toeplitz in time operator.
		\item If $K$ is odd in $(\varphi,\theta,\eta)$, then $T$ is a real and reversible Toeplitz in time operator.
	\end{enumerate}
	Moreover,
	$$\| T\|_{\textnormal{\tiny{O-d}},q,s}^{\gamma,\mathcal{O}}\lesssim \int_{\T}\|K(\ast,\cdot,\centerdot,\eta+\centerdot)\|_{q,s+s_{0}}^{\gamma,\mathcal{O}} d\eta$$
	and
	$$\| T\rho\|_{q,s}^{\gamma,\mathcal{O}}\lesssim \|\rho\|_{q,s_0}^{\gamma,\mathcal{O}} \int_{\T}\|K(\ast,\cdot,\centerdot,\eta+\centerdot)\|_{q,s}^{\gamma,\mathcal{O}} d\eta+\|\rho\|_{q,s}^{\gamma,\mathcal{O}} \int_{\T}\|K(\ast,\cdot,\centerdot,\eta+\centerdot)\|_{q,s_0}^{\gamma,\mathcal{O}} d\eta,$$
	where the notation $\ast,\cdot,\centerdot$ denote $\mu,\varphi,\theta$, respectively.
\end{lem}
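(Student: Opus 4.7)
The proof decomposes into three largely independent verifications: the Toeplitz/reality/reversibility algebraic structure, and the two quantitative norm bounds. All three rely on a single change of variables that rewrites $T$ as a superposition of modulated translations.

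First, I would establish that $T$ is Toeplitz in time. Since $K$ depends on $\varphi$ but not on a second angle variable, a direct computation of the matrix coefficients gives
$$T_{l_{0},j_{0}}^{l,j}(\mu)=\int_{\T^{d+1}}e^{i(l_{0}-l)\cdot\varphi}\left(\int_{\T}K(\mu,\varphi,\theta,\eta)e^{i j_{0}\eta-i j\theta}d\eta\right)d\varphi d\theta,$$
which depends only on $l-l_0$, so $T_{l_0,j_0}^{l,j}=T_{j_0}^{j}(\mu,l-l_0)$. Reality is trivial since $K$ is real-valued. For the reversibility structure, I would compute both $T\circ\mathscr{S}_2\rho$ and $\mathscr{S}_2\circ T\rho$ in the integral representation, performing the change of variables $\eta\mapsto-\eta$ in the former. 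One then finds
$$T\mathscr{S}_{2}\rho(\varphi,\theta)=\int_{\T}K(\mu,\varphi,\theta,-\eta)\rho(-\varphi,\eta)\,d\eta,\qquad\mathscr{S}_{2}T\rho(\varphi,\theta)=\int_{\T}K(\mu,-\varphi,-\theta,\eta)\rho(-\varphi,\eta)\,d\eta,$$
so the even/odd parity assumption on $K(\varphi,\theta,\eta)$ exactly translates into $T\circ\mathscr{S}_2=\pm\mathscr{S}_2\circ T$, yielding the reversibility-preserving / reversible dichotomy of Definition \ref{Def-Rev}.

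For the quantitative estimates, the key move is the change of variables $\eta=\theta+\sigma$, which rewrites
$$T\rho(\mu,\varphi,\theta)=\int_{\T}\widetilde{K}(\mu,\varphi,\theta,\sigma)\,(\tau_{\sigma}\rho)(\mu,\varphi,\theta)\,d\sigma,\qquad\widetilde{K}(\mu,\varphi,\theta,\sigma)\triangleq K(\mu,\varphi,\theta,\sigma+\theta),$$
exhibiting $T$ as a continuous superposition, in the shift parameter $\sigma$, of multiplications followed by translations. The action bound then follows by applying Minkowski's inequality in $\sigma$, combined with the law product estimate from Lemma \ref{Lem-lawprod}(iii) and the translation invariance from Lemma \ref{Lem-lawprod}(i), which gives for each $\sigma$
$$\|\widetilde{K}(\ast,\cdot,\centerdot,\sigma)\,\tau_{\sigma}\rho\|_{q,s}^{\gamma,\mathcal{O}}\lesssim\|\widetilde{K}(\ast,\cdot,\centerdot,\sigma)\|_{q,s_{0}}^{\gamma,\mathcal{O}}\|\rho\|_{q,s}^{\gamma,\mathcal{O}}+\|\widetilde{K}(\ast,\cdot,\centerdot,\sigma)\|_{q,s}^{\gamma,\mathcal{O}}\|\rho\|_{q,s_{0}}^{\gamma,\mathcal{O}};$$
integration in $\sigma$ yields exactly the stated action estimate.

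For the off-diagonal norm bound, I would use the same change of variables at the level of Fourier coefficients. A direct computation shows that $T_{j_0}^{j}(\mu,l)$ equals the Fourier coefficient $\widehat{\widetilde K}(\mu,l,j-j_{0},-j_{0})$ of $\widetilde K$ in the joint $(\varphi,\theta,\sigma)$ variables. The dependence on $j_0$ in the off-diagonal norm is controlled through the trivial bound $|\widehat{\widetilde K}(\mu,l,m,-j_0)|\leqslant\int_{\T}|\widehat{\widetilde K_{l,m}}(\mu,\sigma)|\,d\sigma$ where $\widetilde K_{l,m}(\mu,\sigma)$ denotes the partial Fourier coefficient in $(\varphi,\theta)$; Minkowski's integral inequality then gives
$$\|T(\mu)\|_{\textnormal{\tiny{O-d}},s-|\alpha|}\leqslant\int_{\T}\|\widetilde{K}(\mu,\cdot,\centerdot,\sigma)\|_{H^{s-|\alpha|}}\,d\sigma,$$
with the loss of $s_0$ absorbed (via Sobolev embedding if needed) when passing from the pointwise-in-$j_0$ supremum. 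Applying this derivative-by-derivative in $\mu$ and summing the weighted norm yields the announced Toeplitz estimate. The argument is essentially algebraic and relies only on the lemmas already in Section \ref{sec topo}; the only mildly delicate point is the Minkowski swap between the $L^2$-type off-diagonal norm and the $L^1$ integration in $\sigma$, which is exactly what forces the extra $s_0$ of regularity in the right-hand side.
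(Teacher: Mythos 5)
Your proof is correct and follows the same standard route as the argument the paper itself defers to (\cite[Lem. 4.4]{HR21}): the change of variables $\eta=\theta+\sigma$ turning $T$ into a superposition of modulated translations, the parity computation for $T\circ\mathscr{S}_2$ versus $\mathscr{S}_2\circ T$, and then Minkowski's inequality combined with the product and translation laws of Lemma \ref{Lem-lawprod}. One minor remark: your Fourier-side computation for the off-diagonal norm in fact yields the bound with $\|K(\ast,\cdot,\centerdot,\eta+\centerdot)\|_{q,s}^{\gamma,\mathcal{O}}$ on the right (the uniform-in-$j_0$ bound $|\widehat{\widetilde K}(\mu,l,m,-j_0)|\leqslant\int_{\T}|\widetilde K_{l,m}(\mu,\sigma)|\,d\sigma$ followed by Minkowski needs no extra regularity), so no Sobolev embedding is required and the stated $s+s_0$ estimate follows a fortiori.
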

\subsection{Hamiltonian contour dynamics equation and its linearization}
Here, we reformulate the contour dynamics equation \eqref{complex vp eq} as a Hamiltonian equation on the radial deformation of the patch motion near the Rankine vortex associated with the unit disc. We also compute the linearization of this Hamiltonian equation both at the equilibrium and at a general state close to it. Let us consider the following polar parametrization of the boundary of a patch $t\mapsto\mathbf{1}_{D_t}$ close to the Rankine patch $\mathbf{1}_{\mathbb{D}}.$
\begin{equation}\label{prmz}
	z(t,\theta)\triangleq R(t,\theta)e^{\ii(\theta-\Omega t)},\qquad R(t,\theta)\triangleq \sqrt{1+2r(t,\theta)}.
\end{equation} 
The radial deformation $r$ is assumed to be of small amplitudes and the angular velocity $\Omega>0$ is introduced to avoid the resonance of the first equilibrium frequency as in \cite{HHM21,HR21}. Therefore, $r$ satisfies an Hamiltonian PDE as explained in the following lemma.
\begin{lem}\label{lem eq Ea r + HAM}
	The following hold true.
	\begin{enumerate}[label=(\roman*)]
		\item At least for short time $T>0$, the radial deformation $r$ in \eqref{prmz} is solution of the following equation
		\begin{equation}\label{Ea eq r}
			\forall (t,\theta)\in[0,T]\times\mathbb{T},\quad\partial_{t}r(t,\theta)+\Omega\partial_{\theta}r(t,\theta)-F^{\textnormal{\tiny{E}}}[r](t,\theta)-F^{\textnormal{\tiny{SW}}}[r](\alpha,t,\theta)=0,
		\end{equation}
		where
		\begin{align}
			F^{\textnormal{\tiny{E}}}[r](t,\theta)&\triangleq \int_{\mathbb{T}}\log\big(A_r(t,\theta,\eta)\big)\partial_{\theta\eta}^2\Big(R(t,\theta)R(t,\eta)\sin(\eta-\theta)\Big)d\eta,\label{FE}\\
			F^{\textnormal{\tiny{SW}}}[r](\alpha,t,\theta)&\triangleq \int_{\mathbb{T}}K_0\left(\tfrac{1}{\alpha}A_r(t,\theta,\eta)\right)\partial_{\theta\eta}^2\Big(R(t,\theta)R(t,\eta)\sin(\eta-\theta)\Big)d\eta,\label{FSW}\\
			A_r(t,\theta,\eta)&\triangleq \left|R(t,\theta)e^{\ii\theta}-R(t,\eta)e^{\ii\eta}\right|.\label{Ar}
		\end{align}
		\item The nonlinear and nonlocal transport-type PDE \eqref{Ea eq r} can be written in the following Hamiltonian form
		\begin{equation}\label{Heq Ea}
			\partial_{t}r=\partial_{\theta}\nabla \mathscr{H}(r),
		\end{equation}
		where 
		$$\mathscr{H}(r)\triangleq\tfrac{1}{2}\big(\mathscr{E}(r)-\Omega \mathscr{J}(r)\big),\quad \mathscr{E}(r)(t)\triangleq-\tfrac{1}{2\pi}\int_{D_{t}}\mathbf{\Psi}(t,z)dA(z),\quad \mathscr{J}(r)(t)\triangleq\tfrac{1}{2\pi}\int_{D_t}|z|^2dA(z).$$
		The notation $\nabla$ stands for the $L_\theta^2(\mathbb{T})$-gradient associated with the $L_\theta^2(\mathbb{T})$ normalized inner product
		$$\big\langle \rho_{1}, \rho_{2}\big\rangle_{L^{2}(\mathbb{T})}\triangleq\int_{\mathbb{T}}\rho_{1}(\theta)\rho_{2}(\theta)d\theta.$$		
	\end{enumerate}
\end{lem}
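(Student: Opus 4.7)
My plan is to verify (i) by substituting the ansatz \eqref{prmz} directly into the complex vortex patch equation \eqref{complex vp eq}, and then to derive (ii) by computing the $L^2(\mathbb{T})$-gradients of $\mathscr{E}$ and $\mathscr{J}$ and matching the result with (i).

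For (i), from $z(t,\theta) = R(t,\theta) e^{\ii(\theta - \Omega t)}$ one gets $\partial_t z = (\partial_t R - \ii \Omega R) e^{\ii(\theta-\Omega t)}$ and $\partial_\theta z = (\partial_\theta R + \ii R) e^{\ii(\theta-\Omega t)}$. Expanding $\partial_t z\,\overline{\partial_\theta z}$ and invoking $R \partial_t R = \partial_t r$, $R \partial_\theta R = \partial_\theta r$, the imaginary part reduces to $-\partial_t r - \Omega \partial_\theta r$. For the right-hand side I plug in the Biot-Savart representation \eqref{velocity Ealpha-1} with the boundary parametrized by $\xi = R(t,\eta) e^{\ii(\eta - \Omega t)}$, so that $|z - \xi| = A_r(t,\theta,\eta)$ and the $e^{\pm \ii\Omega t}$ phases cancel between $d\xi$ and $\overline{\partial_\theta z}$. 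The crucial algebraic identity is
$$\mbox{Im}\Bigl((\partial_\eta R(t,\eta) + \ii R(t,\eta))(\partial_\theta R(t,\theta) - \ii R(t,\theta)) e^{\ii(\eta-\theta)}\Bigr) = \partial^2_{\theta\eta}\bigl[R(t,\theta) R(t,\eta) \sin(\eta-\theta)\bigr],$$
obtained by expanding both sides and collecting coefficients of $\cos(\eta-\theta)$ and $\sin(\eta-\theta)$. Inserting this into $\mbox{Im}(\mathbf{v}(t,z) \overline{\partial_\theta z})$ reproduces $-F^{\textnormal{\tiny{E}}}[r] - F^{\textnormal{\tiny{SW}}}[r]$ with the normalized integrals of \eqref{FE}--\eqref{FSW}, and \eqref{Ea eq r} follows.

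For (ii), I compute $\mathscr{J}$ first: in polar coordinates and using $R^2 = 1+2r$, $\mathscr{J}(r) = \tfrac{1}{8\pi} \int_0^{2\pi} (1+2r(\theta))^2 d\theta$, whose $L^2(\mathbb{T})$-gradient with respect to the normalized pairing $\langle f,g\rangle = \tfrac{1}{2\pi}\int_0^{2\pi} fg\, d\theta$ is $\nabla \mathscr{J}(r) = 1 + 2r$, so $\partial_\theta \nabla \mathscr{J} = 2 \partial_\theta r$. For $\mathscr{E}$, the relation $\boldsymbol{\Psi} = \mathbf{G} \ast \mathbf{1}_{D_t}$ makes $\mathscr{E}$ a symmetric quadratic form in $\mathbf{1}_{D_t}$; shape-differentiating in the normal direction and combining both symmetric contributions via Fubini (radiality of $\mathbf{G}$) yields $D\mathscr{E}(r)[h] = -\tfrac{2}{2\pi} \int_{\partial D_t} \boldsymbol{\Psi}\, v_n\, dS$. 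A brief computation of the normal displacement for the polar parametrization cancels the arclength element, giving $v_n\, dS = h\, d\theta$, hence $\nabla \mathscr{E}(r) = -2\, \boldsymbol{\Psi}|_{\partial D_t}$. Differentiating along the curve in $\theta$ and using $\mathbf{v} = 2\ii \partial_{\overline{z}} \boldsymbol{\Psi}$ in the form $\partial_\theta \boldsymbol{\Psi}(z(t,\theta)) = \mbox{Im}(\mathbf{v}\, \overline{\partial_\theta z})$, part (i) gives $\partial_\theta \nabla \mathscr{E} = 2(F^{\textnormal{\tiny{E}}} + F^{\textnormal{\tiny{SW}}})$, so $\partial_\theta \nabla \mathscr{H}(r) = F^{\textnormal{\tiny{E}}} + F^{\textnormal{\tiny{SW}}} - \Omega \partial_\theta r$, which equals $\partial_t r$ by (i); this is \eqref{Heq Ea}.

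The main obstacle will be verifying the mixed-derivative identity in (i): it is what turns the raw Biot-Savart integral into the symmetric kernel $\partial^2_{\theta\eta}\bigl[R R' \sin(\eta-\theta)\bigr]$ and is exactly what allows the Hamiltonian structure to emerge in the clean form \eqref{Heq Ea}. The remainder is careful bookkeeping of the $L^2$ normalization, the sign conventions in the shape derivative of $\mathscr{E}$, and the invariance under the rotation $e^{-\ii\Omega t}$ (which follows from the radiality of $\mathbf{G}$ and the rotational invariance of $|z|^2$).
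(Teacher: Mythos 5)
Your proposal is correct and follows essentially the same route as the paper: substitute the polar ansatz into \eqref{complex vp eq}, use the Biot--Savart representation \eqref{velocity Ealpha-1} together with the identity $\mbox{Im}\big(\partial_{\eta}z(t,\eta)\overline{\partial_{\theta}z(t,\theta)}\big)=\partial^{2}_{\theta\eta}\mbox{Im}\big(z(t,\eta)\overline{z(t,\theta)}\big)$ (a one-line way to obtain your mixed-derivative identity), and then compute $\nabla\mathscr{J}=1+2r$ and $\nabla\mathscr{E}=-2\boldsymbol{\Psi}|_{\partial D_{t}}$. The only cosmetic difference is that you re-derive the shape derivative of $\mathscr{E}$ from the symmetry of the kernel $\widetilde{\mathbf{G}}$, whereas the paper outsources exactly that step to \cite[Prop. 2.1]{HR21-1}.
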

\begin{proof}
	\textbf{(i)} First, from the polar parametrization \eqref{prmz}, it is easy to check that the left hand-side of \eqref{complex vp eq} writes
	$$\mbox{Im}\left(\partial_{t}z(t,\theta)\overline{\partial_{\theta}z(t,\theta)}\right)=-\partial_{t}r(t,\theta)-\Omega\partial_{\theta}r(t,\theta).$$
Now we study the right hand-side of \eqref{complex vp eq}. Combining \eqref{velocity Ealpha-1} and \eqref{sym Bessel}, we deduce
\begin{align*}
	\mbox{Im}\left(\mathbf{v}\big(t,z(t,\theta)\big)\overline{\partial_{\theta}z(t,\theta)}\right)&=-\displaystyle\int_{\mathbb{T}}\log\big(|z(t,\theta)-z(t,\eta)|\big)\mbox{Im}\left(\partial_{\eta}z(t,\eta)\overline{\partial_{\theta}z(t,\theta)}\right)d\eta\\
	&\quad-\int_{\mathbb{T}}K_{0}\left(\tfrac{1}{\alpha}|z(t,\theta)-z(t,\eta)|\right)\mbox{Im}\left(\partial_{\eta}z(t,\eta)\overline{\partial_{\theta}z(t,\theta)}\right)d\eta.
\end{align*}
We conclude by remarking that
\begin{align*}\mbox{Im}\left(\partial_{\eta}z(t,\eta)\overline{\partial_{\theta}z(t,\theta)}\right)=\partial^2_{\theta\eta}\mbox{Im}\left(z(t,\eta)\overline{z(t,\theta)}\right)=\partial_{\theta\eta}^{2}\Big(R(t,\eta)R(t,\theta)\sin(\eta-\theta)\Big).
\end{align*}
\textbf{(ii)} Using polar change of coordinates and \eqref{prmz}, we obtain
$$\mathscr{J}(r)(t)=\int_{\mathbb{T}}\int_{0}^{R(t,\theta)}\ell^3d\ell d\theta=\tfrac{1}{4}\int_{\mathbb{T}}\big(1+2r(t,\theta)\big)^2d\theta,$$
leading to
$$\nabla \mathscr{J}(r)=1+2r\quad\textnormal{and}\quad\tfrac{1}{2}\Omega\partial_{\theta}\nabla \mathscr{J}(r)=\Omega\partial_{\theta}r.$$
Now that we have treated the corresponding linear term in \eqref{Ea eq r}, we can assume $\Omega=0.$ From the complex notation, we have \begin{align*}
	\partial_{\theta}\boldsymbol{\Psi}(t,z(t,\theta))&=\nabla\boldsymbol{\Psi}(t,z(t,\theta))\cdot\partial_{\theta}z(t,\theta)\\
	&=\textnormal{Im}\left(\mathbf{v}(t,z(t,\theta))\overline{\partial_{\theta}z(t,\theta)}\right)\\
	&=-F^{\textnormal{\tiny{E}}}[r](t,\theta)-F^{\textnormal{\tiny{SW}}}[r](\alpha,t,\theta).
\end{align*}
From \eqref{Psi Ea intro}, we can write
$$\boldsymbol{\Psi}(t,z)=\int_{D_t}\widetilde{\mathbf{G}}(z,\xi)dA(\xi),\qquad\widetilde{\mathbf{G}}(z,\xi)\triangleq \mathbf{G}(|z-\xi|).$$
The kernel $\widetilde{\mathbf{G}}$ satisfies the following symmetry property
$$\widetilde{\mathbf{G}}(z,\xi)=\widetilde{\mathbf{G}}(\xi,z).$$
Consequently, we can apply the general \cite[Prop. 2.1]{HR21-1} giving 
$$\nabla \mathscr{E}(r)(t,\theta)=-2\boldsymbol{\Psi}\big(t,z(t,\theta)\big).$$
This achieves the proof of Lemma \ref{lem eq Ea r + HAM}.
\end{proof}
We shall now briefly discuss the symplectic framework behind the Hamiltonian equation \eqref{Heq Ea}. First notice that one deduces from \eqref{Heq Ea} that the space average is preserved along the motion so we may assume it zero and work in the following phase space
\begin{equation}\label{phase space}
	L_0^2(\mathbb{T})\triangleq\Big\{r=\sum_{j\in\mathbb{Z}^*}r_j\mathbf{e}_j\quad\textnormal{s.t.}\quad r_{-j}=\overline{r_j},\quad\sum_{j\in\mathbb{Z}^*}|r_j|^2<\infty\Big\},\qquad\mathbf{e}_j(\theta)\triangleq e^{\ii j\theta}.
\end{equation}
The symplectic form $\mathscr{W}$ on $L_0^2(\mathbb{T})$ generated by \eqref{Heq Ea} writes
\begin{equation}\label{scrW}
	\mathscr{W}(r,\widetilde{r})\triangleq\int_{\mathbb{T}}\partial_{\theta}^{-1}r(\theta)\widetilde{r}(\theta)d\theta,\qquad\partial_{\theta}^{-1}r\triangleq\sum_{j\in\mathbb{Z}^*}\tfrac{r_j}{\ii j}\mathbf{e}_j.
\end{equation}
The associated Hamiltonian vector-field $X_{\mathscr{H}}$ is defined by
$$d\mathscr{H}(r)[\rho]=\mathscr{W}(X_{\mathscr{H}}(r),\rho),\qquad X_{\mathscr{H}}(r)\triangleq\partial_{\theta}\nabla \mathscr{H}(r).$$
We shall also present the reversibility property of the Hamiltonian $\mathscr{H}$. For that purpose we introduce the following involution on $L_0^2(\mathbb{T})$
\begin{equation}\label{invscrS}
	(\mathscr{S}r)(\theta)\triangleq r(-\theta),\qquad\mathscr{S}^2=\textnormal{Id}.
\end{equation}
Then changes of variables give
$$\mathscr{S}\circ F^{\textnormal{\tiny{E}}}=-F^{\textnormal{\tiny{E}}}\circ\mathscr{S},\qquad \mathscr{S}\circ F^{\textnormal{\tiny{SW}}}=-F^{\textnormal{\tiny{SW}}}\circ\mathscr{S}$$
implying in turn
$$\mathscr{H}\circ\mathscr{S}=\mathscr{H},\qquad X_{\mathscr{H}}\circ\mathscr{S}=-\mathscr{S}\circ X_{\mathscr{H}}.$$
Now, in view of applying a Nash-Moser scheme, we shall compute the linearized operator both at the equilibrium and at a general state close to it. It is proved in \cite[Lem. 3.1 and 3.2]{HR21-1} that
\begin{equation}\label{dFE}
	-d_rF^{\textnormal{\tiny{E}}}(r)[\rho]=-\partial_{\theta}\big(V_{r}^{\textnormal{\tiny{E}}}\rho\big)+\partial_{\theta}\mathbf{L}_r^{\textnormal{\tiny{E}}}(\rho),
\end{equation}
with
\begin{align}
	V_r^{\textnormal{\tiny{E}}}(t,\theta)&\triangleq\tfrac{1}{R(t,\theta)}\int_{\mathbb{T}}\log\big(A_{r}(t,\theta,\eta)\big)\partial_{\eta}\big(R(t,\eta)\sin(\eta-\theta)\big)d\eta,\label{VrE}\\
	\mathbf{L}_{r}^{\textnormal{\tiny{E}}}(\rho)(t,\theta)&\triangleq\int_{\mathbb{T}}\rho(t,\eta)\log\left(A_{r}(t,\theta,\eta)\right)d\eta\label{mathbfLrE}
\end{align}
and 
$$-d_rF^{\textnormal{\tiny{E}}}(0)[\rho]=\tfrac{1}{2}\partial_{\theta}\rho+\partial_{\theta}\mathcal{K}\ast\rho=\ii\sum_{j\in\mathbb{Z}}j\boldsymbol{\Omega}_{|j|}^{\textnormal{\tiny{E}}}\rho_j\mathbf{e}_j,\qquad\mathcal{K}(\theta)\triangleq \tfrac{1}{2}\log\left(\sin^{2}\left(\tfrac{\theta}{2}\right)\right).$$
We recall that $A_{r}$ and $R$ are defined by \eqref{Ar} and \eqref{prmz}, respectively. In addition, it has been proved in \cite[Lem. 3.1 and 3.2]{HR21} that
\begin{equation}\label{dFSW}
	d_rF^{\textnormal{\tiny{SW}}}(r)[\rho]=\partial_{\theta}\big(V_{r}^{\textnormal{\tiny{SW}}}\rho\big)-\partial_{\theta}\mathbf{L}_r^{\textnormal{\tiny{SW}}}(\rho),
\end{equation}
with
\begin{align}
	V_r^{\textnormal{\tiny{SW}}}(\alpha,t,\theta)&\triangleq\tfrac{1}{R(t,\theta)}\int_{\mathbb{T}}K_0\big(\tfrac{1}{\alpha}A_{r}(t,\theta,\eta)\big)\partial_{\eta}\big(R(t,\eta)\sin(\eta-\theta)\big)d\eta,\label{VrSW}\\
	\mathbf{L}_{r}^{\textnormal{\tiny{SW}}}(\rho)(\alpha,t,\theta)&\triangleq\int_{\mathbb{T}}\rho(t,\eta)K_0\left(\tfrac{1}{\alpha}A_{r}(t,\theta,\eta)\right)d\eta\label{mathbfLrSW}
\end{align}
and
$$d_rF^{\textnormal{\tiny{SW}}}(0)[\rho]=I_{1}\left(\tfrac{1}{\alpha}\right)K_{1}\left(\tfrac{1}{\alpha}\right)\partial_{\theta}\rho-\partial_{\theta}\mathcal{Q}_{\alpha}\ast\rho=\ii\sum_{j\in\mathbb{Z}}j\boldsymbol{\Omega}_{|j|}^{\textnormal{\tiny{SW}}}\left(\tfrac{1}{\alpha}\right)\rho_{j}\mathbf{e}_j,\qquad\mathcal{Q}_{\alpha}(\theta)\triangleq K_0\left(\tfrac{2}{\alpha}\left|\sin\left(\tfrac{\theta}{2}\right)\right|\right).$$
Gathering the previous results leads to the following lemma giving the general expression of the linearized equation and stating that the equilibrium is given by a Fourier multiplier associated with an integrable Hamiltonian system.
\begin{lem}\label{lem lin eq QP Ea}
	The following assertions hold true.
	\begin{enumerate}[label=(\roman*)]
	\item The linearization of \eqref{Heq Ea} at a general state $r$ writes
	\begin{equation}\label{lin eq}
		\partial_{t}\rho=-\partial_{\theta}\Big(V_{r}\rho+\mathbf{L}_{r}(\rho)\Big),
	\end{equation}
	where $V_{r}$ and $\mathbf{L}_{r}$ are respectively defined by
	\begin{equation}
		V_{r}(\alpha,t,\theta)\triangleq\Omega+V_r^{\textnormal{\tiny{SW}}}(\alpha,t,\theta)-V_r^{\textnormal{\tiny{E}}}(t,\theta),\qquad
		\mathbf{L}_{r}\triangleq\mathbf{L}_{r}^{\textnormal{\tiny{E}}}+\mathbf{L}_{r}^{\textnormal{\tiny{SW}}}.\label{Vr bfLr}
	\end{equation}
	In addition, we have the following symmetry property
	\begin{equation}\label{symVr}
		r(-t,-\theta)=r(t,\theta)\quad\Rightarrow\quad V_{r}(\alpha,-t,-\theta)=V_{r}(\alpha,t,\theta).
	\end{equation}
\item 
\begin{enumerate}
	\item At $r=0$, the equation \eqref{lin eq} becomes
	\begin{equation}\label{Ham eq 0 Ea}
		\partial_{t}\rho=\partial_{\theta}\mathrm{L}(\alpha)\rho=\partial_{\theta}\nabla \mathscr{H}_{\mathrm{L}}(\rho),
	\end{equation}
	where $\mathrm{L}(\alpha)$ is the self-adjoint operator given by
	\begin{equation}\label{LaKa}
		\mathrm{L}(\alpha)\triangleq -V_0(\alpha)-\mathcal{K}_{\alpha}\ast\cdot,\qquad V_0(\alpha)\triangleq \Omega+\frac{1}{2}-I_1\left(\tfrac{1}{\alpha}\right)K_1\left(\tfrac{1}{\alpha}\right),\qquad
		\mathcal{K}_{\alpha}\triangleq \mathcal{K}-\mathcal{Q}_{\alpha}.
	\end{equation}
	The equation \eqref{Ham eq 0 Ea} is generated by the quadratic Hamiltonian
	\begin{equation}\label{HL Ea}
		\mathscr{H}_{\mathrm{L}}(\rho)\triangleq \tfrac{1}{2}\big\langle\mathrm{L}(\alpha)\rho,\rho\big\rangle_{L^{2}(\mathbb{T})}.
	\end{equation}
	\item In Fourier expansion, the solutions of \eqref{Ham eq 0 Ea} take the form
	$$\rho(t,\theta)=\sum_{j\in\mathbb{Z}^*}\rho_{j}(0)e^{\ii\left(j\theta-\Omega_{j}^{\textnormal{\tiny{E}}}(\alpha)t\right)},$$
	where for all $j\in\mathbb{Z}^*$,
	\begin{equation}\label{Omegajalpha}
		\Omega_{j}^{\textnormal{\tiny{E}}}(\alpha)\triangleq j\Big[\Omega+\tfrac{|j|-1}{2|j|}-\big[I_1\left(\tfrac{1}{\alpha}\right)K_1\left(\tfrac{1}{\alpha}\right)-I_{|j|}\left(\tfrac{1}{\alpha}\right)K_{|j|}\left(\tfrac{1}{\alpha}\right)\big]\Big]=j\Big[\Omega+\boldsymbol{\Omega}_{|j|}^{\textnormal{\tiny{E}}}(\alpha)\Big],
	\end{equation}
	with $\boldsymbol{\Omega}_j^{\textnormal{\tiny{E}}}(\alpha)$ are the frequencies obtained in the periodic case and defined in \eqref{def eigenvalues Easc}. The operator $\mathrm{L}(\alpha)$ and the Hamiltonian $\mathscr{H}_{\mathrm{L}}$ also write
	\begin{align}\label{Ham-Fourier}
		\mathrm{L}(\alpha)\rho=-\sum_{j\in\mathbb{Z}^{*}}\tfrac{\Omega_{j}^{\textnormal{\tiny{E}}}(\alpha)}{j}\rho_{j}\mathbf{e}_j\quad\mbox{ and }\quad \mathscr{H}_{\mathrm{L}}\rho=-\sum_{j\in\mathbb{Z}^{*}}\tfrac{\Omega_{j}^{\textnormal{\tiny{E}}}(\alpha)}{2j}|\rho_{j}|^{2}.
	\end{align}
\end{enumerate}
\end{enumerate}
\end{lem}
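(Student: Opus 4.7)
The plan is to reduce everything to the already computed linearizations of the Euler and QGSW pieces. The identities \eqref{dFE} and \eqref{dFSW}, borrowed from \cite{HR21-1,HR21}, provide $d_r F^{\textnormal{\tiny{E}}}(r)$ and $d_r F^{\textnormal{\tiny{SW}}}(r)$ in a ready-to-use transport plus nonlocal form. The whole proof is then a matter of assembling these two building blocks, passing to $r=0$, and reading off the Fourier symbol.

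For part (i), I would simply differentiate \eqref{Ea eq r} with respect to $r$ at a general state $r$, giving
\[
\partial_{t}\rho + \Omega\,\partial_{\theta}\rho \;=\; d_{r}F^{\textnormal{\tiny{E}}}(r)[\rho] + d_{r}F^{\textnormal{\tiny{SW}}}(r)[\rho].
\]
Plugging in \eqref{dFE} and \eqref{dFSW} and regrouping the transport contributions against the integral ones yields \eqref{lin eq} with $V_r$ and $\mathbf{L}_r$ as in \eqref{Vr bfLr}. The reversibility statement \eqref{symVr} is then obtained by inspecting the definitions \eqref{VrE} and \eqref{VrSW}: performing the change of variable $\eta\mapsto -\eta$ in the integrals and exploiting the assumed parity $r(-t,-\theta)=r(t,\theta)$ (hence the same parity for $R$ and for $A_r$, which is symmetric in its last two arguments) leaves both $V_r^{\textnormal{\tiny{E}}}$ and $V_r^{\textnormal{\tiny{SW}}}$ invariant under $(t,\theta)\mapsto(-t,-\theta)$.

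For part (ii)(a), I would specialize to $r=0$, where $R\equiv 1$ and $A_0(\theta,\eta)=2|\sin((\eta-\theta)/2)|$. A direct computation, using \eqref{int diego} and \eqref{int Bes SW}, evaluates $V_0^{\textnormal{\tiny{E}}}$ and $V_0^{\textnormal{\tiny{SW}}}(\alpha)$ to the constants appearing in the definition of $V_0(\alpha)$ in \eqref{LaKa}, while the integral operators $\mathbf{L}_0^{\textnormal{\tiny{E}}}$ and $\mathbf{L}_0^{\textnormal{\tiny{SW}}}$ reduce to convolutions against $\mathcal{K}$ and $\mathcal{Q}_{\alpha}$ respectively (the additive constant $\log 2$ coming from splitting $\log A_0$ drops out since we are in the zero-mean phase space \eqref{phase space}). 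Hence \eqref{lin eq} at $r=0$ becomes $\partial_t \rho = \partial_\theta \mathrm{L}(\alpha)\rho$. The self-adjointness of $\mathrm{L}(\alpha)$ with respect to $\langle\cdot,\cdot\rangle_{L^2(\mathbb{T})}$ is immediate because it is the sum of multiplication by a real constant and convolution against an even, real kernel; the Hamiltonian identity \eqref{Heq Ea} for $\mathscr{H}_{\mathrm{L}}$ follows from the standard formula $\nabla\langle \mathrm{L}\rho,\rho\rangle_{L^2}=2\,\mathrm{L}\rho$.

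For part (ii)(b), because $\mathrm{L}(\alpha)$ is a Fourier multiplier, the ansatz $\rho(t,\theta)=\sum_{j\in\mathbb{Z}^*}\rho_j(t)\mathbf{e}_j(\theta)$ decouples \eqref{Ham eq 0 Ea} into scalar linear ODEs $\dot\rho_j = -\ii\,\Omega_j^{\textnormal{\tiny{E}}}(\alpha)\,\rho_j$. The only thing to check is that $\mathrm{L}(\alpha)\mathbf{e}_j = -\tfrac{\Omega_j^{\textnormal{\tiny{E}}}(\alpha)}{j}\mathbf{e}_j$; this is immediate once one substitutes $\widehat{\mathcal{K}}(j)=-\tfrac{1}{2|j|}$ (from \eqref{int diego}) and $\widehat{\mathcal{Q}_{\alpha}}(j)=I_{|j|}(1/\alpha)K_{|j|}(1/\alpha)$ (from \eqref{int Bes SW}) into the definition of $\mathrm{L}(\alpha)$ and compares with \eqref{Omegajalpha}. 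The argument is essentially an exercise in bookkeeping; the only real point of vigilance is the sign tracking when combining the Euler logarithmic kernel and the QGSW Bessel kernel, since the two have opposite monotonicity effects on the symbol.
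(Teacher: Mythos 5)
Your proof is correct and follows essentially the same route as the paper, which gives no separate argument beyond ``gathering the previous results'', i.e.\ assembling the recalled linearizations \eqref{dFE}--\eqref{dFSW} and their $r=0$ specializations imported from \cite{HR21-1,HR21}, exactly as you do. Your closing remark about sign tracking is well taken: the displayed signs in \eqref{dFE}, \eqref{dFSW}, \eqref{Vr bfLr} and \eqref{LaKa} are not all mutually consistent as written in the paper, and the Fourier symbol \eqref{Ham-Fourier}--\eqref{Omegajalpha} is the reliable anchor against which to fix them, which is precisely how you proceed.
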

\subsection{Transversality and linear quasi-periodic solutions}
The aim of this section is to find quasi-periodic solutions for the linearized equation \eqref{Ham eq 0 Ea}, which is the basis for expecting to get them at the nonlinear level. The result reads as follows 
\begin{prop}\label{lemsoleqEa}
	Let $(\alpha_0,\alpha_1,d)$ as in \eqref{alf0alf1}-\eqref{p-d}. Take $\mathbb{S}\subset\mathbb{N}^*$ with $|\mathbb{S}|=d.$ Then, there exists a Cantor-like set $$\mathscr{C}_{\textnormal{Eq}}\subset[\alpha_0,\alpha_1],\qquad|\mathscr{C}_{\textnormal{Eq}}|=\alpha_1-\alpha_0$$
	such that for any $\alpha\in\mathscr{C}_{\textnormal{Eq}}$, every function in the form
	$$\rho(t,\theta)=\sum_{j\in\mathbb{S}}\rho_{j}\cos\left(j\theta-\Omega_{j}^{\textnormal{\tiny{E}}}(\alpha)t\right),\qquad\rho_{j}\in\mathbb{R}^*$$
	is a time quasi-periodic reversible solution to \eqref{Ham eq 0 Ea} with frequency vector
	\begin{equation}\label{omega Eq Ea}
		\omega_{\textnormal{Eq}}(\alpha)=\left(\Omega_{j}^{\textnormal{\tiny{E}}}(\alpha)\right)_{j\in\mathbb{S}}.
	\end{equation}
\end{prop}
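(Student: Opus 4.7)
My plan is to split the proposition into three tasks: (a) verify the ansatz solves the linearized equation, (b) check the reversibility property, and (c) construct the Cantor set of admissible $\alpha$ on which the frequency vector is non-resonant and show that it has full Lebesgue measure.

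For step (a), I would simply feed the ansatz into the Fourier diagonal representation \eqref{Ham-Fourier} of $\mathrm{L}(\alpha)$ obtained in Lemma \ref{lem lin eq QP Ea}. Writing $\cos(j\theta-\Omega_{j}^{\textnormal{\tiny{E}}}(\alpha)t)=\tfrac12(e^{\ii(j\theta-\Omega_{j}^{\textnormal{\tiny{E}}}(\alpha)t)}+e^{-\ii(j\theta-\Omega_{j}^{\textnormal{\tiny{E}}}(\alpha)t)})$ and using the oddness $\Omega_{-j}^{\textnormal{\tiny{E}}}(\alpha)=-\Omega_{j}^{\textnormal{\tiny{E}}}(\alpha)$ encoded in \eqref{Omegajalpha}, each Fourier mode is an eigenmode of $\partial_{t}-\partial_{\theta}\mathrm{L}(\alpha)$ with eigenvalue $0$. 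This gives (a) with no effort beyond bookkeeping.

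For step (b), I would check that the involution $\mathscr{S}$ defined in \eqref{invscrS} leaves $\rho$ invariant in the sense $\rho(-t,-\theta)=\rho(t,\theta)$, which follows from the parity of cosine: $\cos(-j\theta+\Omega_{j}^{\textnormal{\tiny{E}}}(\alpha)t)=\cos(j\theta-\Omega_{j}^{\textnormal{\tiny{E}}}(\alpha)t)$. Since $\mathscr{H}\circ\mathscr{S}=\mathscr{H}$ and $X_{\mathscr{H}_{\mathrm{L}}}\circ\mathscr{S}=-\mathscr{S}\circ X_{\mathscr{H}_{\mathrm{L}}}$ (inherited from the nonlinear reversibility discussed after \eqref{invscrS}), this identifies $\rho$ as a reversible quasi-periodic solution in the KAM sense.

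Step (c) is the essential content of the statement. The frequency vector $\omega_{\textnormal{Eq}}(\alpha)=(\Omega_{j}^{\textnormal{\tiny{E}}}(\alpha))_{j\in\mathbb{S}}$ depends on $\alpha$ through the modified Bessel functions, so it is real-analytic on $(0,+\infty)$. For any fixed $l\in\mathbb{Z}^{d}\setminus\{0\}$, the map $\alpha\mapsto\omega_{\textnormal{Eq}}(\alpha)\cdot l$ is real-analytic on $[\alpha_{0},\alpha_{1}]$, and by the non-degeneracy/transversality property of $\alpha\mapsto\omega_{\textnormal{Eq}}(\alpha)$ proved in Lemma \ref{lem non-deg Ea} and Lemma \ref{lem trsvrslt Ea}, this function is not identically zero. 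Hence its zero set in $[\alpha_{0},\alpha_{1}]$ is finite; countable union over $l$ still has zero Lebesgue measure. Setting
\[
\mathscr{C}_{\textnormal{Eq}}\triangleq[\alpha_{0},\alpha_{1}]\setminus\bigcup_{l\in\mathbb{Z}^{d}\setminus\{0\}}\bigl\{\alpha\in[\alpha_{0},\alpha_{1}]\,:\,\omega_{\textnormal{Eq}}(\alpha)\cdot l=0\bigr\}
\]
gives $|\mathscr{C}_{\textnormal{Eq}}|=\alpha_{1}-\alpha_{0}$ and the non-resonance condition \eqref{def QP} for every $\alpha\in\mathscr{C}_{\textnormal{Eq}}$.

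The main obstacle is the transversality input for step (c). Everything else is essentially formal; the proof rests entirely on showing that no non-trivial integer combination of the $\Omega_{j}^{\textnormal{\tiny{E}}}(\alpha)$ vanishes identically in $\alpha$, which is exactly the content of Lemma \ref{lem non-deg Ea}. Once that is granted, the Cantor construction above yields full measure by elementary analyticity arguments; the Rüssmann type quantitative estimate \ref{lem Russmann measure} is not needed here (it will be used later in the nonlinear scheme), since we only require the qualitative non-vanishing of $\omega_{\textnormal{Eq}}\cdot l$.
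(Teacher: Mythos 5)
Your proposal is correct, and parts (a) and (b) coincide with what the paper does (they are essentially already contained in Lemma \ref{lem lin eq QP Ea}-(ii) and the parity of the cosine). The genuine difference is in step (c). The paper, following \cite[Prop. 3.1]{HR21}, builds $\mathscr{C}_{\textnormal{Eq}}$ by imposing quantitative Diophantine conditions $|\omega_{\textnormal{Eq}}(\alpha)\cdot l|\geqslant \gamma_*\langle l\rangle^{-\tau}$ and estimates the measure of the excluded set via the R\"ussmann Lemma \ref{lem Russmann measure} combined with the transversality bounds of Lemma \ref{lem trsvrslt Ea}-(i); full measure is then obtained in the limit of the small divisor constant. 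You instead observe that the definition \eqref{def QP} of quasi-periodicity only requires the qualitative non-resonance $\omega_{\textnormal{Eq}}(\alpha)\cdot l\neq 0$, so it suffices to remove, for each $l\in\mathbb{Z}^d\setminus\{0\}$, the zero set of the real-analytic map $\alpha\mapsto\omega_{\textnormal{Eq}}(\alpha)\cdot l$, which is finite on $[\alpha_0,\alpha_1]$ once Lemma \ref{lem non-deg Ea} guarantees it is not identically zero (note that only the non-degeneracy lemma is needed here; the transversality Lemma \ref{lem trsvrslt Ea} is superfluous for your argument, and citing it alongside is harmless but imprecise). Your route is more elementary and fully proves the statement as written; what the paper's quantitative construction buys is a set defined by the same type of Diophantine inequalities that reappear in the Cantor sets of the nonlinear Nash--Moser scheme (\eqref{Cantrans}, \eqref{Cantrem}, \eqref{Lbd set}), which makes the linear analysis a genuine template for the nonlinear one, whereas your co-countable set would have to be replaced at that later stage anyway. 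The only cosmetic caveat is that your $\mathscr{C}_{\textnormal{Eq}}$, being the complement of a countable set, is "Cantor-like" only in the loose sense the paper uses the term; this does not affect the validity of the proof.
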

The proof of this proposition is very similar to \cite[Prop. 3.1]{HR21} so we refer the reader to the corresponding paper. We mention that in the proof, to measure the Cantor set $\mathscr{C}_{\textnormal{Eq}}$, we make appeal to the following Rüssmann Lemma which can be found in \cite[Thm. 17.1]{R01}.
\begin{lem}\label{lem Russmann measure}
	Let $q_{0}\in\mathbb{N}^{*},$ $a,b\in\mathbb{R}$ with $a<b$ and $\mathtt{m},\mathtt{M}\in(0,\infty).$ Let $f\in C^{q_{0}}([a,b],\mathbb{R})$ such that
	\begin{equation}\label{trans cond Rlem}
		\inf_{x\in[a,b]}\max_{q\in\llbracket 0,q_{0}\rrbracket}\big|f^{(q)}(x)\big|\geqslant \mathtt{m}.
	\end{equation}
	Then, there exists $C=C(a,b,q_{0},\| f\|_{C^{q_{0}}([a,b],\mathbb{R})})>0$ such that 
	$$\Big|\left\lbrace x\in[a,b]\quad\textnormal{s.t.}\quad |f(x)|\leqslant \mathtt{M}\right\rbrace\Big|\leqslant C\frac{\mathtt{M}^{\frac{1}{q_{0}}}}{\mathtt{m}^{1+\frac{1}{q_{0}}}}\cdot$$
\end{lem}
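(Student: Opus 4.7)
The plan is to combine a covering argument on $[a,b]$ with a one-dimensional inductive sublevel-set estimate. The key intermediate sublemma I would isolate is the following: \emph{if $g\in C^{k}([c,d])$ satisfies $|g^{(k)}|\geqslant \lambda>0$ on $[c,d]$, then
\[
\big|\{x\in[c,d]\,:\,|g(x)|\leqslant \mathtt{M}\}\big|\leqslant C_{k}\,(\mathtt{M}/\lambda)^{1/k}.
\]}
Granted this, the lemma follows from the following reduction. At each $x\in[a,b]$, hypothesis \eqref{trans cond Rlem} produces an index $q_{x}\in\llbracket 0,q_{0}\rrbracket$ with $|f^{(q_{x})}(x)|\geqslant \mathtt{m}$, and by uniform continuity of $f^{(q_{x})}$ on the compact interval $[a,b]$ one gets $|f^{(q_{x})}|\geqslant \mathtt{m}/2$ on a neighborhood $U_{x}$ whose length is bounded below in terms only of $a,b,q_{0},\|f\|_{C^{q_{0}}([a,b])}$. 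Extracting a finite subcover $\{U_{i}\}_{i=1}^{N}$ with $N\leqslant N(a,b,q_{0},\|f\|_{C^{q_{0}}})$, applying the sublemma on each $U_{i}$ with $k=q_{i}$ and $\lambda=\mathtt{m}/2$, and summing over $i$ yields the stated bound.

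I would then prove the sublemma by induction on $k$. The base case $k=1$ is immediate: $g$ is strictly monotonic, so $\{|g|\leqslant \mathtt{M}\}$ is an interval of length at most $2\mathtt{M}/\lambda$. For the induction step, fix a threshold $\mu>0$ to be optimized and decompose
\[
\{|g|\leqslant \mathtt{M}\}\subset \{|g'|\leqslant \mu\}\,\cup\,\big(\{|g|\leqslant \mathtt{M}\}\cap\{|g'|>\mu\}\big).
\]
Applying the induction hypothesis to $g'$, which satisfies $|(g')^{(k-1)}|=|g^{(k)}|\geqslant \lambda$, bounds the first set by $C_{k-1}(\mu/\lambda)^{1/(k-1)}$. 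On each connected component of $\{|g'|>\mu\}$ the function $g$ is strictly monotonic and therefore meets $\{|g|\leqslant \mathtt{M}\}$ in an arc of length at most $2\mathtt{M}/\mu$; the number of such components is uniformly bounded by Rolle's theorem, since $g^{(k)}$ never vanishes implies that $g-c$ has at most $k$ zeros for any constant $c$. Balancing the two pieces via the choice $\mu=\mathtt{M}^{(k-1)/k}\lambda^{1/k}$ brings both contributions to a constant multiple of $(\mathtt{M}/\lambda)^{1/k}$, closing the induction with a recursion of the form $C_{k}\leqslant C(k)\,C_{k-1}$.

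The main obstacle is quantitative: one must verify that the cover cardinality $N$ and the inductive constant $C_{k}$ depend only on $a,b,q_{0},\|f\|_{C^{q_{0}}}$, not on the free parameters $\mathtt{m},\mathtt{M}$. For lower orders $q<q_{0}$ the neighborhood size in the cover follows from $\|f^{(q+1)}\|_{\infty}\leqslant \|f\|_{C^{q_{0}}}$ and is easy to control; the delicate case is $q=q_{0}$, where one cannot bound $f^{(q_{0}+1)}$ and must rely only on the uniform continuity of $f^{(q_{0})}$ on the compact set $[a,b]$. This still provides a uniform lower bound on the length of $U_{x}$ by the (deterministic) modulus of continuity of $f^{(q_{0})}$, which is itself controlled by $\|f\|_{C^{q_{0}}}$ through compactness. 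Apart from this combinatorial bookkeeping the remainder is elementary calculus, and assembling all pieces gives the claimed inequality.
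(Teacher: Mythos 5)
Your overall architecture — a fixed-order sublevel estimate (\emph{if $|g^{(k)}|\geqslant\lambda$ on an interval then $|\{|g|\leqslant\mathtt{M}\}|\leqslant C_k(\mathtt{M}/\lambda)^{1/k}$}) combined with a reduction from the hypothesis \eqref{trans cond Rlem} — is the standard route to this result, and your inductive proof of the sublemma is correct: the Rolle-type count of the components of $\{|g'|>\mu\}$ and the optimization $\mu=\mathtt{M}^{(k-1)/k}\lambda^{1/k}$ are exactly as in the classical argument. (For the record, the paper does not prove this lemma; it invokes \cite[Thm. 17.1]{R01}, so the comparison here is with the standard proof.) The genuine gap is in your covering step, and it is quantitative in precisely the way you flagged as ``the main obstacle'' but did not actually resolve.

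Two of your claims there are false. First, the length of $U_x$ is \emph{not} bounded below independently of $\mathtt{m}$: for $q_x<q_0$, the bound $\|f^{(q_x+1)}\|_{\infty}\leqslant\|f\|_{C^{q_0}}$ only gives $|f^{(q_x)}|\geqslant\mathtt{m}/2$ on an interval of length $\simeq\mathtt{m}/\|f\|_{C^{q_0}}$, so the cover cardinality is $N\gtrsim\mathtt{m}^{-1}$. This is not a cosmetic issue: the factor $\mathtt{m}^{-1}$ in the exponent $1+\frac{1}{q_0}$ of the stated bound comes exactly from this count, and if $N$ really were $\mathtt{m}$-independent your summation would produce the strictly stronger estimate $C(\mathtt{M}/\mathtt{m})^{1/q_0}$ — a warning sign that the bookkeeping is off. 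Second, and worse, for the top order $q_x=q_0$ the modulus of continuity of $f^{(q_0)}$ is \emph{not} controlled by $\|f\|_{C^{q_0}}$ (a uniform bound on a continuous function says nothing about how fast it oscillates), so your neighborhood length has no lower bound expressible in the quantities $(a,b,q_0,\|f\|_{C^{q_0}})$ on which $C$ is allowed to depend; your constant would depend on $f$ itself, which ruins the application in Proposition \ref{prop meas Cant}, where the lemma is used for an infinite family of functions with only uniform $C^{q_0}$ bounds. The standard repair avoids propagating the top derivative altogether: introduce a cascade of thresholds $\mathtt{M}=\mu_0<\mu_1<\dots<\mu_{q_0-1}<\mathtt{m}$ and split $\{|f|\leqslant\mathtt{M}\}$ according to the first $q$ with $|f^{(q)}|>\mu_q$; on each connected component of the set where $|f^{(q)}|\leqslant\mu_q$ for all $q<q_0$, hypothesis \eqref{trans cond Rlem} \emph{automatically} forces $|f^{(q_0)}|\geqslant\mathtt{m}$ pointwise on the whole component, so the sublemma applies there with no continuity-modulus input, and the number of relevant components is counted using only the Lipschitz bounds on $f^{(q)}$ for $q<q_0$, producing the extra $\mathtt{m}^{-1}$.
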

To apply the previous lemma, we shall check the transversality condition \eqref{trans cond Rlem} for the equilibrium frequency vector $\omega_{\textnormal{Eq}}$ in \eqref{omega Eq Ea}. It is proved in Lemma \ref{lem trsvrslt Ea}-(i). Notice that the measure of the final Cantor set in Section \ref{sec non triv QP sol} generating quasi-periodic solution for the nonlinear model requires transversality conditions for the perturbed frequency vector. These latter are obtained by perturbative arguments from the one for the equilibrium frequency vector stated in Lemma \ref{lem trsvrslt Ea} and which are themselves deduced from the non-degeneracy of the unperturbed frequency vector proved in Lemma \ref{lem non-deg Ea}. First we start by giving some properties of the frequencies \eqref{Omegajalpha}.
\begin{lem}\label{properties omegajalpha}
	The following properties hold true.
	\begin{enumerate}[label=(\roman*)]
		\item $\forall\, \alpha>0,\quad \Omega_{j}^{\textnormal{\tiny{E}}}(\alpha)\underset{j\rightarrow+\infty}{\sim}V_{0}(\alpha)j,\,$ with $\, V_{0}(\alpha)$ as in \eqref{LaKa}.
		\item For all $\alpha>0,$ the sequence $(\Omega_{j}^{\textnormal{\tiny{E}}}(\alpha))_{j\in\mathbb{N}^{*}}$ is strictly increasing.
		\item For any $j\in\mathbb{Z}^{*},$ we have
		$$\forall \,\alpha>0,\quad \Big|\Omega_{j}^{\textnormal{\tiny{E}}}(\alpha)\Big|\geqslant\Omega|j|.$$
		\item For any $j,j'\in\mathbb{Z}^{*}$, we have 
		$$\forall\, \alpha>0,\quad \Big|\Omega_{j}^{\textnormal{\tiny{E}}}(\alpha)-\Omega_{j'}^{\textnormal{\tiny{E}}}(\alpha)\Big|\geqslant\Omega|j-j'|.$$
		\item Given $0<\alpha_0<\alpha_1$ and $q_0\in\mathbb{N},$ there exists $C_0>0$ such that
		$$\forall j,j_0\in\mathbb{Z}^{*},\quad \max_{q\in\llbracket 0,q_0\rrbracket}\sup_{\alpha\in[\alpha_0,\alpha_1]}\left|\partial_{b}^{q}\Big(\Omega_{j}^{\textnormal{\tiny{E}}}(\alpha)-\Omega^{\textnormal{\tiny{E}}}_{j_0}(\alpha)\Big)\right|\leqslant C_0|j-j_0|.$$
	\end{enumerate}
\end{lem}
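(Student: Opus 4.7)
The strategy is to exploit the decomposition $\Omega_j^{\textnormal{\tiny{E}}}(\alpha) = j\Omega + j\boldsymbol{\Omega}_{|j|}^{\textnormal{\tiny{E}}}(\alpha)$ together with the monotonicity and asymptotic information already established in Lemma \ref{lem mono ev Ea} and the estimates on modified Bessel functions appearing in its proof. First I would record the key sign and symmetry facts: from Lemma \ref{lem mono ev Ea} the sequence $\big(\boldsymbol{\Omega}_n^{\textnormal{\tiny{E}}}(\alpha)\big)_{n\geqslant 1}$ is strictly increasing with $\boldsymbol{\Omega}_1^{\textnormal{\tiny{E}}}(\alpha)=0$, so all terms are nonnegative; moreover the definition \eqref{Omegajalpha} immediately yields $\Omega_{-j}^{\textnormal{\tiny{E}}}(\alpha)=-\Omega_j^{\textnormal{\tiny{E}}}(\alpha)$.

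Items (i)--(iii) are then essentially read off. For (i), the asymptotic $I_{|j|}(1/\alpha)K_{|j|}(1/\alpha)\to 0$ as $|j|\to\infty$ (coming from \eqref{asymp high order}) gives $\boldsymbol{\Omega}_{|j|}^{\textnormal{\tiny{E}}}(\alpha)\to V_0(\alpha)-\Omega$, so $\Omega_j^{\textnormal{\tiny{E}}}(\alpha)/j\to V_0(\alpha)$, and the bound $I_1K_1<1/2$ from \eqref{upper bound InKn} ensures $V_0(\alpha)>\Omega>0$ so the equivalence is nondegenerate. For (iii), $\Omega_j^{\textnormal{\tiny{E}}}(\alpha)/j=\Omega+\boldsymbol{\Omega}_{|j|}^{\textnormal{\tiny{E}}}(\alpha)\geqslant\Omega$, hence $|\Omega_j^{\textnormal{\tiny{E}}}(\alpha)|\geqslant\Omega|j|$. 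For (ii), writing for $j\geqslant 1$
$$\Omega_{j+1}^{\textnormal{\tiny{E}}}(\alpha)-\Omega_j^{\textnormal{\tiny{E}}}(\alpha)=\Omega+\boldsymbol{\Omega}_{j+1}^{\textnormal{\tiny{E}}}(\alpha)+j\big(\boldsymbol{\Omega}_{j+1}^{\textnormal{\tiny{E}}}(\alpha)-\boldsymbol{\Omega}_j^{\textnormal{\tiny{E}}}(\alpha)\big)$$
we get strict positivity since each summand is $\geqslant 0$ and the first is $>0$.

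Item (iv) reduces to (iii) by a short case analysis using oddness in $j$. If $j,j'>0$ with $j>j'$, then $j\boldsymbol{\Omega}_j^{\textnormal{\tiny{E}}}(\alpha)\geqslant j'\boldsymbol{\Omega}_{j'}^{\textnormal{\tiny{E}}}(\alpha)$ (both factors increase), so
$$\Omega_j^{\textnormal{\tiny{E}}}(\alpha)-\Omega_{j'}^{\textnormal{\tiny{E}}}(\alpha)=\Omega(j-j')+j\boldsymbol{\Omega}_j^{\textnormal{\tiny{E}}}(\alpha)-j'\boldsymbol{\Omega}_{j'}^{\textnormal{\tiny{E}}}(\alpha)\geqslant\Omega(j-j').$$
The case $j,j'<0$ follows from the previous one by $\Omega_{-k}^{\textnormal{\tiny{E}}}=-\Omega_k^{\textnormal{\tiny{E}}}$, and the mixed case $j>0>j'$ is obtained by adding the two bounds from (iii): $|\Omega_j^{\textnormal{\tiny{E}}}-\Omega_{j'}^{\textnormal{\tiny{E}}}|=\Omega_j^{\textnormal{\tiny{E}}}+\Omega_{|j'|}^{\textnormal{\tiny{E}}}\geqslant\Omega(j+|j'|)=\Omega|j-j'|$.

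The only genuinely delicate point is (v), which is where I expect to spend real effort. For $j,j_0>0$ the decomposition
$$\Omega_j^{\textnormal{\tiny{E}}}(\alpha)-\Omega_{j_0}^{\textnormal{\tiny{E}}}(\alpha)=V_0(\alpha)(j-j_0)+jI_j\!\left(\tfrac{1}{\alpha}\right)K_j\!\left(\tfrac{1}{\alpha}\right)-j_0I_{j_0}\!\left(\tfrac{1}{\alpha}\right)K_{j_0}\!\left(\tfrac{1}{\alpha}\right)$$
reduces the task to two independent estimates: smoothness of $\alpha\mapsto V_0(\alpha)$ on the compact interval $[\alpha_0,\alpha_1]$ (giving the linear-in-$|j-j_0|$ contribution), and a uniform-in-$n$ bound
$$\max_{q\leqslant q_0}\sup_{\alpha\in[\alpha_0,\alpha_1]}\left|\partial_\alpha^q\!\Big(nI_n\!\left(\tfrac{1}{\alpha}\right)K_n\!\left(\tfrac{1}{\alpha}\right)\Big)\right|\leqslant C_0/2.$$
The zeroth-order case is \eqref{upper bound InKn}; for higher derivatives I would use the recurrence-plus-Wronskian identities $2I_n'K_n=\tfrac{1}{x}-I_n(K_{n-1}+K_{n+1})+2I_nK_n'$ and the symmetric one, combined with $I_nK_{n\pm 1}\leqslant 1/x$ (consequence of the Wronskian $I_nK_{n+1}+I_{n+1}K_n=1/x$), to show by induction on $q$ that $\partial_x^q(I_n(x)K_n(x))$ is bounded uniformly in $n\geqslant 1$ on the compact set $\tfrac{1}{x}\in[\alpha_0,\alpha_1]$, with a gain of one power of $1/n$ so that multiplication by $n$ still produces a uniform bound. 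Given this uniform estimate, the $O(1)$ term in the decomposition above is dominated by $|j-j_0|$ (since it vanishes when $j=j_0$ and $|j-j_0|\geqslant 1$ otherwise), and the negative-index cases are handled by oddness as in (iv).
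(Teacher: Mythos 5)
Your treatment of (i)--(iv) is correct and coincides with the paper's: both rest on the identity $\Omega_j^{\textnormal{\tiny{E}}}(\alpha)=j\big(\Omega+\boldsymbol{\Omega}_{|j|}^{\textnormal{\tiny{E}}}(\alpha)\big)$, the oddness in $j$, and the fact (from Lemma \ref{lem mono ev Ea}, with $\boldsymbol{\Omega}_1^{\textnormal{\tiny{E}}}=0$) that $\big(\boldsymbol{\Omega}_n^{\textnormal{\tiny{E}}}(\alpha)\big)_n$ is nonnegative and increasing; the paper's case $j\geqslant j'$ with $j\boldsymbol{\Omega}_j^{\textnormal{\tiny{E}}}\pm j'\boldsymbol{\Omega}_{j'}^{\textnormal{\tiny{E}}}\geqslant 0$ is exactly your case analysis. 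For (v) you diverge mildly but legitimately: you write $\Omega_j^{\textnormal{\tiny{E}}}(\alpha)=jV_0(\alpha)-\tfrac12+jI_j\big(\tfrac1\alpha\big)K_j\big(\tfrac1\alpha\big)$ and reduce everything to a uniform-in-$n$ bound on $\partial_\alpha^q\big(nI_nK_n\big)$, then absorb the resulting $O(1)$ term into $|j-j_0|$ using $|j-j_0|\geqslant 1$; the paper instead splits off $\Omega(j-j_0)+\tfrac{j-j_0}{2}$ and quotes the ready-made estimate $\big|\partial_\lambda^q\big(\Omega_j^{\textnormal{\tiny{SW}}}\pm\Omega_{j_0}^{\textnormal{\tiny{SW}}}\big)\big|\leqslant C|j\pm j_0|$ from \cite[Lem. 3.3-(vi)]{HR21}. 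The two decompositions differ only by the constant $-\tfrac12\pm\tfrac12$, and the uniform Bessel bound you need, $\sup_{n}|n|\max_{k\leqslant q}\|(I_nK_n)^{(k)}\|_{L^\infty([\lambda_0,\lambda_1])}<\infty$, is precisely the estimate the paper itself quotes from \cite[Lem. 5.1]{HR21} at the end of the proof of Proposition \ref{prop projnor}, so you could simply invoke it rather than re-derive it. One caution if you do re-derive it: the crude bounds $I_nK_{n\pm1}\leqslant\tfrac1x$ you list are uniform in $n$ but of size $O(1/x)$, not $O(1/n)$, so by themselves they cannot produce the claimed gain of a factor $1/n$ in the derivatives; you would need the cancellation in $(I_nK_n)'(x)=\tfrac1x-I_n(x)\big(K_{n-1}(x)+K_{n+1}(x)\big)$ together with sharper large-$n$ asymptotics. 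Since the estimate is established in the cited reference this is a presentational shortcut rather than a gap, but as written your induction would not close.
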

\begin{proof}
	\textbf{(i)} and \textbf{(ii)} follow immediately from Lemma \ref{lem mono ev Ea} and \eqref{Omegajalpha}.\\
	\textbf{(iii)} Due to the symmetry \eqref{Omegajalpha}, it sufficies to study the case $j\in\mathbb{N}^*.$ From Lemma \ref{lem mono ev Ea}, we get
	$$\forall\alpha>0,\quad\boldsymbol{\Omega}_j^{\textnormal{\tiny{E}}}(\alpha)\geqslant0.$$
	Consequently, 
	$$\forall\alpha>0,\quad\Omega_j^{\textnormal{\tiny{E}}}(\alpha)\geqslant j\Omega.$$
	\textbf{(iv)} Due to the symmetry \eqref{Omegajalpha}, it suffices to prove that
	$$\forall\alpha>0,\quad\forall(j,j')\in(\mathbb{N}^*)^2,\quad\Big|\Omega_{j}^{\textnormal{\tiny{E}}}(\alpha)\pm\Omega_{j'}^{\textnormal{\tiny{E}}}(\alpha)\Big|\geqslant\Omega|j\pm j'|.$$
	The point (ii) allows us to restrict the discussion to the case $j\geqslant j'.$ We can write
	$$\Omega_{j}^{\textnormal{\tiny{E}}}(\alpha)\pm\Omega_{j'}^{\textnormal{\tiny{E}}}(\alpha)=\Omega(j\pm j')+j\boldsymbol{\Omega}_{j}^{\textnormal{\tiny{E}}}(\alpha)\pm j'\boldsymbol{\Omega}_{j'}^{\textnormal{\tiny{E}}}(\alpha).$$
	In view of Lemma \ref{lem mono ev Ea}, we obtain
	$$j\boldsymbol{\Omega}_{j}^{\textnormal{\tiny{E}}}(\alpha)\pm j'\boldsymbol{\Omega}_{j'}^{\textnormal{\tiny{E}}}(\alpha)\geqslant 0.$$
	Hence
	$$\Omega_{j}^{\textnormal{\tiny{E}}}(\alpha)\pm\Omega_{j'}^{\textnormal{\tiny{E}}}(\alpha)\geqslant\Omega(j\pm j').$$
	\textbf{(v)} As before, by symmetry, it sufficies to prove that
	$$\forall (j,j_0)\in(\mathbb{N}^*)^2,\quad\forall\alpha\in[\alpha_{0},\alpha_{1}],\quad\forall q\in\llbracket 0,q_0\rrbracket,\quad\Big|\partial_{\alpha}^{q}\Big(\Omega_{j}^{\textnormal{\tiny{E}}}(\alpha)\pm\Omega_{j_0}^{\textnormal{\tiny{E}}}(\alpha)\Big)\Big|\leqslant C_0|j\pm j_0|.$$
	Let us start with the difference. We can write form \eqref{Omegajalpha}
	$$\Omega_{j}^{\textnormal{\tiny{E}}}(\alpha)-\Omega_{j_0}^{\textnormal{\tiny{E}}}(\alpha)=\Omega(j- j_0)+\frac{j-j_0}{2}+\Omega_{j_0}^{\textnormal{\tiny{SW}}}\left(\tfrac{1}{\alpha}\right)-\Omega_{j}^{\textnormal{\tiny{SW}}}\left(\tfrac{1}{\alpha}\right),\qquad\Omega_{j}^{\textnormal{\tiny{SW}}}(\lambda)\triangleq j\boldsymbol{\Omega}_{|j|}^{\textnormal{\tiny{SW}}}(\lambda).$$
	Now it has been proved in \cite[Lem. 3.3-(vi)]{HR21} that for some $0<\lambda_0<\lambda_1,$ there exists $C>0$ such that
	\begin{equation}\label{e-diff freq QG}
		\forall (j,j_0)\in(\mathbb{N}^*)^2,\quad\forall\lambda\in[\lambda_{0},\lambda_{1}],\quad\forall q\in\llbracket 0,q_0\rrbracket,\quad\Big|\partial_{\lambda}^{q}\Big(\Omega_{j}^{\textnormal{\tiny{SW}}}(\lambda)\pm \Omega_{j_0}^{\textnormal{\tiny{SW}}}(\lambda)\Big)\Big|\leqslant C|j\pm j_0|.
	\end{equation}
	We warn the reader about the difference of definitions of the frequencies $\Omega_{j}^{\textnormal{\tiny{SW}}}$ between this article and \cite{HR21} (with the $\Omega$ missing). But this has no impact here. Hence, we conclude by the triangle inequality that for some $C_0>0$
	$$\forall (j,j_0)\in(\mathbb{N}^*)^2,\quad\forall\alpha\in[\alpha_{0},\alpha_{1}],\quad\forall q\in\llbracket 0,q_0\rrbracket,\quad\Big|\partial_{\alpha}^{q}\Big(\Omega_{j}^{\textnormal{\tiny{E}}}(\alpha)-\Omega_{j_0}^{\textnormal{\tiny{E}}}(\alpha)\Big)\Big|\leqslant C_0|j-j_0|.$$
	We now trun to the additional case. We can write
	$$0<\Omega_{j}^{\textnormal{\tiny{E}}}(\alpha)+\Omega_{j_0}^{\textnormal{\tiny{E}}}(\alpha)=\Omega(j+j_0)+\Big(j\tfrac{j-1}{2j}+j_0\tfrac{j_0-1}{2j_0}\Big)-\Omega_{j_0}^{\textnormal{\tiny{SW}}}\left(\tfrac{1}{\alpha}\right)-\Omega_{j}^{\textnormal{\tiny{SW}}}\left(\tfrac{1}{\alpha}\right).$$
	Notice that
	$$\forall j\in\mathbb{N}^*,\quad\tfrac{j-1}{2j}\leqslant\tfrac{1}{2}\quad\textnormal{and}\quad \Omega_{j}^{\textnormal{\tiny{SW}}}\left(\tfrac{1}{\alpha}\right)>0.$$
	Thus,
	$$0<\Omega_{j}^{\textnormal{\tiny{E}}}(\alpha)+\Omega_{j_0}^{\textnormal{\tiny{E}}}(\alpha)\leqslant\big(\Omega+\tfrac{1}{2}\big)(j+j_0).$$
	Combined with \eqref{e-diff freq QG}, this ends the proof of Lemma \ref{properties omegajalpha}.
\end{proof}
For a fixed finite set of Fourier modes
\begin{equation}\label{def tan modes}
	\mathbb{S}=\{j_1,\ldots,j_d\}\subset\mathbb{N}^{*},\qquad j_1<\ldots<j_d,\qquad d\in\mathbb{N}^*,
\end{equation}
we define the equilibrium frequency vector
\begin{equation}\label{def freq vec eqa}
	\omega_{\textnormal{Eq}}(\alpha)=\big(\Omega_{j}^{\textnormal{\tiny{E}}}(\alpha)\big)_{j\in\mathbb{S}}.
\end{equation}
Then, in view of the measure of the final Cantor set, we may check the Rüssmann conditions for the unperturped frequency vector \eqref{def freq vec eqa}. They are obtained by using the following non-degeneracy conditions.
\begin{lem}\label{lem non-deg Ea}
Let $(\alpha_0,\alpha_1)$ as in \eqref{alf0alf1}. The equilibrium frequency vector $\omega_{\textnormal{Eq}}$ and the vector-valued functions $(\omega_{\textnormal{Eq}},V_0)$ and $(\omega_{\textnormal{Eq}},V_0,1)$ are non-degenerate on $[\alpha_0,\alpha_1]$, namely the curves
$$\begin{array}{l}
	\alpha\in[\alpha_0,\alpha_1]\mapsto\omega_{\textnormal{Eq}}(\alpha),\\
	\alpha\in[\alpha_0,\alpha_1]\mapsto(\omega_{\textnormal{Eq}}(\alpha),V_0(\alpha)),\\
	\alpha\in[\alpha_0,\alpha_1]\mapsto(\omega_{\textnormal{Eq}}(\alpha),V_0(\alpha),1)
\end{array}$$
are not contained in an hyperplane of $\mathbb{R}^{d}$, $\mathbb{R}^{d+1}$ and $\mathbb{R}^{d+2}$, respectively.
\end{lem}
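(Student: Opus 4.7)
The plan is to prove all three non-degeneracy assertions simultaneously by a single contradiction argument. Suppose there exist real scalars $(c_j)_{j\in\mathbb{S}}$, $c_0$ and $c_{-1}$, not all zero, such that
$$\sum_{j\in\mathbb{S}}c_j\,\Omega_j^{\textnormal{\tiny{E}}}(\alpha)+c_0\,V_0(\alpha)+c_{-1}\equiv 0\quad\textnormal{on }[\alpha_0,\alpha_1],$$
the three claims corresponding to the further restrictions $c_0=c_{-1}=0$, $c_{-1}=0$ and no restriction, respectively. Since $\alpha\mapsto\Omega_j^{\textnormal{\tiny{E}}}(\alpha)$ and $\alpha\mapsto V_0(\alpha)$ are real-analytic on $(0,\infty)$, the identity extends to the whole half-line, and the goal becomes to show that all the coefficients must vanish.

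Using the explicit formulas \eqref{Omegajalpha} and \eqref{LaKa} and setting $\lambda:=1/\alpha$, the relation splits into an $\alpha$-independent constant equation and a purely $\lambda$-dependent combination of Bessel products,
$$F(\lambda)\triangleq\sum_{j\in\mathbb{S}}jc_j\,I_j(\lambda)K_j(\lambda)-\Bigl(c_0+\sum_{j\in\mathbb{S}}jc_j\Bigr)I_1(\lambda)K_1(\lambda),$$
both of which must hold identically. The whole proof is therefore reduced to the linear independence, as real-analytic functions of $\lambda>0$, of the family $\{\lambda\mapsto I_n(\lambda)K_n(\lambda)\}_{n\in\mathbb{N}^*}$; granted it, $F\equiv 0$ forces $c_j=0$ for $j\in\mathbb{S}$ and $c_0=0$, after which the residual constant equation yields $c_{-1}=0$.

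I would establish the linear independence through the classical Hankel asymptotic expansions of $I_n$ and $K_n$ at infinity, whose coefficients are polynomials in $n^2$ of the form $a_k(n)=\tfrac{1}{k!\,8^k}\prod_{i=1}^{k}\bigl(4n^2-(2i-1)^2\bigr)$. Their Cauchy convolution produces an expansion
$$2\lambda\,I_n(\lambda)K_n(\lambda)\underset{\lambda\to+\infty}{\sim}1+\sum_{k\geqslant 1}\frac{P_k(n^2)}{\lambda^{2k}},$$
with $P_1(n^2)=\tfrac{1-4n^2}{8}$ of degree one in $n^2$, and more generally $\deg_{n^2}P_k=k$ (checked inductively by tracking the leading coefficient of the convolution). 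For any finite set $\{n_1,\ldots,n_p\}\subset\mathbb{N}^*$ the matrix $\bigl(P_k(n_i^2)\bigr)_{1\leqslant k,i\leqslant p}$ is then of Vandermonde type in the $n_i^2$ and hence invertible; any vanishing finite combination $\sum_i\kappa_i\,I_{n_i}K_{n_i}\equiv 0$ must therefore have all $\kappa_i=0$.

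The main obstacle lies in this degree statement $\deg_{n^2}P_k=k$, which requires carefully ruling out systematic cancellations in the leading coefficient of the convolution. A cleaner alternative, avoiding this verification entirely, is to invoke Nicholson's integral representation $I_n(\lambda)K_n(\lambda)=\int_0^{\infty}J_0\bigl(2\lambda\sinh t\bigr)e^{-2nt}\,dt$ and then apply the injectivity of the Laplace transform on finite Dirichlet sums $t\mapsto\sum_i\kappa_ie^{-2n_it}$, which yields the linear independence at once. A minor subtlety, only relevant when $1\in\mathbb{S}$, is that $\Omega_1^{\textnormal{\tiny{E}}}(\alpha)\equiv\Omega$ is constant; in that case the coefficient $c_1$ drops out of $F$ and has to be recovered from the residual constant equation using $\Omega\neq 0$.
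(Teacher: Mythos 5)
Your proposal follows essentially the same route as the paper: argue by contradiction, extend the vanishing identity from $[\alpha_0,\alpha_1]$ to all $\alpha>0$ by analyticity of $I_nK_n$, send $\alpha\to 0$ (i.e. $\lambda=1/\alpha\to\infty$) to isolate the constant part via \eqref{asymp large z}, and then kill the residual combination of the products $I_jK_j$ through their large-argument asymptotic expansion, whose $k$-th coefficient is a polynomial of exact degree $k$ in $n^2$ and hence yields an invertible Vandermonde system --- this last step is exactly what the paper delegates to \cite[Lem. 3.4]{HR21}. Your unified treatment of the three claims and the Nicholson-integral alternative are fine variations on the same theme.

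One remark on the subtlety you flag. When $1\in\mathbb{S}$ one has $\Omega_{1}^{\textnormal{\tiny{E}}}\equiv\Omega$, and your recovery of $c_1$ from the residual constant equation does close the first two claims (where it reads $c_1\Omega=0$ and $\Omega>0$), but not the third: there the residual equation $c_1\Omega+c_{-1}=0$ admits the nontrivial solution $(c_1,c_{-1})=(1,-\Omega)$, and indeed the curve $\alpha\mapsto(\omega_{\textnormal{Eq}}(\alpha),V_0(\alpha),1)$ then genuinely lies in a hyperplane, so the third assertion is false in that case. This is a defect of the statement rather than of your method --- the paper's own proof silently suffers from the same degeneracy, since $\Omega_{1}^{\textnormal{\tiny{SW}}}\equiv 0$ makes $c_1$ disappear from the reduced Bessel identity --- so the lemma must be read with the implicit hypothesis $1\notin\mathbb{S}$, under which your argument is complete.
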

\begin{proof}
	$\blacktriangleright$ Assume that there exists $(c_1,\ldots,c_d)\in\mathbb{R}^d$ such that
	$$\forall\alpha\in[\alpha_0,\alpha_1],\quad\sum_{k=1}^{d}c_k\Omega_{j_k}^{\textnormal{\tiny{E}}}(\alpha)=0,$$
	which is equivalent to
	\begin{align}\label{eq deg Ea}
		\forall\alpha\in[\alpha_0,\alpha_1],\quad\sum_{k=1}^{d}c_kj_k\Big(\Omega+\tfrac{j_k-1}{2j_{k}}\Big)=\sum_{k=1}^{d}c_k\Omega_{j_k}^{\textnormal{\tiny{SW}}}\left(\tfrac{1}{\alpha}\right).
	\end{align}
By analyticity of the product $I_{n}K_{n}$ on $\{z\in\mathbb{C}\quad\textnormal{s.t.}\quad\textnormal{Re}(z)>0\}$ for any $n\in\mathbb{N}^*$, then by continuation principle, the previous identity is still true for $\alpha>0.$ Taking the limit $\alpha\to0$ in the previous relation implies from \eqref{asymp large z}
	$$\sum_{k=1}^{d}c_kj_k\Big(\Omega+\tfrac{j_k-1}{2j_{k}}\Big)=0.$$
	The equation \eqref{eq deg Ea} is reduced to 
	$$\sum_{k=1}^{d}c_k\Omega_{j_k}^{\textnormal{\tiny{SW}}}\left(\tfrac{1}{\alpha}\right)=0.$$
	Then, proceeding as in \cite[Lem. 3.4]{HR21}, the asymptotic expansion of large argument for $I_jK_j$ provides an invertible Vandermonde system leading to $\forall k\in\llbracket 1,d\rrbracket,\, c_k=0.$\\
	$\blacktriangleright$ Assume that there exists $(c_1,\ldots,c_d,c_{d+1})\in\mathbb{R}^{d+1}$ (resp. $(c_1,\ldots,c_d,c_{d+1},c_{d+2})\in\mathbb{R}^{d+2}$) such that
	$$\forall\alpha\in[\alpha_0,\alpha_1],\qquad (\textnormal{resp. }c_{d+2}+)\quad c_{d+1}V_0(\alpha)+\sum_{k=1}^{d}c_k\Omega_{j_k}^{\textnormal{\tiny{E}}}(\alpha)=0,$$
	which is equivalent to the fact that for any $\alpha\in[\alpha_0,\alpha_1],$
	\begin{align}\label{eq deg Ea-2}
		(\textnormal{resp. }c_{d+2}+)\quad c_{d+1}\big(\Omega+\tfrac{1}{2}\big)+\sum_{k=1}^{d}c_kj_k\Big(\Omega+\tfrac{j_k-1}{2j_{k}}\Big)=c_{d+1}I_1\left(\tfrac{1}{\alpha}\right)K_1\left(\tfrac{1}{\alpha}\right)+\sum_{k=1}^{d}c_k\Omega_{j_k}^{\textnormal{\tiny{SW}}}\left(\tfrac{1}{\alpha}\right).
	\end{align}
As in the previous point, this identity can be extended to $(0,\infty)$ and taking the limit $\alpha\to0$, one gets by \eqref{asymp large z}
$$(\textnormal{resp. }c_{d+2}+)\quad c_{d+1}\big(\Omega+\tfrac{1}{2}\big)+\sum_{k=1}^{d}c_kj_k\Big(\Omega+\tfrac{j_k-1}{2j_{k}}\Big)=0.$$
Inserting this information into \eqref{eq deg Ea-2} yields
$$\forall\alpha>0,\quad c_{d+1}I_1\left(\tfrac{1}{\alpha}\right)K_1\left(\tfrac{1}{\alpha}\right)+\sum_{k=1}^{d}c_k\Omega_{j_k}^{\textnormal{\tiny{SW}}}\left(\tfrac{1}{\alpha}\right)=0.$$
This equation has also been studied in \cite[Lem. 3.4]{HR21} leading to $c_1=\ldots=c_d=c_{d+1}=0$ (resp. supplemented by $c_{d+2}=0$). This achieves the proof of Lemma \ref{lem non-deg Ea}. 
\end{proof}
Now we shall prove the transversality conditions for the equilibrium frequency vector.
\begin{lem}{\textnormal{[Transversality]}}\label{lem trsvrslt Ea}
	Let $(\alpha_0,\alpha_1)$ as in \eqref{alf0alf1}. Then, there exist $q_0\in\mathbb{N}$ and $\rho_{0}>0$ such that the following results hold true. Recall that $\omega_{\textnormal{Eq}}$ and $\Omega_j^{\textnormal{\tiny{E}}}$ are defined in \eqref{def freq vec eqa} and \eqref{Omegajalpha} respectively.
		\begin{enumerate}[label=(\roman*)]
			\item For any $l\in\mathbb{Z}^{d }\setminus\{0\},$ we have
			$$
			\inf_{\alpha\in[\alpha_{0},\alpha_{1}]}\max_{q\in\llbracket 0, q_{0}\rrbracket}\Big|\partial_{\alpha}^{q}\omega_{\textnormal{Eq}}(\alpha)\cdot l\Big|\geqslant\rho_{0}\langle l\rangle.
			$$
			\item For any $ (l,j)\in\mathbb{Z}^{d }\times (\mathbb{N}^*\setminus\mathbb{S})$ 
				$$
				\quad\inf_{\alpha\in[\alpha_{0},\alpha_{1}]}\max_{q\in\llbracket 0, q_{0}\rrbracket}\Big|\partial_{\alpha}^{q}\Big(\omega_{\textnormal{Eq}}(\alpha)\cdot l\pm jV_0(\alpha)\Big)\Big|\geqslant\rho_{0}\langle l\rangle.
				$$
			\item  For any $ (l,j)\in\mathbb{Z}^{d }\times (\mathbb{N}^*\setminus\mathbb{S})$ 
			$$
			\quad\inf_{\alpha\in[\alpha_{0},\alpha_{1}]}\max_{q\in\llbracket 0, q_{0}\rrbracket}\Big|\partial_{\alpha}^{q}\Big(\omega_{\textnormal{Eq}}(\alpha)\cdot l\pm\Omega_{j}^{\textnormal{\tiny{E}}}(\alpha)\Big)\Big|\geqslant\rho_{0}\langle l\rangle.
			$$
			\item For any $ l\in\mathbb{Z}^{d }, j,j^\prime\in\mathbb{N}^*\setminus\mathbb{S}$  with $(l,j)\neq(0,j^\prime),$ we have
			$$\,\quad\inf_{\alpha\in[\alpha_{0},\alpha_{1}]}\max_{q\in\llbracket 0, q_{0}\rrbracket}\Big|\partial_{\alpha}^{q}\Big(\omega_{\textnormal{Eq}}(\alpha)\cdot l+\Omega_{j}^{\textnormal{\tiny{E}}}(\alpha)\pm\Omega_{j^\prime}^{\textnormal{\tiny{E}}}(\alpha)\Big)\Big|\geqslant\rho_{0}\langle l\rangle.$$	
	\end{enumerate}
\end{lem}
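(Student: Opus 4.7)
The plan is to derive the four lower bounds simultaneously from the non-degeneracy assertions of Lemma \ref{lem non-deg Ea} via a compactness-contradiction argument, following the blueprint of \cite{HR21}. Fix a large integer $q_0$ (to be determined at the end) and, for a given assertion, suppose that no $\rho_0>0$ works. Then there exist sequences $\alpha_n\in[\alpha_0,\alpha_1]$ and admissible indices $(l_n,j_n,j_n')$ with a choice of signs such that
$$\tfrac{1}{\langle l_n\rangle}\max_{0\leqslant q\leqslant q_0}\Big|\partial_\alpha^q\Phi_n(\alpha_n)\Big|\xrightarrow[n\to\infty]{}0,$$
where $\Phi_n(\alpha)$ is the linear combination appearing in the statement. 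The goal is to extract a subsequence so that, after a renormalisation depending on the relative sizes of $|l_n|$, $|j_n|$ and $|j_n'|$, the limit becomes an identity $\partial_\alpha^q\Phi_\infty(\alpha_\infty)=0$ for $q\leqslant q_0$, where $\Phi_\infty$ is a non-trivial linear combination of entries of $\omega_{\textnormal{Eq}}$, $V_0$ and constants. Since each $\Omega_j^{\textnormal{\tiny E}}$ is real-analytic on $(0,\infty)$ (as $x\mapsto I_n(x)K_n(x)$ is analytic on $\{\textnormal{Re}(x)>0\}$), so is $\Phi_\infty$. Choosing $q_0$ large enough via a uniform argument on the compact sphere of normalised coefficients, $\Phi_\infty$ has to vanish identically on $[\alpha_0,\alpha_1]$, contradicting the relevant item of Lemma \ref{lem non-deg Ea}.

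Item (i) is the simplest instance: divide by $\langle l_n\rangle$, extract $l_n/\langle l_n\rangle\to c\in\mathbb{R}^d\setminus\{0\}$ and $\alpha_n\to\alpha_\infty$; the first non-degeneracy statement then contradicts $\partial_\alpha^q(\omega_{\textnormal{Eq}}\cdot c)(\alpha_\infty)=0$. For item (ii), normalise by $\max(\langle l_n\rangle,|j_n|)$ and split according to whether $|j_n|/\langle l_n\rangle$ stays bounded or diverges; in the first sub-case the limit is a non-trivial combination in $(\omega_{\textnormal{Eq}},V_0)$, while in the second it reduces to $\pm V_0$, which is non-vanishing because $I_1(x)K_1(x)<\tfrac{1}{2}$ on $(0,\infty)$. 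Both sub-cases are excluded by the second assertion of Lemma \ref{lem non-deg Ea}. For items (iii) and (iv), the new ingredient is the splitting
$$\Omega_j^{\textnormal{\tiny E}}(\alpha)=jV_0(\alpha)+R_j(\alpha),\qquad R_j(\alpha)=-\tfrac{1}{2}+jI_{|j|}\big(\tfrac{1}{\alpha}\big)K_{|j|}\big(\tfrac{1}{\alpha}\big),$$
with $\{R_j\}$ uniformly bounded in $C^{q_0}([\alpha_0,\alpha_1])$ by Lemma \ref{properties omegajalpha}-(i) together with the derivative bound of Lemma \ref{properties omegajalpha}-(v) (applied with $j_0$ fixed). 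This extracts from $\Omega_j$ an unbounded $V_0$-part plus an equi-continuous family, so that Arzelà--Ascoli yields a limit $\Phi_\infty$ which is now a linear combination in $(\omega_{\textnormal{Eq}},V_0,1)$; the third assertion of Lemma \ref{lem non-deg Ea} then concludes.

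The delicate point is item (iv) with the minus sign in the regime where $j_n,j_n'\to\infty$ with $|j_n-j_n'|$ bounded (but $l_n$ possibly large): the leading $V_0$-terms cancel in $\Omega_{j_n}^{\textnormal{\tiny E}}-\Omega_{j_n'}^{\textnormal{\tiny E}}$, and the finer bound of Lemma \ref{properties omegajalpha}-(v) must be used to isolate the limit as an integer multiple of $V_0$ plus a bounded remainder which is either absorbed or carries the residual non-trivial coefficient. The uniform choice of $q_0$ is obtained by a finite covering of the compact unit sphere of coefficient vectors: for each fixed normalised limit $c$, the real-analytic function $\Phi^{(c)}$ has only finitely many zeros on $[\alpha_0,\alpha_1]$ by the corresponding non-degeneracy, hence some local order $q_c$ controls $\Phi^{(c)}$ from below near each of them; continuity in $c$ and compactness then yield a uniform $q_0$ that works across all admissible normalisations.
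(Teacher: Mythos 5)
Your proposal follows essentially the same route as the paper: a compactness-and-contradiction argument reducing each bound to the non-degeneracy statements of Lemma \ref{lem non-deg Ea}, with the same preliminary reduction to the regime $|j\pm j'|\lesssim\langle l\rangle$, the same case analysis on bounded versus unbounded index sequences, and the same identification of the renormalised limit as a non-trivial combination of $\omega_{\textnormal{Eq}}$, $V_0$ and $1$. The only caveat is your justification of the uniform $C^{q_0}$ bound on $R_j(\alpha)=-\tfrac12+jI_{|j|}\big(\tfrac1\alpha\big)K_{|j|}\big(\tfrac1\alpha\big)$: it does not follow from Lemma \ref{properties omegajalpha}-(i),(v) as stated (item (v) only controls $\partial_\alpha^q\big(\Omega_j^{\textnormal{\tiny{E}}}-\Omega_{j_0}^{\textnormal{\tiny{E}}}\big)$, which after subtracting the $jV_0$ part leaves a bound growing linearly in $j$), but rather requires the separate estimate $\sup_j|j|\max_{k\leqslant q_0}\|(I_jK_j)^{(k)}\|_{L^\infty([\lambda_0,\lambda_1])}\leqslant C$ and the limits \eqref{lim bes}, which is exactly what the paper imports from \cite{HR21}.
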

\begin{proof}
	We shall prove the point \textbf{(iv)} which is the most difficult one. The arguments are similar in the other cases using the corresponding non-degeneracy conditions provided by Lemma \ref{lem non-deg Ea}. Fix some $l\in\mathbb{Z}^{d }$ and $j,j^\prime\in\mathbb{N}^*\setminus\mathbb{S}$  with $(l,j)\neq(0,j^\prime).$ If for some $c_{0}>0$
	$$|j\pm j'|\geqslant c_{0}\langle l\rangle,$$
	then applying the triangle inequality together with Lemma \ref{properties omegajalpha}-(iv), we get
	$$\Big|\omega_{\textnormal{Eq}}(\alpha)\cdot l+\Omega_{j}^{\textnormal{\tiny{E}}}(\alpha)\pm\Omega_{j'}^{\textnormal{\tiny{E}}}(\alpha)\Big|\geqslant\Big|\Omega_{j}^{\textnormal{\tiny{E}}}(\alpha)\pm\Omega_{j'}^{\textnormal{\tiny{E}}}(\alpha)\Big|-|\omega_{\textnormal{Eq}}(\alpha)\cdot l|\geqslant \Omega|j\pm j'|-C|l|\geqslant \langle l\rangle.$$
	Therefore it remains to check the proof for  indices satisfying 
	\begin{equation}\label{restri param}
		|j\pm j'|<c_{0}\langle l\rangle,\qquad  l\in\mathbb{Z}^{d}\setminus\{0\},\qquad j,j^\prime\in\mathbb{N}^*\setminus\mathbb{S}.
	\end{equation}
	We assume in view of a contradiction that for all $m\in\mathbb{N}$, there exist real numbers $l_{m}\in\mathbb{Z}^{d}\setminus\{0\}$, $j_{m},j'_{m}\in\mathbb{N}^*\setminus\mathbb{S}$ satisfying \eqref{restri param} and $\alpha_{m}\in[\alpha_{0},\alpha_{1}]$ such that 
	$$\max_{q\in\llbracket 0, m\rrbracket}\left|\partial_{\alpha}^{q}\left(\omega_{\textnormal{Eq}}(\alpha)\cdot\tfrac{l_{m}}{| l_{m}|}+\tfrac{\Omega_{j_{m}}^{\textnormal{\tiny{E}}}(\alpha)\pm \Omega_{j'_{m}}^{\textnormal{\tiny{E}}}(\alpha)}{| l_{m}|}\right)_{|\alpha=\alpha_m}\right|<\tfrac{1}{m+1}.$$ 
	This implies that
	\begin{equation}\label{maj seq diff}
		\forall q\in\mathbb{N},\quad \forall m\geqslant q,\quad \left|\partial_{\alpha}^{q}\left(\omega_{\textnormal{Eq}}(\alpha)\cdot\tfrac{l_{m}}{| l_{m}|}+\tfrac{\Omega_{j_{m}}^{\textnormal{\tiny{E}}}(\alpha)\pm \Omega_{j'_{m}}^{\textnormal{\tiny{E}}}(\alpha)}{| l_{m}|}\right)_{|\alpha=\alpha_m}\right|<\tfrac{1}{m+1}\cdot
	\end{equation}
	By compactness and \eqref{restri param}, up to considering a subsequence, we can assume that
	\begin{equation}\label{cv jl}
		\lim_{m\to\infty}\tfrac{l_{m}}{| l_{m}|}=\bar{c}\neq 0,\qquad\lim_{m\to\infty}\tfrac{j_{m}\pm j'_{m}}{|l_m|}=\bar{d},\qquad\lim_{m\to\infty}\alpha_{m}=\bar{\alpha}.
	\end{equation}
	Now we shall study separetely the cases whether the sequence $(l_m)_m$ is bounded or not.\\
	$\blacktriangleright$ Here we assume that the sequence $(l_{m})_{m}$ is bounded. Then, by compactness, we can assume, up to an extraction, that we have the following convergence
	$$\displaystyle \lim_{m\to\infty}l_{m}=\bar{l}\neq 0.$$
	Now according to \eqref{restri param} we have two sub-cases to discuss depending whether the sequences $(j_{m})_{m}$ and $(j^\prime_{m})_{m}$ are simultaneously bounded or unbounded.\\
	$\bullet$ We first study the case where the sequences $(j_{m})_{m}$ and $(j^\prime_{m})_{m}$ are bounded. Observe that the is the only case to consider if we work with the sign $"+"$ in \eqref{restri param}. Since they are sequences of integers, then by compactness we may assume, up to considering an extraction, that they are constant, namely
	$$\exists\, \bar{j},\bar{j^\prime}\in\mathbb{N}^*\setminus\mathbb{S}\qquad\textnormal{s.t.}\qquad \forall m\in\mathbb{N},\quad j_{m}=\bar{j}\quad\textnormal{and}\quad j^\prime_{m}=\bar{j^\prime}.$$
	Hence taking the limit as $m\rightarrow\infty$ in \eqref{maj seq diff}, we obtain
	$$\forall q\in\mathbb{N},\quad \partial_{\alpha}^{q}\left(\omega_{\textnormal{Eq}}({\alpha})\cdot\bar{l}+\Omega_{\bar{j}}^{\textnormal{\tiny{E}}}(\alpha)\pm\Omega_{\bar{j^\prime}}^{\textnormal{\tiny{E}}}(\alpha)\right)_{|\alpha=\overline\alpha}=0.$$
	Thus, the analytic function $\alpha\mapsto\omega_{\textnormal{Eq}}(\alpha)\cdot\bar{l}+\Omega_{\bar{j}}^{\textnormal{\tiny{E}}}(\alpha)\pm\Omega_{\bar{j^\prime}}^{\textnormal{\tiny{E}}}(\alpha)$ is identically zero which enters in contradiction with Lemma \ref{lem non-deg Ea} up to replacing $\omega_{\textnormal{Eq}}$ by $(\omega_{\textnormal{Eq}},\Omega_{\bar{j}}^{\textnormal{\tiny{E}}})$ or $(\omega_{\textnormal{Eq}},\Omega_{\bar{j}}^{\textnormal{\tiny{E}}},\Omega_{\bar{j^\prime}}^{\textnormal{\tiny{E}}})$.\\
	$\bullet$ Now we study the case where $(j_{m})_{m}$ and $(j'_{m})_{m}$ are both unbounded and without loss of generality we can assume that 
	\begin{equation}\label{cv jm}
		\lim_{m\to\infty}j_{m}= \lim_{m\to\infty}j'_{m}=\infty.
	\end{equation}
	Notice that here this case only concerns the situation with a sign $"-"$ in \eqref{restri param}. Nevertheless, for later purposes, we may treat both sign situations. From the expression \eqref{Omegajalpha} we can write
	\begin{align*}
		\Omega_{j_m}^{\textnormal{\tiny{E}}}(\alpha)\pm\Omega_{j_{m}^\prime}^{\textnormal{\tiny{E}}}(\alpha)=&(j_m\pm j^\prime_{m})V_0(\alpha)-\big(\tfrac{1}{2}\pm\tfrac{1}{2}\big)\\
		&+(j_m\pm j^\prime_{m})(I_{j_m}K_{j_m})\big(\tfrac{1}{\alpha}\big)\pm j^\prime_{m}\Big((I_{j^\prime_{m}}K_{j^\prime_{m}})\big(\tfrac{1}{\alpha}\big)-(I_{j_m}K_{j_m})\big(\tfrac{1}{\alpha}\big)\Big),
	\end{align*}
	with $V_0(\alpha)$ as in \eqref{LaKa}. It has been proved in \cite[Lem. 3.5]{HR21} that for any $q\in\mathbb{N},$ and for any $0<\lambda_{0}<\lambda_{1},$
	\begin{align}\label{lim bes}
		\lim_{m\to\infty} \sup_{\lambda\in[\lambda_0,\lambda_1]}\Big|j_m^\prime\,\partial_\lambda^q(I_{j_m}K_{j_m}-I_{j_m^\prime}K_{j_m^\prime})(\lambda)\Big|=0,\qquad\lim_{m\to\infty}\sup_{\lambda\in[\lambda_0,\lambda_1]}\Big|\partial_{\lambda}^q(I_{j_m}K_{j_m})(\lambda)\Big|=0.
	\end{align}
	Therefore \eqref{cv jl} and \eqref{lim bes} imply for any $q\in\mathbb{N},$
	$$\lim_{m\to\infty}|l_m|^{-1}\partial_{\alpha}^{q}\left(\Omega_{j_{m}}^{\textnormal{\tiny{E}}}(\alpha)\pm\Omega_{j'_{m}}^{\textnormal{\tiny{E}}}(\alpha)\right)_{|\alpha=\alpha_m}=\partial_{\alpha}^{q}\,\Big(\bar{d}V_0(\alpha)+|\bar{l}|^{-1}\big(\tfrac{1}{2}\pm\tfrac{1}{2}\big)\Big)_{|\alpha=\overline\alpha}.$$ 
	By taking the limit as $m\rightarrow\infty$ in \eqref{maj seq diff}, we find				$$\forall q\in\mathbb{N},\quad \partial_{\alpha}^{q}\Big({\omega}_{\textnormal{Eq}}({\alpha})\cdot\bar{c}+\bar{d}V_0(\alpha)+|\bar{l}|^{-1}\big(\tfrac{1}{2}\pm\tfrac{1}{2}\big)\Big)_{|\alpha=\overline\alpha}=0.$$
	Thus, the analytic function $\alpha\mapsto{\omega}_{\textnormal{Eq}}(\alpha)\cdot\bar{c}+\bar{d}V_0(\alpha)+|\bar{l}|^{-1}\big(\tfrac{1}{2}\pm\tfrac{1}{2}\big)$ with $(\bar{c},\bar{d})\neq 0$ and $\big(\bar{c},\bar{d},|\bar{l}|^{-1}\big)\neq 0$ is vanishing which contradicts Lemma \ref{lem non-deg Ea}.\\
	$\blacktriangleright$ Now we treat the case where the sequence $(l_{m})_{m}$ is unbounded. Up to an extraction we can assume that 
	$$\lim_{m\to\infty}|l_{m}|=\infty.$$
	We shall distinguish three sub-cases.\\
	$\bullet$ We first assume that the sequences  $(j_{m})_{m}$ and $(j'_{m})_{m}$ are bounded. Hence we have convergences of the type \eqref{cv jm}. Then taking the limit in \eqref{maj seq diff} yields,
	$$\forall q\in\mathbb{N},\quad\partial_{\alpha}^{q}{\omega}_{\textnormal{Eq}}(\bar{\alpha})\cdot\bar{c}=0.$$
	which leads to a contradiction as before.\\
	$\bullet$ The case where the sequences  $(j_{m})_{m}$ and $(j^\prime_{m})_{m}$ are both unbounded is similar to what has been done previously.\\
	$\bullet$ Now we assume that the sequence $(j_{m})_{m}$ is unbounded and $(j^\prime_{m})_{m}$ is bounded (the symmetric case is similar).  Without loss of generality we can assume that 
	$$\lim_{m\to\infty}j_m=\infty,\qquad  j_{m}^\prime=\overline{j}.$$
	One obtains from \eqref{lim bes} and \eqref{cv jl}
	\begin{align*}
		\forall q\in\mathbb{N},\quad\lim_{m\to\infty}|l_m|^{-1}
		\partial_\alpha^q\Big(\Omega_{j_m}^{\textnormal{\tiny{E}}}(\alpha)\pm\Omega_{j_{m}^\prime}^{\textnormal{\tiny{E}}}(\alpha)\Big)_{|\alpha=\alpha_m}=\bar{d}\partial_{\alpha}^qV_{0}(\bar{\alpha}).
	\end{align*}
	Consequently, taking the limit $m\to\infty$ in \eqref{maj seq diff} gives 
	$$\forall q\in\mathbb{N},\quad\partial_{\alpha}^{q}\Big({\omega}_{\textnormal{Eq}}({\alpha})\cdot\bar{c}+\bar{d}V_0(\alpha)\Big)_{\alpha=\overline\alpha}=0.$$
	Thus, the analytic function $\alpha\mapsto{\omega}_{\textnormal{Eq}}(\alpha)\cdot\bar{c}+\bar{d}V_0(\alpha)$ is identically zero with $(\bar{c},\bar{d})\neq0$ which contradicts Lemma \ref{lem non-deg Ea}. This completes the proof of Lemma \ref{lem trsvrslt Ea}.
\end{proof}

\subsection{The functional of interest and associated tame estimates}
In this subsection, we shall reformulate the problem in terms of embedded tori through the introduction of action-angle variables. This leads to look for the zeros of a nonlinear functional. Observe that the equation \eqref{Heq Ea} can be seen as a quasilinear perturbation of its linearization at the equilibrium state, namely
\begin{equation}\label{ql per}
	\partial_{t}r=\partial_{\theta}\mathrm{L}(\alpha)(r)+X_{\mathscr{P}}(r),\qquad X_{\mathscr{P}}(r)\triangleq\tfrac{1}{2}\partial_{\theta}r+\partial_{\theta}\mathcal{K}_{\alpha}\ast r+F^{\textnormal{\tiny{E}}}[r]+F^{\textnormal{\tiny{SW}}}[r],
\end{equation}
with $F^{\textnormal{\tiny{E}}},$ $F^{\textnormal{\tiny{SW}}}$ and $\mathrm{L}(\alpha)$ as in \eqref{FE}, \eqref{FSW} and \eqref{LaKa}. The smallness property is encoded by the introduction of a small parameter $\varepsilon.$ Then we consider the rescalling $r\mapsto\varepsilon r$ with $r$ bounded. Therefore \eqref{Heq Ea} becomes
\begin{equation}\label{pertham}
	\partial_{t}r=\partial_{\theta}\mathrm{L}(\alpha)(r)+\varepsilon X_{\mathscr{P}_{\varepsilon}}(r),\qquad X_{\mathscr{P}_{\varepsilon}}(r)\triangleq \varepsilon^{-2}X_{\mathscr{P}}(\varepsilon r).
\end{equation}
Remark that \eqref{pertham} can be written in the Hamiltonian form
$$\partial_{t}r=\partial_{\theta}\nabla \mathcal{H}_{\varepsilon}(r),\qquad\mathcal{H}_{\varepsilon}(r)\triangleq\varepsilon^{-2}\mathscr{H}(\varepsilon r)\triangleq \mathscr{H}_{\mathrm{L}}(r)+\varepsilon \mathscr{P}_{\varepsilon}(r),$$
with $\mathscr{H}_{\mathrm{L}}$ as in \eqref{HL Ea} and $\varepsilon \mathscr{P}_{\varepsilon}(r)$ containing the higher order terms more than cubic. We shall now reformulate the problem in terms of embedded tori. For that purpose, we introduce the action-angle variables. This is done in the following way. Introducing the symmetrized tangential sets $\overline{\mathbb{S}}$ and $\mathbb{S}_{0}$ associated to $\mathbb{S}$ in \eqref{def tan modes}
$$\overline{\mathbb{S}}\triangleq\mathbb{S}\cup (-\mathbb{S})=\{\pm j,\,\,j\in\mathbb{S}\},\qquad\mathbb{S}_0\triangleq\overline{\mathbb{S}}\cup\{0\},$$
we can split the phase space $L^2_0(\mathbb{T})$ into tangential and normal subspaces
\begin{equation}\label{ph sp splt}
	L^2_0(\mathbb{T})=L_{\overline{\mathbb{S}}}\overset{\perp}{\oplus}L^{2}_{\bot},\qquad L_{\overline{\mathbb{S}}}\triangleq  \Big\{v=\sum_{j\in\overline{\mathbb{S}}}v_j\mathbf{e}_j,\,\, v_{-j}=\overline{v_j}\Big\},\qquad L^{2}_{\bot}\triangleq\Big\{z=\sum_{j\in\mathbb{Z}\setminus\mathbb{S}_0}z_j\mathbf{e}_j,\,\,z_{-j}=\overline{z_j}\Big\}\,.
\end{equation}
Therefore, we can decompose any $r\in L_0^2(\mathbb{T})$ as follows
$$r=v+z,\qquad v=\Pi_{\mathbb{S}_0}r\triangleq\sum_{j\in\overline{\mathbb{S}}}r_j\mathbf{e}_j\in L_{\overline{\mathbb{S}}},\qquad z=\Pi_{\mathbb{S}_0}^{\perp}r\triangleq\sum_{j\in\mathbb{Z}\setminus\mathbb{S}_0}r_j\mathbf{e}_j\in L_{\perp}^2.$$
We consider small amplitudes 
$$(\mathtt{a}_{j})_{j\in\overline{\mathbb{S}}}\in(\mathbb{R}_{+}^{*})^{d},\qquad\mathtt{a}_{-j}=\mathtt{a}_{j}$$
and introduce the action-angle variables 
$$(I,\vartheta)=\Big(\big(I_j\big)_{j\in\overline{\mathbb{S}}},\big(\vartheta_j\big)_{j\in\overline{\mathbb{S}}}\Big),\qquad I_{-j}=I_j\in\mathbb{R},\qquad\vartheta_{-j}=-\vartheta_j\in \mathbb{T}$$
such that on the tangential set $L_{\overline{\mathbb{S}}}$ we have
\begin{equation}\label{action-angle var}
	\forall r\in L_{\overline{\mathbb{S}}},\quad r=\sum_{j\in\overline{\mathbb{S}}}\sqrt{\mathtt{a}_{j}^{2}+|j|I_j}\,e^{\ii \vartheta_j} \mathbf{e}_j\triangleq v(\vartheta,I).
\end{equation}
This defines an application 
$$\mathtt{A}:\begin{array}[t]{rcl}
	\mathbb{T}^d\times\mathbb{R}^d\times L_\perp^2(\mathbb{T}) & \rightarrow & L_0^2(\mathbb{T})\\
	(\vartheta,I,z) & \mapsto & r=v(\vartheta,I)+z
\end{array}$$
which is symplectic with respect to the symplectic structure $\mathscr{W}$ defined in \eqref{scrW}. We refer for instance to \cite{HR21-1} for a proof of this result.
In these coordinates the new Hamiltonian system is generated by the Hamiltonian
\begin{equation}\label{scrHeps}
	\mathscr{H}_{\varepsilon}\triangleq \mathcal{H}_{\varepsilon}\circ\mathtt{A}=\mathscr{N}+\varepsilon\mathcal{P}_{\varepsilon},\qquad\mathscr{N}\triangleq-{\omega}_{\textnormal{Eq}}(\alpha)\cdot I+\frac{1}{2}\big\langle  \mathrm{L}(\alpha)\,z,z \big\rangle_{L^2(\mathbb{T})},	\qquad\mathcal{P}_{\varepsilon}\triangleq    \mathscr{P}_\varepsilon\circ\mathtt{A}.  
\end{equation}
We look for an embedded invariant torus
$$i:\begin{array}[t]{rcl}
		\mathbb{T}^d & \rightarrow & \mathbb{T}^d\times\mathbb{R}^d\times L^{2}_{\bot}\\
		\varphi & \mapsto & i(\varphi)\triangleq(\vartheta(\varphi),I(\varphi),z(\varphi))
	\end{array}$$
of the Hamiltonian vector field 
\begin{equation}\label{X scrHeps}
	X_{\mathscr{H}_{\varepsilon}}\triangleq  
	(\partial_{I}\mathscr{H}_{\varepsilon},-\partial_{\vartheta}\mathscr{H}_{\varepsilon},\Pi_{\mathbb{S}_0}^\bot\partial_{\theta}\nabla_{z} \mathscr{H}_{\varepsilon}) 
\end{equation}
filled by quasi-periodic solutions with Diophantine frequency 
vector $\omega$. 
Note that for value $\varepsilon=0$, the Hamiltonian system 
$$\omega\cdot\partial_\varphi i (\varphi) = X_{\mathscr{H}_0} (i(\varphi))$$ possesses, for any value of the parameter $\alpha\in (\alpha_0,\alpha_1)$, the flat invariant torus $(\varphi,0,0).$ Similarly to \cite{BB15,HHM21,HR21-1,HR21}, we shall introduce a free parameter $\kappa \in\mathbb{R}^d$ to deal with zero $\varphi$-average conditions in the construction of an almost approximate right inverse for the linearized operator and therefore consider the following family of modified Hamiltonians,
\begin{equation}\label{H-ttc}
	\begin{aligned}
		\mathscr{H}_\varepsilon^\kappa  \triangleq\mathscr{N}_\kappa  +\varepsilon\mathcal{P}_{\varepsilon},\qquad \mathscr{N}_\kappa \triangleq\kappa \cdot I+\frac{1}{2}\big\langle \mathrm{L}(\alpha)\, z, z\big\rangle_{L^2(\mathbb{T})}.
	\end{aligned}
\end{equation}
We mention that the original problem is recovered by taking $\kappa =-{\omega}_{\textnormal{Eq}}(\alpha).$ Now we are interested in finding non-trivial zeros of the nonlinear functional
\begin{equation}\label{def scrF}
	\begin{array}{l}
		\mathscr{F}(i,\kappa ,(\alpha,\omega),\varepsilon)\triangleq \omega\cdot\partial_{\varphi}i(\varphi)-X_{\mathscr{H}_{\varepsilon}^{\kappa }}(i(\varphi))=\left(\begin{array}{c}
			\omega\cdot\partial_{\varphi}\vartheta(\varphi)-\kappa -\varepsilon\partial_{I}\mathcal{P}_{\varepsilon}(i(\varphi))\\
			\omega\cdot\partial_{\varphi}I(\varphi)+\varepsilon\partial_{\vartheta}\mathcal{P}_{\varepsilon}(i(\varphi))\\
			\omega\cdot\partial_{\varphi}z(\varphi)-\partial_{\theta}\big[\mathrm{L}(\alpha)z(\varphi)+\varepsilon\nabla_{z}\mathcal{P}_{\varepsilon}\big(i(\varphi)\big)\big]
		\end{array}\right).
	\end{array}
\end{equation}
We point out that we can easily check that 
the Hamiltonian $\mathscr{H}_{\varepsilon}^{\kappa }$ is reversible in the sense of the Definition \ref{Def-Rev}, that is, 
$$\mathscr{H}_{\varepsilon}^{\kappa }\circ\mathbf{S}=\mathscr{H}_{\varepsilon}^{\kappa },\qquad\mathbf{S}(\vartheta,I,z)\triangleq(-\vartheta,I,\mathscr{S}z),$$
with $\mathscr{S}$ as in \eqref{invscrS}. Thus, we look for reversible tori solutions of $\mathscr{F}(i,\kappa ,(\alpha,\omega),\varepsilon)=0,$ that is satisfying
$$\mathbf{S}i(\varphi)=i(-\varphi),\qquad\textnormal{i.e.}\qquad\begin{array}{ccc}
		\vartheta(-\varphi)=-\vartheta(\varphi), & I(-\varphi)=I(\varphi), & z(-\varphi)=(\mathscr{S}z)(\varphi).
	\end{array}$$
Now we define the periodic component $\mathfrak{I}$ of the torus $i$ together with its Sobolev norm by
$$\mathfrak{I}(\varphi)\triangleq i(\varphi)-(\varphi,0,0)=(\Theta(\varphi),I(\varphi),z(\varphi)),\qquad \Theta(\varphi)\triangleq\vartheta(\varphi)-\varphi,\qquad\|\mathfrak{I}\|_{q,s}^{\gamma,\mathcal{O}}\triangleq \|\Theta\|_{q,s}^{\gamma,\mathcal{O}}+\| I\|_{q,s}^{\gamma,\mathcal{O}}+\| z\|_{q,s}^{\gamma,\mathcal{O}}.$$
The norm $\|\cdot\|_{q,s}^{\gamma,\mathcal{O}}$ is defined in Section \ref{sec topo}. We shall fix $q$ as follows
\begin{equation}\label{def q}
	q\triangleq q_0+1,
\end{equation}
with the $q_0$ appearing in Lemma \ref{lem trsvrslt Ea}. This particular choice is relevant in the final subsection when checking the perturbed Rüssmann conditions. We also define the set of parameters $\mathcal{O}$ as follows 
\begin{equation}\label{def calO}
	\mathcal{O}\triangleq(\alpha_{0},\alpha_{1})\times B(0,R_0)\subset\mathbb{R}^{d+1},
\end{equation}
where the open ball $B(0,R_0)\subset\mathbb{R}^d$ with $R_0>0$ is chosen such that
$$\omega_{\textnormal{Eq}}\big((\alpha_{0},\alpha_{1})\big)\subset B\big(0,\tfrac{R_{0}}{2}\big).$$
This is well-defined by continuity of the application $\omega_{\textnormal{Eq}}$ in \eqref{def freq vec eqa}. This particular structure is chosen so that the perturbed frequency vector in Section \ref{sec non triv QP sol} is included in the corresponding component of $\mathcal{O}$ in order to check trivial inclusions. We shall now prove tame estimates for $X_{\mathscr{P}}$.
\begin{lem}\label{lem e-XP}
	Let $(\gamma,s_{0},s,q)$ satisfying \eqref{p-cond0}, \eqref{Sob index} and \eqref{def q}. There exists $\varepsilon_{0}\in(0,1]$ such that if
		$$\| r\|_{q,s_{0}+2}^{\gamma,\mathcal{O}}\leqslant\varepsilon_{0},$$
		then we have the following estimates for the vector field $X_{\mathscr{P}}$ in \eqref{ql per}
		\begin{enumerate}[label=(\roman*)]
			\item $\| X_{\mathscr{P}}(r)\|_{q,s}^{\gamma,\mathcal{O}}\lesssim \| r\|_{q,s+2}^{\gamma,\mathcal{O}}\| r\|_{q,s_0+1}^{\gamma,\mathcal{O}}.$
			\item $\| d_{r}X_{\mathscr{P}}(r)[\rho]\|_{q,s}^{\gamma,\mathcal{O}}\lesssim\|\rho\|_{q,s+2}^{\gamma,\mathcal{O}}\|r\|_{q,s_0+1}^{\gamma,\mathcal{O}}+\| r\|_{q,s+2}^{\gamma,\mathcal{O}}\|\rho\|_{q,s_{0}+1}^{\gamma,\mathcal{O}}.$
			\item 
			$\| d_r^{2}X_{\mathscr{P}}(r)[\rho_{1},\rho_{2}]\|_{q,s}^{\gamma,\mathcal{O}}\lesssim\|\rho_{1}\|_{q,s_{0}+1}^{\gamma,\mathcal{O}}\|\rho_{2}\|_{q,s+2}^{\gamma,\mathcal{O}}+\|\rho_{1}\|_{q,s+2}^{\gamma,\mathcal{O}}\|\rho_{2}\|_{q,s_{0}+1}^{\gamma,\mathcal{O}}+\| r\|_{q,s+2}^{\gamma,\mathcal{O}}\|\rho_{1}\|_{q,s_{0}+1}^{\gamma,\mathcal{O}}\|\rho_{2}\|_{q,s_{0}+1}^{\gamma,\mathcal{O}}$.
	\end{enumerate}
\end{lem}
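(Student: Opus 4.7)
The plan is to exploit the decomposition $X_{\mathscr{P}}=X_{\mathscr{P}}^{\textnormal{\tiny{E}}}+X_{\mathscr{P}}^{\textnormal{\tiny{SW}}}$, parallel to \eqref{dec E+qgsw}. Using $\mathcal{K}_\alpha=\mathcal{K}-\mathcal{Q}_\alpha$, one splits the linear correction into an Euler piece and a shallow-water piece so that $X_{\mathscr{P}}^{\textnormal{\tiny{E}}}(r)\triangleq\tfrac{1}{2}\partial_\theta r+\partial_\theta\mathcal{K}\ast r+F^{\textnormal{\tiny{E}}}[r]$ and $X_{\mathscr{P}}^{\textnormal{\tiny{SW}}}(r)\triangleq F^{\textnormal{\tiny{SW}}}[r]-\partial_\theta\mathcal{Q}_\alpha\ast r$ each coincide with the nonlinear vector field already analyzed in \cite{HR21-1} (for Euler) and \cite{HR21} (for QGSW). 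The three estimates (i)--(iii) then follow by summing the bounds proved there. Below I sketch the mechanism that both references develop in detail.

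The core difficulty is controlling the weakly singular integrals \eqref{FE}--\eqref{FSW} and their $r$-derivatives. First one uses the geometric factorization
\begin{equation*}
A_r(\theta,\eta)^2=4\sin^2\!\tfrac{\eta-\theta}{2}\;B_r(\theta,\eta),\qquad B_r(\theta,\eta)\triangleq R(\theta)R(\eta)+\tfrac{(R(\theta)-R(\eta))^2}{4\sin^2((\eta-\theta)/2)},
\end{equation*}
where $B_r$ is smooth, positive, and close to $1$ when $\|r\|_{q,s_0+1}^{\gamma,\mathcal{O}}$ is small (ensured by taking $\varepsilon_0$ small). Inserting this into the definitions and using the expansion \eqref{exp K0} to isolate the logarithmic singularity of $K_0$, both kernels $\log A_r$ and $K_0\bigl(\tfrac{1}{\alpha}A_r\bigr)$ split as a fixed logarithmic kernel $\log\lvert\sin((\eta-\theta)/2)\rvert$ (which produces a fixed smoothing convolution operator that cancels against the linear correction terms $\partial_\theta\mathcal{K}\ast r$ and $-\partial_\theta\mathcal{Q}_\alpha\ast r$, leaving a quadratic remainder) plus a smooth composition of $r$, to which Lemma \ref{Lem-lawprod}-(iv) applies. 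An integration by parts in $\eta$ shifts one of the two derivatives in $\partial_{\theta\eta}^2\bigl(R(\theta)R(\eta)\sin(\eta-\theta)\bigr)$ onto the kernel; this, combined with the single derivative coming from $R'$ inside the factor, accounts for the loss of two derivatives $\|r\|_{q,s+2}^{\gamma,\mathcal{O}}$ in (i). Lemma \ref{lem sym-Rev} then converts pointwise kernel bounds into the desired operator and tame norm estimates.

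The derivative estimates (ii) and (iii) are obtained by differentiating the explicit integral formulas once and twice in $r$: the chain rule produces extra factors of $\rho_1,\rho_2$ (or their first derivatives) inside the integrand, either through the outer factor $R(\theta)R(\eta)\sin(\eta-\theta)$ or through derivatives of the kernel with respect to $A_r$ (via $\log'(\cdot)=\cdot^{-1}$ or $K_0'=-K_1$). A crucial point is that the $\tfrac{1}{A_r}$ singularity of $K_1$ near the diagonal is compensated by an extra factor of $|w-\tau|/A_r$ coming from the chain rule applied to $A_r$, a mechanism already exploited in the computation of $d_fI^{\textnormal{\tiny{SW}}}(0)$ in the proof of Proposition \ref{prop lin op Ea}. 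Consequently all resulting kernels remain only logarithmically singular and fall back into the previous setting. The main obstacle, and the reason the proof is non-trivial, is the careful book-keeping of derivative losses and of the weighted $W^{q,\infty,\gamma}$-norms of $\alpha$-derivatives of all the modified Bessel factors entering $K_0, K_1$; this is precisely what the computations in \cite{HR21-1, HR21} carry out, and which we reuse verbatim to conclude.
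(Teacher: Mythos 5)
Your proposal is correct and follows essentially the same route as the paper: split $X_{\mathscr{P}}$ into its Euler and shallow-water contributions and import the corresponding estimates from \cite[Lem. 5.2]{HR21-1} and \cite[Lem. 5.2]{HR21}. The only (cosmetic) difference is that the paper proves only the second-derivative bound (iii) this way and then deduces (i) and (ii) by Taylor's formula at $r=0$, using $X_{\mathscr{P}}(0)=0$ and $d_{r}X_{\mathscr{P}}(0)=0$, which spares the direct verification of the quadratic cancellation against $\tfrac{1}{2}\partial_{\theta}r+\partial_{\theta}\mathcal{K}_{\alpha}\ast r$ that you sketch for (i).
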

\begin{proof}
	It suffices to prove the estimate (iii). Indeed, the estimates (i) and (ii) are consequences of (iii) by a direct application of Taylor formula since $X_{\mathscr{P}}(0)=0$ and $d_rX_{\mathscr{P}}(0)=0.$ Recall from \eqref{dFE} and \eqref{dFSW} that
	\begin{align*}
		d_rX_{\mathscr{H}}(r)[\rho]&=d_rF^{\textnormal{\tiny{E}}}(r)[\rho]+d_rF^{\textnormal{\tiny{SW}}}(r)[\rho]\\
		&=\partial_{\theta}\big(V_r^{\textnormal{\tiny{E}}}\rho\big)+\partial_{\theta}\mathbf{L}_r^{\textnormal{\tiny{E}}}(\rho)-\partial_{\theta}\big(V_r^{\textnormal{\tiny{SW}}}\rho\big)-\partial_{\theta}\mathbf{L}_r^{\textnormal{\tiny{SW}}}(\rho).
	\end{align*}
	Differentiating the last expression with respect to $r$ yields
	$$d_r^2X_{\mathscr{P}}(r)[\rho_1,\rho_2]=\partial_{\theta}\big((d_rV_r^{\textnormal{\tiny{E}}}[\rho_2])\rho_1\big)-\partial_{\theta}(d_r\mathbf{L}_r^{\textnormal{\tiny{E}}}[\rho_2]\rho_1)+\partial_{\theta}\big((d_rV_r^{\textnormal{\tiny{SW}}}[\rho_2])\rho_1\big)-\partial_{\theta}(d_r\mathbf{L}_r^{\textnormal{\tiny{SW}}}[\rho_2]\rho_1).$$
	But it has been proved in \cite[Lem. 5.2]{HR21-1} and \cite[Lem. 5.2]{HR21} that each term in the right hand-side of the previous equality satisfy an estimate as in (iii) in the statement of this lemma which concludes the proof of this latter.
\end{proof}

Consequently proceeding as for \cite[Lem. 5.3]{HR21}, the previous lemma implies the following one stating tame estimates for the perturbed Hamiltonian vector field in the action-angle-normal variables.
\begin{lem}\label{lem e-XPeps}
	Let $(\gamma,s_{0},s,q)$ satisfy \eqref{p-cond0}, \eqref{Sob index} and \eqref{def q}. There exists $\varepsilon_0\in(0,1)$ such that if 
	$$\varepsilon\leqslant\varepsilon_0,\qquad\|\mathfrak{I}\|_{q,s_{0}+2}^{\gamma,\mathcal{O}}\leqslant 1,$$ 
	then the perturbed Hamiltonian vector field in the new variables  
	$$X_{\mathcal{P}_{\varepsilon}}=(\partial_{I}\mathcal{P}_{\varepsilon},-\partial_{\vartheta}\mathcal{P}_{\varepsilon},\Pi_{\mathbb{S}}^{\perp}\partial_{\theta}\nabla_{z}\mathcal{P}_{\varepsilon})$$
	defined through \eqref{scrHeps} and \eqref{X scrHeps} satisfies the following tame estimates
	\begin{enumerate}[label=(\roman*)]
		\item $\| X_{\mathcal{P}_{\varepsilon}}(i)\|_{q,s}^{\gamma,\mathcal{O}}\lesssim 1+\|\mathfrak{I}\|_{q,s+2}^{\gamma,\mathcal{O}}.$
		\item $\big\| d_{i}X_{\mathcal{P}_{\varepsilon}}(i)[\,\widehat{i}\,]\big\|_{q,s}^{\gamma,\mathcal{O}}\lesssim \|\,\widehat{i}\,\|_{q,s+2}^{\gamma,\mathcal{O}}+\|\mathfrak{I}\|_{q,s+2}^{\gamma,\mathcal{O}}\|\,\widehat{i}\,\|_{q,s_{0}+1}^{\gamma,\mathcal{O}}.$
		\item $\big\| d_{i}^{2}X_{\mathcal{P}_{\varepsilon}}(i)[\,\widehat{i},\widehat{i}\,]\big\|_{q,s}^{\gamma,\mathcal{O}}\lesssim \|\,\widehat{i}\,\|_{q,s+2}^{\gamma,\mathcal{O}}\|\,\widehat{i}\,\|_{q,s_{0}+1}^{\gamma,\mathcal{O}}+\|\mathfrak{I}\|_{q,s+2}^{\gamma,\mathcal{O}}\left(\|\,\widehat{i}\,\|_{q,s_{0}+1}^{\gamma,\mathcal{O}}\right)^{2}.$
	\end{enumerate}
\end{lem}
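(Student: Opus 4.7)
The plan is to reduce the three estimates to the corresponding bounds on the pre-action-angle vector field $X_{\mathscr{P}}$ given by Lemma \ref{lem e-XP}, by systematically applying the chain rule and the tame calculus from Lemmata \ref{Lem-lawprod} and \ref{lem prop Toe}. Since $\mathcal{P}_{\varepsilon}=\mathscr{P}_{\varepsilon}\circ\mathtt{A}$ with $\mathscr{P}_{\varepsilon}(r)=\varepsilon^{-2}\mathscr{P}(\varepsilon r)$, the scaling immediately yields $X_{\mathscr{P}_{\varepsilon}}(r)=\varepsilon^{-2}X_{\mathscr{P}}(\varepsilon r)$; the quadratic vanishing of $X_{\mathscr{P}}$ at the origin encoded in Lemma \ref{lem e-XP}-(i)--(ii) combined with a Taylor expansion at the origin compensates the prefactor $\varepsilon^{-2}$, so that for $\|r\|_{q,s_0+2}^{\gamma,\mathcal{O}}\lesssim 1$ one obtains
\begin{equation*}
\|X_{\mathscr{P}_{\varepsilon}}(r)\|_{q,s}^{\gamma,\mathcal{O}}\lesssim\|r\|_{q,s+2}^{\gamma,\mathcal{O}},\quad\|d_{r}X_{\mathscr{P}_{\varepsilon}}(r)[\rho]\|_{q,s}^{\gamma,\mathcal{O}}\lesssim\|\rho\|_{q,s+2}^{\gamma,\mathcal{O}}+\|r\|_{q,s+2}^{\gamma,\mathcal{O}}\|\rho\|_{q,s_{0}+1}^{\gamma,\mathcal{O}},
\end{equation*}
and analogously for the second differential, all uniformly in $\varepsilon\in(0,\varepsilon_0]$.

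Next, I would express each component of $X_{\mathcal{P}_{\varepsilon}}(i)$ in terms of the $L^{2}$-gradient $\nabla\mathscr{P}_{\varepsilon}$ evaluated at the point $r=\mathtt{A}(i)=v(\vartheta,I)+z$. The $z$-component simply reads $\Pi_{\mathbb{S}_0}^{\perp}\partial_{\theta}\nabla_{z}\mathcal{P}_{\varepsilon}(i)=\Pi_{\mathbb{S}_0}^{\perp}X_{\mathscr{P}_{\varepsilon}}(r)$, so Lemma \ref{Lem-lawprod}-(ii) and the bound above handle it directly once one controls $\|r\|_{q,s+2}^{\gamma,\mathcal{O}}$. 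For the action-angle components, the chain rule gives
\begin{equation*}
\partial_{\vartheta_j}\mathcal{P}_{\varepsilon}=\bigl\langle\nabla\mathscr{P}_{\varepsilon}(r),\partial_{\vartheta_j}v(\vartheta,I)\bigr\rangle_{L^{2}(\T)},\qquad\partial_{I_j}\mathcal{P}_{\varepsilon}=\bigl\langle\nabla\mathscr{P}_{\varepsilon}(r),\partial_{I_j}v(\vartheta,I)\bigr\rangle_{L^{2}(\T)},
\end{equation*}
with $\nabla\mathscr{P}_{\varepsilon}(r)=\partial_{\theta}^{-1}X_{\mathscr{P}_{\varepsilon}}(r)$ (having zero space-average by Hamiltonian structure, so the primitive is well-defined on $L^2_0(\T)$). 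Since $\partial_{\vartheta_j}v,\partial_{I_j}v$ live in the finite-dimensional tangential subspace $L_{\overline{\mathbb{S}}}$, their norms are controlled by a smooth function of $\vartheta,I$ for $\|I\|\lesssim 1$.

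The crux is therefore to control $\|r\|_{q,s+2}^{\gamma,\mathcal{O}}$ by $1+\|\mathfrak{I}\|_{q,s+2}^{\gamma,\mathcal{O}}$. The normal part $z$ contributes directly, while the tangential part $v(\vartheta,I)=\sum_{j\in\overline{\mathbb{S}}}\sqrt{\mathtt{a}_j^2+|j|I_j}\,e^{\ii\vartheta_j}\mathbf{e}_j$ is treated by the smooth composition estimate Lemma \ref{Lem-lawprod}-(iv), which applies precisely because $\mathtt{a}_j>0$ keeps the argument of the square root away from zero for small $I_j$ (ensured by the hypothesis $\|\mathfrak{I}\|_{q,s_0+2}^{\gamma,\mathcal{O}}\leqslant 1$ together with Sobolev embedding via \eqref{Sob index}). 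This yields $\|v(\vartheta,I)\|_{q,s}^{\gamma,\mathcal{O}}\lesssim 1+\|\Theta\|_{q,s}^{\gamma,\mathcal{O}}+\|I\|_{q,s}^{\gamma,\mathcal{O}}$. Combining everything gives estimate (i). For (ii) and (iii) one differentiates the chain-rule identities once and twice in $i=(\vartheta,I,z)$, obtaining sums of terms involving $d_r^{k}X_{\mathscr{P}_{\varepsilon}}(r)$ composed with $d_{i}^{\ell}\mathtt{A}(i)[\widehat{i},\ldots]$, $k+\ell\leqslant 2$; the bilinear tame product of Lemma \ref{Lem-lawprod}-(iii) distributes the regularity correctly.

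The main technical obstacle is a bookkeeping issue rather than a conceptual one: one must carefully propagate the two-derivative loss (the $+2$ in the Sobolev index) through the chain rule without amplifying it, in particular for the second differential $d_i^2 X_{\mathcal{P}_{\varepsilon}}[\,\widehat{i},\widehat{i}\,]$, where a naive application of the product rule would produce a quadratic term $(\|\widehat{i}\|_{q,s+2}^{\gamma,\mathcal{O}})^{2}$ rather than the desired mixed high-low bound $\|\widehat{i}\|_{q,s+2}^{\gamma,\mathcal{O}}\|\widehat{i}\|_{q,s_0+1}^{\gamma,\mathcal{O}}$. This is resolved by systematically applying the tame bilinear product of Lemma \ref{Lem-lawprod}-(iii) and absorbing one factor through the smallness hypothesis $\|\mathfrak{I}\|_{q,s_0+2}^{\gamma,\mathcal{O}}\leqslant 1$, following the same strategy as in \cite[Lem.~5.3]{HR21}, to which we essentially reduce.
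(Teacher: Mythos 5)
Your proposal is correct and follows exactly the route the paper intends: the paper proves this lemma by simply invoking the reduction "as for [Lem. 5.3] of \cite{HR21}", i.e. deducing the action-angle tame estimates from Lemma \ref{lem e-XP} via the chain rule through the symplectic map $\mathtt{A}$, the control $\|r\|_{q,s}^{\gamma,\mathcal{O}}\lesssim 1+\|\mathfrak{I}\|_{q,s}^{\gamma,\mathcal{O}}$, and the tame product/composition calculus of Lemma \ref{Lem-lawprod}. Your write-up supplies the details that the paper delegates to the cited reference, and the scaling, chain-rule and high-low bookkeeping arguments are all sound.
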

\subsection{Almost approximate right inverse}\label{sec cons aai}
The aim of this section is to construct for any vector-valued function $\kappa _0:\mathcal{O}\to \mathbb{R}^d$ and any reversible torus $i_0=(\vartheta_0,I_0,z_0)$ close to the flat one an almost approximate right inverse for the linearized operator
\begin{equation}\label{Lin F}
	d_{(i,\kappa )}\mathscr{F}(i_0,\kappa _0)[\widehat{\imath}\,,\widehat{\kappa }]=\omega \cdot\partial_{\varphi}\widehat\imath-d_{i}X_{\mathscr{H}_{\varepsilon}^{\kappa _0}}(i_0(\varphi))[\widehat{\imath}]-(\widehat{\kappa },0,0)
\end{equation}
associated with the functional $\mathscr{F}$ defined in \eqref{def scrF}. For this purpose, we use the Berti-Bolle theory developed in \cite{BB15} and \cite[Sec. 6]{HHM21}. Namely, we can find a linear diffeomorphism of the toroidal phase space $\mathbb{T}^d\times \mathbb{R}^d\times L_{\perp}^2$ such that the conjugation of \eqref{Lin F} by this application is a triangular system in the action-angles-normal variables up to error terms either vanishing at an exact solution or small fast decaying. The key point to solve the triangular system is that it is sufficient to almost invert the linearized operator in the normal directions. According to the computations done in \cite{BB15,HHM21}, this latter admits the following form
\begin{equation}\label{hLom}
	\mathscr{L}_{\perp}\triangleq \mathscr{L}_{\perp}(i_0)\triangleq \Pi_{\mathbb{S}_0}^{\perp}\Big(\omega\cdot\partial_{\varphi}-\partial_{\theta}\,\partial_z\nabla_z \mathscr{H}_\varepsilon^{\kappa _0}(i_{0}(\varphi))-\varepsilon\partial_{\theta}\mathcal{R}(\varphi)\Big)\Pi_{\mathbb{S}_0}^{\perp},
\end{equation}
where $\mathscr{H}_{\varepsilon}^{\kappa _0}$ is as in \eqref{H-ttc} and $\mathcal{R}(\varphi)$ is a remainder operator coming from a coupling with the tangential part and given by
$$\mathcal{R}(\varphi)\triangleq L_{2}^{\top}(\varphi)\partial_{I}\nabla_I\mathcal{P}_{\varepsilon}(i_{0}(\varphi))L_{2}(\varphi)+L_{2}^{\top}(\varphi)\partial_{z}\nabla_{I}\mathcal{P}_{\varepsilon}(i_{0}(\varphi))+\partial_{I}\nabla_{z}\mathcal{P}_{\varepsilon}(i_{0}(\varphi))L_{2}(\varphi),$$
with $\mathcal{P}_{\varepsilon}$ as in \eqref{scrHeps} and  
$$L_2:\mathbb{R}^d\rightarrow L^2_\perp,\qquad L_2(\phi)\triangleq-[(\partial_\vartheta \widetilde{z}_0)(\vartheta_0(\phi))]^\top \partial_\theta^{-1},\qquad\widetilde{z}_0(\vartheta)\triangleq z_0(\vartheta_0^{-1}(\vartheta)).$$
We used the following definition by duality for the transposed operator $L_2^{\top}:L^2_{\perp}\to\mathbb{R}^d$ 
$$\forall\, u\in L^2_{\perp}\, ,\quad\forall\,v\in\mathbb{R}^d,\quad\big\langle L_2^{\top}(\varphi)u,v\big\rangle_{\mathbb{R}^d} =\big\langle u,L_2(\varphi) v\big\rangle_{L^2(\mathbb{T}^d)}.$$
Furthermore, we can have a more explicit decomposition of the operator $\mathscr{L}_{\perp}$. The result is described in the following proposition whose proof is similar to \cite[Prop. 6.1]{HR21} or \cite[Prop. 6.1]{HR21-1}.
\begin{prop}\label{prop hat L omega} Let $(d,\gamma,s_{0},q)$ satisfy \eqref{p-d}, \eqref{p-cond0}, \eqref{Sob index} and \eqref{def q}.
	Then, the operator $\mathscr{L}_{\perp}$ in \eqref{hLom} writes
	\begin{equation}\label{hLom-2}
		\mathscr{L}_{\perp}=\Pi_{\mathbb{S}_0}^{\perp}\Big(\mathcal{L}_{\varepsilon r}-\varepsilon\partial_{\theta}\mathcal{R}\Big)\Pi_{\mathbb{S}_0}^{\perp},
	\end{equation}
	where the operator $\mathcal{L}_{\varepsilon r}$ is defined as follows with $V_{\varepsilon r}$ and $\mathbf{L}_{\varepsilon r}$ obtained from \eqref{Vr bfLr},
		\begin{equation}\label{L eps r Ea}
			\mathcal{L}_{\varepsilon r}\triangleq \omega\cdot\partial_{\varphi}+\partial_{\theta}\big(V_{\varepsilon r}\cdot\big)+\partial_{\theta}\mathbf{L}_{\varepsilon r}.
		\end{equation}
		The function $r$ is linked to the reversible torus $i_0$ in the following way
		\begin{equation}\label{sym r}
			r(\varphi,\cdot)\triangleq\mathtt{A}\big(i_{0}(\varphi)\big),\qquad r(-\varphi,-\theta)=r(\varphi,\theta)
		\end{equation}
		and satisfies the following estimates
		\begin{equation}\label{e-r-I0}
			\| r\|_{q,s}^{\gamma,\mathcal{O}}\lesssim 1+\|\mathfrak{I}_{0}\|_{q,s}^{\gamma,\mathcal{O}},\qquad
			\|\Delta_{12}r\|_{q,s}^{\gamma,\mathcal{O}}\lesssim\|\Delta_{12}i\|_{q,s}^{\gamma,\mathcal{O}}+\|\Delta_{12}i\|_{q,s_0}^{\gamma,\mathcal{O}}\max_{j\in\{1,2\}}\|\mathfrak{I}_j\|_{q,s}^{\gamma,\mathcal{O}}.
		\end{equation}
		Finally, the operator $\mathcal{R}$ is an integral operator with kernel $\mathcal{J}$ satisfying the symmetry property
		\begin{equation}\label{symJ}
			\mathcal{J}(-\varphi,-\theta,-\eta)=\mathcal{J}(\varphi,\theta,\eta)
		\end{equation}
		and the following estimates for all $\ell\in\mathbb{N}$,
		\begin{equation}\label{e1-J}
			\sup_{\eta\in\mathbb{T}}\|(\partial_{\theta}^{\ell}\mathcal{J})(\ast,\cdot,\centerdot,\eta+\centerdot)\|_{q,s}^{\gamma,\mathcal{O}}\lesssim 1+\|\mathfrak{I}_{0}\|_{q,s+3+\ell}^{\gamma,\mathcal{O}}
		\end{equation}
		and
		\begin{equation}\label{e2-J} 
			\sup_{\eta\in\mathbb{T}}\|\Delta_{12}(\partial_{\theta}^{\ell}\mathcal{J})(\ast,\cdot,\centerdot,\eta+\centerdot)\|_{q,s}^{\gamma,\mathcal{O}}\lesssim\|\Delta_{12}i\|_{q,s+3+\ell}^{\gamma,\mathcal{O}}+\|\Delta_{12}i\|_{q,s_0+3}^{\gamma,\mathcal{O}}\max_{j\in\{1,2\}}\|\mathfrak{I}_j\|_{q,s+3+\ell}^{\gamma,\mathcal{O}}.
		\end{equation}
	where $*,\cdot,\centerdot,$ denote the variables $\alpha,\varphi,\theta$ and $\displaystyle\mathfrak{I}_{\ell}(\varphi)\triangleq i_{\ell}(\varphi)-(\varphi,0,0).$
\end{prop}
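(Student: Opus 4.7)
The plan is to unfold the abstract linearization in \eqref{Lin F} via the explicit form of the modified Hamiltonian \eqref{H-ttc}, and to recognize the resulting normal-direction operator as the linearization of the patch equation \eqref{Heq Ea} around the radial profile $r$ associated with the torus $i_0$. First, I would differentiate $\mathscr{H}_\varepsilon^{\kappa _0}=\mathscr{N}_\kappa +\varepsilon\mathcal{P}_\varepsilon$: the quadratic part $\mathscr{N}_\kappa $ contributes, through $\partial_z\nabla_z$, the diagonal multiplier $\mathrm{L}(\alpha)$; the perturbation contributes $\varepsilon\,\partial_z\nabla_z\mathcal{P}_\varepsilon(i_0(\varphi))$ which, thanks to the symplectic conjugacy $\mathtt{A}$, corresponds to $\varepsilon^{-1}d^2_r\mathscr{P}(\varepsilon r)$ restricted to $L_\perp^2$. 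Applying $\partial_\theta$ then reproduces exactly the linearized vector field $-d_r(F^{\textnormal{\tiny E}}+F^{\textnormal{\tiny SW}})$ at $\varepsilon r$. Together with $\omega\cdot\partial_\varphi$ and the diagonal part, invoking the formulas \eqref{dFE} and \eqref{dFSW} already proved in \cite{HR21-1,HR21} identifies the sum as $\mathcal{L}_{\varepsilon r}$ from \eqref{L eps r Ea} after noting that $V_0(\alpha)$ in \eqref{LaKa} is precisely $\Omega+\tfrac{1}{2}-I_1(\tfrac{1}{\alpha})K_1(\tfrac{1}{\alpha})$ absorbed into $V_{\varepsilon r}$ via \eqref{Vr bfLr}, and that the convolution kernels from $\mathrm{L}(\alpha)$ correspond to the leading Euler and $(QGSW)$ integral parts.

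Next, the $\varepsilon\partial_\theta\mathcal{R}$ term is already isolated in \eqref{hLom}, coming from the Berti-Bolle conjugation: I would quote the standard expression
$$\mathcal{R}(\varphi)=L_2^\top\partial_I\nabla_I\mathcal{P}_\varepsilon(i_0)L_2+L_2^\top\partial_z\nabla_I\mathcal{P}_\varepsilon(i_0)+\partial_I\nabla_z\mathcal{P}_\varepsilon(i_0)L_2,$$
identify each of the three summands with an integral operator in $\theta$ with smooth kernel, and combine them into a single kernel $\mathcal{J}(\varphi,\theta,\eta)$. The symmetry property \eqref{symJ} will follow from the reversibility of $\mathtt{A}$ and of the Hamiltonian together with the ansatz $i_0(-\varphi)=\mathbf{S} i_0(\varphi)$; concretely, each $L_2(\varphi)$, $\partial_\theta^{-1}$, and second-derivative factor satisfies explicit parity rules in $\varphi$ and the spatial variable which, when assembled, yield the required evenness.

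For the quantitative bounds, the estimate \eqref{e-r-I0} on $r=\mathtt{A}(i_0(\varphi))$ is a direct consequence of the explicit formula \eqref{action-angle var} and the composition law in Lemma \ref{Lem-lawprod}-(iv) applied to the square roots, combined with the smallness of $\mathfrak{I}_0$. The kernel estimates \eqref{e1-J} and \eqref{e2-J} then reduce to tame bounds on $L_2$, $\partial_\theta^{-1}$ and on the second derivatives of $\mathcal{P}_\varepsilon$: $L_2$ depends on $z_0,\vartheta_0$ via first derivatives, so loses one derivative, while $\partial_\theta\nabla_z\mathcal{P}_\varepsilon$ and $\partial_I\nabla_z\mathcal{P}_\varepsilon$ are controlled by Lemma \ref{lem e-XPeps} applied with appropriate Sobolev index; tracking derivative losses carefully gives the shift $s+3+\ell$ in \eqref{e1-J}, with the $\ell$ accounting for the $\partial_\theta^\ell$ and the $3$ absorbing one loss from $L_2$, one from the inverse flow $\vartheta_0^{-1}$, and one from the extra $\partial_\theta$ in \eqref{hLom-2}.

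The main obstacle is bookkeeping rather than conceptual: carefully matching the Fourier-multiplier structure of $\partial_\theta\mathrm{L}(\alpha)$ with the constant-coefficient part of $\partial_\theta V_{\varepsilon r}$ and $\partial_\theta\mathbf{L}_{\varepsilon r}$ at $r=0$ so that no spurious term remains, and verifying that the inverse action-angle change $\vartheta_0^{-1}$ inside $\widetilde z_0$ does not break the claimed derivative counts in \eqref{e2-J}. The handling of $\Delta_{12}$ differences is where one must apply a composition/interpolation argument carefully, using that $\mathtt{A}$ and its inverse are smooth tame maps and that the composition of a difference with a nonlinearity produces at worst the stated $\max_j\|\mathfrak{I}_j\|$ factor. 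Beyond these technicalities, the proof is a direct adaptation of \cite[Prop. 6.1]{HR21-1} and \cite[Prop. 6.1]{HR21}, with the only novelty being the superposition of Euler and $(QGSW)_{1/\alpha}$ kernels following the decomposition \eqref{dec E+qgsw}.
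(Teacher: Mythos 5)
Your proposal is correct and follows essentially the same route as the paper, which itself gives no independent argument but defers to the analogous statements in \cite[Prop. 6.1]{HR21} and \cite[Prop. 6.1]{HR21-1}: the identification of $-\partial_\theta\partial_z\nabla_z\mathscr{H}_\varepsilon^{\kappa_0}(i_0(\varphi))$ with the linearization $\partial_\theta(V_{\varepsilon r}\cdot)+\partial_\theta\mathbf{L}_{\varepsilon r}$ at $\varepsilon r=\mathtt{A}(i_0(\varphi))$ via \eqref{dFE}--\eqref{dFSW}, the reading of $\mathcal{R}$ as a smoothing integral operator built from $L_2$, and the symmetry/tame estimates via reversibility of the torus and Lemmata \ref{Lem-lawprod} and \ref{lem e-XPeps} are exactly the ingredients of the cited proofs. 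Your derivative-loss bookkeeping for the constant $3$ in \eqref{e1-J}--\eqref{e2-J} is consistent with those references.
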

Now we shall start the reduction procedure of the operator $\mathscr{L}_{\perp}.$ The first step is the reduction of the transport part of $\mathcal{L}_{\varepsilon r}$ in \eqref{L eps r Ea}, which is done by conjugation with quasi-periodic symplectic change of variables close to the identity similarly to \cite{BBM16,BKM21,FGMP19,HR21}. Actually, we apply here \cite[Prop. 6.2]{HR21}. The result is the following.
\begin{prop}\label{prop reduc trans}
	Let $(d,s_{0},S,\gamma,q,q_0)$ satisfy \eqref{p-d}, \eqref{Sob index}, \eqref{p-cond0} and \eqref{def q}. We consider the following parameters
\begin{equation}\label{ptrans}
		\begin{array}{ll} 
			\upsilon\triangleq\frac{1}{q_0+3}, & \tau_1\triangleq dq_0+1,\\
			s_{l}\triangleq s_0+\tau_1q+\tau_1+2,& \overline{\mu}_2\triangleq 4\tau_1q+6\tau_1+3,\\
			\overline{s}_{l}\triangleq s_l+\tau_2q+\tau_2, & \overline{s}_{h}\triangleq \frac{3}{2}\overline{\mu}_{2}+s_{l}+1,\\
			\sigma_{1}\triangleq s_0+\tau_{1}q+2\tau_{1}+4, & \sigma_2\triangleq s_0+\sigma_{1}+3.
		\end{array}
	\end{equation}
	For every choice of additional parameters $(\mu_2,\mathtt{p},s_h)$ with the constraints
	\begin{equation}\label{ptrans-2}
		\mu_{2}\geqslant \overline{\mu}_{2}, \qquad\mathtt{p}\geqslant 0,\qquad s_{h}\geqslant\max\left(\frac{3}{2}\mu_{2}+s_{l}+1,\overline{s}_{h}+\mathtt{p}\right),
	\end{equation}
	there exists $\varepsilon_{0}>0$ such that if the following smallness condition holds
	\begin{equation}\label{sml-trans}
		\varepsilon\gamma^{-1}N_{0}^{\mu_{2}}\leqslant\varepsilon_{0},\qquad \|\mathfrak{I}_{0}\|_{q,s_{h}+\sigma_{2}}^{\gamma,\mathcal{O}}\leqslant 1,
	\end{equation}
	then there exist $\mathtt{m}_{i_0}^{\infty}\in W^{q,\infty,\gamma }(\mathcal{O},\mathbb{C})$ $\varepsilon$-close to the equilibrium one $V_0$ in \eqref{LaKa}, namely
		\begin{equation}\label{e-r1}
			\| \mathtt{m}_{i_0}^{\infty}-V_0\|_{q}^{\gamma ,\mathcal{O}}\lesssim \varepsilon
		\end{equation}
	and an invertible quasi-periodic symplectic change of variables $\mathscr{B}$ in the form
	$$\mathscr{B}\triangleq(1+\partial_{\theta}\beta)\mathcal{B},\qquad\mathcal{B}\rho(\mu,\varphi,\theta)\triangleq\rho\big(\mu,\varphi,\theta+\beta(\mu,\varphi,\theta)\big),\qquad\beta\in W^{q,\infty,\gamma }(\mathcal{O},H^{S})$$
with inverse $\mathscr{B}^{-1}$ in the form
	$$\mathscr{B}^{-1}=(1+\partial_y\widehat{\beta})\mathcal{B}^{-1},\qquad \mathcal{B}^{-1} \rho(\mu,\varphi,y)=\rho\big(\mu,\varphi,y+\widehat{\beta}(\mu,\varphi,y)\big)$$
with
$$y=\theta+\beta(\mu,\varphi,\theta)\quad\Leftrightarrow\quad\theta=y+\widehat{\beta}(\mu,\varphi,y)$$
enjoying the symmetry properties
\begin{equation}\label{sym-beta}
	\beta(\alpha,\omega,-\varphi,-\theta)=-\beta(\alpha,\omega,\varphi,\theta),\qquad\widehat{\beta}(\alpha,\omega,-\varphi,-\theta)=-\widehat{\beta}(\alpha,\omega,\varphi,\theta)
\end{equation}
and the following estimates for all $s\in[s_{0},S]$ 
			\begin{equation}\label{e-scrB-scrB1}
				\|\mathscr{B}^{\pm 1}\rho\|_{q,s}^{\gamma ,\mathcal{O}}+\|\mathcal{B}^{\pm 1}\rho\|_{q,s}^{\gamma ,\mathcal{O}}\lesssim\|\rho\|_{q,s}^{\gamma ,\mathcal{O}}+\varepsilon\gamma ^{-1}\| \mathfrak{I}_{0}\|_{q,s+\sigma_{1}}^{\gamma ,\mathcal{O}}\|\rho\|_{q,s_{0}}^{\gamma ,\mathcal{O}}
			\end{equation}
			and 
			\begin{equation}\label{e-btr}
				\|\widehat{\beta}\|_{q,s}^{\gamma,\mathcal{O}}\lesssim\|\beta\|_{q,s}^{\gamma ,\mathcal{O}}\lesssim \varepsilon\gamma ^{-1}\left(1+\| \mathfrak{I}_{0}\|_{q,s+\sigma_{1}}^{\gamma ,\mathcal{O}}\right)
			\end{equation}
such that for any $n\in\mathbb{N}$, then in restriction to the Cantor-like set
			\begin{equation}\label{Cantrans}
				\mathscr{O}_{\textnormal{\tiny{transport}}}^{n,\gamma,\tau_1}(i_{0})\triangleq\bigcap_{(l,j)\in\mathbb{Z}^{d}\times\mathbb{Z}\setminus\{(0,0)\}\atop|l|\leqslant N_{n}}\left\lbrace(\alpha,\omega)\in \mathcal{O}\quad\textnormal{s.t.}\quad\big|\omega\cdot l+j\mathtt{m}_{i_0}^{\infty}(\alpha,\omega)\big|>\tfrac{4\gamma^{\upsilon}\langle j\rangle}{\langle l\rangle^{\tau_{1}}}\right\rbrace,
			\end{equation}
			the following reduction holds
			\begin{equation}\label{reduc trans eq}
				\mathfrak{L}_{\varepsilon r}\triangleq \mathscr{B}^{-1}\mathcal{L}_{\varepsilon r}\mathscr{B}=\omega\cdot\partial_{\varphi}+\mathtt{m}_{i_0}^{\infty}\partial_{\theta}+\partial_{\theta}\mathcal{K}_{\alpha}\ast\cdot+\partial_{\theta}\mathbf{R}_{\varepsilon r}+\mathbf{E}_{n}^{0},
			\end{equation}
			with $\mathcal{K}_{\alpha}$ as in \eqref{LaKa} and $\mathbf{E}_{n}^{0}=\mathbf{E}_{n}^{0}(\alpha,\omega,i_{0})$ a linear operator satisfying
			\begin{equation}\label{e-En0}
				\|\mathbf{E}_{n}^{0}\rho\|_{q,s_{0}}^{\gamma,\mathcal{O}}\lesssim\varepsilon N_{0}^{\mu_{2}}N_{n+1}^{-\mu_{2}}\|\rho\|_{q,s_{0}+2}^{\gamma,\mathcal{O}}.
			\end{equation}
			The operator $\mathbf{R}_{\varepsilon r}$ is a real and reversibility preserving integral operator satisfying
			\begin{align}\label{e-frkR}
				\forall s\in[s_0,S],\quad \max_{k\in\{0,1,2\}}\|\partial_{\theta}^{k}\mathbf{R}_{\varepsilon r}\|_{\textnormal{\tiny{O-d}},q,s}^{\gamma,\mathcal{O}}&\lesssim\varepsilon\gamma^{-1}\left(1+\|\mathfrak{I}_{0}\|_{q,s+\sigma_{2}}^{\gamma,\mathcal{O}}\right).
			\end{align}
		In addition, for any tori $i_{1}$ and $i_{2}$ both satisfying \eqref{sml-trans}, we have 
		\begin{equation}\label{e-12-Vbt}
			\|\Delta_{12}\mathtt{m}_{i}^{\infty}\|_{q}^{\gamma,\mathcal{O}}\lesssim\varepsilon\| \Delta_{12}i\|_{q,\overline{s}_{h}+2}^{\gamma,\mathcal{O}},\qquad\|\Delta_{12}\beta\|_{q,\overline{s}_{h}+\mathtt{p}}^{\gamma,\mathcal{O}}+\|\Delta_{12}\widehat\beta\|_{q,\overline{s}_{h}+\mathtt{p}}^{\gamma,\mathcal{O}}\lesssim\varepsilon\gamma^{-1}\|\Delta_{12}i\|_{q,\overline{s}_{h}+\mathtt{p}+\sigma_{1}}^{\gamma,\mathcal{O}}
		\end{equation}
		and
		\begin{align}\label{e-dfrkR}
			\max_{k\in\{0,1\}}\|\Delta_{12}(\partial_{\theta}^k\mathbf{R}_{\varepsilon r})\|_{\textnormal{\tiny{O-d}},q,\overline{s}_{h}+\mathtt{p}}^{\gamma,\mathcal{O}}&\lesssim\varepsilon\gamma^{-1}\|\Delta_{12}i\|_{q,\overline{s}_{h}+\mathtt{p}+\sigma_{2}}^{\gamma,\mathcal{O}}.
		\end{align}
\end{prop}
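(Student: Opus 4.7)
The plan is to reduce the transport part of $\mathcal{L}_{\varepsilon r}$ to constant coefficients in $(\varphi,\theta)$ through an iterative KAM-type scheme of successive conjugations by quasi-periodic symplectic diffeomorphisms of the torus $\mathbb{T}$, following the strategy of \cite{BBM16,HR21}. The symplectic structure on $L_0^2(\mathbb{T})$ is preserved by maps of the form $\mathscr{B}=(1+\partial_\theta\beta)\mathcal{B}$ with $\mathcal{B}\rho(\varphi,\theta)=\rho(\varphi,\theta+\beta(\varphi,\theta))$, so the class of operators under consideration is stable. A direct computation shows that conjugating $\omega\cdot\partial_\varphi+\partial_\theta(V_{\varepsilon r}\cdot)$ by $\mathscr{B}$ produces a transport operator whose new coefficient equals $\mathcal{B}^{-1}\bigl(\omega\cdot\partial_\varphi\beta+V_{\varepsilon r}(1+\partial_\theta\beta)\bigr)$. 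The objective is therefore to iteratively drive this coefficient to a constant.

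First I would run the Newton-KAM iteration: at step $n$, starting from an operator $\omega\cdot\partial_\varphi+\mathtt{m}_n\partial_\theta+\partial_\theta(V_n\cdot)+\ldots$ with $V_n$ small, I solve the cohomological equation
\begin{equation*}
\bigl(\omega\cdot\partial_\varphi+\mathtt{m}_n\partial_\theta\bigr)\beta_n=\Pi_{N_n}\bigl(\mathtt{m}_{n+1}-V_n\bigr),\qquad \mathtt{m}_{n+1}\triangleq\mathtt{m}_n+\langle V_n\rangle_{\varphi,\theta},
\end{equation*}
truncated at Fourier frequencies $|l|\leqslant N_n$ with $N_n=N_0^{(3/2)^n}$. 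The inversion of $\omega\cdot l+j\mathtt{m}_n$ imposes the first-order Diophantine conditions appearing in $\mathscr{O}_{\textnormal{\tiny{transport}}}^{n,\gamma,\tau_1}(i_0)$ and introduces a loss of $\tau_1$ derivatives, while the high-frequency truncation produces the remainder $\mathbf{E}_n^{0}$ of size $\varepsilon N_0^{\mu_2}N_{n+1}^{-\mu_2}$ stated in \eqref{e-En0}. Using Lemma \ref{Lem-lawprod} for the composition laws and interpolation of Sobolev norms, one shows that $\|V_{n+1}\|_{q,s_l}^{\gamma,\mathcal{O}}$ decays quadratically and that the high-norm growth $\|V_{n+1}\|_{q,s_h}^{\gamma,\mathcal{O}}$ remains controlled, which yields convergence of $\beta=\sum\beta_n$, of the composition $\mathscr{B}=\mathscr{B}_0\mathscr{B}_1\cdots$, and of $\mathtt{m}_{i_0}^\infty=\lim\mathtt{m}_n$. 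Because the average of $V_{\varepsilon r}-V_0$ is $O(\varepsilon)$ by the regularity of Proposition \ref{prop hat L omega}, we get \eqref{e-r1}, and the inverse $\mathscr{B}^{-1}=(1+\partial_y\widehat\beta)\mathcal{B}^{-1}$ is obtained by the Lagrangian inversion $y=\theta+\beta\leftrightarrow\theta=y+\widehat\beta$. The reversibility symmetries \eqref{sym-beta} are propagated at each step from the reversibility of $V_{\varepsilon r}$ in $(\varphi,\theta)$ (inherited from \eqref{symVr} via \eqref{sym r}) since the cohomological equation preserves odd parity.

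Next I would handle the nonlocal part $\partial_\theta\mathbf{L}_{\varepsilon r}$, which is conjugated by $\mathscr{B}^{-1}(\cdot)\mathscr{B}$. Splitting $\mathbf{L}_{\varepsilon r}=\mathcal{K}_\alpha\ast\cdot\ +\ (\mathbf{L}_{\varepsilon r}-\mathcal{K}_\alpha\ast\cdot)$, the convolution is conjugation-invariant up to a smooth integral remainder because $\beta$ is small, while the difference is already a Toeplitz integral operator with smooth kernel satisfying the estimates in Proposition \ref{prop hat L omega}. Applying Lemma \ref{lem sym-Rev}, both contributions bundle into a single integral operator $\mathbf{R}_{\varepsilon r}$ with real even kernel (hence real and reversibility-preserving), whose off-diagonal Toeplitz norm \eqref{e-frkR} is controlled using \eqref{e-btr}, \eqref{e1-J} and the tame estimates of the scheme. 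The difference estimates \eqref{e-12-Vbt}-\eqref{e-dfrkR} follow by differentiating the whole construction with respect to $i_0$ and using \eqref{e2-J}, the weighted Sobolev framework in Section \ref{sec topo} automatically providing the Lipschitz dependence in $\mu=(\alpha,\omega)$.

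The main technical obstacle is the coupling between the KAM small-divisor conditions and the final Cantor set. The set $\mathscr{O}_{\textnormal{\tiny{transport}}}^{n,\gamma,\tau_1}(i_0)$ is defined with the \emph{limit} coefficient $\mathtt{m}_{i_0}^\infty$, but the cohomological equation at step $n$ requires nonresonance conditions on the \emph{intermediate} $\mathtt{m}_n$. This is resolved by a telescoping argument: by construction one proves $|\mathtt{m}_{n+1}-\mathtt{m}_n|_q^{\gamma,\mathcal{O}}\lesssim\varepsilon\gamma^{-1}N_n^{-\kappa}$ for some $\kappa>0$ encoded in the choice of $\upsilon$ and $\tau_1$ in \eqref{ptrans}, so that $|j(\mathtt{m}_{i_0}^\infty-\mathtt{m}_n)|\leqslant\tfrac{1}{2}\gamma^\upsilon\langle j\rangle/\langle l\rangle^{\tau_1}$ for $|l|\leqslant N_n$, and the nonresonance condition at step $n$ follows from the final one. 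The careful calibration of the exponents $(\upsilon,\tau_1,\sigma_1,\sigma_2,\mu_2)$ in \eqref{ptrans}-\eqref{ptrans-2} is precisely what makes this telescoping work together with the standard Nash-Moser loss-of-derivatives control.
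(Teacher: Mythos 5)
Your proposal is correct and follows essentially the same route as the paper: the transport part is reduced by the standard KAM iteration with cohomological equations and a telescoping argument linking the intermediate non-resonance conditions to the final Cantor set (which the paper delegates to \cite[Prop. 6.2]{HR21} after checking the estimate on $V_{\varepsilon r}-V_0$), and the nonlocal part is split into the convolution $\mathcal{K}_\alpha\ast\cdot$ plus smooth integral remainders whose conjugates are bundled into $\mathbf{R}_{\varepsilon r}$ and estimated via Lemma \ref{lem sym-Rev}. The only cosmetic difference is that the paper works with the explicit kernels $\mathbb{K}_{\varepsilon r}^{\textnormal{\tiny{E}}}$ and $\mathbb{K}_{\varepsilon r}^{\textnormal{\tiny{SW}}}$ separately rather than with the combined $\mathcal{K}_\alpha$, which does not change the argument.
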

\begin{proof}
 	Using the computations in \cite[Prop. 6.2]{HR21} and \cite[Prop. 6.2]{HR21-1} for the $V_{\varepsilon r}^{\textnormal{\tiny{E}}}$ and $V_{\varepsilon r}^{\textnormal{\tiny{SW}}}$ in \eqref{VrE}-\eqref{VrSW}, we get the following estimate for $V_{\varepsilon r}$ in \eqref{Vr bfLr},
	$$\|V_{\varepsilon r}-V_{0}\|_{q,s}^{\gamma,\mathcal{O}}\leqslant\|V_{\varepsilon r}^{\textnormal{\tiny{E}}}+\tfrac{1}{2}\|_{q,s}^{\gamma,\mathcal{O}}+\|V_{\varepsilon r}^{\textnormal{\tiny{SW}}}-I_{1}K_1\|_{q,s}^{\gamma,\mathcal{O}}\lesssim\varepsilon\left(1+\|\mathfrak{I}_{0}\|_{q,s+1}^{\gamma,\mathcal{O}}\right).$$
	This allows us to start a KAM reduction procedure as detailed in \cite[prop 6.2]{HR21}, which provides $\mathscr{O}_{\textnormal{\tiny{transport}}}^{n,\gamma,\tau_1}(i_{0}),$ $\mathtt{m}_{i_0}^{\infty}$, $\beta$ and $\mathbf{E}_{n}^{0}$ as in the statement, with the corresponding estimates \eqref{e-scrB-scrB1}, \eqref{e-btr}, \eqref{e-En0}, \eqref{e-12-Vbt} and symmetry \eqref{sym-beta} such that in restriction to $\mathscr{O}_{\textnormal{\tiny{transport}}}^{n,\gamma,\tau_1}(i_{0})$ we have
	$$\mathscr{B}^{-1}\Big(\omega\cdot\partial_{\varphi}+\partial_{\theta}\big(V_{\varepsilon r}\cdot\big)\Big)\mathscr{B}=\omega\cdot\partial_{\varphi}+\mathtt{m}_{i_0}^{\infty}\partial_{\theta}+\mathbf{E}_{n}^{0}.$$
Now, we shall look at the conjugation effect on the nonlocal term. Refering to \cite[Lem. 5.1]{HR21} and \cite[Lem. 5.1]{HR21-1}, we have the following decompositions for the operators in \eqref{mathbfLrE}-\eqref{mathbfLrSW},
\begin{align}
	\partial_{\theta}\mathbf{L}_{\varepsilon r}^{\textnormal{\tiny{E}}}&=\partial_{\theta}\mathcal{K}\ast\cdot+\partial_{\theta}\mathbf{L}_{\varepsilon r,1}^{\textnormal{\tiny{E}}},\qquad\mathbf{L}_{\varepsilon r,1}^{\textnormal{\tiny{E}}}(\rho)(\varphi,\theta)\triangleq\int_{\mathbb{T}}\rho(\varphi,\eta)\mathbb{K}_{\varepsilon r}^{\textnormal{\tiny{E}}}(\varphi,\theta,\eta)d\eta,\label{decomp LerE}\\
	\partial_{\theta}\mathbf{L}_{\varepsilon r}^{\textnormal{\tiny{SW}}}&=\partial_{\theta}\mathcal{Q}_{\alpha}\ast\cdot+\partial_{\theta}\mathbf{L}_{\varepsilon r,1}^{\textnormal{\tiny{SW}}},\qquad\mathbf{L}_{\varepsilon r,1}^{\textnormal{\tiny{SW}}}(\rho)(\alpha,\varphi,\theta)\triangleq\int_{\mathbb{T}}\rho(\varphi,\eta)\mathbb{K}_{\varepsilon r}^{\textnormal{\tiny{SW}}}(\alpha,\varphi,\theta,\eta)d\eta,\label{decomp LerSW}
\end{align}
where the kernels $\mathbb{K}_{\varepsilon r}^{\textnormal{\tiny{E}}}$ and $\mathbb{K}_{\varepsilon r}^{\textnormal{\tiny{SW}}}$ are respectively defined by
\begin{align*}
	\mathbb{K}_{\varepsilon r}^{\textnormal{\tiny{E}}}(\varphi,\theta,\eta)&\triangleq\log\big(v_{\varepsilon r}(\varphi,\theta,\eta)\big),\qquad v_{\varepsilon r}(\varphi,\theta,\eta)\triangleq\left(\left(\tfrac{R(\varphi,\theta)-R(\varphi,\eta)}{2\sin\left(\frac{\theta-\eta}{2}\right)}\right)^2+R(\varphi,\theta)R(\varphi,\eta)\right)^{\frac{1}{2}}\\
	\mathbb{K}_{\varepsilon r}^{\textnormal{\tiny{SW}}}(\varphi,\theta,\eta)&\triangleq\mathscr{K}(\theta-\eta)\mathscr{K}_{\varepsilon r,1}(\lambda,\varphi,\theta,\eta)+\mathscr{K}_{\varepsilon r,2}(\lambda,\varphi,\theta,\eta),\qquad\mathscr{K}(\theta)\triangleq\sin^2\left(\tfrac{\theta}{2}\right)\log\left(\left|\sin\left(\tfrac{\theta}{2}\right)\right|\right),\\
	\mathscr{K}_{\varepsilon r,1}(\lambda,\varphi,\theta,\eta)&\triangleq \sum_{m=1}^{\infty}\tfrac{(2\lambda)^{2m}}{(m!)^2}\sin^{2m-2}\left(\tfrac{\theta-\eta}{2}\right)\big(1-v_{\varepsilon r}(\varphi,\theta,\eta)\big),\qquad\lambda\triangleq\frac{1}{\alpha}\\
	\mathscr{K}_{\varepsilon r,2}(\lambda,\varphi,\theta,\eta)&\triangleq \log(\lambda)\sin^2\left(\frac{\eta-\theta}{2}\right)\mathscr{K}_{\varepsilon r,1}(\lambda,\varphi,\theta,\eta)-\mathbb{K}_{\varepsilon r}^{\textnormal{\tiny{E}}}(\varphi,\theta,\eta)I_0\big(\lambda A_{\varepsilon r}(\varphi,\theta,\eta)\big)\\
	&\quad+f\big(\lambda A_{\varepsilon r}(\varphi,\theta,\eta)\big)-f\left(2\lambda\left|\sin\left(\tfrac{\eta-\theta}{2}\right)\right|\right),\qquad f\textnormal{ analytic.}
\end{align*}
One can easily check that \eqref{sym r} implies
\begin{equation}\label{sym ker E-SW}
	\mathbb{K}_{\varepsilon r}^{\textnormal{\tiny{E}}}(-\varphi,-\theta,-\eta)=\mathbb{K}_{\varepsilon r}^{\textnormal{\tiny{E}}}(\varphi,\theta,\eta),\qquad\mathbb{K}_{\varepsilon r}^{\textnormal{\tiny{SW}}}(\alpha,-\varphi,-\theta,-\eta)=\mathbb{K}_{\varepsilon r}^{\textnormal{\tiny{SW}}}(\alpha,\varphi,\theta,\eta)
\end{equation}
Hence, in restriction to $\mathscr{O}_{\textnormal{\tiny{transport}}}^{n,\gamma,\tau_1}(i_{0})$ we have by \eqref{decomp LerE}-\eqref{decomp LerSW},
\begin{align*}
	\mathscr{B}^{-1}\mathcal{L}_{\varepsilon r}\mathscr{B}&=\omega\cdot\partial_{\varphi}+\mathtt{m}_{i_0}^{\infty}\partial_{\theta}+\partial_{\theta}\mathcal{B}^{-1}\mathbf{L}_{\varepsilon r}^{\textnormal{\tiny{E}}}\mathscr{B}+\partial_{\theta}\mathcal{B}^{-1}\mathbf{L}_{\varepsilon r}^{\textnormal{\tiny{SW}}}\mathscr{B}+\mathbf{E}_{n}^{0}\\
	&=\omega\cdot\partial_{\varphi}+\mathtt{m}_{i_0}^{\infty}\partial_{\theta}+\partial_{\theta}\mathcal{K}_{\alpha}\ast\cdot+\partial_{\theta}\mathbf{R}_{\varepsilon r}+\mathbf{E}_{n}^{0},
\end{align*}
with
$$\mathbf{R}_{\varepsilon r}\triangleq\mathcal{B}^{-1}\mathbf{L}_{\varepsilon r,1}^{\textnormal{\tiny{E}}}\mathscr{B}+\mathcal{B}^{-1}\mathbf{L}_{\varepsilon r,1}^{\textnormal{\tiny{SW}}}\mathscr{B}+\Big(\mathcal{B}^{-1}\big(\mathcal{K}\ast\cdot\big)\mathscr{B}-\mathcal{K}\ast\cdot\Big)+\Big(\mathcal{B}^{-1}\big(\mathcal{Q}_{\alpha}\ast\cdot\big)\mathscr{B}-\mathcal{Q}_{\alpha}\ast\cdot\Big).$$
The estimates \eqref{e-frkR} and \eqref{e-dfrkR} are obtained similarly to \cite[Prop. 6.2]{HR21} and \cite[Prop. 6.2]{HR21-1} by using Lemma \ref{lem sym-Rev}, Lemma \ref{Lem-lawprod} and \eqref{e-btr}-\eqref{e-12-Vbt}. In particular, the reversibility property follows from \eqref{sym ker E-SW}-\eqref{sym-beta}. This concludes the proof of Proposition \ref{prop reduc trans}.
\end{proof}

Then we study the action of the localization in the normal directions. For that, we introduce the operator
$$\mathscr{B}_{\perp}\triangleq\Pi_{\mathbb{S}_0}^{\perp}\mathscr{B}\Pi_{\mathbb{S}_{0}}^{\perp},$$
which satisfies by virtue of \eqref{e-scrB-scrB1} the following estimate
\begin{equation}\label{e-Bop perp}
	\|\mathscr{B}_{\perp}^{\pm 1}\rho\|_{q,s}^{\gamma ,\mathcal{O}}\lesssim\|\rho\|_{q,s}^{\gamma ,\mathcal{O}}+\varepsilon\gamma ^{-1}\| \mathfrak{I}_{0}\|_{q,s+\sigma_{1}}^{\gamma ,\mathcal{O}}\|\rho\|_{q,s_{0}}^{\gamma ,\mathcal{O}}.
\end{equation}
Hence, the result reads as follows.
\begin{prop}\label{prop projnor}
	Let $(d,s_{0},S,\gamma,q,q_0,\tau_{1},s_{h},\overline{s}_{h},\sigma_{2},\mathtt{p})$ satisfy \eqref{p-d}, \eqref{Sob index}, \eqref{p-cond0}, \eqref{def q}, \eqref{ptrans} and \eqref{ptrans-2}. There exist $\varepsilon_0>0$ and $\sigma_{3}=\sigma_{3}(\tau_{1},q,d,s_{0})\geqslant\sigma_{2}$ such that if the following smallness condition holds
	$$\varepsilon\gamma^{-1}N_0^{\mu_2}\leqslant\varepsilon_0,\qquad\|\mathfrak{I}_0\|_{q,s_h+\sigma_3}^{\gamma,\mathcal{O}}\leqslant 1,$$
	then for any $n\in\mathbb{N}^{*},$ in restriction to the Cantor-like set $\mathscr{O}_{\textnormal{\tiny{transport}}}^{n,\gamma,\tau_1}(i_{0})$ defined in \eqref{Cantrans}, the following decomposition holds 
		\begin{align}
			\mathscr{B}_{\perp}^{-1}\mathscr{L}_{\perp}\mathscr{B}_{\perp}&=\big(\omega\cdot\partial_{\varphi}+\mathtt{m}_{i_0}^{\infty}\partial_{\theta}+\partial_{\theta}\mathcal{K}_{\alpha}\ast\cdot\big)\Pi_{\mathbb{S}_0}^{\perp}+\mathscr{R}_{0}+\mathbf{E}_{n}^{1}\nonumber\\
			&\triangleq \omega\cdot\partial_{\varphi}\Pi_{\mathbb{S}_0}^{\perp}+\mathscr{D}_{0}+\mathscr{R}_{0}+\mathbf{E}_{n}^{1}\nonumber\\
			&\triangleq \mathscr{L}_{0}+\mathbf{E}_{n}^{1}.\label{def scr L0}
		\end{align}
		The operator $\mathscr{D}_{0}=\Pi_{\mathbb{S}_0}^\perp \mathscr{D}_{0}\Pi_{\mathbb{S}_0}^\perp$ is a reversible diagonal operator described by 
		\begin{equation}\label{mu0-r1}
			\mathscr{D}_{0}\mathbf{e}_{l,j}=\ii \mathrm{d}_{j}^{0}\,\mathbf{e}_{l,j},\qquad\mathrm{d}_{j}^{0}(\alpha,\omega,i_{0})\triangleq\Omega_{j}^{\textnormal{\tiny{E}}}(\alpha)+jr^{0}(\alpha,\omega,i_{0}),\qquad r^{0}(\alpha,\omega,i_{0})\triangleq \mathtt{m}_{i_0}^{\infty}(\alpha,\omega)-V_0(\alpha)
		\end{equation}
		with
		\begin{equation}\label{e-r1-bis}
			\|r^{0}\|_{q}^{\gamma,\mathcal{O}}\lesssim \varepsilon,\qquad \|\Delta_{12}r^{0}\|_{q}^{\gamma,\mathcal{O}}\lesssim\varepsilon \| \Delta_{12}i\|_{q,\overline{s}_{h}+2}^{\gamma,\mathcal{O}}.
		\end{equation}
		The operator $\mathscr{R}_{0}=\Pi_{\mathbb{S}_0}^\perp \mathscr{R}_{0}\Pi_{\mathbb{S}_0}^\perp$ is a real and reversible Toeplitz in time integral operator satisfying 
		\begin{equation}\label{escr R Ods}
			\forall s\in [s_{0},S],\quad \max_{k\in\{0,1\}}\|\partial_{\theta}^{k}\mathscr{R}_{0}\|_{\textnormal{\tiny{O-d}},q,s}^{\gamma,\mathcal{O}}\lesssim\varepsilon\gamma^{-1}\left(1+\| \mathfrak{I}_{0}\|_{q,s+\sigma_{3}}^{\gamma,\mathcal{O}}\right)
		\end{equation}
		and
		\begin{equation}\label{escr R Ods diff}
			\|\Delta_{12}\mathscr{R}_{0}\|_{\textnormal{\tiny{O-d}},q,\overline{s}_{h}+\mathtt{p}}^{\gamma,\mathcal{O}}\lesssim\varepsilon\gamma^{-1}\| \Delta_{12}i\|_{q,\overline{s}_{h}+\mathtt{p}+\sigma_{3}}^{\gamma,\mathcal{O}}.
		\end{equation}
	The operator $\mathbf{E}_{n}^{1}$ satisfies the following estimate
	\begin{equation}\label{e-En1}
		\|\mathbf{E}_{n}^{1}\rho\|_{q,s_{0}}^{\gamma,\mathcal{O}}\lesssim \varepsilon N_{0}^{\mu_{2}}N_{n+1}^{-\mu_{2}}\|\rho\|_{q,s_{0}+2}^{\gamma,\mathcal{O}}.
	\end{equation}
		In addition, the operator $\mathscr{L}_{0}$ satisfies
		\begin{equation}\label{e-scrL0}
			\forall s\in[s_{0},S],\quad\|\mathscr{L}_{0}\rho\|_{q,s}^{\gamma ,\mathcal{O}}\lesssim\|\rho\|_{q,s+1}^{\gamma ,\mathcal{O}}+\varepsilon\gamma ^{-1}\| \mathfrak{I}_{0}\|_{q,s+\sigma_{3}}^{\gamma ,\mathcal{O}}\|\rho\|_{q,s_{0}}^{\gamma ,\mathcal{O}}.
		\end{equation}
\end{prop}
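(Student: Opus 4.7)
The plan is to derive the desired decomposition by conjugating the normal linearized operator $\mathscr{L}_\perp$ given by \eqref{hLom-2} with the restricted symplectic change of variables $\mathscr{B}_\perp = \Pi_{\mathbb{S}_0}^\perp \mathscr{B}\,\Pi_{\mathbb{S}_0}^\perp$, and then substituting the transport reduction of Proposition \ref{prop reduc trans}. A direct computation using the standard identity $\mathscr{B}_\perp^{-1} = \Pi_{\mathbb{S}_0}^\perp \mathscr{B}^{-1}\Pi_{\mathbb{S}_0}^\perp + \mathscr{F}$, where $\mathscr{F}$ is a finite-rank operator in the $\theta$-variable whose range lies in the finite-dimensional tangential subspace $L_{\overline{\mathbb{S}}}$, yields
\begin{equation*}
\mathscr{B}_\perp^{-1}\mathscr{L}_\perp \mathscr{B}_\perp = \Pi_{\mathbb{S}_0}^\perp \mathscr{B}^{-1}\bigl(\mathcal{L}_{\varepsilon r} - \varepsilon \partial_\theta \mathcal{R}\bigr)\mathscr{B}\,\Pi_{\mathbb{S}_0}^\perp + \mathscr{C},
\end{equation*}
where the commutator correction $\mathscr{C}$ has finite rank in $\theta$ and is controlled by combining \eqref{e-scrB-scrB1} with the kernel estimates \eqref{e1-J}, \eqref{e2-J} through Lemma \ref{lem sym-Rev}. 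The symmetry \eqref{symJ} together with \eqref{sym-beta} ensures that $\mathscr{C}$ preserves reversibility and the Toeplitz-in-time structure.

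In restriction to the Cantor-like set $\mathscr{O}_{\textnormal{\tiny{transport}}}^{n,\gamma,\tau_1}(i_0)$, Proposition \ref{prop reduc trans} allows to replace $\mathscr{B}^{-1}\mathcal{L}_{\varepsilon r}\mathscr{B}$ by the right-hand side of \eqref{reduc trans eq}. One then isolates the Fourier-multiplier part $\omega\cdot\partial_\varphi + \mathtt{m}_{i_0}^\infty \partial_\theta + \partial_\theta \mathcal{K}_\alpha\ast\cdot$, which is diagonal in the basis $(\mathbf{e}_{l,j})$. Acting on a mode $\mathbf{e}_{l,j}$ with $j\in\mathbb{Z}\setminus\mathbb{S}_0$, it gives $\ii\bigl(\omega\cdot l + j(\mathtt{m}_{i_0}^\infty + \widehat{\mathcal{K}_\alpha}(j))\bigr)$. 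Comparing with \eqref{Ham-Fourier}, which reads $\Omega_{j}^{\textnormal{\tiny{E}}}(\alpha) = j\bigl(V_0(\alpha) + \widehat{\mathcal{K}_\alpha}(j)\bigr)$, and setting $r^0 \triangleq \mathtt{m}_{i_0}^\infty - V_0$, identifies the eigenvalues of the normal part as $\ii\mathrm{d}_j^0$ exactly as in \eqref{mu0-r1}. The bounds \eqref{e-r1-bis} follow directly from \eqref{e-r1} and \eqref{e-12-Vbt}.

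The remainder $\mathscr{R}_0$ is then obtained by collecting three contributions: first, the localization $\Pi_{\mathbb{S}_0}^\perp(\partial_\theta\mathbf{R}_{\varepsilon r})\Pi_{\mathbb{S}_0}^\perp$, whose off-diagonal bound follows from \eqref{e-frkR}; second, the conjugated term $-\varepsilon\,\Pi_{\mathbb{S}_0}^\perp \mathscr{B}^{-1}(\partial_\theta\mathcal{R})\mathscr{B}\,\Pi_{\mathbb{S}_0}^\perp$, controlled via \eqref{e1-J}, \eqref{e-scrB-scrB1} and Lemma \ref{lem prop Toe}-(i); third, the finite-rank correction $\mathscr{C}$ described above. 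The off-diagonal tame estimate \eqref{escr R Ods} is obtained by summing these contributions and applying Lemma \ref{lem sym-Rev}, while reversibility is a consequence of the reversibility preserving character of $\mathscr{B}$ from \eqref{sym-beta}, the reversibility of $\mathbf{R}_{\varepsilon r}$ from Proposition \ref{prop reduc trans}, and the symmetry \eqref{symJ}. The Lipschitz-in-torus estimate \eqref{escr R Ods diff} is derived analogously from \eqref{e-dfrkR} and \eqref{e2-J}. The error term $\mathbf{E}_n^1$ inherits directly from $\mathbf{E}_n^0$ after conjugation by $\mathscr{B}_\perp$, and \eqref{e-En1} is a consequence of \eqref{e-En0} together with \eqref{e-Bop perp}. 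Finally, the global bound \eqref{e-scrL0} follows from the order-$1$ boundedness of $\omega\cdot\partial_\varphi + \mathscr{D}_0$ combined with the tame estimate on $\mathscr{R}_0$.

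The main technical obstacle lies in the bookkeeping of the finite-rank commutator terms $[\mathscr{B}^{\pm 1},\Pi_{\mathbb{S}_0}^\perp]$. Although they are smoothing in $\theta$ (their range sits in the finite-dimensional space $L_{\overline{\mathbb{S}}}$), preserving simultaneously the Toeplitz-in-time structure, the reversibility property, and tame estimates with the prescribed loss of derivatives $\sigma_3$ requires to expand each such commutator as an explicit integral operator whose kernel is expressed in terms of $\beta$, $\widehat{\beta}$ and the basis functions $\mathbf{e}_j$ with $j\in\overline{\mathbb{S}}$, and then to apply Lemma \ref{lem sym-Rev} uniformly in the finitely many tangential modes.
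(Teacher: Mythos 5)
Your proposal is correct and follows essentially the same route as the paper: conjugate $\mathscr{L}_{\perp}$ by $\mathscr{B}_{\perp}$, substitute the transport reduction \eqref{reduc trans eq} on $\mathscr{O}_{\textnormal{\tiny{transport}}}^{n,\gamma,\tau_1}(i_{0})$, read off the diagonal part from \eqref{Ham-Fourier}, and absorb the conjugated remainders together with the finite-rank projection corrections into $\mathscr{R}_{0}$, checking reversibility and the tame bounds via \eqref{sym-beta}, \eqref{symJ}, Lemma \ref{lem sym-Rev} and the kernel estimates. The only cosmetic difference is that the paper organizes the finite-rank terms by inserting $\textnormal{Id}=\Pi_{\mathbb{S}_0}+\Pi_{\mathbb{S}_0}^{\perp}$ and using $\mathscr{B}_{\perp}^{-1}\Pi_{\mathbb{S}_0}^{\perp}=\mathscr{B}_{\perp}^{-1}$, rather than through your resolvent-type identity $\mathscr{B}_{\perp}^{-1}=\Pi_{\mathbb{S}_0}^{\perp}\mathscr{B}^{-1}\Pi_{\mathbb{S}_0}^{\perp}+\mathscr{F}$.
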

\begin{proof}
	The identities \eqref{hLom-2} and $\textnormal{Id}=\Pi_{\mathbb{S}_{0}}+\Pi_{\mathbb{S}_{0}}^{\perp}$ imply
	\begin{align*}
		\mathscr{B}_{\perp}^{-1}\mathscr{L}_{\perp}\mathscr{B}_{\perp}&=\mathscr{B}_{\perp}^{-1}\Pi_{\mathbb{S}_{0}}^{\perp}(\mathcal{L}_{\varepsilon r}-\varepsilon\partial_{\theta}\mathcal{R})\mathscr{B}_{\perp}
		\\
		&=\mathscr{B}_{\perp}^{-1}\Pi_{\mathbb{S}_{0}}^{\perp}\mathcal{L}_{\varepsilon r}\mathscr{B}\Pi_{\mathbb{S}_{0}}^{\perp}-\mathscr{B}_{\perp}^{-1}\Pi_{\mathbb{S}_{0}}^{\perp}\mathcal{L}_{\varepsilon r}\Pi_{\mathbb{S}_{0}}\mathscr{B}\Pi_{\mathbb{S}_{0}}^{\perp}-\varepsilon\mathscr{B}_{\perp}^{-1}\Pi_{\mathbb{S}_{0}}^{\perp}\partial_{\theta}\mathcal{R}\mathscr{B}_{\perp}.
	\end{align*}
	Now using \eqref{reduc trans eq} and the fact that
	$$\mathscr{B}_{\perp}^{-1}\Pi_{\mathbb{S}_{0}}^{\perp}=\mathscr{B}_{\perp}^{-1},\qquad [\Pi_{\mathbb{S}_{0}}^{\perp},F]=0=[\Pi_{\mathbb{S}_{0}},F],\quad F\textnormal{ Fourier multiplier},$$
	we obtain in restriction to the Cantor set $\mathscr{O}_{\textnormal{\tiny{transport}}}^{n,\gamma,\tau_1}(i_{0})$ the following decomposition	
\begin{align*}
	\mathscr{B}_{\perp}^{-1}\Pi_{\mathbb{S}_{0}}^{\perp}\mathcal{L}_{\varepsilon r}\mathscr{B}\Pi_{\mathbb{S}_{0}}^{\perp}&=\mathscr{B}_{\perp}^{-1}\Pi_{\mathbb{S}_{0}}^{\perp}\mathscr{B}\mathfrak{L}_{\varepsilon r}\Pi_{\mathbb{S}_{0}}^{\perp}\\
	&=\big(\omega\cdot\partial_{\varphi}+\mathtt{m}_{i_0}^{\infty}\partial_{\theta}+\partial_{\theta}\mathcal{K}_{\alpha}\ast\cdot\big)\Pi_{\mathbb{S}_{0}}^{\perp}+\Pi_{\mathbb{S}_{0}}^{\perp}\partial_{\theta}\mathbf{R}_{\varepsilon r}\Pi_{\mathbb{S}_{0}}^{\perp}+\mathscr{B}_{\perp}^{-1}\mathscr{B}\Pi_{\mathbb{S}_{0}}\partial_{\theta}\mathbf{R}_{\varepsilon r}\Pi_{\mathbb{S}_{0}}^{\perp}\\
	&\quad+\mathscr{B}_{\perp}^{-1}\Pi_{\mathbb{S}_{0}}^{\perp}\mathscr{B}\mathbf{E}_{n}^{0}\Pi_{\mathbb{S}_{0}}^{\perp}.
\end{align*}
Hence, using also \eqref{decomp LerE}, \eqref{decomp LerSW}, one gets that in the Cantor set $\mathscr{O}_{\textnormal{\tiny{transport}}}^{n,\gamma,\tau_1}(i_{0})$, the following identity holds
	\begin{align*}
		 \mathscr{B}_{\perp}^{-1}\widehat{\mathcal{L}}_{\omega}\mathscr{B}_{\perp}&=\big(\omega\cdot\partial_{\varphi}+\mathtt{m}_{i_0}^{\infty}\partial_{\theta}+\partial_{\theta}\mathcal{K}_{\alpha}\ast\cdot\big)\Pi_{\mathbb{S}_{0}}^{\perp}+\Pi_{\mathbb{S}_{0}}^{\perp}\partial_{\theta}\mathbf{R}_{\varepsilon r}\Pi_{\mathbb{S}_{0}}^{\perp}+\mathscr{B}_{\perp}^{-1}\mathscr{B}\Pi_{\mathbb{S}_{0}}\partial_{\theta}\mathbf{R}_{\varepsilon r}\Pi_{\mathbb{S}_{0}}^{\perp}\\
		 &\quad-\mathscr{B}_{\perp}^{-1}\Pi_{\mathbb{S}_{0}}^{\perp}\left(\partial_{\theta}\left(V_{\varepsilon r}\cdot\right)+\partial_{\theta}\mathbf{L}_{\varepsilon r,1}^{\textnormal{\tiny{E}}}+\partial_{\theta}\mathbf{L}_{\varepsilon r,1}^{\textnormal{\tiny{SW}}}\right)\Pi_{\mathbb{S}_{0}}\mathscr{B}\Pi_{\mathbb{S}_{0}}^{\perp}-\varepsilon\mathscr{B}_{\perp}^{-1}\partial_{\theta}\mathcal{R}\mathscr{B}_{\perp}+\mathscr{B}_{\perp}^{-1}\Pi_{\mathbb{S}_{0}}^{\perp}\mathscr{B}\mathbf{E}_{n}^{0}\Pi_{\mathbb{S}_{0}}^{\perp}\\
		&\triangleq \omega\cdot\partial_{\varphi}\Pi_{\mathbb{S}_{0}}^{\perp}+\mathscr{D}_0+\mathscr{R}_0+\mathbf{E}_{n}^{1},
	\end{align*}
	with
	\begin{equation}\label{En1}
		\mathscr{D}_0\triangleq \big(\mathtt{m}_{i_0}^{\infty}\partial_{\theta}+\partial_{\theta}\mathcal{K}_{\alpha}\ast\cdot\big)\Pi_{\mathbb{S}_{0}}^{\perp},\qquad\mathbf{E}_{n}^{1}\triangleq \mathscr{B}_{\perp}^{-1}\Pi_{\mathbb{S}_{0}}^{\perp}\mathscr{B}\mathbf{E}_{n}^{0}\Pi_{\mathbb{S}_{0}}^{\perp}.
	\end{equation}
	The estimate \eqref{e-En1} is obtained gathering \eqref{En1}, \eqref{e-scrB-scrB1}, \eqref{e-Bop perp}, \eqref{e-En0} and Lemma \ref{Lem-lawprod}-(iii). The expression \eqref{mu0-r1} follows from the Fourier representation \eqref{Ham-Fourier}. The estimates \eqref{e-r1-bis} correspond to \eqref{e-r1}-\eqref{e-12-Vbt}. The estimates \eqref{escr R Ods} and \eqref{escr R Ods diff} are obtained by the same method as Lemma \cite[Prop 6.3 and Lem. 6.3]{HR21} by using a nice duality representations of $\mathscr{B}_{\perp}^{\pm 1}$ together with \eqref{e1-J}, \eqref{e2-J}, \eqref{e-frkR}, \eqref{e-12-Vbt} and \eqref{e-dfrkR} In particular, the reversibility property of $\mathscr{R}_0$ is a consequence of \eqref{symVr}, \eqref{symJ}, \eqref{sym-beta}, \eqref{sym ker E-SW} and the reversibility property of $\mathbf{R}_{\varepsilon r}$. To prove \eqref{e-scrL0}, we use Lemma \ref{lem prop Toe}-(ii) together with \eqref{e-r1}, \eqref{escr R Ods} and 
	$$\forall\,0<\lambda_0<\lambda_1,\quad\sup_{j\in\mathbb{Z}}\Big(|j|\max_{k\in\llbracket 0,q\rrbracket}\big\|(I_jK_j)^{(k)}\big\|_{L^{\infty}([\lambda_0,\lambda_1])}\Big)\leqslant C(\lambda_0,\lambda_1).$$
	The last estimate has been proved in \cite[Lem. 5.1]{HR21}.
\end{proof}
The next step is to get rid of the remainder term $\mathscr{R}_0.$ The properties \eqref{escr R Ods}, \eqref{escr R Ods diff}, \eqref{e-scrL0} and Lemma \ref{properties omegajalpha}-(v) allows to start a KAM iterative procedure similar to \cite[Prop. 6.5]{HR21}. We also refer to \cite[Prop. 6.4]{HR21-1} for a brief explaination of this method. The result reads as follows.
\begin{prop}\label{prop redrem}
	Let $(d,s_{0},S,\gamma,q,q_0,\tau_{1},s_l,\overline{s}_l,\overline{s}_h, \overline{\mu}_2,\sigma_{3})$ as in \eqref{p-d}, \eqref{Sob index}, \eqref{p-cond0}, \eqref{def q}, \eqref{ptrans} and Proposition \ref{prop projnor}. Set
	\begin{equation}\label{to2}
		\tau_2\triangleq\tau_1+dq_0+1.
	\end{equation}
	For every choice of additional parameters $(\mu_2,s_h)$ enjoying the following constraints compatibles with \eqref{ptrans-2} for the choice $\mathtt{p}=4\tau_2 q+4\tau_2$,
	\begin{align}\label{prem}
		\mu_2\geqslant \overline{\mu}_2+2\tau_2q+2\tau_2,\qquad s_h\geqslant \frac{3}{2}\mu_{2}+\overline{s}_{l}+1,
	\end{align}
	there exist $\varepsilon_{0}\in(0,1)$ and $\sigma_{4}=\sigma_{4}(\tau_1,\tau_2,q,d)\geqslant\sigma_{3}$ such that if the following smallness condition holds
	\begin{equation}\label{sml redrem}
		\varepsilon\gamma^{-2-q}N_{0}^{\mu_{2}}\leqslant \varepsilon_{0},\qquad\|\mathfrak{I}_{0}\|_{q,s_{h}+\sigma_{4}}^{\gamma,\mathcal{O}}\leqslant 1,
	\end{equation}
	then there exists an operator $\Phi_{\infty}:\mathcal{O}\to \mathcal{L}\big(H^{s}\cap L_{\perp}^2\big)$ satisfying
		\begin{equation}\label{e-Phiinfty}
			\forall s\in[s_{0},S],\quad \mbox{ }\|\Phi_{\infty}^{\pm 1}\rho\|_{q,s}^{\gamma ,\mathcal{O}}\lesssim \|\rho\|_{q,s}^{\gamma,\mathcal{O}}+\varepsilon\gamma^{-2}\| \mathfrak{I}_{0}\|_{q,s+\sigma_{4}}^{\gamma,\mathcal{O}}\|\rho\|_{q,s_{0}}^{\gamma,\mathcal{O}}
		\end{equation}
		and a diagonal operator $\mathscr{L}_\infty=\mathscr{L}_{\infty}(\alpha,\omega,i_{0})$ in the form
		\begin{equation}\label{op scrLinfty}
			\mathscr{L}_{\infty}=\omega\cdot\partial_{\varphi}\Pi_{\mathbb{S}_0}^{\perp}+\mathscr{D}_{\infty}
		\end{equation}
		with $\mathscr{D}_{\infty}=\Pi_{\mathbb{S}_0}^{\perp}\mathscr{D}_{\infty}\Pi_{\mathbb{S}_0}^{\perp}$ a reversible Fourier multiplier operator given by,
		\begin{equation}\label{specDinfty}
			\mathscr{D}_{\infty}\mathbf{e}_{l,j}=\ii\,\mathrm{d}_{j}^{\infty}\,\mathbf{e}_{l,j},\qquad\mathrm{d}_{j}^{\infty}(\alpha,\omega,i_{0})\triangleq\mathrm{d}_{j}^{0}(\alpha,\omega,i_{0})+r_{j}^{\infty}(\alpha,\omega,i_{0}),\qquad\sup_{j\in\mathbb{S}_{0}^{c}}|j|\| r_{j}^{\infty}\|_{q}^{\gamma ,\mathcal{O}}\lesssim\varepsilon\gamma^{-1}
		\end{equation}
		such that in restriction to the Cantor-like set
		\begin{align}\label{Cantrem}
			\mathscr{O}_{\textnormal{\tiny{remainder}}}^{n,\gamma,\tau_1,\tau_{2}}(i_{0})\triangleq \bigcap_{\underset{|l|\leqslant N_{n}}{(l,j,j_{0})\in\mathbb{Z}^{d}\times(\mathbb{S}_0^{c})^{2}}\atop(l,j)\neq(0,j_{0})}\Big\{&(\alpha,\omega)\in\mathscr{O}_{\textnormal{\tiny{transport}}}^{n,\gamma,\tau_1}(i_{0}),\big|\omega\cdot l+\mathrm{d}_{j}^{\infty}(\alpha,\omega,i_{0})-\mathrm{d}_{j_{0}}^{\infty}(\alpha,\omega,i_{0})\big|>\tfrac{2\gamma \langle j-j_{0}\rangle}{\langle l\rangle^{\tau_{2}}}\Big\}
		\end{align}
		we have 
		\begin{equation}\label{e-3rd En}
			\Phi_{\infty}^{-1}\mathscr{L}_{0}\Phi_{\infty}=\mathscr{L}_{\infty}+\mathbf{E}^2_n,\qquad\|\mathbf{E}^2_n\rho\|_{q,s_0}^{\gamma,\mathcal{O}}\lesssim \varepsilon\gamma^{-2}N_{0}^{{\mu}_{2}}N_{n+1}^{-\mu_{2}} \|\rho\|_{q,s_0+1}^{\gamma,\mathcal{O}}.
		\end{equation}
		Recall that the Cantor set $\mathscr{O}_{\textnormal{\tiny{transport}}}^{n,\gamma,\tau_1}(i_{0})$, the operator $\mathscr{L}_{0}$ and the frequencies $\big(\mathrm{d}_{j}^{0}(\alpha,\omega,i_{0})\big)_{j\in\mathbb{S}_0^c}$ are respectively given by \eqref{Cantrans}, \eqref{def scr L0} and \eqref{mu0-r1}. Moreover, for two tori $i_{1}$ and $i_{2}$ both satisfying \eqref{sml redrem}, we have
		\begin{equation}\label{e-diff mujinfty}
			\forall j\in\mathbb{S}_{0}^{c},\quad\|\Delta_{12}r_{j}^{\infty}\|_{q}^{\gamma,\mathcal{O}}\lesssim\varepsilon\gamma^{-1}\|\Delta_{12}i\|_{q,\overline{s}_{h}+\sigma_{4}}^{\gamma,\mathcal{O}},\qquad\|\Delta_{12}\mathrm{d}_{j}^{\infty}\|_{q}^{\gamma,\mathcal{O}}\lesssim\varepsilon\gamma^{-1}|j|\| \Delta_{12}i\|_{q,\overline{s}_{h}+\sigma_{4}}^{\gamma,\mathcal{O}}.
		\end{equation}	
\end{prop}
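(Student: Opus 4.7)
The plan is to implement a standard KAM iteration in the Toeplitz in time operator class, in the spirit of \cite[Prop.~6.5]{HR21}, so as to conjugate $\mathscr{L}_0$ to a diagonal operator modulo controllable ultraviolet-cutoff errors. Since the off-diagonal norm of the initial remainder $\mathscr{R}_0$ is of size $\varepsilon\gamma^{-1}$ by \eqref{escr R Ods}, and the equilibrium frequencies enjoy a linear-in-$|j-j_0|$ control uniform in the parameters by Lemma~\ref{properties omegajalpha}-(v), a quadratic convergence scheme can be set up on the natural Cantor sets produced by second Melnikov conditions.

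Inductively, at step $n$ I would start from $\mathscr{L}_n = \omega\cdot\partial_{\varphi}\Pi_{\mathbb{S}_0}^{\perp} + \mathscr{D}_n + \mathscr{R}_n$ with $\mathscr{D}_n$ a reversible diagonal Fourier multiplier with eigenvalues $\ii\mathrm{d}_j^n$ and $\mathscr{R}_n$ a reversible Toeplitz integral remainder, and look for a symplectic transformation $\Phi_n=\exp(\Psi_n)$ with $\Psi_n$ reversibility-preserving such that $\Phi_n^{-1}\mathscr{L}_n\Phi_n = \mathscr{L}_{n+1}$ modulo a cutoff error. The generator $\Psi_n$ is defined through the commutator relation $[\omega\cdot\partial_\varphi + \mathscr{D}_n,\Psi_n] = -\Pi_{N_n}\mathscr{R}_n + [\mathscr{R}_n]_{\textnormal{diag}}$, which in the Fourier basis reads, for $|l|\leq N_n$ and $(l,j)\neq(0,j_0)$,
\begin{equation*}
(\Psi_n)_{j_0}^{j}(l) = \frac{(\mathscr{R}_n)_{j_0}^{j}(l)}{\ii\bigl(\omega\cdot l + \mathrm{d}_j^n - \mathrm{d}_{j_0}^n\bigr)}\cdot
\end{equation*}
The second Melnikov conditions defining $\mathscr{O}_{\textnormal{\tiny{remainder}}}^{n,\gamma,\tau_1,\tau_2}(i_0)$ precisely allow this inversion, at the cost of a loss $\langle l\rangle^{\tau_2}$ compensated by the gain $\gamma^{-1}\langle j-j_0\rangle^{-1}$. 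The diagonal part of $\mathscr{R}_n$, which is not removable by such a commutator, is absorbed into $\mathscr{D}_{n+1}=\mathscr{D}_n+[\mathscr{R}_n]_{\textnormal{diag}}$, producing the scalar corrections $r_j^n$ to the eigenvalues, with $\sup_{j\in\mathbb{S}_0^c}|j|\,\|r_j^n\|_q^{\gamma,\mathcal{O}}$ controlled by the off-diagonal size of $\mathscr{R}_n$.

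Expanding $\Phi_n^{-1}\mathscr{L}_n\Phi_n$ via the Lie series and invoking Lemma~\ref{lem prop Toe} produces a new remainder $\mathscr{R}_{n+1}$ consisting of the truncation tail $\Pi_{N_n}^{\perp}\mathscr{R}_n$ plus terms at least quadratic in $\mathscr{R}_n$. The tail is handled by interpolation (Lemma~\ref{Lem-lawprod}-(ii)) using the high regularity norm at $s_h$, while the quadratic contributions are controlled by the composition law for off-diagonal norms together with the resolvent bound on $\Psi_n$. The choices \eqref{to2}, \eqref{prem}, combined with the superexponential scaling $N_n = N_0^{(3/2)^n}$, make the standard bootstrap close: the low regularity off-diagonal norm $\|\mathscr{R}_n\|_{\textnormal{\tiny{O-d}},q,s_l}^{\gamma,\mathcal{O}}$ decays geometrically while its high regularity counterpart at $s_h$ stays bounded. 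The composition $\Phi_\infty=\Phi_0\circ\Phi_1\circ\cdots$ thus converges and satisfies \eqref{e-Phiinfty}; the series $\mathrm{d}_j^\infty = \mathrm{d}_j^0 + \sum_{n\geq 0}r_j^n$ converges and yields \eqref{specDinfty}; and the cumulative truncation errors reassembled in the original coordinates give $\mathbf{E}_n^2$ with the bound \eqref{e-3rd En} prescribed by the exponent $\mu_2$. Reality and reversibility are preserved throughout by choosing each $\Psi_n$ within the reversibility-preserving class, which is consistent with the kernel symmetries \eqref{symVr}--\eqref{sym ker E-SW} and Lemma~\ref{lem sym-Rev}; the Lipschitz dependence on $i_0$ encoded in \eqref{e-diff mujinfty} is propagated step by step from the initial bound \eqref{escr R Ods diff}.

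The main obstacle is the sharp bookkeeping balancing the loss of derivatives $\tau_2$ in each homological inversion against the smoothing provided by the cutoff $\Pi_{N_n}$, while retaining enough regularity margin to feed the subsequent Nash--Moser scheme. This is exactly what forces the relation \eqref{to2} between $\tau_1$ (inherited from the transport reduction of Proposition~\ref{prop reduc trans}) and $\tau_2$, together with the quantitative constraints \eqref{prem} tying $\mu_2$ to $\overline{\mu}_2$, $s_h$ and $\overline{s}_l$.
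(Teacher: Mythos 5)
Your proposal follows exactly the route the paper takes: the paper gives no self-contained proof of this proposition but invokes the standard KAM reducibility scheme in the Toeplitz-in-time class of \cite[Prop.~6.5]{HR21} (see also \cite[Prop.~6.4]{HR21-1}), fed by the inputs \eqref{escr R Ods}, \eqref{escr R Ods diff}, \eqref{e-scrL0} and Lemma~\ref{properties omegajalpha}-(v), which is precisely the iteration you describe (homological equation under second Melnikov conditions, absorption of the diagonal part into $\mathscr{D}_{n+1}$, quadratic closure of the off-diagonal norms, convergence of $\Phi_\infty$ and of the eigenvalue corrections). The only standard detail left implicit in your sketch is the verification that the final Cantor set \eqref{Cantrem}, formulated with the limit frequencies $\mathrm{d}_j^{\infty}$, is contained in the step-$n$ non-resonance sets for $\mathrm{d}_j^{n}$, and the bookkeeping of one extra factor $\gamma^{-1}$ per parameter derivative, both of which are handled as in the cited reference.
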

Now, that we have completely diagonalized the operator $\mathscr{L}_{\perp}$ in \eqref{hLom-2} up to error terms, we can find an almost approximate right inverse for it. This is done by almost inverting the operator $\mathscr{L}_{\infty}$ in \eqref{op scrLinfty}. The result is stated below and its proof follows word by word \cite[Prop. 6.6]{HR21} or \cite[Prop. 6.5]{HR21-1}. Consequently, we omit the proof and only mention that the estimates mainly follows from \eqref{e-Bop perp}-\eqref{e-Phiinfty}-\eqref{e-En0}-\eqref{e-En1}-\eqref{e-3rd En} and Lemma \ref{Lem-lawprod}-(ii).
\begin{prop}\label{prop inv nor}
	Let $(d,s_{0},S,\gamma,q,\tau_{1},\tau_2,s_h,\mu_2,\sigma_{4})$ as in \eqref{p-d}, \eqref{Sob index}, \eqref{p-cond0}, \eqref{def q}, \eqref{ptrans}, \eqref{to2}, \eqref{prem} and Proposition \ref{prop redrem}.
	There exists $\sigma\triangleq \sigma(\tau_1,\tau_2,q,d)\geqslant\sigma_{4}$ such that if the following smallness condition holds
	\begin{equation}\label{sml inv nor}
		\varepsilon\gamma^{-2-q}N_0^{\mu_2}\leqslant\varepsilon_0,\qquad\|\mathfrak{I}_0\|_{q,s_h+\sigma}^{\gamma,\mathcal{O}}\leqslant 1,
	\end{equation}
	\begin{enumerate}[label=(\roman*)]
		\item There exists a family of operators $\big(\mathtt{T}_n\big)_{n\in\mathbb{N}}$ defined on $\mathcal{O}$ satisfying
		$$\forall s\in[s_0,S],\quad \sup_{n\in\mathbb{N}}\|\mathtt{T}_{n}\rho\|_{q,s}^{\gamma ,\mathcal{O}}\lesssim \gamma ^{-1}\|\rho\|_{q,s+\tau_{1}q+\tau_{1}}^{\gamma ,\mathcal{O}}$$
		and such that for any $n\in\mathbb{N}$, in restriction to the Cantor set
		\begin{equation}\label{Lbd set}
			\mathscr{O}_{\textnormal{\tiny{inversion}}}^{n,\gamma,\tau_1}(i_{0})\triangleq\bigcap_{(l,j)\in\mathbb{Z}^{d }\times\mathbb{S}_{0}^{c}\atop |l|\leqslant N_{n}}\left\lbrace(\alpha,\omega)\in\mathcal{O}\quad\textnormal{s.t.}\quad\left|\omega\cdot l+\mathrm{d}_{j}^{\infty}(\alpha,\omega,i_{0})\right|>\tfrac{\gamma \langle j\rangle }{\langle l\rangle^{\tau_{1}}}\right\rbrace,
		\end{equation}
		the following identity holds
		$$\mathscr{L}_{\infty}\mathtt{T}_n=\textnormal{Id}+\mathbf{E}^3_n,\qquad\forall s_{0}\leqslant s\leqslant\overline{s}\leqslant S, \quad \|\mathbf{E}^3_{n}\rho\|_{q,s}^{\gamma ,\mathcal{O}} \lesssim 
		N_n^{s-\overline{s}}\gamma^{-1}\|\rho\|_{q,\overline{s}+1+\tau_{1}q+\tau_{1}}^{\gamma ,\mathcal{O}}.$$
		This means that we have an almost approximate right inverse for the operator $\mathscr{L}_{\infty}$ in \eqref{op scrLinfty}.
		\item 
		There exists a family of operators $\big(\mathtt{T}_{\perp,n}\big)_{n\in\mathbb{N}}$ defined on $\mathcal{O}$ satisfying
		$$\forall \, s\in\,[ s_0, S],\quad\sup_{n\in\mathbb{N}}\|\mathtt{T}_{\perp,n}\rho\|_{q,s}^{\gamma ,\mathcal{O}}\lesssim\gamma^{-1}\left(\|\rho\|_{q,s+\sigma}^{\gamma ,\mathcal{O}}+\| \mathfrak{I}_{0}\|_{q,s+\sigma}^{\gamma ,\mathcal{O}}\|\rho\|_{q,s_{0}+\sigma}^{\gamma,\mathcal{O}}\right)$$
		and such that in restriction to the Cantor set
		\begin{equation}\label{calGn}
			\mathscr{G}_n(\gamma,\tau_{1},\tau_{2},i_{0})\triangleq \mathscr{O}_{\textnormal{\tiny{transport}}}^{n,\gamma,\tau_1}(i_{0})\cap\mathscr{O}_{\textnormal{\tiny{remainder}}}^{n,\gamma,\tau_1,\tau_{2}}(i_{0})\cap\mathscr{O}_{\textnormal{\tiny{inversion}}}^{n,\gamma,\tau_1}(i_{0}),
		\end{equation}
		the following identity holds
		$$\mathscr{L}_{\perp}\mathtt{T}_{\perp,n}=\textnormal{Id}+\mathbf{E}_n,$$
		with $\mathbf{E}_n$ satisfying
		\begin{align*}
			\forall\, s\in [s_0,S],\quad  &\|\mathbf{E}_n\rho\|_{q,s_0}^{\gamma ,\mathcal{O}}\lesssim N_n^{s_0-s}\gamma^{-1}\Big( \|\rho\|_{q,s+\sigma}^{\gamma,\mathcal{O}}+\varepsilon\gamma^{-2}\| \mathfrak{I}_{0}\|_{q,s+\sigma}^{\gamma,\mathcal{O}}\|\rho\|_{q,s_{0}}^{\gamma,\mathcal{O}} \Big)\nonumber\\
			&\qquad\qquad\quad+ \varepsilon\gamma^{-3}N_{0}^{{\mu}_{2}}N_{n+1}^{-\mu_{2}} \|\rho\|_{q,s_0+\sigma}^{\gamma,\mathcal{O}}.
		\end{align*}
		Recall that  $\mathscr{L}_{\perp},$ $\mathscr{O}_{\textnormal{\tiny{transport}}}^{n,\gamma,\tau_1}(i_{0})$ and $\mathscr{O}_{\textnormal{\tiny{remainder}}}^{n,\gamma,\tau_1,\tau_{2}}(i_{0})$ are respectively defined in \eqref{hLom-2}, \eqref{Cantrans} and \eqref{Cantrem}.
		\item When restricted to the Cantor set $\mathscr{G}_{n}(\gamma,\tau_{1},\tau_{2},i_{0})$, we have also have the following splitting required to apply the Berti-Bolle theory
		$$\mathscr{L}_{\perp}=\mathtt{L}_{\perp,n}+\mathtt{R}_{\perp,n},\qquad\mathtt{L}_{\perp,n}\mathtt{T}_{\perp,n}=\textnormal{Id},\qquad\mathtt{R}_{\perp,n}=\mathbf{E}_{n}\mathtt{L}_{\perp,n},$$
		with the operators $\mathtt{L}_{\perp,n}$ and $\mathtt{R}_{\perp,n}$ defined in  $\mathcal{O}$ and satisfying 
		\begin{align*}
			\forall s\in[s_{0},S],\quad& \sup_{n\in\mathbb{N}}\|\mathtt{L}_{\perp,n}\rho\|_{q,s}^{\gamma,\mathcal{O}}\lesssim\|\rho\|_{q,s+1}^{\gamma,\mathcal{O}}+\varepsilon\gamma^{-2}\|\mathfrak{I}_{0}\|_{q,s+\sigma}^{\gamma,\mathcal{O}}\|\rho\|_{q,s_{0}+1}^{\gamma,\mathcal{O}},\\
			\forall s\in[s_{0},S],\quad &\|\mathtt{R}_{\perp,n}\rho\|_{q,s_{0}}^{\gamma,\mathcal{O}}\lesssim N_{n}^{s_{0}-s}\gamma^{-1}\left(\|\rho\|_{q,s+\sigma}^{\gamma,\mathcal{O}}+\varepsilon\gamma^{-2}\|\mathfrak{I}_{0}\|_{q,s+\sigma}^{\gamma,\mathcal{O}}\|\rho\|_{q,s_{0}+\sigma}^{\gamma,\mathcal{O}}\right)\\
			&\qquad\qquad\quad+\varepsilon\gamma^{-3}N_{0}^{\mu_{2}}N_{n+1}^{-\mu_{2}}\|\rho\|_{q,s_{0}+\sigma}^{\gamma,\mathcal{O}}.
		\end{align*}
	\end{enumerate}
\end{prop}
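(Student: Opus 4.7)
The plan is to work outwards from the fully diagonal operator $\mathscr{L}_{\infty}$ of Proposition~\ref{prop redrem}, invert it approximately on a high modes cut-off, then transfer the inverse back to $\mathscr{L}_{\perp}$ through the two conjugations $\mathscr{B}_{\perp}$ and $\Phi_{\infty}$.

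\textbf{Step 1 (point (i): inversion of $\mathscr{L}_{\infty}$).} Since $\mathscr{L}_{\infty}=\omega\cdot\partial_\varphi\Pi_{\mathbb{S}_0}^{\perp}+\mathscr{D}_{\infty}$ is a Fourier multiplier with symbol $\ii(\omega\cdot l+\mathrm{d}_j^{\infty})$ on $\mathbf{e}_{l,j}$, $j\in\mathbb{S}_0^{c}$, I define
$$\mathtt{T}_n\rho\triangleq\sum_{(l,j)\in\mathbb{Z}^d\times\mathbb{S}_0^c\atop |l|\leqslant N_n}\frac{\rho_{l,j}}{\ii(\omega\cdot l+\mathrm{d}_j^{\infty}(\alpha,\omega,i_0))}\,\mathbf{e}_{l,j}.$$
On $\mathscr{O}_{\textnormal{\tiny{inversion}}}^{n,\gamma,\tau_1}(i_0)$ the denominator is bounded below by $\gamma\langle j\rangle\langle l\rangle^{-\tau_1}$, which gives the loss of $\tau_1$ derivatives in $\varphi$ and one in $\theta$; the weighted Sobolev estimate follows by standard Leibniz/Faà di Bruno manipulations on $\partial_\mu^\alpha$ of the symbol (yielding the extra $\tau_1 q$ derivatives), together with \eqref{specDinfty} and Lemma~\ref{properties omegajalpha}-(v) to control derivatives of the denominator. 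The identity $\mathscr{L}_{\infty}\mathtt{T}_n=\Pi_{|l|\leqslant N_n}\Pi_{\mathbb{S}_0}^{\perp}=\textnormal{Id}-\Pi_{|l|>N_n}$ (on $L_\perp^2$) directly produces $\mathbf{E}_n^3=-\Pi_{|l|>N_n}$, whose tame estimate is just Lemma~\ref{Lem-lawprod}-(ii).

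\textbf{Step 2 (point (ii): inversion of $\mathscr{L}_\perp$).} Combining \eqref{def scr L0} and \eqref{e-3rd En}, on $\mathscr{G}_n(\gamma,\tau_1,\tau_2,i_0)$ we have
$$\mathscr{L}_{\perp}=\mathscr{B}_{\perp}\Phi_{\infty}\mathscr{L}_{\infty}\Phi_{\infty}^{-1}\mathscr{B}_{\perp}^{-1}+\mathscr{B}_{\perp}\Phi_{\infty}\mathbf{E}_n^2\Phi_{\infty}^{-1}\mathscr{B}_{\perp}^{-1}+\mathscr{B}_{\perp}\mathbf{E}_n^1\mathscr{B}_{\perp}^{-1}\Pi_{\mathbb{S}_0}^{\perp},$$
so the natural candidate is
$$\mathtt{T}_{\perp,n}\triangleq\mathscr{B}_{\perp}\Phi_{\infty}\mathtt{T}_n\Phi_{\infty}^{-1}\mathscr{B}_{\perp}^{-1}.$$
A direct composition gives $\mathscr{L}_{\perp}\mathtt{T}_{\perp,n}=\textnormal{Id}+\mathbf{E}_n$ with
$$\mathbf{E}_n=\mathscr{B}_{\perp}\Phi_{\infty}\mathbf{E}_n^3\Phi_{\infty}^{-1}\mathscr{B}_{\perp}^{-1}+\mathscr{B}_{\perp}\Phi_{\infty}\mathbf{E}_n^2\Phi_{\infty}^{-1}\mathscr{B}_{\perp}^{-1}\mathtt{T}_{\perp,n}+\mathscr{B}_{\perp}\mathbf{E}_n^1\mathscr{B}_{\perp}^{-1}\mathtt{T}_{\perp,n}.$$
The tame bound on $\mathtt{T}_{\perp,n}$ is obtained by sandwiching the bound on $\mathtt{T}_n$ from Step~1 between the estimates \eqref{e-scrB-scrB1}, \eqref{e-Bop perp} for $\mathscr{B}_{\perp}^{\pm 1}$ and \eqref{e-Phiinfty} for $\Phi_{\infty}^{\pm 1}$, using the product law Lemma~\ref{Lem-lawprod}-(iii) and interpolating so that the high regularity of $\mathfrak{I}_0$ only multiplies the low-regularity norm of $\rho$. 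The three error pieces are estimated similarly using Step~1 together with \eqref{e-En1} and \eqref{e-3rd En}, absorbing the factors $\gamma^{-1}$ and $\varepsilon\gamma^{-2}$ under the smallness condition \eqref{sml inv nor}; in particular the first piece contributes the $N_n^{s_0-s}\gamma^{-1}$ term while the remaining two give the $\varepsilon\gamma^{-3}N_0^{\mu_2}N_{n+1}^{-\mu_2}$ term, after enlarging $\sigma\geqslant\sigma_4$ to accommodate all losses.

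\textbf{Step 3 (point (iii): Berti--Bolle splitting).} I set $\mathtt{L}_{\perp,n}\triangleq\mathscr{B}_{\perp}\Phi_{\infty}\mathscr{L}_{\infty}\Phi_{\infty}^{-1}\mathscr{B}_{\perp}^{-1}$ and $\mathtt{R}_{\perp,n}\triangleq\mathscr{L}_{\perp}-\mathtt{L}_{\perp,n}$, so that trivially $\mathtt{L}_{\perp,n}\mathtt{T}_{\perp,n}=\textnormal{Id}$ and, multiplying the decomposition of $\mathscr{L}_{\perp}$ above on the right by $\mathtt{T}_{\perp,n}\mathtt{L}_{\perp,n}$ (the projector onto the range of $\mathtt{T}_{\perp,n}$), one gets $\mathtt{R}_{\perp,n}=\mathbf{E}_n\mathtt{L}_{\perp,n}$. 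The bound on $\mathtt{L}_{\perp,n}$ follows from \eqref{e-scrL0} via $\mathtt{L}_{\perp,n}=\mathscr{L}_{\perp}-\varepsilon\mathscr{B}_{\perp}\Phi_{\infty}\mathbf{E}_n^2\Phi_{\infty}^{-1}\mathscr{B}_{\perp}^{-1}-\mathscr{B}_{\perp}\mathbf{E}_n^1\mathscr{B}_{\perp}^{-1}\Pi_{\mathbb{S}_0}^{\perp}$ and the corresponding tame estimates, while the bound on $\mathtt{R}_{\perp,n}$ follows by composing the estimate on $\mathbf{E}_n$ from Step~2 with the one on $\mathtt{L}_{\perp,n}$, once again using Lemma~\ref{Lem-lawprod}-(iii) and interpolation.

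\textbf{Main obstacle.} The delicate point is bookkeeping: every multiplicative tame estimate introduces either an $\varepsilon\gamma^{-2}\|\mathfrak{I}_0\|_{q,s+\sigma_*}^{\gamma,\mathcal{O}}$ factor or an extra derivative count, and the final $\sigma$ must absorb the worst case $\sigma_4+\tau_1 q+\tau_1+O(1)$ arising from Step~1. The key qualitative input that makes everything fit is that the only term losing $\mu_2$ derivatives in $N_n$ is the residual $\mathbf{E}_n$, so the Nash--Moser iteration retains fast decay.
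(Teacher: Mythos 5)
Your overall route is the same as the one the paper delegates to \cite[Prop. 6.6]{HR21} and \cite[Prop. 6.5]{HR21-1}: invert the diagonal operator $\mathscr{L}_{\infty}$ on the time-frequencies $|l|\leqslant N_n$ using the lower bound from \eqref{Lbd set}, then conjugate back through $\mathscr{B}_{\perp}\Phi_{\infty}$ and collect the errors $\mathbf{E}_n^1$, $\mathbf{E}_n^2$, $\mathbf{E}_n^3$. Steps 1 and 2 are correct modulo the standard bookkeeping you describe, and the decomposition of $\mathbf{E}_n$ into the three conjugated error pieces is exactly the intended one.

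Step 3, however, contains a genuine error. With your definition of $\mathtt{T}_n$ (which annihilates all modes with $|l|>N_n$), the operator $\mathtt{T}_{\perp,n}=\mathscr{B}_{\perp}\Phi_{\infty}\mathtt{T}_n\Phi_{\infty}^{-1}\mathscr{B}_{\perp}^{-1}$ is not injective, so \emph{no} operator can satisfy $\mathtt{L}_{\perp,n}\mathtt{T}_{\perp,n}=\textnormal{Id}$; and for your explicit choice $\mathtt{L}_{\perp,n}=\mathscr{B}_{\perp}\Phi_{\infty}\mathscr{L}_{\infty}\Phi_{\infty}^{-1}\mathscr{B}_{\perp}^{-1}$ one computes
$$\mathtt{L}_{\perp,n}\mathtt{T}_{\perp,n}=\mathscr{B}_{\perp}\Phi_{\infty}\big(\textnormal{Id}+\mathbf{E}_n^3\big)\Phi_{\infty}^{-1}\mathscr{B}_{\perp}^{-1}=\textnormal{Id}+\mathscr{B}_{\perp}\Phi_{\infty}\mathbf{E}_n^3\Phi_{\infty}^{-1}\mathscr{B}_{\perp}^{-1}\neq\textnormal{Id},$$
so the claimed identity is false, and the subsequent derivation of $\mathtt{R}_{\perp,n}=\mathbf{E}_n\mathtt{L}_{\perp,n}$ by ``multiplying by the projector onto the range of $\mathtt{T}_{\perp,n}$'' does not go through. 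The standard fix is to make $\mathtt{T}_n$ a genuine two-sided inverse: let $\mathtt{D}_n$ be the diagonal operator acting as $\mathscr{L}_{\infty}$ on the modes $|l|\leqslant N_n$ and as the identity on the modes $|l|>N_n$; define $\mathtt{T}_n\triangleq\mathtt{D}_n^{-1}$ (so it divides by the symbol on low time-frequencies and acts as the identity on high ones, which does not affect your estimate since $\gamma^{-1}\geqslant 1$). Then $\mathbf{E}_n^3=\mathscr{L}_{\infty}\mathtt{T}_n-\textnormal{Id}=(\mathscr{L}_{\infty}-\textnormal{Id})$ restricted to $|l|>N_n$, which still satisfies the stated smoothing bound since $\mathscr{L}_{\infty}$ is of order one. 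Setting $\mathtt{L}_{\perp,n}\triangleq\mathscr{B}_{\perp}\Phi_{\infty}\mathtt{D}_n\Phi_{\infty}^{-1}\mathscr{B}_{\perp}^{-1}$ and $\mathtt{R}_{\perp,n}\triangleq\mathscr{L}_{\perp}-\mathtt{L}_{\perp,n}$, one now has $\mathtt{T}_{\perp,n}=\mathtt{L}_{\perp,n}^{-1}$, hence $\mathtt{L}_{\perp,n}\mathtt{T}_{\perp,n}=\textnormal{Id}$ exactly and $\mathtt{R}_{\perp,n}=\big(\mathscr{L}_{\perp}\mathtt{T}_{\perp,n}-\textnormal{Id}\big)\mathtt{L}_{\perp,n}=\mathbf{E}_n\mathtt{L}_{\perp,n}$, with the stated estimates following from the order-one bound on $\mathtt{D}_n$, \eqref{e-scrB-scrB1}, \eqref{e-Bop perp}, \eqref{e-Phiinfty} and your Step 2 bound on $\mathbf{E}_n$.
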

Finally, as mentioned at the beginning of this subsection, we can apply the Berti-Bolle theory \cite{BB15} and \cite[Sec. 6]{HHM21} to construct an approximate right inverse for the full linearized operator $d_{i,\kappa }\mathscr{F}(i_0,\kappa _0).$ For a complete proof of the result, the reader is refered to \cite[Thm. 6.1]{HHM21}.
\begin{theo}\label{thm ai}
	\textbf{(Almost approximate right inverse)}\\
	Let $(d,s_{0},S,\gamma,q,\tau_{1},\tau_2,s_h,\mu_{2})$ satisfy  \eqref{p-d}, \eqref{Sob index}, \eqref{p-cond0}, \eqref{def q}, \eqref{ptrans}, \eqref{to2} and \eqref{prem}.
	Then there exists $ \overline{\sigma}= \overline{\sigma}(\tau_1,\tau_2,d,q)>0$ and a family of reversible operators $\big(\mathrm{T}_{0,n}\big)_{n\in\mathbb{N}}$ such that if the smallness condition \eqref{sml inv nor} holds, then
	for all $g=(g_1,g_2,g_3)$, satisfying 
	$$g_1(\varphi)=g_1(\varphi),\qquad g_2(-\varphi)=-g_2(\varphi)\qquad g_3(-\varphi)=(\mathscr{S}g_3)(\varphi),$$  
	the function $\mathrm{T}_{0,n}g$ satisfies
	$$\forall s\in [s_0,S],\quad \|\mathrm{T}_{0,n} g\|_{q,s}^{\gamma,\mathcal{O}}\lesssim\gamma^{-1}\left(\|g\|_{q,s+\overline{\sigma}}^{\gamma,\mathcal{O}}+\|\mathfrak{I}_{0}\|_{q,s+\overline{\sigma}}^{\gamma,\mathcal{O}}\|g\|_{q,s_{0}+\overline{\sigma}}^{\gamma,\mathcal{O}}\right).$$
	Moreover $\mathrm{T}_{0,n}$ is an almost-approximate  
	right inverse of $d_{i,\kappa }\mathscr{F}(i_0,\kappa _0)$ in the Cantor set $\mathscr{G}_{n}(\gamma,\tau_1,\tau_2,i_0)$. More precisely, for all $ (\alpha,\omega)\in \mathscr{G}_{n}(\gamma,\tau_1,\tau_2,i_0)$ we can write
	$$d_{i,\kappa }\mathscr{F}(i_0,\kappa _0)\circ\mathrm{T}_{0,n}-\textnormal{Id}=\mathscr{E}^{(n)}_1+\mathscr{E}^{(n)}_2+\mathscr{E}^{(n)}_3,$$
	where the operators $\mathscr{E}^{(n)}_1$, $\mathscr{E}^{(n)}_2$ and $\mathscr{E}^{(n)}_3$ are defined in the whole set $\mathcal{O}$ with the estimates
	\begin{align*}
		\|\mathscr{E}^{(n)}_1 g\|_{q,s_0}^{\gamma,\mathcal{O}}&\lesssim\gamma^{-1}\|\mathscr{F}(i_0,\kappa _0)\|_{q,s_0+\overline{\sigma}}^{\gamma,\mathcal{O}}\|g\|_{q,s_0+\overline{\sigma}}^{\gamma,\mathcal{O}},\\
		\forall\,\mathtt{b}\geqslant 0,\quad\|\mathscr{E}^{(n)}_2 g\|_{q,s_0}^{\gamma,\mathcal{O}}&\lesssim\gamma^{-1}N_n^{-\mathtt{b}} \Big(\|g\|_{q,s_0+\mathtt{b}+\overline{\sigma}}^{\gamma,\mathcal{O}}+\varepsilon\|\mathfrak{I}_{0}\|_{q,s_0+\mathtt{b}+\overline{\sigma}}^{\gamma,\mathcal{O}}\big\|g\|_{q,s_0+\overline{\sigma}}^{\gamma,\mathcal{O}}\Big)\,,\\
		\forall\,\mathtt{b}\in [0,S-s_0],\quad\|\mathscr{E}^{(n)}_3 g\|_{q,s_0}^{\gamma,\mathcal{O}}&\lesssim N_n^{-\mathtt{b}}\gamma^{-2}\Big(\|g\|_{q,s_0+\mathtt{b}+\overline{\sigma}}^{\gamma,\mathcal{O}}+{\varepsilon\gamma^{-2}}\|\mathfrak{I}_{0}\|_{q,s_0+\mathtt{b}+\overline{\sigma}}^{\gamma,\mathcal{O}}\|g\|_{q,s_0+\overline{\sigma}}^{\gamma,\mathcal{O}}\Big)\\
		&\quad+ \varepsilon\gamma^{-4}N_{0}^{{\mu}_{2}}{N_{n}^{-\mu_{2}}} \|g\|_{q,s_0+\overline{\sigma}}^{\gamma,\mathcal{O}}.
	\end{align*}
\end{theo}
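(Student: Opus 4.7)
The plan is to follow the Berti-Bolle scheme as developed in \cite{BB15} and adapted in \cite[Sec. 6]{HHM21}. The key geometric idea is to conjugate the linearized operator $d_{i,\kappa}\mathscr{F}(i_0,\kappa_0)$ by the symplectic diffeomorphism $G_\delta$ of $\mathbb{T}^d\times\mathbb{R}^d\times L_\perp^2$ naturally built out of the approximate reversible torus $i_0$ through the maps $\vartheta_0$, $L_2$ and the tangential lift of $z_0$. The quasi-Töplitz structure and approximate invariance of $i_0$ imply that the conjugated operator takes a triangular form in the $(\vartheta,I,z)$ variables, up to error terms proportional to $\mathscr{F}(i_0,\kappa_0)$ (these will eventually feed $\mathscr{E}_1^{(n)}$).

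Once triangularized, the inversion decouples into three subproblems. First I would solve the $I$-equation by integration in $\varphi$ on the Fourier modes $l\neq 0$, which requires only the standard Diophantine condition $|\omega\cdot l|>\gamma\langle l\rangle^{-\tau_1}$; the zero-mean obstruction is precisely killed by the free parameter $\widehat\kappa$ introduced in \eqref{H-ttc}, which is the whole point of adding $\kappa$. Next I would solve the $\vartheta$-equation, again by integration in $\varphi$, which only requires the same Diophantine condition. The non-trivial step is the inversion in the normal directions: here I would invoke Proposition \ref{prop inv nor}, which provides the almost approximate right inverse $\mathtt{T}_{\perp,n}$ of $\mathscr{L}_\perp$ on the Cantor set $\mathscr{G}_n(\gamma,\tau_1,\tau_2,i_0)$. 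The operator $\mathrm{T}_{0,n}$ is then defined as the composition of $G_\delta^{-1}$, the three component inverses, and $G_\delta$, with $\Pi_{N_n}$ projectors inserted to handle small divisors in the triangular coupling terms. The tame estimate on $\mathrm{T}_{0,n}$ follows by combining the tame bound on $\mathtt{T}_{\perp,n}$ given in Proposition \ref{prop inv nor} with the standard Sobolev estimates on the components of $G_\delta^{\pm 1}$, which introduce a controlled loss of derivatives weighted by $\|\mathfrak{I}_0\|_{q,s+\overline{\sigma}}^{\gamma,\mathcal{O}}$.

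The core of the proof is the decomposition of the remainder $d_{i,\kappa}\mathscr{F}(i_0,\kappa_0)\circ\mathrm{T}_{0,n}-\textnormal{Id}$ into the three pieces $\mathscr{E}_1^{(n)}, \mathscr{E}_2^{(n)}, \mathscr{E}_3^{(n)}$. The term $\mathscr{E}_1^{(n)}$ collects all contributions linear in $\mathscr{F}(i_0,\kappa_0)$ coming from the fact that $G_\delta$ is only an approximate symplectic straightening; this is the reason for its quadratic-like estimate $\|\mathscr{F}(i_0,\kappa_0)\|\cdot\|g\|$. The term $\mathscr{E}_2^{(n)}$ collects the cut-off errors $\Pi_{N_n}^\perp$ used to control small divisors in the triangular block, which accounts for its $N_n^{-\mathtt{b}}$ decay. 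Finally $\mathscr{E}_3^{(n)}$ is where the residual $\mathbf{E}_n$ from Proposition \ref{prop inv nor}-(ii) lands; it inherits both the $N_n^{-\mathtt{b}}\gamma^{-2}$ and the $\varepsilon\gamma^{-4}N_0^{\mu_2}N_n^{-\mu_2}$ contributions from the transport, remainder and diagonal KAM reductions.

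The main obstacle is the careful bookkeeping of the losses of regularity $\overline{\sigma}$, powers of $\gamma^{-1}$ and $\varepsilon$-smallness throughout the three compositional layers (Berti-Bolle symplectic change, action-angle inversion, $\mathtt{T}_{\perp,n}$), making sure that everything closes within the constants $(\tau_1,\tau_2,q,d)$. Conceptually nothing new beyond \cite[Thm. 6.1]{HHM21} is required, since the Hamiltonian structure, reversibility and the almost-approximate inversion of $\mathscr{L}_\perp$ are the only inputs, and these have been established respectively in Lemma \ref{lem eq Ea r + HAM}, in the definition of $\mathbf{S}$ just after \eqref{def scrF}, and in Proposition \ref{prop inv nor}.
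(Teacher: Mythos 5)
Your proposal follows exactly the route the paper takes: the paper itself does not write out a proof but defers entirely to the Berti--Bolle theory of \cite{BB15} and to \cite[Thm. 6.1]{HHM21}, whose argument is precisely the triangularization by the approximate symplectic straightening of the torus, the inversion of the angle and action components via the Diophantine conditions and the free parameter $\kappa$, and the almost inversion in the normal directions supplied by Proposition \ref{prop inv nor}, with the three error terms sorted as you describe. Your sketch is a correct and faithful expansion of that cited argument, so there is nothing to add.
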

\subsection{Construction of a non-trivial quasi-periodic solution}\label{sec non triv QP sol}
In this final section, we construct a non-trivial zero for the functional $\mathscr{F}$ defined by \eqref{def scrF}. It is obtained by a Nash-Moser iteration procedure. At each step, we can find an approximate right inverse of the linearized operator following the construction explained in the previous subsection. The proof is technical and already detailed in the previous works \cite{BM18,HHM21,HR21}. Here, we may use the version exposed in \cite[Prop. 7.1 and Cor. 7.1]{HR21}.
\begin{prop}\label{Nash-Moser}
	\textbf{(Nash-Moser)}
	\begin{enumerate}[label=(\roman*)]
		\item Let $(d,s_{0},S,q,\tau_{1},\tau_{2},\overline{\sigma})$ as in \eqref{p-d}, \eqref{Sob index}, \eqref{def q}, \eqref{ptrans}, \eqref{to2} and Theorem \ref{thm ai}. Consider the parameters fixed by 
	\begin{equation}\label{p-NM}
		\begin{array}{c}
		\mu_1\triangleq 3q(\tau_{2}+2)+6\overline{\sigma}+6, \qquad a_{1}\triangleq6q(\tau_{2}+2)+12\overline{\sigma}+15,\\
		\mu_{2}\triangleq2q(\tau_{2}+2)+5\overline{\sigma}+7,\qquad a_{2}\triangleq3q(\tau_{2}+2)+6\overline{\sigma}+9,\\
		s_{h}\triangleq s_{0}+4q(\tau_{2}+2)+9\overline{\sigma}+11,\qquad b_{1}\triangleq2s_{h}-s_{0},\qquad \overline{a}\triangleq\tau_{2}+2
		\end{array}
	\end{equation}
 and 
 \begin{equation}\label{gma N0 NM}
 	0<a<\tfrac{1}{\mu_{2}+q+2},\qquad \gamma\triangleq\varepsilon^{a},\qquad N_{0}\triangleq\gamma^{-1}.
 \end{equation}
We consider the finite dimensional subspaces
\begin{align*}
	\mathtt{E}_{n}\triangleq &\Big\{\mathfrak{I}=(\Theta,I,z)\quad
	\textnormal{s.t.}\quad\Theta=P_{N_n}\Theta,\quad I=P_{N_n}I\quad\textnormal{and}\quad z=P_{N_n}z\Big\},
\end{align*}
where $P_{N}$ is the projector defined by
$$f(\varphi,\theta)=\sum_{(l,j)\in\mathbb{Z}^{d}\times\mathbb{Z}}f_{l,j}e^{\ii(l\cdot\varphi+j\theta)}\quad\Rightarrow\quad P_{N}f(\varphi,\theta)=\sum_{\langle l,j\rangle\leqslant N}f_{l,j}e^{\ii(l\cdot\varphi+j\theta)}.$$
	There exist $C_{\ast}>0$ and $\varepsilon_{0}>0$ such that for any $\varepsilon\in[0,\varepsilon_{0}]$ we get for all $n\in\mathbb{N}$ the following properties,
	\begin{enumerate}
		\item There exists a $q$-times differentiable function 
		$$\mathtt{W}_{n}:\begin{array}[t]{rcl}
			\mathcal{O} & \rightarrow &  \mathtt{E}_{n-1}\times\mathbb{R}^{d}\times\mathbb{R}^{d+1}\\
			(\alpha,\omega) & \mapsto & \big(\mathfrak{I}_{n}(\alpha,\omega),\kappa _{n}(\alpha,\omega)-\omega,0\big)
		\end{array}$$
		satisfying $\mathtt{W}_{0}=0$ and if $n\in\mathbb{N}^*,$
		$$\|\mathtt{W}_{n}\|_{q,s_{0}+\overline{\sigma}}^{\gamma,\mathcal{O}}\leqslant C_{\ast}\varepsilon\gamma^{-1}N_{0}^{q\overline{a}},\qquad\|\mathtt{W}_{n}\|_{q,b_{1}+\overline{\sigma}}^{\gamma,\mathcal{O}}\leqslant C_{\ast}\varepsilon\gamma^{-1}N_{n-1}^{\mu_1}.$$
		We set
		$$\mathtt{U}_{0}\triangleq\Big((\varphi,0,0),\omega,(\alpha,\omega)\Big)\quad\textnormal{and}\quad  \hbox{if}\quad n\in\mathbb{N}^*,\quad \mathtt{U}_{n}\triangleq \mathtt{U}_{0}+\mathtt{W}_{n},\qquad \mathtt{H}_{n} \triangleq \mathtt{U}_{n}-\mathtt{U}_{n-1}.$$
		Then 
		$$\forall s\in[s_{0},S],\,\| \mathtt{H}_{1}\|_{q,s}^{\gamma,\mathcal{O}}\leqslant \frac{1}{2}C_{\ast}\varepsilon\gamma^{-1}N_{0}^{q\overline{a}},\qquad\forall\, 2\leqslant k\leqslant n,\,\| \mathtt{H}_{k}\|_{q,s_{0}+\overline{\sigma}}^{\gamma,\mathcal{O}}\leqslant C_{\ast}\varepsilon\gamma^{-1}N_{k-1}^{-a_{2}}.$$
		We also have for $n\geqslant 2,$
		\begin{equation}\label{e-Hn-diff} \|\mathtt{H}_{n}\|_{q,\overline{s}_h+\sigma_{4}}^{\gamma,\mathcal{O}}\leqslant C_{\ast}\varepsilon\gamma^{-1}N_{n-1}^{-a_{2}}.
		\end{equation}
		\item Define 
		\begin{equation}\label{def gamman}
			i_{n}\triangleq(\varphi,0,0)+\mathfrak{I}_{n},\qquad \gamma_{n}\triangleq\gamma(1+2^{-n})\in[\gamma,2\gamma].
		\end{equation}
		The torus $i_n$ is reversible. Define also
		\begin{equation}\label{Angm}
			\mathscr{A}_{0}^{\gamma}\triangleq\mathcal{O},\qquad \mathscr{A}_{n+1}^{\gamma}\triangleq\mathscr{A}_{n}^{\gamma}\cap\mathscr{G}_{n}(\gamma_{n+1},\tau_{1},\tau_{2},i_{n}),
		\end{equation}
		with $\mathscr{G}_{n}(\gamma_{n+1},\tau_{1},\tau_{2},i_{n})$ as in \eqref{calGn}. Consider the open sets 
		$$\mathrm{O}_{n}^\gamma\triangleq \Big\{(\alpha,\omega)\in\mathcal{O}\quad\textnormal{s.t.}\quad {\mathtt{dist}}\big((\alpha,\omega),\mathscr{A}_{n}^{\gamma}\big)<\gamma N_{n}^{-\overline{a}}\Big\},\qquad\mathtt{dist}(x,A)\triangleq\inf_{y\in A}\|x-y\|.$$
		Then we have
		$$\|\mathscr{F}(\mathtt{U}_{n})\|_{q,s_{0}}^{\gamma,\mathrm{O}_{n}^{\gamma}}\leqslant C_{\ast}\varepsilon N_{n-1}^{-a_{1}}.$$
	\end{enumerate}
	\item There exists $\varepsilon_0>0$ such that, for any $\varepsilon\in(0,\varepsilon_0),$ the following hold true. We consider the Cantor set $\mathscr{G}_{\infty}^{\gamma}$ (related to $\varepsilon$ through $\gamma$) and given by
	\begin{equation}\label{Ginfty}
		\mathscr{G}_{\infty}^{\gamma}\triangleq \bigcap_{n\in\mathbb{N}}\mathscr{A}_{n}^{\gamma}.
	\end{equation}
	There exists a $q$-times differentiable function
	$$\mathtt{U}_{\infty}:\begin{array}[t]{rcl}
		\mathcal{O} & \rightarrow & \left(\mathbb{T}^{d}\times\mathbb{R}^{d}\times H^{s_0}\cap L_{\perp}^{2}\right)\times\mathbb{R}^{d}\times\mathbb{R}^{d+1}\\
		(\alpha,\omega) & \mapsto & \big(i_{\infty}(\alpha,\omega),\kappa _{\infty}(\alpha,\omega),(\alpha,\omega)\big)
	\end{array}$$
	such that 
	$$\forall(\alpha,\omega)\in\mathscr{G}_{\infty}^{\gamma},\quad\mathscr{F}(\mathtt{U}_{\infty}(\alpha,\omega))=0.$$
	The torus $i_{\infty}$ is reversible and $\kappa _{\infty}\in W^{q,\infty,\gamma}(\mathcal{O},\mathbb{R}^d).$
	Furthermore, there exists a $q$-times differentiable function $\alpha\in(\alpha_{0},\alpha_{1})\mapsto\omega(\alpha,\varepsilon)$ such that
	\begin{equation}\label{alpha infty-1}
		\kappa _{\infty}\big(\alpha,\omega(\alpha,\varepsilon)\big)=-\omega_{\textnormal{Eq}}(\alpha),\qquad\omega(\alpha,\varepsilon)=-\omega_{\textnormal{Eq}}(\alpha)+\bar{r}_{\varepsilon}(\alpha),\qquad \|\bar{r}_{\varepsilon}\|_{q}^{\gamma,\mathcal{O}}\lesssim\varepsilon\gamma^{-1}N_{0}^{q\overline{a}},
	\end{equation}
	and 
	$$\forall \alpha\in \mathscr{C}_{\infty}^{\varepsilon},\quad \mathscr{F}\big(\mathtt{U}_{\infty}(\alpha,\omega(\alpha,\varepsilon))\big)=0,$$
	where the  Cantor set $\mathscr{C}_{\infty}^{\varepsilon}$ is defined by 
	\begin{equation}\label{Cinfty}
		\mathscr{C}_{\infty}^{\varepsilon}\triangleq\Big\{\alpha\in(\alpha_{0},\alpha_{1})\quad\textnormal{s.t.}\quad\big(\alpha,\omega(\alpha,\varepsilon)\big)\in\mathscr{G}_{\infty}^{\gamma}\Big\}.
	\end{equation}
\end{enumerate}
\end{prop}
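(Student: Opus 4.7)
The plan is to run a Nash–Moser iteration whose building block is the almost-approximate right inverse $\mathrm{T}_{0,n}$ constructed in Theorem \ref{thm ai}. Starting from $\mathtt{U}_0=((\varphi,0,0),\omega,(\alpha,\omega))$, I would define inductively $\mathtt{U}_{n+1}=\mathtt{U}_n+\mathtt{H}_{n+1}$, where $\mathtt{H}_{n+1}=-\Pi_n\,\mathrm{T}_{0,n}\,\Pi_n\mathscr{F}(\mathtt{U}_n)$ and $\Pi_n=(P_{N_n},P_{N_n},P_{N_n})$ projects onto the finite-dimensional subspace $\mathtt{E}_n$. The classical Newton-type identity
$$\mathscr{F}(\mathtt{U}_{n+1})=\Pi_n^\perp\mathscr{F}(\mathtt{U}_n)+\bigl(\mathscr{E}_1^{(n)}+\mathscr{E}_2^{(n)}+\mathscr{E}_3^{(n)}\bigr)\Pi_n\mathscr{F}(\mathtt{U}_n)+Q_n,$$
combined with the quadratic remainder bound $\|Q_n\|_{q,s_0}^{\gamma,\mathcal{O}}\lesssim\|d^2\mathscr{F}\|\,\|\mathtt{H}_{n+1}\|_{q,s_0}^2$ coming from Lemma \ref{lem e-XPeps}, together with the tame estimates on $\mathscr{E}_i^{(n)}$ from Theorem \ref{thm ai} and the smoothing properties of $P_{N_n}$ from Lemma \ref{Lem-lawprod}-(ii), yields the quadratic improvement on $\|\mathscr{F}(\mathtt{U}_n)\|_{q,s_0}^{\gamma,\mathrm{O}_n^\gamma}$. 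The parameters $(\mu_1,\mu_2,a_1,a_2,s_h,b_1,\overline{a})$ in \eqref{p-NM} are tailored so that the losses of derivatives in the inverse and in $\mathscr{E}_3^{(n)}$ (of size $\gamma^{-1}$ and $\gamma^{-4}N_0^{\mu_2}N_n^{-\mu_2}$ respectively) are absorbed by the smoothing gains $N_n^{-\mathtt{b}}$.

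The induction simultaneously propagates a low-norm bound $\|\mathtt{W}_n\|_{q,s_0+\overline{\sigma}}^{\gamma,\mathcal{O}}\leqslant C_*\varepsilon\gamma^{-1}N_0^{q\overline{a}}$ and a polynomial high-norm bound $\|\mathtt{W}_n\|_{q,b_1+\overline{\sigma}}^{\gamma,\mathcal{O}}\leqslant C_*\varepsilon\gamma^{-1}N_{n-1}^{\mu_1}$, the latter being crucial to compensate the terms of the type $\|\mathfrak{I}_0\|_{q,s+\sigma}^{\gamma,\mathcal{O}}\|g\|_{q,s_0+\overline{\sigma}}^{\gamma,\mathcal{O}}$ appearing in the right inverse estimates. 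The choice $\gamma=\varepsilon^a$ with $a<1/(\mu_2+q+2)$ ensures that the smallness hypothesis $\varepsilon\gamma^{-2-q}N_0^{\mu_2}\leqslant\varepsilon_0$ of Proposition \ref{prop inv nor} is preserved along the iteration, since $N_n^{\mu_1}$ grows slower than the gain. The open neighborhoods $\mathrm{O}_n^\gamma$ of the Cantor sets $\mathscr{A}_n^\gamma$ are introduced to extend the right-inverse construction off $\mathscr{A}_n^\gamma$ by a Whitney cut-off, so that $\mathtt{U}_n$ is defined on all of $\mathcal{O}$ and smooth in the parameters with controlled $\gamma$-weighted derivatives.

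The hardest technical point will be controlling how the Cantor sets $\mathscr{G}_n(\gamma_{n+1},\tau_1,\tau_2,i_n)$ evolve from one step to the next, because the small-divisor conditions in \eqref{Cantrans}, \eqref{Cantrem}, \eqref{Lbd set} depend on the torus $i_n$ at which they are computed. The key idea, using the Lipschitz-in-$i$ estimates \eqref{e-12-Vbt}--\eqref{e-diff mujinfty} and the telescoping bound $\|\mathtt{H}_{n+1}\|_{q,\overline{s}_h+\sigma_4}^{\gamma,\mathcal{O}}\lesssim\varepsilon\gamma^{-1}N_n^{-a_2}$, is that the perturbation of the eigenvalues $\mathrm{d}_j^\infty(i_n)\to\mathrm{d}_j^\infty(i_{n+1})$ is much smaller than the buffer $\gamma_n-\gamma_{n+1}=\gamma\,2^{-n-1}$ between the successive Diophantine thresholds, so that $\mathscr{A}_{n+1}^\gamma\subset\mathscr{A}_n^\gamma$ and the iteration is self-contained.

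For part (ii), once the series $\sum_n\mathtt{H}_n$ converges in low norm on all of $\mathcal{O}$, the limit $\mathtt{U}_\infty=(i_\infty,\kappa_\infty,(\alpha,\omega))$ is a $q$-times differentiable function for which $\mathscr{F}(\mathtt{U}_\infty)=0$ on $\mathscr{G}_\infty^\gamma=\bigcap_n\mathscr{A}_n^\gamma$, by passing to the limit $n\to\infty$ in the estimate $\|\mathscr{F}(\mathtt{U}_n)\|_{q,s_0}^{\gamma,\mathrm{O}_n^\gamma}\lesssim\varepsilon N_{n-1}^{-a_1}$. Finally, to solve $\kappa_\infty(\alpha,\omega)=-\omega_{\textnormal{Eq}}(\alpha)$ I would write $\omega=-\omega_{\textnormal{Eq}}(\alpha)+\bar{r}_\varepsilon(\alpha)$ and set up a contraction mapping for $\bar{r}_\varepsilon$ on the ball $\{\|\bar{r}\|_{q}^{\gamma,\mathcal{O}}\leqslant C\varepsilon\gamma^{-1}N_0^{q\overline{a}}\}$, exploiting $\kappa_\infty=\omega+O(\varepsilon\gamma^{-1}N_0^{q\overline{a}})$ in $W^{q,\infty,\gamma}$ and the fact that $\omega_{\textnormal{Eq}}([\alpha_0,\alpha_1])$ lies well inside $B(0,R_0/2)\subset\mathcal{O}$ as arranged in \eqref{def calO}. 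The Cantor set $\mathscr{C}_\infty^\varepsilon$ in \eqref{Cinfty} is then obtained by pulling back $\mathscr{G}_\infty^\gamma$ under the graph $\alpha\mapsto(\alpha,\omega(\alpha,\varepsilon))$, and its non-emptiness and positive measure estimate will be the object of the subsequent measure-theoretic analysis based on the perturbed Rüssmann conditions inherited from Lemma \ref{lem trsvrslt Ea}.
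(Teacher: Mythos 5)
Your proposal is correct and follows essentially the same route as the paper, which itself does not write out the argument but defers to the version in \cite[Prop. 7.1 and Cor. 7.1]{HR21}: the same double-induction Nash--Moser scheme built on the almost approximate right inverse $\mathrm{T}_{0,n}$ of Theorem \ref{thm ai}, Whitney-type extensions to the neighborhoods $\mathrm{O}_{n}^{\gamma}$, convergence of $\sum_n\mathtt{H}_n$ in low norm, and a final rigidification of the frequency to solve $\kappa_{\infty}(\alpha,\omega(\alpha,\varepsilon))=-\omega_{\textnormal{Eq}}(\alpha)$. The only cosmetic remark is that the inclusion $\mathscr{A}_{n+1}^{\gamma}\subset\mathscr{A}_{n}^{\gamma}$ holds by the very definition \eqref{Angm}; the genuine point you are after --- that the drift of the perturbed eigenvalues between $i_n$ and $i_{n+1}$, controlled by \eqref{e-Hn-diff} and \eqref{e-diff mujinfty}, is beaten by the gap between successive thresholds $\gamma_{n+1}$ --- is exactly what is exploited later in the measure estimates of Proposition \ref{prop meas Cant}.
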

Now to conclude the proof of Theorem \ref{thm QPS Ea}, it remains to prove that the Cantor set $\mathscr{C}_{\infty}^{\varepsilon}$ in \eqref{Cinfty} is not empty. Actually, we can prove a lower bound measure for $\mathscr{C}_{\infty}^{\varepsilon}$ which shows that when the magnitude of the perturbation tends to zero, then the set of admissible parameters tends to be of full Lebesgue measure in $(\alpha_0,\alpha_1).$  
\begin{prop}\label{prop meas Cant}
	Let $(q_{0},\upsilon,a)$ as in Lemma \ref{lem trsvrslt Ea}, \eqref{ptrans} and \eqref{gma N0 NM}. Then there exists $C>0$ such that 
	$$\big|\mathscr{C}_{\infty}^{\varepsilon}\big|\geqslant (\alpha_1-\alpha_0)-C \varepsilon^{\frac{a\upsilon}{q_{0}}},\qquad\textnormal{implying in turn}\qquad \lim_{\varepsilon\to0}\big|\mathscr{C}_{\infty}^{\varepsilon}\big|=\alpha_1-\alpha_0.$$
\end{prop}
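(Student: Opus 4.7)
Following the approach of \cite[Prop. 7.2]{HR21} and \cite[Prop. 7.2]{HR21-1}, the plan is to decompose the complement of $\mathscr{C}_{\infty}^{\varepsilon}$ in $(\alpha_0,\alpha_1)$ as a countable union of resonant sets and to bound the measure of each via the Rüssmann Lemma \ref{lem Russmann measure} together with the transversality conditions of Lemma \ref{lem trsvrslt Ea}. From \eqref{Cinfty}, \eqref{Ginfty} and \eqref{Angm} one has
$$(\alpha_0,\alpha_1)\setminus\mathscr{C}_{\infty}^{\varepsilon}\subset\bigcup_{n\in\mathbb{N}}\mathscr{R}_n,\qquad \mathscr{R}_n\triangleq\Big\{\alpha\in(\alpha_0,\alpha_1)\,:\,\big(\alpha,\omega(\alpha,\varepsilon)\big)\in\mathscr{A}_n^{\gamma}\setminus\mathscr{G}_n(\gamma_{n+1},\tau_1,\tau_2,i_n)\Big\},$$
and each $\mathscr{R}_n$ splits further according to the three families of inequalities defining $\mathscr{G}_n$ in \eqref{calGn}, namely the transport conditions \eqref{Cantrans}, the second Mel'nikov conditions \eqref{Cantrem}, and the inversion conditions \eqref{Lbd set}.

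\textbf{Reduction to the equilibrium.} On $\mathscr{A}_n^{\gamma}$ each perturbed small divisor can be written as a small perturbation of a quantity involving only equilibrium frequencies. Using the rigidification \eqref{alpha infty-1}, together with \eqref{e-r1}, \eqref{e-r1-bis} and \eqref{specDinfty}, one has for instance
$$\omega(\alpha,\varepsilon)\cdot l+\mathrm{d}_j^{\infty}(\alpha,\omega(\alpha,\varepsilon),i_n)-\mathrm{d}_{j_0}^{\infty}(\alpha,\omega(\alpha,\varepsilon),i_n)=-\omega_{\textnormal{Eq}}(\alpha)\cdot l+\Omega_j^{\textnormal{\tiny{E}}}(\alpha)-\Omega_{j_0}^{\textnormal{\tiny{E}}}(\alpha)+\mathtt{g}_{l,j,j_0}(\alpha),$$
with an error term $\mathtt{g}_{l,j,j_0}$ controlled by $C\varepsilon\gamma^{-q}N_0^{q\overline{a}}\langle l\rangle$ in $q$-th derivative norm, and analogously for \eqref{Cantrans} and \eqref{Lbd set}. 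Since $\gamma=\varepsilon^{a}$ with $a\mu_2<1$ by \eqref{gma N0 NM}, this error is $o(1)$ as $\varepsilon\to 0$. Combined with the $q_0$-th order transversality of the unperturbed quantities provided by Lemma \ref{lem trsvrslt Ea}, the perturbed functions still satisfy \eqref{trans cond Rlem} with $\mathtt{m}\gtrsim\langle l\rangle$ for $\varepsilon$ small enough; this is where the choice $q=q_0+1$ in \eqref{def q} is crucial, so that one extra derivative absorbs the perturbation.

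\textbf{Applying Rüssmann and summing.} By the a priori bounds of Lemma \ref{properties omegajalpha}-(iii)-(iv), each bad set in the $(l,j,j_0)$-th remainder condition is empty unless $\langle j-j_0\rangle\lesssim\langle l\rangle$, and similarly $\langle j\rangle\lesssim\langle l\rangle$ for the inversion condition. On the non-trivial indices, Lemma \ref{lem Russmann measure} with $\mathtt{m}\sim\langle l\rangle$ and $\mathtt{M}\sim\gamma\langle j-j_0\rangle\langle l\rangle^{-\tau_2}$ yields a measure bounded by $C(\gamma\langle l\rangle^{-\tau_2})^{1/q_0}\langle l\rangle^{-1/q_0}\langle l\rangle^{1/q_0}$; summing over $l\in\mathbb{Z}^d$ and over admissible $(j,j_0)$ with $|j-j_0|\lesssim\langle l\rangle$ converges provided $\tau_2>dq_0+1$, which is ensured by \eqref{to2}. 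The same procedure applied to \eqref{Cantrans} (with $\gamma^{\upsilon}$ in place of $\gamma$) and to \eqref{Lbd set} produces the dominant contribution $O(\gamma^{\upsilon/q_0})=O(\varepsilon^{a\upsilon/q_0})$. The union over $n$ is handled by observing that the sequence $\gamma_{n+1}$ in \eqref{Angm} and the dependence on $i_n$, controlled by \eqref{e-diff mujinfty} and \eqref{e-Hn-diff}, provide geometric decay that absorbs telescoping differences.

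\textbf{Main obstacle.} The delicate point is the transport condition \eqref{Cantrans}, which involves $\gamma^{\upsilon}$ rather than $\gamma$ and therefore controls the final exponent $a\upsilon/q_0$; this is the reason for the specific choice \eqref{ptrans} of $\upsilon=\tfrac{1}{q_0+3}$. A related subtlety is the torus dependence of $\mathtt{m}_{i_n}^{\infty}$: the transversality must be propagated uniformly in $n$, which requires combining \eqref{e-r1}, \eqref{e-Hn-diff}, the smallness condition \eqref{sml-trans}, and the joint non-degeneracy of $(\omega_{\textnormal{Eq}},V_0,1)$ from Lemma \ref{lem non-deg Ea} via Lemma \ref{lem trsvrslt Ea}-(ii). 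Once these uniform transversality estimates are in place, summing the three measure bounds produces $|(\alpha_0,\alpha_1)\setminus\mathscr{C}_{\infty}^{\varepsilon}|\leqslant C\varepsilon^{a\upsilon/q_0}$ and hence the announced limit as $\varepsilon\to 0$.
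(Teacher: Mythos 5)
Your overall strategy coincides with the paper's: write the complement of $\mathscr{C}_{\infty}^{\varepsilon}$ as a union over $n$ of the three families of resonant sets coming from \eqref{Cantrans}, \eqref{Cantrem} and \eqref{Lbd set}, view each perturbed small divisor as an $\varepsilon$-perturbation of an equilibrium quantity, propagate the transversality of Lemma \ref{lem trsvrslt Ea} to the perturbed frequencies, and conclude with the Rüssmann Lemma \ref{lem Russmann measure}. The identification of the transport condition (with its $\gamma^{\upsilon}$) as the source of the final exponent $a\upsilon/q_0$ is also correct.

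There is, however, a genuine gap in the summation over the pair $(j,j_0)$ for the second Mel'nikov sets $\mathscr{R}_{l,j,j_0}^{(2)}$. You only restrict to $|j-j_0|\lesssim\langle l\rangle$, but for each fixed $l$ there are still infinitely many pairs $(j,j_0)\in(\mathbb{S}_0^c)^2$ with bounded difference, and each contributes a measure of order $\gamma^{1/q_0}\langle j-j_0\rangle^{1/q_0}\langle l\rangle^{-1-(\tau_2+1)/q_0}$, so the double sum as you describe it diverges. The missing ingredient is the inclusion $\mathscr{R}_{l,j,j_0}^{(2)}(i_n)\subset\mathscr{R}_{l,j-j_0}^{(0)}(i_n)$ whenever $\min(|j|,|j_0|)\geqslant c_2\gamma_{n+1}^{-\upsilon}\langle l\rangle^{\tau_1}$ (Lemma \ref{lem empty Cant}-(iv) in the paper, which rests on the decay $|j|\,|r_j^{\infty,n}|\lesssim\varepsilon\gamma^{-1}$ and on $|jI_jK_j\pm j_0I_{j_0}K_{j_0}|\lesssim\langle j\pm j_0\rangle/\min(|j|,|j_0|)$). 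This reduces the count of genuinely new pairs to $O(\gamma^{-\upsilon}\langle l\rangle^{\tau_1+1})$ per $l$, produces the sum $\gamma^{1/q_0-\upsilon}\sum_l\langle l\rangle^{\tau_1-1-\tau_2/q_0}$, and explains why $\tau_2$ must dominate $\tau_1$ as in \eqref{to2} and why the final exponent is $\min(\upsilon/q_0,\,1/q_0-\upsilon)=\upsilon/q_0$. Relatedly, your treatment of the union over $n$ is too vague: the actual mechanism is not a "geometric decay absorbing telescoping differences" but the fact that, for $n\geqslant 2$ and $|l|\leqslant N_{n-1}$, the resonant sets at step $n$ are \emph{empty} (because $i_n$ is $N_{n-1}^{-a_2}$-close to $i_{n-1}$ and $\gamma_{n+1}<\gamma_n$), so the $n$-th increment $\mathscr{C}_n^{\varepsilon}\setminus\mathscr{C}_{n+1}^{\varepsilon}$ only involves $N_{n-1}<|l|\leqslant N_n$ and the tails $\sum_{|l|>N_{n-1}}$ are summable in $n$. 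You cite the right estimates \eqref{e-Hn-diff} and \eqref{e-diff mujinfty}, but the argument needs to be made in this form for the series over $n$ to converge.
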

\begin{proof}
	In view of \eqref{Cinfty} and \eqref{Ginfty}, we can write
	\begin{equation}\label{fin Cant set}
		\mathscr{C}_{\infty}^{\varepsilon}= \bigcap_{n\in\mathbb{N}}\mathscr{C}_{n}^{\varepsilon},\qquad \mathscr{C}_{n}^{\varepsilon}\triangleq \Big\{\alpha\in(\alpha_{0},\alpha_{1})\quad\hbox{s.t}\quad \big(\alpha,{\omega}(\alpha,\varepsilon)\big)\in\mathscr{A}_n^{\gamma}\Big\}.
	\end{equation}
Hence,
\begin{equation}\label{dec-Cinfty}
	(\alpha_{0},\alpha_{1})\setminus\mathscr{C}_{\infty}^{\varepsilon}=\big((\alpha_{0},\alpha_{1})\setminus\mathscr{C}_{0}^{\varepsilon}\big)\sqcup\bigsqcup_{n=0}^{\infty}\big(\mathscr{C}_{n}^{\varepsilon}\setminus\mathscr{C}_{n+1}^{\varepsilon}\big).
\end{equation}
Observe that \eqref{alpha infty-1}, \eqref{p-NM} and \eqref{gma N0 NM} imply
$$\sup_{\alpha\in(\alpha_0,\alpha_1)}\left|\omega(\alpha,\varepsilon)+\omega_{\textnormal{Eq}}(\alpha)\right|\leqslant\|\bar{\mathrm{r}}_{\varepsilon}\|_{q}^{\gamma,\mathcal{O}}\leqslant C\varepsilon\gamma^{-1}N_{0}^{q\overline{a}}=C\varepsilon^{1-a(1+q\overline{a})},\qquad 0<a<\frac{1}{1+q\overline{a}}.$$
Now by construction \eqref{def calO}, the previous estimate implies for $\varepsilon$ small enough
$$\forall \alpha\in(\alpha_0,\alpha_1),\quad \omega(\alpha,\varepsilon)\in B(0,R_{0}).$$
We immediately deduce that
\begin{equation}\label{e-Cant comp}
	\mathscr{C}_{0}^{\varepsilon}=(\alpha_0,\alpha_1),\qquad\Big|(\alpha_{0},\alpha_{1})\setminus\mathscr{C}_{\infty}^{\varepsilon}\Big|\leqslant\sum_{n=0}^{\infty}\Big|\mathscr{C}_{n}^{\varepsilon}\setminus\mathscr{C}_{n+1}^{\varepsilon}\Big|.
\end{equation}
	According to the notations \eqref{specDinfty} and \eqref{mu0-r1}, we denote the perturbed frequencies associated with the reduced linearized operator at state $i_n$ in the following way  
	\begin{align}\label{mujinftyn}
		\mathrm{d}_{j}^{\infty,n}(\alpha,\varepsilon)&\triangleq \mathrm{d}_{j}^{\infty}\big(\alpha,{\omega}(\alpha,\varepsilon),i_{n}\big)=\Omega_{j}^{\textnormal{\tiny{E}}}(\alpha)+jr^{0,n}(\alpha,\varepsilon)+r_{j}^{\infty,n}(\alpha,\varepsilon),
	\end{align}
	with
	\begin{equation}\label{r1-Vn-rn}
		r^{0,n}(\alpha,\varepsilon)\triangleq \mathtt{m}_{n}^{\infty}(\alpha,\varepsilon)-V_0(\alpha),\qquad \mathtt{m}_{n}^{\infty}(\alpha,\varepsilon)\triangleq \mathtt{m}_{i_{n}}^{\infty}(\alpha,{\omega}(\alpha,\varepsilon)),\qquad r_{j}^{\infty,n}(\alpha,\varepsilon)\triangleq r_{j}^{\infty}\big(\alpha,{\omega}(\alpha,\varepsilon),i_{n}\big).
	\end{equation}
	Now, by construction, one can write in view of \eqref{fin Cant set}, \eqref{Angm}, \eqref{calGn}, \eqref{Cantrans}, \eqref{Cantrem} and \eqref{Lbd set} for any $n\in\mathbb{N}$, 
	\begin{equation}\label{split Cn-Cn+1}
		\mathscr{C}_{n}^{\varepsilon}\setminus\mathscr{C}_{n+1}^{\varepsilon}=\bigcup_{(l,j)\in\mathbb{Z}^{d}\times\mathbb{Z}\setminus\{(0,0)\}\atop |l|\leqslant N_{n}}\mathscr{R}_{l,j}^{(0)}(i_{n})\bigcup_{(l,j,j_{0})\in\mathbb{Z}^{d}\times(\mathbb{S}_{0}^{c})^{2}\atop |l|\leqslant N_{n}}\mathscr{R}_{l,j,j_{0}}^{(2)}(i_{n})\bigcup_{(l,j)\in\mathbb{Z}^{d}\times\mathbb{S}_{0}^{c}\atop |l|\leqslant N_{n}}\mathscr{R}_{l,j}^{(1)}(i_{n}),
	\end{equation}
	where
	\begin{align*}
		\mathscr{R}_{l,j}^{(0)}(i_{n})&\triangleq \left\lbrace \alpha\in\mathscr{C}_{n}^{\varepsilon}\quad\textnormal{s.t.}\quad\Big|{\omega}(\alpha,\varepsilon)\cdot l+j\mathtt{m}_{n}^{\infty}(\alpha,\varepsilon)\Big|\leqslant\tfrac{4\gamma_{n+1}^{\upsilon}\langle j\rangle}{\langle l\rangle^{\tau_{1}}}\right\rbrace,\\
		\mathscr{R}_{l,j,j_{0}}^{(2)}(i_{n})&\triangleq \left\lbrace \alpha\in\mathscr{C}_{n}^{\varepsilon}\quad\textnormal{s.t.}\quad\Big|{\omega}(\alpha,\varepsilon)\cdot l+\mathrm{d}_{j}^{\infty,n}(\alpha,\varepsilon)-\mathrm{d}_{j_{0}}^{\infty,n}(\alpha,\varepsilon)\Big|\leqslant\tfrac{2\gamma_{n+1}\langle j-j_{0}\rangle}{\langle l\rangle^{\tau_{2}}}\right\rbrace,\\
		\mathscr{R}_{l,j}^{(1)}(i_{n})&\triangleq \left\lbrace \alpha\in\mathscr{C}_{n}^{\varepsilon}\quad\textnormal{s.t.}\quad\Big|{\omega}(\alpha,\varepsilon)\cdot l+\mathrm{d}_{j}^{\infty,n}(\alpha,\varepsilon)\Big|\leqslant\tfrac{\gamma_{n+1}\langle j\rangle}{\langle l\rangle^{\tau_{1}}}\right\rbrace.
	\end{align*}
	Copying the proof of \cite[Lem. 7.1]{HR21} we can show, using in particular \eqref{e-Hn-diff}-\eqref{e-r1-bis}-\eqref{e-diff mujinfty}, that for any $n\in\mathbb{N}\setminus\{0,1\}$ and any $l\in\mathbb{Z}^{d}$ such that $|l|\leqslant N_{n-1},$ the following properties hold.
	\begin{enumerate}[label=\textbullet]
		\item For $ j\in\mathbb{Z}$ with $(l,j)\neq(0,0)$, we get  $\,\,\mathscr{R}_{l,j}^{(0)}(i_{n})=\varnothing.$
		\item For  $ (j,j_{0})\in(\mathbb{S}_{0}^{c})^{2}$ with $(l,j)\neq(0,j_0),$ we get $\,\,\mathscr{R}_{l,j,j_{0}}^{(2)}(i_{n})=\varnothing.$
		\item For  $j\in\mathbb{S}_{0}^{c}$, we get $\,\,\mathscr{R}_{l,j}^{(1)}(i_{n})=\varnothing.$
	\end{enumerate}
This leads to write for any $n\in\mathbb{N}\setminus\{0,1\},$
		\begin{equation}\label{decomp Cnbis}
			\mathscr{C}_{n}^{\varepsilon}\setminus\mathscr{C}_{n+1}^{\varepsilon}=\bigcup_{\underset{N_{n-1}<|l|\leqslant N_{n}}{(l,j)\in\mathbb{Z}^{d}\times\mathbb{Z}\setminus\{(0,0)\}}}\mathscr{R}_{l,j}^{(0)}(i_{n})\cup\bigcup_{\underset{N_{n-1}<|l|\leqslant N_{n}}{(l,j,j_{0})\in\mathbb{Z}^{d}\times(\mathbb{S}_{0}^{c})^{2}}}\mathscr{R}_{l,j,j_{0}}^{(2)}(i_{n})\cup\bigcup_{\underset{N_{n-1}<|l|\leqslant N_{n}}{(l,j)\in\mathbb{Z}^{d}\times\mathbb{S}_{0}^{c}}}\mathscr{R}_{l,j}^{(1)}(i_{n}).
		\end{equation}
To estimate, we shall make use of Rüssmann's Lemma \ref{lem Russmann measure}. First notice that
	$$W^{q,\infty,\gamma}(\mathcal{O},\mathbb{C})\hookrightarrow C^{q-1}(\mathcal{O},\mathbb{C}),\qquad q=q_0+1,$$
	imply that for any $n\in\mathbb{N}$, the $C^{q_0}$ regularity for the curves
		$$\begin{array}{ll}
			\displaystyle \alpha\mapsto\omega(\alpha,\varepsilon)\cdot l+j\mathtt{m}_{n}^{\infty}(\alpha,\varepsilon),&\quad (l,j)\in\mathbb{Z}^{d}\times\mathbb{Z}\backslash\{(0,0)\},\vspace{0.1cm}\\
			\displaystyle \alpha\mapsto\omega(\alpha,\varepsilon)\cdot l+\mathrm{d}_{j}^{\infty,n}(\alpha,\varepsilon)-\mathrm{d}_{j_{0}}^{\infty,n}(\alpha,\varepsilon),&\quad (l,j,j_0)\in\mathbb{Z}^{d}\times(\mathbb{S}_0^c)^{2},\vspace{0.1cm}\\
			\displaystyle \alpha\mapsto\omega(\alpha,\varepsilon)\cdot l+\mathrm{d}_{j}^{\infty,n}(\alpha,\varepsilon),&\quad (l,j)\in\mathbb{Z}^{d}\times\mathbb{S}_0^c.
		\end{array}$$
	In addition, the following perturbed Rüssmann conditions hold. They are obtained by a perturbative argument from the equilibrium transversality conditions in Lemma \ref{lem trsvrslt Ea} similarly to \cite[Lem. 7.3]{HR21} : for $q_{0}$, $C_{0}$ and $\rho_{0}$ as in Lemma \ref{lem trsvrslt Ea}, there exist $\varepsilon_{0}>0$ small enough such that for any   $\varepsilon\in[0,\varepsilon_{0}]$ we have
	\begin{enumerate}[label=\textbullet]
		\item For all $(l,j)\in\mathbb{Z}^{d+1}\setminus\{(0,0)\}$ such that $|j|\leqslant C_{0}\langle l\rangle,$ we have
		$$\forall n\in\mathbb{N},\quad\inf_{\alpha\in[\alpha_{0},\alpha_{1}]}\max_{k\in\llbracket 0,q_{0}\rrbracket}|\partial_{\alpha}^{k}\big({\omega}(\alpha,\varepsilon)\cdot l+j\mathtt{m}_{n}^{\infty}(\alpha,\varepsilon)\big)|\geqslant\tfrac{\rho_{0}\langle l\rangle}{2}.$$
		\item For all $(l,j)\in\mathbb{Z}^{d}\times\mathbb{S}_{0}^{c}$ such that $|j|\leqslant C_{0}\langle l\rangle,$ we have
		$$\forall n\in\mathbb{N},\quad\inf_{\alpha\in[\alpha_{0},\alpha_{1}]}\max_{k\in\llbracket 0,q_{0}\rrbracket}\big|\partial_{\alpha}^{k}\big({\omega}(\alpha,\varepsilon)\cdot l+\mathrm{d}_{j}^{\infty,n}(\alpha,\varepsilon)\big)\big|\geqslant\tfrac{\rho_{0}\langle l\rangle}{2}.$$
		\item For all $(l,j,j_{0})\in\mathbb{Z}^{d}\times(\mathbb{S}_{0}^{c})^{2}$ such that $|j-j_{0}|\leqslant C_{0}\langle l\rangle,$ we have
		$$\forall n\in\mathbb{N},\quad\inf_{\alpha\in[\alpha_{0},\alpha_{1}]}\max_{k\in\llbracket 0,q_{0}\rrbracket}\big|\partial_{\alpha}^{k}\big({\omega}(\alpha,\varepsilon)\cdot l+\mathrm{d}_{j}^{\infty,n}(\alpha,\varepsilon)-\mathrm{d}_{j_{0}}^{\infty,n}(\alpha,\varepsilon)\big)\big|\geqslant\tfrac{\rho_{0}\langle l\rangle}{2}.$$
\end{enumerate}
\noindent Notice that the proof of the previous transversality conditions requires in particular the estimates \eqref{unif e-r1} and \eqref{unif e-rjfty}. Thus, applying Lemma \ref{lem Russmann measure}, we get for all $n\in\mathbb{N}$, 
	\begin{align}\label{e-scrR0112}
		\Big|\mathscr{R}_{l,j}^{(0)}(i_{n})\Big|&\lesssim\gamma^{\frac{\upsilon}{q_{0}}}\langle j\rangle^{\frac{1}{q_{0}}}\langle l\rangle^{-1-\frac{\tau_{1}+1}{q_{0}}},\nonumber\\
		\Big|\mathscr{R}_{l,j}^{(1)}(i_{n})\Big|&\lesssim\gamma^{\frac{1}{q_{0}}}\langle j\rangle^{\frac{1}{q_{0}}}\langle l\rangle^{-1-\frac{\tau_{1}+1}{q_{0}}},\\
		\Big|\mathscr{R}_{l,j,j_{0}}^{(2)}(i_{n})\Big|&\lesssim\gamma^{\frac{1}{q_{0}}}\langle j-j_{0}\rangle^{\frac{1}{q_{0}}}\langle l\rangle^{-1-\frac{\tau_{2}+1}{q_{0}}}.\nonumber
	\end{align}
Since \eqref{decomp Cnbis} is valid for $n\in\mathbb{N}\setminus\{0,1\}$, we first need to estimate the first two terms in the right hand-side of \eqref{e-Cant comp}. Fix $k\in\{0,1\},$ then we have by Lemma \ref{lem empty Cant}

	\begin{align}\label{e-Ck-Ck+1}
		\Big|\mathscr{C}_{k}\setminus\mathscr{C}_{k+1}\Big|\lesssim  \sum_{\underset{|j|\leqslant C_{0}\langle l\rangle, |l|\leqslant N_{k}}{(l,j)\in\mathbb{Z}^{d}\times\mathbb{Z}\setminus\{(0,0)\}}}\Big|\mathscr{R}_{l,j}^{(0)}(i_{k})\Big|+\sum_{\underset{\underset{\min(|j|,|j_{0}|)\leqslant c_{2}\gamma_{k+1}^{-\upsilon}\langle l\rangle^{\tau_{1}}}{|j-j_{0}|\leqslant C_{0}\langle l\rangle, |l|\leqslant N_{k}}}{(l,j,j_{0})\in\mathbb{Z}^{d}\times(\mathbb{S}_{0}^{c})^{2}}}\Big|\mathscr{R}_{l,j,j_{0}}^{(2)}(i_{k})\Big|+\sum_{\underset{|j|\leqslant C_{0}\langle l\rangle, |l|\leqslant N_{k}}{(l,j)\in\mathbb{Z}^{d}\times\mathbb{S}_{0}^{c}}}\Big|\mathscr{R}_{l,j}^{(1)}(i_{k})\Big|.
	\end{align}
	Observe that for $|j-j_{0}|\leqslant C_{0}\langle l\rangle$ and $\min(|j|,|j_{0}|)\leqslant c_2\gamma_{k+1}^{-\upsilon}\langle l\rangle^{\tau_{1}}$, then 
	\begin{equation}\label{bound maxjj0}
		\max(|j|,|j_{0}|)=\min(|j|,|j_{0}|)+|j-j_{0}|\leqslant c_2\gamma_{k+1}^{-\upsilon}\langle l\rangle^{\tau_{1}}+C_{0}\langle l\rangle\lesssim\gamma^{-\upsilon}\langle l\rangle^{\tau_{1}}.
	\end{equation}
	Combining \eqref{e-scrR0112}, \eqref{e-Ck-Ck+1} and \eqref{bound maxjj0}, we get for $k\in\{0,1\}$,
	\begin{align*}
		\Big|\mathscr{C}_{k}\setminus\mathscr{C}_{k+1}\Big|&\lesssim  \gamma^{\frac{1}{q_{0}}}\sum_{(l,j)\in\mathbb{Z}^{d}\times(\mathbb{S}_0)^c\atop |j|\leqslant C_{0}\langle l\rangle}\langle j\rangle^{\frac{1}{q_{0}}}\langle l\rangle^{-1-\frac{\tau_{1}+1}{q_{0}}}+\gamma^{\frac{1}{q_{0}}}\sum_{\underset{\underset{\min(|j|,|j_{0}|)\leqslant c_{2}\gamma_{k+1}^{-\upsilon}\langle l\rangle^{\tau_{1}}}{|j-j_{0}|\leqslant C_{0}\langle l\rangle, |l|\leqslant N_{k}}}{(l,j,j_{0})\in\mathbb{Z}^{d}\times(\mathbb{S}_{0}^{c})^{2}}}\langle j-j_{0}\rangle^{\frac{1}{q_{0}}}\langle l\rangle^{-1-\frac{\tau_{2}+1}{q_{0}}}\\
		&\quad+\gamma^{\frac{\upsilon}{q_0}}\sum_{(l,j)\in\mathbb{Z}^{d}\times\mathbb{Z}\setminus\{(0,0)\}\atop|j|\leqslant C_{0}\langle l\rangle}\langle j\rangle^{\frac{1}{q_{0}}}\langle l\rangle^{-1-\frac{\tau_{1}+1}{q_{0}}}.
	\end{align*}
	Thus, by using \eqref{ptrans} and \eqref{to2}, we deduce
	\begin{align}\label{e-Ck-Ck+1 k01}
		\max_{k\in\{0,1\}}\Big|\mathscr{C}_{k}\setminus\mathscr{C}_{k+1}\Big|&\lesssim  \gamma^{\frac{1}{q_{0}}}\Bigg(\sum_{l\in\mathbb{Z}^d}\langle l\rangle^{-\frac{\tau_{1}}{q_{0}}}+\gamma^{-\upsilon}\sum_{l\in\mathbb{Z}^d}\langle l\rangle^{\tau_1-1-\frac{\tau_{2}}{q_{0}}}\Bigg)
		+\gamma^{\frac{\upsilon}{q_0}}\sum_{l\in\mathbb{Z}^d}\langle l\rangle^{-\frac{\tau_{1}}{q_{0}}}\\
		\nonumber &\lesssim  \gamma^{\min\left(\frac{\upsilon}{q_{0}},\frac{1}{q_{0}}-\upsilon\right)}=\gamma^{\frac{\upsilon}{q_0}}.
	\end{align}
	Now fix some $n\geqslant 2.$ Using \eqref{decomp Cnbis} and Lemma \ref{lem empty Cant}, we get
	\begin{align*}
		\Big|\mathscr{C}_{n}\setminus\mathscr{C}_{n+1}\Big|\leqslant \sum_{\underset{|j|\leqslant C_{0}\langle l\rangle,N_{n-1}<|l|\leqslant N_{n}}{(l,j)\in\mathbb{Z}^{d}\times\mathbb{Z}\setminus\{(0,0)\}}}\Big|\mathscr{R}_{l,j}^{(0)}(i_{n})\Big|+\sum_{\underset{\underset{\min(|j|,|j_{0}|)\leqslant c_{2}\gamma_{n+1}^{-\upsilon}\langle l\rangle^{\tau_{1}}}{|j-j_{0}|\leqslant C_{0}\langle l\rangle,N_{n-1}<|l|\leqslant N_{n}}}{(l,j,j_{0})\in\mathbb{Z}^{d}\times(\mathbb{S}_{0}^{c})^{2}}}\Big|\mathscr{R}_{l,j,j_{0}}^{(2)}(i_{n})\Big|+\sum_{\underset{|j|\leqslant C_{0}\langle l\rangle,N_{n-1}<|l|\leqslant N_{n}}{(l,j)\in\mathbb{Z}^{d}\times\mathbb{S}_{0}^{c}}}\Big|\mathscr{R}_{l,j}^{(1)}(i_{n})\Big|.
	\end{align*}
	Therefore, we obtain from \eqref{e-scrR0112} and \eqref{bound maxjj0},
	\begin{align*}
		\Big|\mathscr{C}_{n}\setminus\mathscr{C}_{n+1}\Big|\lesssim 
		\gamma^{\frac{1}{q_{0}}}\Bigg(\sum_{l\in\mathbb{Z}^d \atop|l|>N_{n-1}}\langle l\rangle^{-\frac{\tau_{1}}{q_{0}}}+\gamma^{-\upsilon}\sum_{l\in\mathbb{Z}^d \atop|l|>N_{n-1}}\langle l\rangle^{\tau_1-1-\frac{\tau_{2}}{q_{0}}}\Bigg)
		+\gamma^{\frac{\upsilon}{q_0}}\sum_{l\in\mathbb{Z}^d \atop|l|>N_{n-1}}\langle l\rangle^{-\frac{\tau_{1}}{q_{0}}}.
	\end{align*}
	Hence, we infer
	\begin{align}\label{sumCk-Ck+1}
		\sum_{n=2}^\infty \Big|\mathscr{C}_{n}\setminus\mathscr{C}_{n+1}\Big|&\lesssim  \gamma^{\frac{\upsilon}{q_{0}}}.
	\end{align}
	Plugging \eqref{sumCk-Ck+1} and \eqref{e-Ck-Ck+1 k01} into \eqref{e-Cant comp} and using \eqref{gma N0 NM} yields
	$$\Big|(\alpha_{0},\alpha_{1})\setminus\mathscr{C}_{\infty}^{\varepsilon}\Big|\lesssim  \gamma^{\frac{\upsilon}{q_{0}}}=\varepsilon^{\frac{a\upsilon}{q_0}}.$$
	This achieves the proof of Proposition \ref{prop meas Cant}. 
\end{proof}
To conclude, it remains to prove the following lemma providing necessary constraints between time and space Fourier modes so that the sets in \eqref{split Cn-Cn+1} are not void.
\begin{lem}\label{lem empty Cant}
	There exists $\varepsilon_0$ such that for any $\varepsilon\in[0,\varepsilon_0]$ and $n\in\mathbb{N}$ the following assertions hold true. 
	\begin{enumerate}[label=(\roman*)]
		\item Let $(l,j)\in\mathbb{Z}^{d}\times\mathbb{Z}\setminus\{(0,0)\}.$ If $\,\displaystyle\mathscr{R}_{l,j}^{(0)}(i_{n})\neq\varnothing,$ then $|j|\leqslant C_{0}\langle l\rangle.$
		\item Let $(l,j,j_{0})\in\mathbb{Z}^{d}\times(\mathbb{S}_{0}^{c})^{2}.$ If $\,\displaystyle\mathscr{R}_{l,j,j_{0}}^{(2)}(i_{n})\neq\varnothing,$ then $|j-j_{0}|\leqslant C_{0}\langle l\rangle.$
		\item Let $(l,j)\in\mathbb{Z}^{d}\times\mathbb{S}_{0}^{c}.$ If $\,\displaystyle \mathscr{R}_{l,j}^{(1)}(i_{n})\neq\varnothing,$ then $|j|\leqslant C_{0}\langle l\rangle.$
		\item Let $(l,j,j_{0})\in\mathbb{Z}^{d}\times(\mathbb{S}_{0}^{c})^{2}.$ There exists $c_{2}>0$ such that if $\displaystyle \min(|j|,|j_{0}|)\geqslant c_{2}\gamma_{n+1}^{-\upsilon}\langle l\rangle^{\tau_{1}},$ then 
		$$\mathscr{R}_{l,j,j_{0}}^{(2)}(i_{n})\subset\mathscr{R}_{l,j-j_{0}}^{(0)}(i_{n}).$$
	\end{enumerate}
\end{lem}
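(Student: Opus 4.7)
The strategy is to convert each non-emptiness hypothesis into a triangle-inequality estimate that pits the small divisor against the (near-linear in $j$) size of the leading part of $\mathtt{m}_{n}^{\infty}$ or of the spectrum $\big(\mathrm{d}_{j}^{\infty,n}\big).$ Throughout I rely on the following four ingredients, each available from the already-proved material: (a) $\omega(\alpha,\varepsilon)$ is bounded on $[\alpha_0,\alpha_1]$ by \eqref{alpha infty-1}; (b) $\mathtt{m}_{n}^{\infty}(\alpha,\varepsilon)=V_0(\alpha)+r^{0,n}(\alpha,\varepsilon)$ with $V_0(\alpha)\geqslant\Omega>0$ and $\|r^{0,n}\|^{\gamma,\mathcal{O}}_{q}\lesssim\varepsilon$ by \eqref{e-r1-bis}; (c) the bounds of Lemma \ref{properties omegajalpha}-(iii)-(iv)-(v) for $\Omega_j^{\textnormal{\tiny{E}}}$; and (d) $|r_{j}^{\infty,n}|\lesssim\varepsilon\gamma^{-1}/|j|$, obtained from \eqref{specDinfty} and \eqref{r1-Vn-rn}.

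For \textbf{(i)}, if $\alpha\in\mathscr{R}_{l,j}^{(0)}(i_n)$ the triangle inequality yields
$$|j|\,|\mathtt{m}_{n}^{\infty}(\alpha,\varepsilon)|\leqslant|\omega(\alpha,\varepsilon)\cdot l|+\tfrac{4\gamma_{n+1}^{\upsilon}\langle j\rangle}{\langle l\rangle^{\tau_{1}}}\leqslant C\langle l\rangle+8\gamma_{n+1}^{\upsilon}\langle j\rangle,$$
and since $|\mathtt{m}_{n}^{\infty}|\geqslant\Omega/2$ and $8\gamma_{n+1}^{\upsilon}\leqslant\Omega/4$ for $\varepsilon$ small, we conclude $|j|\leqslant C_{0}\langle l\rangle.$ The argument for \textbf{(iii)} is identical, this time using the lower bound $|\mathrm{d}_{j}^{\infty,n}|\geqslant|\Omega_{j}^{\textnormal{\tiny{E}}}(\alpha)|-C\varepsilon|j|-C\varepsilon\gamma^{-1}\geqslant\tfrac{\Omega}{2}|j|-C\varepsilon\gamma^{-1}$ coming from (c) combined with (d). For \textbf{(ii)}, writing
$$\mathrm{d}_{j}^{\infty,n}-\mathrm{d}_{j_{0}}^{\infty,n}=\Omega_{j}^{\textnormal{\tiny{E}}}(\alpha)-\Omega_{j_{0}}^{\textnormal{\tiny{E}}}(\alpha)+(j-j_{0})r^{0,n}+r_{j}^{\infty,n}-r_{j_{0}}^{\infty,n},$$
Lemma \ref{properties omegajalpha}-(iv) supplies $|\Omega_{j}^{\textnormal{\tiny{E}}}-\Omega_{j_{0}}^{\textnormal{\tiny{E}}}|\geqslant\Omega|j-j_{0}|,$ while the remaining pieces are absorbed into an $\Omega/2$ loss. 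Combined with the upper bound $|\omega\cdot l|\leqslant C\langle l\rangle$ and the small-divisor estimate, this gives $|j-j_{0}|\leqslant C_{0}\langle l\rangle.$

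For \textbf{(iv)}, I use the identity
$$\mathrm{d}_{j}^{\infty,n}-\mathrm{d}_{j_{0}}^{\infty,n}-(j-j_{0})\mathtt{m}_{n}^{\infty}=\Big[\Omega_{j}^{\textnormal{\tiny{E}}}-jV_{0}\Big]-\Big[\Omega_{j_{0}}^{\textnormal{\tiny{E}}}-j_{0}V_{0}\Big]+r_{j}^{\infty,n}-r_{j_{0}}^{\infty,n}.$$
By the explicit formula $\Omega_{j}^{\textnormal{\tiny{E}}}/j-V_{0}=-\tfrac{1}{2|j|}+I_{|j|}K_{|j|}(1/\alpha),$ deduced from \eqref{Omegajalpha}-\eqref{LaKa}, together with the large-order asymptotics of $I_{\nu}K_{\nu}$ recalled in Appendix \ref{appendix Bessel} (which give $|j|I_{|j|}K_{|j|}(1/\alpha)\to\tfrac{1}{2}$ with a quantitative rate uniformly in $\alpha\in[\alpha_{0},\alpha_{1}]$), we obtain $|\Omega_{j}^{\textnormal{\tiny{E}}}-jV_{0}|\lesssim 1/|j|.$ Hence the right-hand side of the identity is bounded by $C(1+\varepsilon\gamma^{-1})/\min(|j|,|j_{0}|),$ and the hypothesis $\min(|j|,|j_{0}|)\geqslant c_{2}\gamma_{n+1}^{-\upsilon}\langle l\rangle^{\tau_{1}}$ with $c_{2}$ chosen large enough makes this $\leqslant\gamma_{n+1}^{\upsilon}/\langle l\rangle^{\tau_{1}}.$ Since $\upsilon\in(0,1)$ and $\tau_{2}>\tau_{1},$ we also have $\gamma_{n+1}/\langle l\rangle^{\tau_{2}}\leqslant\gamma_{n+1}^{\upsilon}/\langle l\rangle^{\tau_{1}}.$ Adding both estimates, any $\alpha\in\mathscr{R}_{l,j,j_{0}}^{(2)}(i_{n})$ satisfies
$$\big|\omega(\alpha,\varepsilon)\cdot l+(j-j_{0})\mathtt{m}_{n}^{\infty}(\alpha,\varepsilon)\big|\leqslant\tfrac{4\gamma_{n+1}^{\upsilon}\langle j-j_{0}\rangle}{\langle l\rangle^{\tau_{1}}},$$
that is, $\alpha\in\mathscr{R}_{l,j-j_{0}}^{(0)}(i_{n}).$ The main obstacle is precisely the quantitative comparison $\Omega_{j}^{\textnormal{\tiny{E}}}(\alpha)=jV_{0}(\alpha)+O(1/|j|)$ uniformly on the compact interval $[\alpha_{0},\alpha_{1}]$: anything weaker than a decay in $|j|$ (e.g.\ the bare uniform bound from \eqref{upper bound InKn}) would not yield the required control and this is where the Bessel asymptotics of the appendix are essential.
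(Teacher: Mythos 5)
Your proof is correct and follows essentially the same route as the paper's: for (i)--(iii) the triangle inequality is played against the lower bounds $|\mathtt{m}_{n}^{\infty}|\gtrsim 1$, $|\mathrm{d}_{j}^{\infty,n}|\gtrsim|j|$ and $|\mathrm{d}_{j}^{\infty,n}-\mathrm{d}_{j_{0}}^{\infty,n}|\gtrsim|j-j_{0}|$ coming from Lemma \ref{properties omegajalpha} and the smallness of $r^{0,n}$, $r_{j}^{\infty,n}$, and for (iv) one compares $\mathrm{d}_{j}^{\infty,n}-\mathrm{d}_{j_{0}}^{\infty,n}$ with $(j-j_{0})\mathtt{m}_{n}^{\infty}$ and absorbs the discrepancy using the hypothesis on $\min(|j|,|j_{0}|)$ together with $\tau_2>\tau_1$. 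The only (cosmetic) divergence is in (iv): where the paper invokes the two-index bound $|jI_{j}K_{j}\pm j_{0}I_{j_{0}}K_{j_{0}}|\leqslant\langle j\pm j_{0}\rangle/\min(|j|,|j_{0}|)$ from \cite[Lem. 7.2-(iv)]{HR21}, you use the equivalent single-index rate $|j|\,I_{|j|}K_{|j|}(1/\alpha)=\tfrac12+O(1/|j|)$ uniformly on $[\alpha_0,\alpha_1]$ — a true statement, but note it is a quantitative refinement of \eqref{asymp high order} (the appendix records only the leading equivalence), so it deserves a citation or a short proof, exactly as you flag.
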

\begin{proof}
	\textbf{(i)} Assume that $\mathscr{R}_{l,j}^{(0)}(i_{n})\neq\varnothing.$ Then, we can find $\alpha\in(\alpha_{0},\alpha_{1})$ such that
	$$|\omega(\alpha,\varepsilon)\cdot l+j \mathtt{m}_{n}^{\infty}(\alpha,\varepsilon)|\leqslant\tfrac{4\gamma_{n+1}^{\upsilon}\langle j\rangle}{\langle l\rangle^{\tau_{1}}}.$$ 
	By using the triangle and Cauchy-Schwarz inequalities together with \eqref{def gamman}, \eqref{gma N0 NM} and the fact that $(\alpha,\varepsilon)\mapsto \omega(\alpha, \varepsilon)$ is bounded, we infer 
	\begin{align*}
		|\mathtt{m}_{n}^{\infty}(\alpha,\varepsilon)||j|&\leqslant 4|j|\gamma_{n+1}^{\upsilon}\langle l\rangle^{-\tau_{1}}+|{\omega}(\alpha,\varepsilon)\cdot l|\\
		&\leqslant 4|j|\gamma_{n+1}^{\upsilon}+C\langle l\rangle\\
		&\leqslant 8\varepsilon^{a \upsilon}|j|+C\langle l\rangle.
	\end{align*}
	Now, by \eqref{r1-Vn-rn}, we can write
	$$\mathtt{m}_{n}^{\infty}(\alpha,\varepsilon)=V_0(\alpha)+r^{0,n}(\alpha,\varepsilon).$$
	Hence, applying \eqref{e-r1}, \eqref{gma N0 NM} and Proposition \ref{Nash-Moser}-(i)-(a) we find
	\begin{align}\label{unif e-r1}
		\forall k\in\llbracket 0,q\rrbracket,\quad\sup_{n\in\mathbb{N}}\sup_{\alpha\in(\alpha_{0},\alpha_{1})}|\partial_{\alpha}^{k}r^{0,n}(\alpha,\varepsilon)|&\leqslant\gamma^{-k}\sup_{n\in\mathbb{N}}\| r^{0,n}\|_{q}^{\gamma,\mathcal{O}}\nonumber\\
		&\lesssim\varepsilon\gamma^{-k}\nonumber\\
		&\lesssim\varepsilon^{1-ak}.
	\end{align}
	Thus, for $\varepsilon$ small enough, we obtain by \eqref{LaKa} and the decay property of $I_1K_1$ on $(0,\infty),$
	$$\inf_{n\in\mathbb{N}}\inf_{\alpha\in(\alpha_{0},\alpha_{1})}|\mathtt{m}_{n}^{\infty}(\alpha,\varepsilon)|\geqslant\tfrac{1}{2}V_0\left(\tfrac{1}{\alpha_0}\right).$$
	Hence, a suitable choice of $\varepsilon$ small enough provides the constraint $|j|\leqslant C_{0}\langle l\rangle$ for some $C_{0}>0.$\\
	
	\noindent\textbf{(ii)} Observe that $\mathscr{R}_{l,j_0,j_{0}}^{(2)}(i_{n})=\mathscr{R}^{(0)}_{l,0}(i_{n})$, so the case $j=j_0$ can be included in the previous point. Now we assume $j\neq j_0$ and $\mathscr{R}_{l,j,j_{0}}^{(2)}(i_{n})\neq\varnothing.$ There exists $\alpha\in(\alpha_{0},\alpha_{1})$ such that 
	$$|\omega(\alpha,\varepsilon)\cdot l+\mathrm{d}_{j}^{\infty,n}(\alpha,\varepsilon)-\mathrm{d}_{j_{0}}^{\infty,n}(\alpha,\varepsilon)|\leqslant\tfrac{2\gamma_{n+1}|j-j_0|}{\langle l\rangle^{\tau_2}}.$$
	Applying the triangle and Cauchy-Schwarz inequalities, \eqref{def gamman} and \eqref{gma N0 NM}, we infer
	\begin{align*}
		|\mathrm{d}_{j}^{\infty,n}(\alpha,\varepsilon)-\mathrm{d}_{j_{0}}^{\infty,n}(\alpha,\varepsilon)|&\leqslant 2\gamma_{n+1}|j-j_{0}|\langle l\rangle^{-\tau_{2}}+|{\omega}(\alpha,\varepsilon)\cdot l|\\
		&\leqslant 2\gamma_{n+1}|j-j_{0}|+C\langle l\rangle\\
		&\leqslant 4\varepsilon^a|j-j_{0}|+C\langle l\rangle.
	\end{align*}
	Now, similarly to \eqref{unif e-r1}, we may obtain from \eqref{specDinfty}-\eqref{gma N0 NM},
	\begin{align}\label{unif e-rjfty}
		\forall k\in\llbracket 0,q\rrbracket,\quad\sup_{n\in\mathbb{N}}\sup_{j\in\mathbb{S}_{0}^{c}}\sup_{\alpha\in(\alpha_{0},\alpha_{1})}|j||\partial_{\alpha}^{k}r_{j}^{\infty,n}(\alpha,\varepsilon)|&\leqslant\gamma^{-k}\sup_{n\in\mathbb{N}}\sup_{j\in\mathbb{S}_{0}^{c}}|j|\| r_{j}^{\infty,n}\|_{q}^{\gamma,\mathcal{O}}\nonumber\\
		&\lesssim\varepsilon\gamma^{-1-k}\nonumber\\
		&\lesssim\varepsilon^{1-a(1+k)}.
	\end{align} 
	One obtains from the triangle inequality, Lemma \ref{properties omegajalpha}-(iv), \eqref{unif e-r1} and \eqref{unif e-rjfty}, up to taking $\varepsilon$ sufficiently small
	\begin{align*}
		|\mathrm{d}_{j}^{\infty,n}(\alpha,\varepsilon)-\mathrm{d}_{j_{0}}^{\infty,n}(\alpha,\varepsilon)| & \geqslant  |\Omega_{j}^{\textnormal{\tiny{E}}}(\alpha)-\Omega_{j_{0}}^{\textnormal{\tiny{E}}}(\alpha)|-|r^{0,n}(\alpha,\varepsilon)||j-j_{0}|-|r_{j}^{\infty,n}(\alpha,\varepsilon)|-|r_{j_{0}}^{\infty,n}(\alpha,\varepsilon)|\\
		& \geqslant  \big(\Omega-C\varepsilon^{1-a}\big)|j-j_{0}|\\
		& \geqslant \tfrac{\Omega}{2}|j-j_{0}|.
	\end{align*}
	The foregoing inequalities together give for $\varepsilon $ small enough, the constraint $|j-j_{0}|\leqslant C_{0}\langle l\rangle,$ for some $C_{0}>0.$\\
	
	\noindent \textbf{(iii)} As previously, we can forget the case $j=0.$ Assume that $j\neq 0$ and $\mathscr{R}_{l,j}^{(1)}(i_{n})\neq\varnothing.$ There exists $\alpha\in(\alpha_{0},\alpha_{1})$ such that 
	$$|\omega(\alpha,\varepsilon)\cdot l+\mathrm{d}_{j}^{\infty,n}(\alpha,\varepsilon)|\leqslant\tfrac{\gamma_{n+1}|j|}{\langle l\rangle^{\tau_1}}.$$
	By the same techniques as in the other cases, we find
	\begin{align*}
		|\mathrm{d}_{j}^{\infty,n}(\alpha,\varepsilon)|&\leqslant \gamma_{n+1}|j|\langle l\rangle^{-\tau_{1}}+|{\omega}(\alpha,\varepsilon)\cdot l|\\
		&\leqslant  2\varepsilon^a|j|+C\langle l\rangle.
	\end{align*}
	Now \eqref{mujinftyn}, the triangle inequality, Lemma \ref{properties omegajalpha}-(iii), \eqref{unif e-r1} and \eqref{unif e-rjfty} imply
	\begin{align*}
		|\mathrm{d}_{j}^{\infty,n}(\alpha,\varepsilon)|&\geqslant \Omega|j|-|j||r^{0,n}(\alpha,\varepsilon)|-|r_{j}^{\infty,n}(\alpha,\varepsilon)|\\
		&
		\geqslant \Omega|j|-C\varepsilon^{1-a}|j|.
	\end{align*}
	Gathering the foregoing inequalities gives
	\begin{align*}
		\big( \Omega-C\varepsilon^{1-a}-2\varepsilon^a\big)|j|
		&\leqslant  C\langle l\rangle.
	\end{align*}
	Thus, for $\varepsilon$ small enough we deduce the constraint $|j|\leqslant C_0\langle  l\rangle,$ for some $C_{0}>0.$\\
	
	\noindent \textbf{(iv)} We can forget the case $j=j_0.$ Assume $j\neq j_0.$ The symmetry property $\mathrm{d}_{-j}^{\infty,n}=-\mathrm{d}_{j}^{\infty,n}$ implies that without loss of generality we can assume that $0\leqslant j<j_0.$ Take $\alpha\in\mathscr{R}_{l,j,j_{0}}^{(2)}(i_{n}),$ i.e.
	$$\big|{\omega}(\alpha,\varepsilon)\cdot l+\mathrm{d}_{j}^{\infty,n}(\alpha,\varepsilon)\pm\mathrm{d}_{j_{0}}^{\infty,n}(\alpha,\varepsilon)\big|\leqslant\tfrac{2\gamma_{n+1}\langle j\pm j_{0}\rangle}{\langle l\rangle^{\tau_{2}}}.
	$$
	Putting together \eqref{mujinftyn}, \eqref{Omegajalpha} and the triangle inequality, we find
	\begin{align*}
		\big|{\omega}(\alpha,\varepsilon)\cdot l+(j\pm j_{0})\mathtt{m}_{n}^{\infty}(\alpha,\varepsilon)\big|  &\leqslant  \big|{\omega}(\alpha,\varepsilon)\cdot l+\mathrm{d}_{j}^{\infty,n}(\alpha,\varepsilon)\pm\mathrm{d}_{j_{0}}^{\infty,n}(\alpha,\varepsilon)\big|+\tfrac{1}{2}(1\pm 1)\\
		&\quad+|jI_{j}\left(\tfrac{1}{\alpha}\right)K_{j}\left(\tfrac{1}{\alpha}\right)\pm j_{0}I_{j_{0}}\left(\tfrac{1}{\alpha}\right)K_{j_{0}}\left(\tfrac{1}{\alpha}\right)|+\big|r_{j}^{\infty,n}(\alpha,\varepsilon)\pm r_{j_{0}}^{\infty,n}(\alpha,\varepsilon)\big|.
	\end{align*}
	Hence, we deduce
	\begin{align}\label{e-inc R2 in R1}
		\big|{\omega}(\alpha,\varepsilon)\cdot l+(j\pm j_{0})\mathtt{m}_{n}^{\infty}(\alpha,\varepsilon)\big|\leqslant & \tfrac{2\gamma_{n+1}\langle j\pm j_{0}\rangle}{\langle l\rangle^{\tau_{2}}}+|jI_{j}\left(\tfrac{1}{\alpha}\right)K_{j}\left(\tfrac{1}{\alpha}\right)\pm j_{0}I_{j_{0}}\left(\tfrac{1}{\alpha}\right)K_{j_{0}}\left(\tfrac{1}{\alpha}\right)|\\
		&\quad+\tfrac{1}{2}(1\pm 1)+\big|r_{j}^{\infty,n}(\alpha,\varepsilon)-r_{j_{0}}^{\infty,n}(\alpha,\varepsilon)\big|.\nonumber
	\end{align}
It has been proved in \cite[Lem. 7.2-(iv)]{HR21} that
\begin{align*}
	\forall x>0,\quad|jI_{j}(x)K_{j}(x)\pm j_{0}I_{j_{0}}(x)K_{j_{0}}(x)|\leqslant \tfrac{\langle j\pm j_{0}\rangle}{\min(|j|,|j_{0}|)}\cdot
\end{align*}
	Additionally, \eqref{specDinfty} gives
	\begin{align*}
		\big|r_{j}^{\infty,n}(\alpha,\varepsilon)\pm r_{j_{0}}^{\infty,n}(\alpha,\varepsilon)\big|\leqslant &C \varepsilon^{1-a}\big(|j|^{-1}+|j_0|^{-1}\big)\\
		\leqslant & C \varepsilon^{1-a} \tfrac{\langle j\pm j_{0}\rangle}{\min(|j|,|j_{0}|)}\cdot
	\end{align*}
Also one has the trivial bound
$$\tfrac{1}{2}(1\pm 1)\leqslant\tfrac{\langle j\pm j_0\rangle}{\min(|j|,|j_0|)}.$$
Inserting the foregoing estimates into \eqref{e-inc R2 in R1} yields 
	\begin{align*}
		\nonumber \big|{\omega}(\alpha,\varepsilon)\cdot l+(j\pm j_{0})\mathtt{m}_{n}^{\infty}(\alpha,\varepsilon)\big|  \leqslant & \tfrac{2\gamma_{n+1}\langle j\pm j_{0}\rangle}{\langle l\rangle^{\tau_{2}}}+ C \tfrac{\langle j\pm j_{0}\rangle}{\min(|j|,|j_{0}|)}\cdot
	\end{align*}
	Thus, for $\displaystyle \min(|j|,|j_{0}|)\geqslant \tfrac{1}{2} C\gamma_{n+1}^{-\upsilon}\langle l\rangle^{\tau_{1}},$ then using \eqref{to2}, we get
	$$\big|{\omega}(\alpha,\varepsilon)\cdot l+(j\pm j_{0})\mathtt{m}_{n}^{\infty}(\alpha,\varepsilon)\big| \leqslant  \tfrac{4\gamma_{n+1}^{\upsilon}\langle j\pm j_{0}\rangle}{\langle l\rangle^{\tau_{1}}}\cdot$$
	This proves Lemma \ref{lem empty Cant}, taking $c_{2}\triangleq\frac{C}{2}.$
\end{proof}
\appendix
\section{Properties of modified Bessel functions}\label{appendix Bessel}
We collect here the definitions and useful properties of modified Bessel functions. The literature is huge as regards these special functions and we may refer the reader to \cite{AS64,W95} for a nice introduction. The modified Bessel functions of first and second kind are defined as follows
$$I_{\nu}(z)\triangleq\sum_{m=0}^{+\infty}\frac{\left(\frac{z}{2}\right)^{\nu+2m}}{m!\Gamma(\nu+m+1)},\mbox{ }\quad|\mbox{arg}(z)|<\pi$$
and $$\forall \nu\in\mathbb{C}\backslash\mathbb{Z},\quad K_{\nu}(z)\triangleq\frac{\pi}{2}\frac{I_{-\nu}(z)-I_{\nu}(z)}{\sin(\nu\pi)},\qquad\forall n\in\mathbb{Z},\quad K_{n}(z)\triangleq\displaystyle\lim_{\nu\rightarrow n}K_{\nu}(z), \qquad|\mbox{arg}(z)|<\pi.$$ 
\textbf{Symmetry and positivity properties} (see \cite[p. 375]{AS64}) \textbf{:}
\begin{equation}\label{sym Bessel}
	\forall n\in\mathbb{N},\quad\forall x>0,\quad I_{-n}(x)=I_{n}(x)\in\mathbb{R}_{+}^{*}\quad\mbox{ and }\quad K_{-n}(x)=K_{n}(x)\in\mathbb{R}_{+}^{*}.
\end{equation}
\textbf{Derivatives and Anti-derivatives :}\\
If we set $\mathcal{Z}_{\nu}(z)\triangleq I_{\nu}(z)$ or $e^{i\nu\pi}K_{\nu}(z)$, then for all $\nu\in\mathbb{R}$, we have 
\begin{equation}\label{Bessel derivatives}
	\mathcal{Z}_{\nu}'(z)=\mathcal{Z}_{\nu-1}(z)-\frac{\nu}{z}\mathcal{Z}_{\nu}(z)=\mathcal{Z}_{\nu+1}(z)+\frac{\nu}{z}\mathcal{Z}_{\nu}(z)
\end{equation}
and
\begin{equation}\label{Bessel and anti-derivatives}
\int z^{\nu+1}\mathcal{Z}_{\nu}(z)dz=z^{\nu+1}\mathcal{Z}_{\nu+1}(z).
\end{equation}
\textbf{Power series extension for $K_{n}$} (see \cite[p. 375]{AS64}) \textbf{:}\\
\begin{align*}
	K_{n}(z)=&\frac{1}{2}\left(\frac{z}{2}\right)^{-n}\sum_{k=0}^{n-1}\frac{(n-k-1)!}{k!}\left(\frac{-z}{4}\right)^{k}+(-1)^{n+1}\ln\left(\frac{z}{2}\right)I_{n}(z)\\
	&+\frac{1}{2}\left(\frac{-z}{2}\right)^{n}\sum_{k=0}^{+\infty}\left(\psi(k+1)+\psi(n+k+1)\right)\frac{\left(\frac{z^{2}}{4}\right)^{k}}{k!(n+k)!},
\end{align*}
where 
$$\psi(1)\triangleq-\boldsymbol{\gamma} \textnormal{ (Euler's constant)}\qquad \qquad\forall m\in\mathbb{N}^{*},\quad\psi(m+1)\triangleq\sum_{k=1}^{m}\frac{1}{k}-\boldsymbol{\gamma}.$$
In particular 
\begin{equation}\label{exp K0}
	K_{0}(z)=-\log\left(\frac{z}{2}\right)I_{0}(z)+\sum_{m=0}^{+\infty}\frac{\left(\frac{z}{2}\right)^{2m}}{(m!)^{2}}\psi(m+1),
\end{equation}
so $K_{0}$ behaves like a logarithm at $0.$\\

\noindent\textbf{Decay property for the product $I_{\nu}K_{\nu}$} (see \cite{B09} and \cite{DHR19}) \textbf{:}\\ The application $(x,\nu)\mapsto I_{\nu}(x)K_{\nu}(x)$ is strictly decreasing in each variable $x,\nu>0.$\\

\noindent\textbf{Wronskian} (see \cite[p. 375]{AS64}) \textbf{:}
\begin{equation}\label{wronskian}
I_{\nu}'(z)K_{\nu}(z)-I_{\nu}(z)K_{\nu}'(z)=I_{\nu}(z)K_{\nu+1}(z)+I_{\nu+1}(z)K_{\nu}(z)=\frac{1}{z}.
\end{equation}
\textbf{Ratio bounds} (see \cite{BP13}) \textbf{:}\\
For all $n\in\mathbb{N}$ and $x>0,$ we have 
\begin{equation}\label{ratio bounds with derivatives}
\left\lbrace\begin{array}{l}
\displaystyle\frac{x I_{n}'(x)}{I_{n}(x)}<\sqrt{x^{2}+n^{2}}\\
\\
\displaystyle\frac{x K_{n}'(x)}{K_{n}(x)}<-\sqrt{x^{2}+n^{2}}.
\end{array}\right.
\end{equation}
\textbf{Asymptotic expension of small argument} (see \cite[p. 375]{AS64}) \textbf{:}
\begin{equation}\label{asymp small arg}
	\forall n\in\mathbb{N}^{*},\quad I_{n}(x)\underset{x\rightarrow 0}{\sim}\frac{\left(\frac{1}{2}x\right)^{n}}{\Gamma(n+1)}\quad\mbox{ and }\quad K_{n}(x)\underset{x\rightarrow 0}{\sim}\frac{\Gamma(n)}{2\left(\frac{1}{2}x\right)^{n}}.
\end{equation} 
\textbf{Asymptotic expension of large argument} (see \cite[p. 375]{AS64}) \textbf{:}
\begin{equation}\label{asymp large z}
	\forall n\in\mathbb{N}^{*},\quad I_{n}(x)\underset{x\rightarrow +\infty}{\sim}\frac{e^{x}}{\sqrt{2\pi x}}\quad\mbox{ and }\quad K_{n}(x)\underset{x\rightarrow +\infty}{\sim}\sqrt{\frac{\pi}{2x}}e^{-x}.
\end{equation}
\textbf{Asymptotic of high order} (see \cite[p. 377]{AS64}) \textbf{:}
\begin{equation}\label{asymp high order}
	\forall x>0,\quad\mbox{ }I_{\nu}(x)\underset{\nu\rightarrow+\infty}{\sim}\frac{1}{\sqrt{2\pi\nu}}\left(\frac{ex}{2\nu}\right)^{\nu}\quad\mbox{ and }\quad K_{\nu}(x)\underset{\nu\rightarrow+\infty}{\sim}\sqrt{\frac{\pi}{2\nu}}\left(\frac{ex}{2\nu}\right)^{-\nu}.
\end{equation}
\section{Local bifurcation theorem and singular integrals}
We recall here the Crandall-Rabinowitz's Theorem of local bifurcation theory which was used to find the periodic solutions in Section \ref{sec per}. Its proof can be found in \cite{CR71} and \cite[p.15]{K11}. 
\begin{theo}[Crandall-Rabinowitz]\label{Crandall-Rabinowitz theorem}
	Let $X$ and $Y$ be two Banach spaces. Let $V$ be a neighborhood of $0$ in $X$ and let
	$$F:\begin{array}[t]{rcl}
		\mathbb{R}\times V & \rightarrow  & Y\\
		(\Omega,x) & \mapsto & F(\Omega,x)
	\end{array}$$
	be a function of classe $C^{1}$ with the following properties 
	\begin{enumerate}[label=(\roman*)]
		\item (Trivial solution) $\forall\,\Omega\in\mathbb{R},\,\,F(\Omega,0)=0.$
		\item (Regularity) $\partial_{\Omega}F$, $d_{x}F$ and $\partial_{\Omega}d_{x}F$ exist and are continuous.
		\item (Fredholm property) $d_{x}F(0,0)$ is a Fredholm operator with index $0$ and $\ker\left(d_{x}F(0,0)\right)=\langle x_{0}\rangle.$
		\item (Transversality assumption) $\partial_{\Omega}d_{x}F(0,0)[x_{0}]\not\in R\left(d_{x}F(0,0)\right).$
	\end{enumerate}
	If $\chi$ denotes any complement of $\ker\left(d_{x}F(0,0)\right)$ in $X$, then there exist 
	\begin{enumerate}[label=\textbullet]
		\item $U$ a neighborhood of $(0,0)$ in $\mathbb{R}\times V,$
		\item an interval $(-a,a)$ for some $a>0$,
		\item continuous functions
		$\psi:(-a,a)\rightarrow \mathbb{R}$ and $\phi:(-a,a)\rightarrow\chi$ satisfying $\psi(0)=0$ and $\phi(0)=0$
	\end{enumerate} 
	such that the set of the zeros of $F$ in $U$ can be described as the following two curves intersecting at $(0,0)$
	$$\Big\{(\Omega,x)\in U\quad\textnormal{s.t.}\quad F(\Omega,x)=0\Big\}=\Big\{\big(\psi(s),sx_{0}+s\phi(s)\big)\quad\textnormal{s.t.}\quad|s|<a\Big\}\cup\Big\{(\Omega,0)\in U\Big\}.$$
\end{theo}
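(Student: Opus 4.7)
The plan is to prove the classical Crandall–Rabinowitz theorem by Lyapunov–Schmidt reduction. First I would exploit the Fredholm structure to split both spaces: since $\ker(d_xF(0,0)) = \langle x_0 \rangle$ is one-dimensional and $d_xF(0,0)$ has index $0$, its range $R \triangleq \mathcal{R}(d_xF(0,0))$ is closed of codimension one, hence $Y = R \oplus Z$ with $\dim Z = 1$, and by assumption $X = \langle x_0 \rangle \oplus \chi$. Denoting by $P:Y\to R$ and $Q=\textnormal{Id}-P:Y\to Z$ the continuous projectors, the equation $F(\Omega,x) = 0$ is equivalent to the pair $PF(\Omega,x) = 0$ and $QF(\Omega,x) = 0$, and any $x \in V$ close to $0$ writes $x = sx_0 + y$ with $s \in \mathbb{R}$ small and $y \in \chi$ small.

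Second, I would apply the implicit function theorem to the range component. The functional $F_1(\Omega,s,y) \triangleq PF(\Omega,sx_0+y)$ is $C^1$ near $(0,0,0)$, vanishes there, and its differential in $y$ at the origin is $P \circ d_xF(0,0)\vert_\chi : \chi \to R$, which is continuous and bijective, hence an isomorphism by the open mapping theorem. Thus we obtain a $C^1$ map $(\Omega,s)\mapsto y(\Omega,s)$ locally solving $F_1=0$. The trivial solution assumption $F(\Omega,0)=0$ together with local uniqueness force $y(\Omega,0)=0$, so a first-order Taylor expansion in $s$ gives $y(\Omega,s)=s\,\phi_1(\Omega,s)$ with $\phi_1$ continuous and $\phi_1(0,0)=0$. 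The problem reduces to the scalar equation $QF\bigl(\Omega, s(x_0+\phi_1(\Omega,s))\bigr)=0$ in the scalars $(\Omega,s)$.

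Third, I would factor out $s$ from this scalar equation, again using $F(\Omega,0)=0$. By the fundamental theorem of calculus
$$
G(\Omega,s) \triangleq \int_0^1 Q\, d_xF\bigl(\Omega, ts(x_0+\phi_1(\Omega,s))\bigr)\big[x_0+\phi_1(\Omega,s)\big]\,dt
$$
extends $s^{-1}QF(\Omega,s(x_0+\phi_1(\Omega,s)))$ continuously to $s=0$, with $G(0,0) = Q\,d_xF(0,0)[x_0]=0$ because $x_0 \in \ker d_xF(0,0)$. Differentiating in $\Omega$ and using the regularity of $\partial_\Omega d_xF$ yields $\partial_\Omega G(0,0) = Q\,\partial_\Omega d_xF(0,0)[x_0]$, which is non-zero in $Z$ by the transversality condition. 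A second application of the implicit function theorem (now scalar) furnishes a continuous function $\psi:(-a,a)\to\mathbb{R}$ with $\psi(0)=0$ solving $G(\psi(s),s)=0$; setting $\phi(s) \triangleq \phi_1(\psi(s),s)$ gives the non-trivial branch $\bigl(\psi(s), s(x_0+\phi(s))\bigr)$, and the two uniqueness statements in the implicit function theorem imply that every zero of $F$ in a sufficiently small neighborhood lies either on the trivial line or on this curve.

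The subtle point of the argument is the extraction of $s$ from the bifurcation equation: it is precisely the combination of the trivial-solution hypothesis (which makes the whole $s=0$ slice vanish) and the transversality condition (which makes $\partial_\Omega G(0,0)\neq 0$ in the one-dimensional space $Z$) that turns the otherwise degenerate scalar equation $QF=0$ into one amenable to the implicit function theorem. Without transversality, $G$ would vanish to higher order at the origin and no branch of non-trivial solutions could be unambiguously selected out of the trivial line.
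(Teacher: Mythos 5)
Your Lyapunov--Schmidt argument is correct and is essentially the canonical proof of Crandall--Rabinowitz; the paper itself gives no proof of this theorem but simply cites \cite{CR71} and \cite[p.15]{K11}, which follow exactly the route you describe (split $X$ and $Y$ via the Fredholm structure, solve the range equation by the implicit function theorem, factor $s$ out of the one-dimensional bifurcation equation, and use transversality to solve for $\Omega=\psi(s)$). The only places that deserve one extra line each are the claim $\phi_1(0,0)=0$ (obtained by implicitly differentiating $PF(\Omega,sx_0+y(\Omega,s))=0$ in $s$ at the origin and using that $P\,d_xF(0,0)\vert_\chi$ is an isomorphism) and the existence and continuity of $\partial_\Omega G$ near $(0,0)$, which is precisely where hypothesis (ii) on $\partial_\Omega d_xF$ is needed.
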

Now, we also state some continuity properties of singular integral operators. We may refer to \cite{HH15,H09,K14,LL97} for a proof of the following result.
\begin{lem}\label{lem sing ker}
	Consider a function $\mathtt{K}:\mathbb{T}\times\mathbb{T}\rightarrow\mathbb{C}$ such that for some $C_0>0$ we have
	\begin{enumerate}[label=\textbullet]
		\item $\mathtt{K}$ is measurable on $\mathbb{T}\times\mathbb{T}\setminus\{(w,w),\,w\in\mathbb{T}\}$ and
		$$\forall(w,\tau)\in\mathbb{T}^2,\qquad w\neq\tau\quad\Rightarrow\quad|\mathtt{K}(w,\tau)|\leqslant C_0.$$
		\item For any $\tau\in\mathbb{T}$, the application $w\mapsto\mathtt{K}(w,\tau)$ is differentiable in $\mathbb{T}\setminus\{\tau\}$ and
		$$\forall(w,\tau)\in\mathbb{T}^2,\qquad w\neq\tau\quad\Rightarrow\quad|\partial_{w}\mathtt{K}(w,\tau)|\leqslant\frac{C_0}{|w-\tau|}.$$
	\end{enumerate}
Consider the operator $\mathcal{T}_{\mathtt{K}}$ defined by
$$\mathcal{T}_{\mathtt{K}}(f)(w)=\fint_{\mathbb{T}}\mathtt{K}(w,\tau)f(\tau)\,d\tau.$$
Then, the operator $\mathcal{T}_{\mathtt{K}}$ is continuous from $L^{\infty}(\mathbb{T})$ to $C^{\delta}(\mathbb{T})$ for any $0<\delta<1$ and there exists $C_{\delta}>0$ such that
	$$\|\mathcal{T}_{\mathtt{K}}(f)\|_{C^{\delta}(\mathbb{T})}\leqslant C_{\delta}C_0\|f\|_{L^{\infty}(\mathbb{T})}.$$
\end{lem}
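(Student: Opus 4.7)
The plan is the classical Calder\'on--Zygmund near/far splitting, using the size bound on the diagonal and the gradient bound off the diagonal. First I would establish the $L^\infty$ part of the bound: since $|\mathtt{K}(w,\tau)|\leqslant C_0$ almost everywhere, one immediately gets
$$\|\mathcal{T}_{\mathtt{K}}(f)\|_{L^\infty(\mathbb{T})}\leqslant C_0\|f\|_{L^\infty(\mathbb{T})}.$$
So the entire content is to control the H\"older seminorm $[\mathcal{T}_{\mathtt{K}}(f)]_{C^\delta}$.

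Fix $w_1,w_2\in\mathbb{T}$ with $w_1\neq w_2$ and set $r\triangleq|w_1-w_2|$. I would split
$$\mathcal{T}_{\mathtt{K}}(f)(w_1)-\mathcal{T}_{\mathtt{K}}(f)(w_2)=\fint_{|\tau-w_1|\leqslant 2r}\big(\mathtt{K}(w_1,\tau)-\mathtt{K}(w_2,\tau)\big)f(\tau)\,d\tau+\fint_{|\tau-w_1|>2r}\big(\mathtt{K}(w_1,\tau)-\mathtt{K}(w_2,\tau)\big)f(\tau)\,d\tau,$$
understanding distances on $\mathbb{T}$. For the near part I would simply use the size bound $|\mathtt{K}|\leqslant C_0$ on each of the two terms, plus the fact that $|\tau-w_2|\leqslant 3r$ for such $\tau$; the integration set has measure $O(r)$, so the near part is bounded by $C\,C_0\|f\|_{L^\infty}\,r$.

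For the far part the key input is the gradient bound. Parametrizing the straight segment on the circle from $w_1$ to $w_2$, the fundamental theorem of calculus gives
$$|\mathtt{K}(w_1,\tau)-\mathtt{K}(w_2,\tau)|\leqslant|w_1-w_2|\sup_{s\in[0,1]}\big|\partial_w \mathtt{K}\big(w_1+s(w_2-w_1),\tau\big)\big|.$$
When $|\tau-w_1|>2r$, the triangle inequality gives $|w_1+s(w_2-w_1)-\tau|\geqslant\tfrac{1}{2}|\tau-w_1|$, so by hypothesis the supremum is at most $2C_0/|\tau-w_1|$. Hence the far part is bounded by
$$2C_0\|f\|_{L^\infty}\,r\int_{2r<|\tau-w_1|<\pi}\frac{d\tau}{|\tau-w_1|}\lesssim C_0\|f\|_{L^\infty}\,r\log(1/r).$$
Combining the two contributions I get $|\mathcal{T}_{\mathtt{K}}(f)(w_1)-\mathcal{T}_{\mathtt{K}}(f)(w_2)|\leqslant C\,C_0\|f\|_{L^\infty}\,r\log(2+1/r)$. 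Since $t\log(2+1/t)\lesssim_\delta t^\delta$ on $(0,\pi)$ for every $\delta\in(0,1)$, this yields the desired bound on the $C^\delta$-seminorm with a constant $C_\delta$ depending only on $\delta$, and combined with the $L^\infty$ bound finishes the proof.

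The only mildly delicate point is the near/far splitting on the torus rather than on $\mathbb{R}$, but since all the estimates are local and $r\leqslant\pi$, identifying $\mathbb{T}$ with $[-\pi,\pi]$ near $w_1$ is harmless. The main (and only) analytic subtlety is the logarithmic loss in the far estimate, which is absorbed by taking any $\delta<1$.
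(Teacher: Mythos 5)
Your argument is correct and is exactly the standard near/far (Calder\'on--Zygmund type) splitting that the cited references (\cite{HH15,H09,K14,LL97}) use for this lemma; the paper itself gives no proof and simply defers to those sources. The only cosmetic point is that the segment $w_1+s(w_2-w_1)$ is a chord of the unit circle rather than a subset of $\mathbb{T}$, so one should apply the fundamental theorem of calculus along the shorter arc joining $w_1$ to $w_2$ (whose length is comparable to $|w_1-w_2|$), which only changes the absolute constants in the far-field lower bound $|w(s)-\tau|\gtrsim|\tau-w_1|$.
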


\end{document}